\documentclass[12pt]{elsarticle}
\usepackage{lineno,hyperref}
\modulolinenumbers[5]
\usepackage{amsmath, amssymb, amsthm, empheq}
\usepackage{tikz}
\usepackage{graphicx,subfigure}
\usepackage{nameref}
\usetikzlibrary{arrows,shapes,chains}
\usepackage[a4paper, left=2.5cm, right=2.5cm, top=3cm, bottom=3cm]{geometry}

\newtheorem{thm}{Theorem}[section]

\newtheorem{lem}[thm]{Lemma}
\newtheorem{pro}[thm]{Proposition}

\newtheorem*{thmA}{Theorem A}
\newtheorem*{thmB}{Theorem B}
\newtheorem*{thmC}{Theorem C}
\newtheorem*{thmD}{Theorem D}
\newtheorem*{thmE}{Theorem E}
\newtheorem*{thmF}{Theorem F}
\newtheorem*{thmG}{Theorem G}

\newtheorem*{BC}{Base case}
\newtheorem*{BF}{Basic fact}

\newtheorem*{FactA}{Fact A}
\newtheorem*{FactB}{Fact B}
\newtheorem*{ClaimA}{Claim 1}
\newtheorem*{ClaimB}{Claim 2}
\newtheorem*{ClaimC}{Claim 3}
\newtheorem*{ClaimD}{Claim 4}

\newtheorem*{conj}{Conjecture}

\numberwithin{equation}{section}
\journal{Journal of \LaTeX\ Templates}

\makeatletter
\newcommand{\lefttag}[2]{%
  \def\@currentlabel{#1}%
  \begin{flalign}
    \text{(#1)}\quad & & #2 && \nonumber
  \end{flalign}
}
\makeatother

\begin{document}

\begin{frontmatter}

\title{A proof of the Freiman-Lev conjecture\tnoteref{mytitlenote}}
\tnotetext[mytitlenote]{This work was supported by the National Natural Science Foundation of China (Grant
Nos. 12101007 and 12371003) and the Natural Science Foundation of Anhui Province (Grant No. 2008085QA06).}

\author[mymainaddress]{Yujie Wang}
\ead{wangyujie9291@126.com}

\author[mymainaddress]{Min Tang\corref{mycorrespondingauthor}}
\ead{tmzzz2000@163.com}
\cortext[mycorrespondingauthor]{Corresponding author}

\address[mymainaddress]{School of Mathematics and Statistics, Anhui Normal University, Wuhu 241002, P. R. China}

\begin{abstract} Let $A=\{a_{0}, a_{1}, \ldots, a_{k-1}\}$ be a set of $k>7$ integers such that $0=a_{0}<a_1<\cdots<a_{k-1}$ and $\gcd(A)=1$.
 The set $2^{\wedge}A=\{a+b: a, b\in A, a\neq b\}$ is called the restricted sumset of $A$.
 The Freiman-Lev conjecture is a well-known conjecture concerning restricted sumsets [V.F. Lev, Restricted set addition in groups, I. The classical setting, J. London Math. Soc. 62(2000), 27-40].
 Up to now, Freiman-Lev conjecture is still open for all $a_{k-2}\geqslant 2k-4$ and $a_{k-1}\geqslant 2k-2$.
 In this paper, we complete the proof of the Freiman-Lev conjecture by resolving this final and most challenging case.

\end{abstract}

\begin{keyword} restricted sumsets; Freiman-Lev conjecture; inverse problem

\MSC[2020] 11B13
\end{keyword}

\end{frontmatter}


\section{Introduction}
Let $A$ be a set of integers and let $\gcd(A)$ be the greatest common divisor of all nonzero elements of $A$. We define the sumsets of $A$ and the restricted sumsets of $A$ to be
$$2A=\{a+b: a, b\in A\}, \ \ 2^{\wedge}A=\{a+b: a, b\in A, a\neq b\}, $$
respectively.
Write $[a, b]=\{x\in \mathbb{Z}\ |\ a\leqslant x\leqslant b\}$.

In 1959, G.A. Freiman \cite{Freiman} obtained the famous Freiman's $2A$ theorem.
\begin{thmA}
Let $A\subseteq [0, l]$ be a set of $k\geqslant 3$ integers such that $0, l\in A$, $\gcd(A)=1$. Then
$$|2A|\geqslant \left\{\begin{array}{ll}
l+k &\text{ if } l\leqslant 2k-3, \\
3k-3 &\text{ if } l\geqslant 2k-2. \end{array}\right.$$
\end{thmA}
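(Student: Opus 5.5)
We prove Theorem A by strong induction on $k$, using the classical ``remove the largest element'' argument. The tool throughout is the elementary bound $|2B|\geqslant 2|B|-1$ for any finite $B\subseteq\mathbb{Z}$, which comes from the chain $b_0+b_0<b_0+b_1<b_1+b_1<\cdots<b_{m-1}+b_{m-1}$. Write $A=\{0=a_0<a_1<\cdots<a_{k-1}=l\}$.

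\textbf{Case $l\leqslant 2k-3$.} We work modulo $l$. Let $\bar A$ be the image of $A$ in $\mathbb{Z}/l\mathbb{Z}$; it has exactly $k-1$ elements, since $0$ and $l$ coincide. If $l\leqslant 2k-3$ then $|\bar A|+|\bar A|>l$, so by the pigeonhole principle $\bar A+\bar A=\mathbb{Z}/l\mathbb{Z}$. In particular $2A'$ meets every residue class modulo $l$, where $A'=A\setminus\{l\}$. Now $2A\subseteq[0,2l]$, and we need $|2A|\geqslant l+k$.

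\begin{itemize}
\item Every residue $r\in[1,l-1]$ is hit by $2A'\subseteq[0,2l-2]$, so $r$ or $r+l$ lies in $2A'\subseteq 2A$.
\item $l+A\subseteq 2A$ gives $k$ elements of $[l,2l]$, and $A\subseteq 2A$ gives $k$ elements of $[0,l]$.
\end{itemize}

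The cleanest count goes as follows. For each $r\in[1,l-1]$ at least one of $r, r+l$ lies in $2A$. In addition, for each $a\in A\setminus\{0,l\}$ both $a$ and $a+l$ lie in $2A$, giving $k-2$ further elements. Finally $0$, $l$ and $2l$ lie in $2A$. The total is $(l-1)+(k-2)+3=l+k$.

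\textbf{Case $l\geqslant 2k-2$.} We induct on $k$. The base case is $k=3$: here $A=\{0,a,l\}$ with $\gcd(a,l)=1$ and $l\geqslant 4$. Then $2A=\{0,a,2a,l,l+a,2l\}$ has $6=3k-3$ elements, because $2a\ne l$ (otherwise $\gcd(a,l)=a$, forcing $a=1$ and $l=2$, contradicting $l\geqslant 4$).

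For the inductive step, let $A'=A\setminus\{l\}$ with $l'=a_{k-2}$. The new sums $l+a_{k-2}$ and $2l$ exceed $\max 2A'=2l'$, so
$$|2A|\geqslant|2A'|+2.$$
We want $|2A|\geqslant|2A'|+3$.

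\begin{itemize}
\item \emph{Subcase $\gcd(A')=d>1$.} Then $l$ is coprime to $d$, so $l+A'$ lies in residue classes disjoint from $2A'\subseteq d\mathbb{Z}$ mod $d$. Together with $2l$ this gives $|2A|\geqslant|2A'|+k\geqslant 2(k-1)-1+k=3k-3$.
\item \emph{Subcase $\gcd(A')=1$.} Apply the induction hypothesis to $A'$. If $l'\geqslant 2k-4$, we get $|2A'|\geqslant 3k-6$. If $l'\leqslant 2k-5$, the first case gives $|2A'|\geqslant l'+k-1$. In either situation it remains to find enough new sums.
\end{itemize}

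To find the new sums, note that $l+A'$ contributes elements above $2l'$ whenever $l+a_i>2l'$. Since $l\geqslant 2k-2$, the sums $l+a$ with $a>2l'-l$ are new. We also have $2l$, and $l+l'$ is new as well.

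The main obstacle is the sub-subcase $l'\geqslant 2k-4$, where we need three new sums but the above guarantees only $2l$ and $l+l'$. If $l+a_{k-3}>2l'$ we are done. Otherwise, the plan is to exploit $l\geqslant 2k-2$ and count directly: the elements $l+a_i\in 2A$ that do not lie in $2A'$. Specifically, I would switch to removing $0$ instead, via the reflection $a\mapsto l-a$, and choose whichever endpoint removal preserves $\gcd=1$ and yields three new sums. Handling the case where both removals give only two new sums requires a structural argument showing that $A$ is then an arithmetic progression, contradicting $\gcd(A)=1$ together with $l\geqslant 2k-2$. I expect this to be the hardest step.
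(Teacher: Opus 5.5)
The paper does not prove Theorem A (it is quoted from Freiman's 1959 paper as background), so I am assessing your proposal on its own merits.

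\textbf{What is correct.} Your case $l\leqslant 2k-3$ is correct and complete; it is the standard pigeonhole argument modulo $l$. The base case $k=3$ and the subcase $\gcd(A')>1$ are also correct.

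\textbf{First gap: the subcase $l'\leqslant 2k-5$.} The inductive step with $\gcd(A')=1$ is never actually carried out. Here you only say that enough new sums remain to be found. The sums you point to are $l+a$ with $a>2l'-l$, plus $2l$. When $l<2l'$ they only guarantee
\[ |2A|\geqslant (l'+k-1)+(k-1-2l'+l)=2k-2+(l-l'). \]
This is below $3k-3$ whenever $l-l'<k-1$, for example $l'=2k-5$, $l=2k-2$. So this subcase needs its own argument.

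\textbf{Main gap: the subcase $l'\geqslant 2k-4$.} Your plan here rests on a false claim. Take $A=\{0,1,2,4,8,12,14,15,16\}$, so $k=9$, $l=16=2k-2$ and $\gcd(A)=1$.

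\begin{enumerate}
\item Removing $16$: $A'=A\setminus\{16\}$ has $\gcd 1$ and $l'=15\geqslant 2k-4$. Yet $16+a\in 2A'$ for every $a\in A'\setminus\{15\}$: $16=8+8$, $17=2+15$, $18=4+14$, $20=8+12$, $24=12+12$, $28=14+14$, $30=15+15$. So the only new sums are $31$ and $32$.
\item Removing $0$: since $A=16-A$, this likewise yields only the two new sums $0$ and $1$. Moreover $A\setminus\{0\}-1$ again has $\gcd 1$ and diameter $15\geqslant 2(k-1)-2$.
\end{enumerate}

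So both endpoint removals give only two new sums, yet $A$ is not an arithmetic progression. In general, removing $0$ creates only two new sums iff every $a_i$ with $i\geqslant 2$ lies in $2(A\setminus\{0\})$. This forces $a_1=1$ and $a_2=2$, but many non-progressions with $l\geqslant 2k-2$ satisfy it.

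For this set your scheme ``$|2A'|\geqslant 3k-6$ plus new top sums'' only yields $3k-4=23$, whereas in fact $|2A|=29$. The surplus lies inside $2A'$: $|2A'|=27$, against the inductive bound of $21$. Counting new extreme sums cannot detect it.

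\textbf{What is needed.} You need a genuinely additional idea. One option is a stronger induction hypothesis that carries structural (inverse) information about sets whose doubling is close to $3|A'|-3$. Another is a different global argument, for instance the Lev--Smeliansky proof via reduction modulo $l$ and Kneser's theorem.
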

Freiman's $2A$ theorem has continued to attract significant attention in number theory and combinatorics,
the reader can see \cite{Freiman64, Freiman1, Jin2007}.

It is natural to study the lower bound for the cardinality of the restricted sumset $2^{\wedge}A$.
However, the problems of the restricted sumsets demand analyzing and handling of all possible subtle structures which makes the problems far more complex than dealing with general sumsets.
Early work on restricted sumsets can be found in, e.g., Nathanson's inverse theorems for subset sums \cite{Nathanson1}.
Nevertheless, it was not until 1999 that G. A. Freiman, L. Low and J. Pitman \cite{Freiman1999} obtained the following result, which builds upon his classical $2A$ theorem.
\begin{thmB}
Let $A\subseteq [0, l]$ be a set of $k\geqslant 3$ integers such that $0, l\in A$, $\gcd(A)=1$. Then
$$|2^{\wedge}A|\geqslant \left\{\begin{array}{ll}
0.5(l+k)+k-3.5 &\text{ if } l\leqslant 2k-3, \\
2.5k-5 &\text{ if } l\geqslant 2k-2. \end{array}\right.$$
\end{thmB}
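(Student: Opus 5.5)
Theorem B is the Freiman--Low--Pitman bound, and the plan is to derive it from Theorem A by comparing $2^{\wedge}A$ with $2A$ through a splitting of $A$. The basic identity is $2A = 2^{\wedge}A \cup \{2a : a\in A\}$. Doubles can only be lost when passing to the restricted sumset, so $|2^{\wedge}A|\geqslant |2A|-k$. This alone is too weak, since it would give only $2k-3$ in the case $l\geqslant 2k-2$. The idea is to show that roughly half of the doubles $2a$ are still recovered as sums of two distinct elements. Concretely, we split $A=A_0\cup A_1$ according to the parity of the elements, or more generally according to residues modulo a suitable $d$, and apply Theorem A separately to the pieces.

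First I will treat the smaller structure inside $2^{\wedge}A$. The set $2^{\wedge}A$ contains $A_0+A_1$, which is disjoint from the doubles of the wrong parity, together with $2^{\wedge}A_0$ and $2^{\wedge}A_1$. An element $2a$ with $a\in A_i$ lies in $2^{\wedge}A_i$ unless $a$ is extremal in a suitable sense. For instance, $2a=(a-t)+(a+t)$ with $a\pm t\in A_i$ works whenever $a$ is an interior point of an arithmetic progression inside $A_i$. I will set up an induction on $k$. Write $A'=A\setminus\{l\}$ with $l'=\max A'$, and compare $|2^{\wedge}A|$ with $|2^{\wedge}A'|$. Removing $l$ destroys the sums $l+a$ with $a>l'$ in the new range, namely $l+l'$ and the sums above $2l'$. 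Hence
\[
|2^{\wedge}A|\geqslant |2^{\wedge}A'| + |\{l+a: a\in A',\ l+a>2l'\}|\cup\cdots,
\]
and at least the two elements $l+l'$ and $l+l''$ (where $l''$ is the next element) exceed $l'+l''$. Each step therefore gains at least $2$, and we need an average gain of $2.5$. The half unit must come from the gaps $l-l'$: when $l-l'$ is large, more new sums appear. This is exactly the source of the term $0.5(l+k)$ in the case $l\leqslant 2k-3$.

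The key steps, in order, are as follows.
\begin{enumerate}
\item Verify small cases $k=3,4$ directly.
\item In the case $l\leqslant 2k-3$, induct using whichever endpoint removal (of $0$ or of $l$, using symmetry $a\mapsto l-a$) yields the larger gap. When the gcd of the reduced set stays $1$, the gain is $\geqslant 2+\tfrac12(\text{gap}-1)\cdot$, which matches the increment of $0.5(l+k)+k$.
\item When removing an endpoint makes the gcd exceed $1$, the set is contained in a progression modulo $d\geqslant 2$ plus one point. Then $A'\subseteq d\mathbb{Z}$, and the sums $l+A'$ form a translate disjoint from $2^{\wedge}A'$, giving $|2^{\wedge}A|\geqslant |2^{\wedge}A'|+(k-1)$. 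Applying the bound to $A'/d$ then gives far more than needed.
\item In the case $l\geqslant 2k-2$, induct similarly, or pass to a subset with diameter $\leqslant 2k-3$ and use the first case monotonically. The key bound $0.5(2k-2+k)+k-3.5=2.5k-4.5$ is comfortably at least $2.5k-5$ at the threshold.
\end{enumerate}

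The main obstacle is step 2. A single removal may gain only $2$ while the diameter drops by $1$, and with the required increment of $1.5$ this case is fine. When the diameter drops by $g$, we need $1+0.5g$ new sums, i.e.\ extra sums $l+a$ lying above $2l'$ or falling in gaps. I would handle this by removing both endpoints at once when needed and counting sums in $(\,l'+l''\,,\,l+l'\,]$ together with their reflections near $0$. If this double-ended counting fails for lacunary configurations, I would fall back on the parity split and apply Theorem A to $A_0$ and $A_1$, where $2A_i$ lives in one residue class.
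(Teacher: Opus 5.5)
The paper does not prove Theorem B. It is quoted from Freiman--Low--Pitman as background, so your proposal has to stand on its own, and as written it has a genuine gap. You actually establish only two quantitative facts. Deleting the maximum $l$ creates at least the two new sums $l+l'$ and $l+l''$. The gcd case of step 3 is correct and even gives $3k-6$. The gain ``$\geqslant 2+\frac12(\text{gap}-1)$'' of step 2 and the ``average gain $2.5$'' of step 4 are only asserted, yet they are the whole content of the theorem.

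With $g=l-l'$, the new sums lying above $\max 2^{\wedge}A'=l'+l''$ are exactly $l+a$ with $a\in A'\cap(l''-g,l']$. For $g=1$ this set is always just $\{l'',l'\}$. Any further gain requires showing that some $l+a$ with $a<l''$ lands in a hole of $2^{\wedge}A'$, which is a global fact that no local count near the top can give.

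The deficit is real. Whenever $l'\geqslant 2k-4=2|A'|-2$ (for example $g=1$ with $l\geqslant 2k-2$, or $g=1$ with $l=2k-3$), the induction hypothesis gives only $|2^{\wedge}A'|\geqslant 2.5(k-1)-5$. Adding the two forced sums reaches $2.5k-5.5<2.5k-5$. For odd $k$ the number $2.5k-7.5$ is an integer, so integrality does not recover the missing half. Likewise, for $l=2k-3$, $g=2$ and odd $k$, you need three new sums but are guaranteed only two unless $l''-1\in A$.

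To close the induction you would need an inverse-type statement, for instance: if no endpoint deletion creates a third new sum, then $A'$ strictly beats its bound. That is the kind of structural input the paper has to build for its own $3k-7$ induction, and nothing in your proposal supplies it.

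The fallbacks do not fill the gap.
\begin{itemize}
\item Theorem A applied to the parity classes bounds $|2A_0|$ and $|2A_1|$. Going back to $2^{\wedge}A_i$ costs up to $|A_i|$ doubles again, the same loss you started from.
\item Passing to a subset $A''$ of small diameter discards every sum involving $A\setminus A''$. For $A=[0,k-2]\cup\{l\}$ with $l$ large, the subset yields only $2k-5$, and everything else comes from $l+[0,k-2]$.
\item The computation $0.5(3k-2)+k-3.5=2.5k-4.5$ only compares the two formulas at $l=2k-2$, where the first case is not available.
\end{itemize}

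For $l\leqslant 2k-3$ there is in fact a short rigorous route in the paper's notation. The set $T=\{a_1,\ldots,a_{k-2}\}\cup(\{a_0,\ldots,a_{k-2}\}+l)$, together with the pairwise disjoint sets $S(b)$ for $b\in[1,l-1]\setminus(A\cup W)$, gives $|2^{\wedge}A|\geqslant l+k-2-|W|$. For $w\in W$:
\begin{itemize}
\item $w\notin 2^{\wedge}A$ forces $|A\cap[0,w]|\leqslant\lfloor w/2\rfloor+1$;
\item $w+l\notin 2^{\wedge}A$ forces $|A\cap[w,l]|\leqslant\lfloor(l-w)/2\rfloor+1$;
\item together these give $k\leqslant l/2+2$.
\end{itemize}
Hence $W=\emptyset$ when $l\leqslant 2k-5$, and then $|2^{\wedge}A|\geqslant l+k-2$, which exceeds the target. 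For $l\in\{2k-4,2k-3\}$, Lemma \ref{LL3-1} gives $|W|\leqslant 2$, so $|2^{\wedge}A|\geqslant l+k-4\geqslant 0.5(l+k)+k-3.5$ for $k\geqslant 5$. The trivial bound $|2^{\wedge}A|\geqslant 2k-3$ covers $k=3,4$.

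The range $l\geqslant 2k-2$ genuinely needs a separate argument, and your proposal does not contain one.
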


In 2000, V.F. Lev \cite{Lev2} improved the lower bound for the cardinality of $2^{\wedge}A$, and remarked that the following conjecture (in personal communication with Freiman).
\begin{thmC}
Let $A\subseteq [0, l]$ be a set of $k\geqslant 3$ integers such that $0, l\in A$ and $\gcd(A)=1$. Then
$$|2^{\wedge}A|\geqslant \left\{\begin{array}{ll}
l+k-2 &\text{ if } l\leqslant 2k-5, \\
(\theta+1)k-6 &\text{ if } l\geqslant 2k-4, \end{array}\right.$$
where $\theta=(1+\sqrt{5})/2$.
\end{thmC}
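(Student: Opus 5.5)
We argue by induction on $k$, removing the largest element. Put $A'=A\setminus\{l\}$ with $l'=\max A'=a_{k-2}$. The new restricted sums contributed by $l$ are the sums $l+a_i$ with $l+a_i>2l'-1$ (the largest element of $2^{\wedge}A'$ being $l'+a_{k-3}\le 2l'-1$). So the first step is the counting inequality
$$|2^{\wedge}A|\geqslant |2^{\wedge}A'|+\#\{i\leqslant k-2:\ a_i>2l'-1-l\}.$$
When $l$ is much larger than $l'$, every $l+a_i$ with $i\le k-2$ is new, and we gain $k-1$ sums. When $l$ is close to $l'$ the gain is small, and in that regime we remove $0$ instead, working with the reflected set $l-A$. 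A further complication is that $\gcd(A')$ may exceed $1$. Then $A'$ lies in a progression with difference $d\geqslant 2$, so the sums $l+a_i$ avoid $2^{\wedge}A'$ entirely, and for $d\ge 2$ we can count directly via residues.

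For the range $l\leqslant 2k-5$ the target $l+k-2$ is linear, and the plan is a Freiman-type argument reducing modulo $l$. Let $\bar A$ be the image of $A$ in $\mathbb Z/l\mathbb Z$, so $|\bar A|=k-1$. Every residue class hit by $\bar A\hat{+}\bar A$ lifts to at least one restricted sum. A residue $r$ is hit twice, once in $[0,l-1]$ and once in $[l,2l]$, whenever both lifts occur. By a Dias da Silva–Hamidoune / Kneser-type lower bound for restricted sums in $\mathbb Z/l\mathbb Z$, we get $|\bar A\hat+\bar A|\geqslant\min(l,2(k-1)-3)$, and since $l\le 2k-5$ this says $\bar A\hat+\bar A$ covers almost all of $\mathbb Z/l\mathbb Z$. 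The doubled residues come from the $k$ sums $a_i+l$ and their partners in $[0,l-1]$. Counting these carefully should give the extra $k-2$. The weakness is that Kneser-type results for restricted sums are known to hold only up to small defects, and this is exactly where the $-2$ is tight.

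For $l\geqslant 2k-4$ the bound $(\theta+1)k-6$ forces an amortized induction, because a single removal only yields $\ge k-1$ in the bad cases. The idea is to track a potential combining $k$ and $|2^{\wedge}A|$, and to choose at each step whether to delete $0$ or $l$, or to pass to a sub-configuration, according to the gaps $l-a_{k-2}$ and $a_1$. The golden ratio should arise from a two-step recursion in which the gains satisfy $g_{k}\ge g_{k-1}+\ldots$ with extremal examples $A=\{0,1,\dots\}\cup$ a Fibonacci-like dilated part. First I would identify these extremal sets, presumably unions of two arithmetic progressions whose lengths are in ratio $\theta$, and then prove that when a removal gains only about $k$, the set splits as $A_1\cup(l-A_2)$ with $A_1,A_2$ short. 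We then apply Theorem B or induction to each part, optimizing $\alpha k+(\text{gain})$ over the split ratio $\alpha$; the optimum gives $\theta+1$.

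The main obstacle is this last structural step. We must show that when both endpoint removals give small gains, $A$ has a rigid two-progression structure. We also need the base cases $k\le 7$, or the small-$l$ interface near $l=2k-4,2k-3$, where the two regimes must be glued together. For this I would first try a direct case analysis on $a_1$, $a_{k-2}$ and $\gcd(A\setminus\{l\})$, with Theorem A applied to $A'$ to control $|2A'|$ and $|2^{\wedge}A'|\ge|2A'|-k+1$.
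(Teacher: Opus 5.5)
The paper gives no proof of Theorem C; it is Lev's result from \cite{Lev2}, quoted and used only as an input. So your proposal has to stand on its own, and it does not, mainly in the regime $l\geqslant 2k-4$.

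For $l\leqslant 2k-5$, reducing modulo $l$ is the right idea, but the tool you cite does not apply. The Dias da Silva--Hamidoune bound $|S\,\hat{+}\,S|\geqslant\min(p,2|S|-3)$ is a theorem about $\mathbb{Z}/p\mathbb{Z}$ with $p$ prime, and it fails for composite moduli: a subgroup $H$ with $|H|\geqslant 4$ has $H\,\hat{+}\,H=H$. Nothing of this kind is needed. For a fixed residue $r$, the involution $x\mapsto r-x$ of $\mathbb{Z}/l\mathbb{Z}$ has at most two fixed points. Hence a set containing no pair $\{x,r-x\}$ with $x\neq r-x$ has at most $\lfloor l/2\rfloor+1$ elements. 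Since $|\bar A|=k-1\geqslant\lfloor l/2\rfloor+2$ exactly when $l\leqslant 2k-5$, every residue class modulo $l$ meets $2^{\wedge}A$. The classes of $a_1,\dots,a_{k-2}$ are met twice, by $a_i=0+a_i$ and by $l+a_i$. This gives $|2^{\wedge}A|\geqslant l+k-2$ outright, with no ``small defect'' to worry about. It is the same count that underlies the paper's gap set $W$: in general $|2^{\wedge}A|\geqslant l+k-2-|W|$, and here $W=\emptyset$.

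Your opening inequality and the $\gcd(A')\geqslant 2$ case are fine, but they only cover situations where deleting $l$ gains on the order of $k$ sums. For $l\geqslant 2k-4$, which is the real content of Lev's theorem, the proposal contains no argument. No inequality is stated, let alone proved, that produces a recursion with fixed point $\theta+1$.

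The whole difficulty is the case where both endpoint deletions gain little. Deleting $l$ is only guaranteed to create $l+a_{k-2}$ and $l+a_{k-3}$, and deleting $0$ only $a_1$ and $a_2$. There you simply assert that $A$ splits as $A_1\cup(l-A_2)$ with a rigid two-progression structure.

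The route you propose for finding that structure is also misdirected. You want to identify extremal sets at the level $(\theta+1)k-6$, but for large $k$ there are none. Schoen's Theorem D gives $3k+o(k)$ in this regime, and the paper's main theorem together with Theorems E and G gives $3k-7$ for $k>7$. So $\theta$ does not encode two progressions with length ratio $\theta$; it is an artifact of a lossy recursion. A stability statement modelled on near-extremal sets at that level has nothing to describe.

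Finally, the tool you name for the interface and base cases is too weak. Theorem A combined with $|2^{\wedge}A'|\geqslant|2A'|-(k-1)$ yields only $|2^{\wedge}A'|\geqslant 2k-5$, below even Theorem B, so it cannot supply the input the gluing needs. As written, the second half is a research plan rather than a proof.
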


\begin{conj} Let $A\subseteq [0, l]$ be a set of $k>7$ integers such that $0, l\in A$ and $\gcd(A)=1$. Then
$$|2^{\wedge}A|\geqslant \left\{\begin{array}{ll}
l+k-2 &\text{ if } l\leqslant 2k-5, \\
3k-7 &\text{ if } l\geqslant 2k-4. \end{array}\right.$$
\end{conj}

In 2002, Schoen \cite{Sch} almost solved the conjecture of Freiman and Lev.

\begin{thmD}
Let $A$ be a set of $k>7$ integers such that $\gcd(A)=1$, $A\subseteq [0,l]$, and $0,l\in A$. Then
\begin{displaymath}
|2^{\wedge}A|\geq
\begin{cases}
l+k-2, &\text{ if } l\leq2k-5,\\
3k+o(k), &\text{ if } l\geq2k-4.
\end{cases}
\end{displaymath}
\end{thmD}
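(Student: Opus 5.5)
The first case, $l\leqslant 2k-5$, I would prove directly by reducing modulo $l$, as in Freiman's proof of Theorem A. Put $A'=A\setminus\{l\}\subseteq[0,l-1]$, so $|A'|=k-1$, and let $\pi:\mathbb Z\to\mathbb Z_l$ be reduction.

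First I show that $\pi(2^{\wedge}A')=\mathbb Z_l$. Fix $r\in\mathbb Z_l$. The sets $\pi(A')$ and $r-\pi(A')$ both have $k-1$ elements. Since $2(k-1)\geqslant l+3$, they meet in at least $3$ residues. Each common residue $\pi(a)$, with $a\in A'$, gives a representation $r=\pi(a)+\pi(b)$ with $b\in A'$. At most one of these common residues satisfies $2a\equiv r$ with $b=a$ unavoidable. Hence $r$ has a representation with $a\neq b$.

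Next I show that many residues lift twice. Every $x\in 2^{\wedge}A'$ lies in $[0,2l-2]$, so each residue has a lift in $[0,l-1]$ or in $[l,2l-1]$. The set $2^{\wedge}A$ also contains:
\begin{itemize}
\item $a+0=a$ for every $a\in A'\setminus\{0\}$, which lies in $[1,l-1]$;
\item $a+l$ for every $a\in A'$, which lies in $[l,2l-1]$.
\end{itemize}
So for each residue of $\pi(A'\setminus\{0\})$, $2^{\wedge}A$ meets both $[0,l-1]$ and $[l,2l-1]$. Counting one lift per residue, plus a second lift for these $k-2$ residues, gives $|2^{\wedge}A|\geqslant l+(k-2)$. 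The only care needed is to check that the residue $0$ and the element $l=0+l$ are not counted twice, which is harmless.

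For $l\geqslant 2k-4$ I would argue by contradiction on a minimal counterexample. Suppose $|2^{\wedge}A|\leqslant 3k-\varepsilon k$ for large $k$. Since $2A=2^{\wedge}A\cup\{2a:a\in A\}$, we get $|2A|\leqslant 4k-\varepsilon k$. The plan is to extract structure from this, split into two cases, and then close with a stability step.

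\emph{Structure.} Pass to a large subset $B\subseteq A$ with small doubling. By Freiman-type structure results ($3k-4$ type arguments after a suitable normalization, or a Fourier/Balog--Szemer\'edi-type density increment), $B$ is dense, $|B|\geqslant(1-o(1))k$, inside an arithmetic progression $P$ of length $O(k)$.

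\emph{Case 1: common difference $1$.} Then $A$ is mostly an interval-like set of length about $l_0\leqslant 2k$, plus $o(k)$ outliers. Apply the already-proved first case of the theorem to the dense part, which gives about $l_0+k$ restricted sums. Then show that each outlier, and the gap structure forced by $l\geqslant 2k-4$, adds roughly $k-l_0+k$ further sums. This uses translates $x+B$ for outliers $x$ that are disjoint from $2^{\wedge}B$.

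\emph{Case 2: common difference $d>1$.} Since $\gcd(A)=1$, some elements of $A$ lie off $P$. Split $A$ by residue classes modulo $d$ and apply Theorem C (or the first case) inside each class. Each cross-class sum set $A_i+A_j$ then has size at least $|A_i|+|A_j|-1$. Summing over a chain of classes gives at least $3k-o(k)$.

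The main obstacle is this last stability step: showing that a set which is only approximately structured cannot beat $3k$. There the $o(k)$ exceptional elements must each be shown to contribute new restricted sums, and controlling the loss coming from $a\neq b$ is delicate. I would first try to handle it by removing elements one at a time, i.e.\ induction on $k$ with $|2^{\wedge}A|-|2^{\wedge}(A\setminus\{a\})|\geqslant 3$ for a well-chosen extreme $a$, only falling back to the structural route when no such $a$ exists.
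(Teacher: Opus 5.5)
Your argument for $l\leqslant 2k-5$ is correct. It is the standard reduction modulo $l$, and this case coincides with the first half of Lev's Theorem C. The paper itself never proves Theorem D; it quotes it from Schoen. There is one slip: for even $l$ the congruence $2x\equiv r\pmod l$ can have two solutions, so ``at most one'' should read ``at most two''. Since $|\pi(A')\cap(r-\pi(A'))|\geqslant 3$, a representation with $a\neq b$ still exists.

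The case $l\geqslant 2k-4$, however, is not proved, and the gap is at the structural step. From $|2^{\wedge}A|\leqslant(3-\varepsilon)k$ you only get $|2A|\leqslant(4-\varepsilon)k$, because $2A\setminus 2^{\wedge}A$ can contain up to $k$ doubles $2a$. At doubling that close to $4$, nothing places $(1-o(1))k$ elements in a one-dimensional progression of length $O(k)$. Freiman's $3k-4$ theorem needs $|2A|\leqslant 3k-4$, and Freiman's lemma only bounds the Freiman dimension by $2$. The two-dimensional case is not a technicality; it is the extremal one. Take $A=[0,n-1]\cup(N+[0,n-1])$ with $N$ large. Then $k=2n$, $\gcd(A)=1$, $l\geqslant 2k-4$, and $|2^{\wedge}A|=(2n-3)+(2n-1)+(2n-3)=3k-7$. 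Yet for $N$ large, no progression of length at most $Ck$ contains more than $k/2+1$ of its elements, so your dichotomy ``difference $1$ / difference $d>1$'' never sees this set. Passing to a compact Freiman-isomorphic copy does not help either. It preserves $|2^{\wedge}A|$ but not the hypotheses $l\geqslant 2k-4$ and $\gcd(A)=1$, and for a copy of diameter $l'\leqslant 2k-5$ the only available bound is $l'+k-2$.

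Even granting one-dimensional structure, Case 1 assumes what has to be shown. If there are no outliers, the dense part is $A$ itself, of diameter $l\geqslant 2k-4$, so your first case does not apply. If the dense part has diameter $l_0\leqslant 2k-5$, the claim that the outliers supply about $2k-l_0$ new restricted sums is the whole difficulty: an outlier $x<2l_0$ has $x+B$ overlapping $2^{\wedge}B$ heavily. This ``dense block plus large elements'' configuration is exactly what Theorems F and G settle, with considerable work.

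The chain count in Case 2 is also unjustified. Sums $A_i+A_j$ from different pairs with the same $i+j \bmod d$ land in the same class and may coincide, and Theorem C inside a class gives only $(\theta+1)k_i-6$. If one class really held all but $o(k)$ elements, the crude bound $(2|A_0|-3)+(|A_0|+|A_j|-1)$ would suffice, in the spirit of Proposition~\ref{P12}(ii),(iii). But that presupposes the unproved structure.

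Your fallback, induction on $k$ gaining three new sums by removing an extreme element, is exactly the paper's route to the stronger bound $3k-7$ of Theorem~\ref{T}, which together with Theorems C, E, G supersedes Theorem D. That step, however, only disposes of the easy cases. The paper removes $a_{k-1}$, or removes $a_0$ via the translate $A\setminus\{a_0\}-a_1$, and uses Proposition~\ref{P12} when the gcd jumps or the diameter drops. What is left is precisely the case $a_u+a_{k-1}\in 2^{\wedge}(A\setminus\{a_{k-1}\})$ for all $u\leqslant k-4$ and $a_s\in 2^{\wedge}(A\setminus\{a_0\})$ for all $s\geqslant 3$. The paper's entire final section is devoted to that residual case: Facts A and B, Claims 1--4, the iterated translates $B$, $C$, $D$, the reflection $A^{*}$, and the inverse results they rest on. For it you offer only the structural route that fails above, and that is the missing idea.
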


In 2025, we \cite{Wang} introduced the gap set $W$ defined as integers in $[0, l]\backslash A$ whose sum with either $0$ or $l$ is absent from the restricted sumset $2^{\wedge}A$. we proved that if $l\leqslant2k-3$, then $|W|\leqslant 2$. In particular, when $|W|=2$, the paper provided a complete characterization of the structure of the original set $A$. Building on this analysis, we successfully proved the Freiman-Lev conjecture for the critical diameter range $2k-4\leqslant l\leqslant 2k-3$.
\begin{thmE}\label{thmE}
Let $A\subseteq [0, l]$ be a set of $k\geqslant 3$ integers such that $0, l\in A$ and $\gcd(A)=1$.
If $2k-4\leqslant l\leqslant 2k-3$, then $|2^{\wedge}A|\geqslant 3k-7$.
\end{thmE}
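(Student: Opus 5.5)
The plan is to count, inside $2^{\wedge}A$, the sums that involve the extreme elements $0$ and $l$, and then use the gap set $W$ to control what is missing. Write $A=\{0=a_0<a_1<\cdots<a_{k-1}=l\}$. The sums $0+a_i$ for $1\leqslant i\leqslant k-1$ give $k-1$ distinct elements of $[1,l]$. The sums $a_i+l$ for $0\leqslant i\leqslant k-2$ give $k-1$ distinct elements of $[l,2l-1]$. These two families meet only at $l$, so together they contribute $2k-3$ elements. The remaining task is to find about $k-4$ further elements of $2^{\wedge}A$. These must be sums $a_i+a_j$ with $0<i<j<k-1$ that land on integers $x\in[1,2l-1]$ not already counted, i.e.\ with $x\notin A$ and $x-l\notin A$.

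Call such integers uncovered. Writing $m=x$ or $x-l$, they correspond to the elements $m\in[0,l]\setminus A$ whose translates $m$ or $m+l$ are missed by the two families. There are $l+1-k$ integers in $[0,l]\setminus A$, and each gives two candidate positions $m$ and $m+l$. I would invoke the result from \cite{Wang} quoted above: for $l\leqslant 2k-3$, at most $|W|\leqslant 2$ of the missing elements $m$ have a sum with $0$ or $l$ absent from $2^{\wedge}A$. So for all but at most two $m\in[0,l]\setminus A$, both $m$ and $m+l$ lie in $2^{\wedge}A$. This gives
\[|2^{\wedge}A|\geqslant (2k-3)+2(l+1-k)-c,\]
where $c\leqslant 2$ corrects for the $W$-elements, each of which loses at most one of its two translates. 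Here I should double-check the exact definition: $W$ collects $m$ for which $m$ or $m+l$ is absent, so each $m\in W$ contributes at least $0$ and others contribute $2$. The bound then becomes $2k-3+2(l+1-k-|W|)$, which is $\geqslant 2l-3$ when $|W|\leqslant 2$, up to boundary adjustments. With $l\geqslant 2k-4$ this gives $\geqslant 4k-11$, which is more than enough for $k\geqslant 4$. The delicate point is how the counts interact at the boundary and at the element $2l-1$, so I would redo this bookkeeping carefully. If $W$-elements can lose both translates, the bound is $2l-5\geqslant 4k-13$, still at least $3k-7$ once $k\geqslant 6$.

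The main obstacle is therefore the small cases and the precise version of the $|W|\leqslant 2$ input. For $3\leqslant k\leqslant 5$ I would check directly, using $\gcd(A)=1$ and $l\in\{2k-4,2k-3\}$. Here $l$ is at most $7$, so a finite enumeration suffices, and the cases with $|W|=2$ are handled by the explicit structural characterization from \cite{Wang}. If the crude count above turns out to be too optimistic, I would instead fall back on the structure of $A$ described in \cite{Wang} for $|W|=2$ and verify $3k-7$ directly there. When $|W|\leqslant 1$, the counting gives slack of order $k$.
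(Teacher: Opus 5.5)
\textbf{There is a genuine gap.} Your central counting step rests on a misreading of $W$. Here $W=\{w\in[0,l]\setminus A: S(w)\cap 2^{\wedge}A=\emptyset\}$ consists of the gaps $w$ for which \emph{both} $w$ and $w+l$ are missing from $2^{\wedge}A$. For a gap $m\notin W$ you only know that \emph{at least one} of $m$, $m+l$ lies in $2^{\wedge}A$. So each gap outside $W$ contributes at least one new sum, not two. Your bounds $2l-3$ and $2l-5$ are therefore false, not merely unproved. For $A=[0,k-2]\cup\{2k-3\}$ one has $2^{\wedge}A=[1,2k-5]\cup[2k-3,3k-5]$, so $|2^{\wedge}A|=3k-6<4k-11=2l-5$ once $k\geqslant 6$. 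The sets in Lemma~\ref{LL3-4}(1) even attain $3k-7$ exactly. There is no slack of order $k$: the bound is sharp in this range, so the small cases are not where the difficulty lies.

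With the correct accounting (as in the proof of Lemma~\ref{LL3-7}), $T=\{a_1,\ldots,a_{k-2}\}\cup(l+\{a_0,\ldots,a_{k-2}\})$ supplies $2k-3$ sums. The $l+1-k-|W|$ gaps outside $W$ supply at least one more each, so $|2^{\wedge}A|\geqslant l+k-2-|W|$. For $l=2k-3$ this is $3k-5-|W|\geqslant 3k-7$ by Lemma~\ref{LL3-1}, and for $l=2k-4$ with $|W|\leqslant 1$ it is also enough. But for $l=2k-4$ and $|W|=2$ it gives only $3k-8$, and your proposal never isolates this case. There you must produce one extra sum, typically a gap $\xi\notin W$ with $|S(\xi)\cap 2^{\wedge}A|=2$. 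One uses that $W\subseteq 2\mathbb{Z}$ when $l=2k-4$ (Lemma~\ref{LL3-2}) and examines $S(w_1-1)$ and $S(w_2+1)$ for $W=\{w_1<w_2\}$, as in Lemma~\ref{LL3-6}. For instance, if $w_2+1\notin A$ and $|S(w_2+1)\cap 2^{\wedge}A|=1$, then $[0,w_2/2]\subseteq A$, which puts $w_1\in 2^{\wedge}A$, a contradiction. The remaining configurations are settled by the structural description of $A$ when $|W|=2$ from \cite{Wang}. Your fallback sentence points toward that structure but gives no argument for the missing element.
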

Recently, we \cite{Wang2025} develop new combinatorial and number-theoretic lemmas to handle the intricate case where $a_{i}<2i$ for each integer $1\leqslant i\leqslant k-2$ (we call this type set the locally dense set). We show that the Freiman-Lev conjecture is true for locally dense set. Moreover, extremal sets are highly regular-unions of arithmetic progressions with a common difference modulo $3$.
\begin{thmF}
Let $A=\{a_{0}, a_{1}, \ldots, a_{k-1}\}$ be a set of $k\geqslant 3$ integers such that $0=a_{0}<a_{1}<\cdots<a_{k-1}$, $a_{i}<2i$ for all $i=1, \ldots, k-2$, $a_{k-1}\geqslant 2k-2$ and $\gcd(A)=1$.
Then
$$\left|2^{\wedge}A\right|\geqslant 3k-7. $$
Moreover, $|2^{\wedge}A|=3k-7$ if and only if $k\geqslant 6$ and one of the following cases holds:

(1) $k\equiv 0\pmod{3}$ and
$$A=\left([0,k-3]\cap 3\mathbb{Z}\right)\cup \left([1,2k-2]\cap (3\mathbb{Z}+1)\right);$$

(2) $k\equiv 1\pmod{3}$ and
$$A=\left([0,2k-2]\cap 3\mathbb{Z}\right)\cup \left([1,k-3]\cap (3\mathbb{Z}+1)\right).$$
\end{thmF}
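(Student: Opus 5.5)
Theorem F is a result quoted from \cite{Wang2025}, so a self-contained plan has to rebuild it. I would argue by induction on $k$, removing the largest element $a_{k-1}$. Put $A'=A\setminus\{a_{k-1}\}=\{a_0,\dots,a_{k-2}\}$, so $A'\subseteq[0,a_{k-2}]$.

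\textbf{Step 1: apply Theorem C to $A'$.} The local density hypothesis gives $a_{k-2}\le 2(k-2)-1=2(k-1)-3$. So $A'$ is a set of $k-1$ elements with diameter $l'=a_{k-2}\le 2(k-1)-3$.
\begin{itemize}
\item If $\gcd(A')=1$ and $l'\le 2(k-1)-5$, then Theorem C (first case) gives $|2^{\wedge}A'|\ge l'+k-3$.
\item If $l'\in\{2(k-1)-4,\,2(k-1)-3\}$, then Theorem E gives $|2^{\wedge}A'|\ge 3(k-1)-7$.
\end{itemize}

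\textbf{Step 2: count new sums.} The sums $a_i+a_{k-1}$ with $a_i+a_{k-1}>a_{k-3}+a_{k-2}$ lie above $\max 2^{\wedge}A'$. Their number is large because $a_{k-1}\ge 2k-2$ exceeds $a_{k-2}$ by a margin.

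\textbf{Step 3: handle $\gcd(A')=d>1$.} Local density forces $a_1=1$, so $\gcd(A')=1$ always holds. The case $k-1$ small would be checked directly.

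\textbf{Step 4: the crude count falls short.} Step 2 gives at least $|\{i: a_i>a_{k-3}+a_{k-2}-a_{k-1}\}|$ new sums. When $a_{k-1}\ge a_{k-3}+a_{k-2}$, all $k-1$ sums $a_{k-1}+a_i$ are new. Combined with Step 1, this yields roughly $(l'+k-3)+(k-1)$, which is not always at least $3k-7$. So a finer argument is needed.

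\textbf{Step 5: the gap set $W$ and the real obstacle.} Let $W$ be the gap set of $A'$ from \cite{Wang}, where $|W|\le 2$. Among the sums $a_{k-1}+a_i$, only those coinciding with elements of $2^{\wedge}A'$ are lost. I would split into two cases.
\begin{itemize}
\item $a_{k-1}>2a_{k-2}$: then all $k-1$ sums with $a_{k-1}$ are new. We get $|2^{\wedge}A|\ge |2^{\wedge}A'|+k-1$, and it remains to show $|2^{\wedge}A'|\ge 2k-6$. This follows from the standard bound $|2^{\wedge}A'|\ge 2(k-1)-3$.
\item $a_{k-1}\le 2a_{k-2}$: this is the genuine obstacle. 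The translate $a_{k-1}+A'$ overlaps $2^{\wedge}A'$, and one needs about $2k-4-|2^{\wedge}A'|+l'+\dots$ new sums.
\end{itemize}
In the second case I would compare $a_{k-1}+A'$ with $2^{\wedge}A'\cap[a_{k-1},\,a_{k-2}+a_{k-3}]$. Write $a_{k-1}=a_{k-2}+t$. Elements of $(a_{k-1}+A')\cap 2^{\wedge}A'$ correspond to representations $a_{k-1}+a_i=a_j+a_m$. Since $A'$ is dense ($a_i<2i$), the interval $[0,l']$ has fewer than $l'-k+2$ holes. Counting sums $a_{k-1}+a_i$ that land in holes of $2^{\wedge}A'$ then shows at least about $3k-7-|2^{\wedge}A'|$ new sums. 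In this bookkeeping the gap set $W$ gives exactly the sums missed near the top.

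\textbf{Step 6: extremal cases.} Equality throughout forces $|W|=2$ and the structural characterization from \cite{Wang}. This leads to the mod-$3$ progressions listed in (1) and (2). Direct computation then confirms $|2^{\wedge}A|=3k-7$ for those sets, and rules out $k<6$.

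I expect the main difficulty to be the overlap case $a_{k-1}\le 2a_{k-2}$. There the residue structure modulo $3$ must be tracked carefully, as the extremal examples show.
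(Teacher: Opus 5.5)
Your plan never handles the case that carries the theorem. Steps 1--5 only dispose of the regime where $a_{k-1}+A'$ avoids $2^{\wedge}A'$. That regime is easier than you say: since $a_{k-2}\geqslant k-2$, the Step 1 bounds plus $k-1$ new sums already reach $3k-7$, so everything with $a_{k-1}>a_{k-2}+a_{k-3}$ is immediate.

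For $2k-2\leqslant a_{k-1}\leqslant a_{k-2}+a_{k-3}$ you only assert that ``counting sums $a_{k-1}+a_i$ that land in holes of $2^{\wedge}A'$'' yields about $3k-7-|2^{\wedge}A'|$ new sums. That is a restatement of the claim, not an argument. Moreover, your set-up cannot deliver it. Step 1 bounds $|2^{\wedge}A'|$ through Theorems C/E, which see only the diameter $l'$ and discard the hypothesis $a_i<2i$. Controlling the overlap $(a_{k-1}+A')\cap 2^{\wedge}A'$ requires knowing \emph{which} integers lie in $2^{\wedge}A'$, not just how many.

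Concretely, for $l'\in\{2k-6,2k-5\}$ you have only $|2^{\wedge}A'|\geqslant 3k-10$ plus the two free sums $a_{k-1}+a_{k-3}$ and $a_{k-1}+a_{k-2}$. You must then either exhibit $u\leqslant k-4$ with $a_{k-1}+a_u\notin 2^{\wedge}A'$, or prove $|2^{\wedge}A'|\geqslant 3k-9$ when $2^{\wedge}A'$ contains the whole translate $a_{k-1}+\{a_0,\dots,a_{k-4}\}$. This is the same dichotomy that forces the long analysis around (\ref{eqa5.2}) in the main proof, and nothing in your proposal addresses it.

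The argument behind Theorem F is proved in \cite{Wang2025}; this paper only quotes it, but its tools reappear in Section \ref{S4}. It splits $2^{\wedge}A'$ at $2k-4$ instead. Because $a_{k-1}\geqslant 2k-2$, the translate $a_{k-1}+A'$ lies entirely above $2k-4$. So with $B=[1,2k-4]\setminus 2^{\wedge}A'=\{b_1<\dots<b_m\}$ one has exactly
$|2^{\wedge}A|=3k-5-m+\bigl|(2^{\wedge}A'\cap[2k-3,\infty))\setminus(a_{k-1}+A')\bigr|$,
with no overlap bookkeeping at all below $2k-4$. Local density is then used to make $B$ rigid (Lemmas \ref{L1}, \ref{L2}):
every $b\in B$ is even;
exactly one of $i,b-i$ lies in $A'$ for $i\leqslant b/2$;
$a_{b/2+1}=b+1$ when $b<2k-4$;
and $b_{i+1}\geqslant 2b_i+2$.
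This rigidity is what produces the $m-2$ further sums needed; compare the proof of Lemma \ref{P18}, which uses Lemma 2.7 and Proposition 2.8 of \cite{Wang2025}.

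Your Step 6 inherits the gap. There is no established chain of inequalities whose tightness you could exploit. Also, the gap set $W$ of $A'$ is defined through the pairs $\{w,w+a_{k-2}\}$, so by itself it says nothing about the sums $a_{k-1}+a_i$. Extracting the two mod-$3$ families and the restriction $k\geqslant 6$ requires a genuine inverse analysis of the tight configurations, which you have not sketched. In the examples, $m=2$ and $A'$ is one of the extremal sets of Lemma \ref{LL3-4}.
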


Moreover, in the critical case where $a_{k-1}=2k-3$, we provided a complete classification of all extremal sets attaining the lower bound $|2^{\wedge}A|=3k-7$ (see Theorem 1.3 of \cite{Wang2025}, for the exhaustive list). Combining these structural insights with inductive arguments, we obtain the following result.

\begin{thmG}
Let $A=\{a_{0}, a_{1}, \ldots, a_{k-1}\}$ be a set of $k\geqslant 3$ integers such that $0=a_{0}<a_{1}<\cdots<a_{k-1}$ and $\gcd(A)=1$.
If $a_{k-2}<2k-4$, $a_{k-1}\geqslant 2k-2$, then $|2^{\wedge}A|\geqslant 3k-7$.
\end{thmG}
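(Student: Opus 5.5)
The plan is induction on $k$, removing the largest element $a_{k-1}$ and then correcting the diameter. Set $A'=A\setminus\{a_{k-1}\}$, so $A'\subseteq[0,a_{k-2}]$ has $k-1$ elements. The new maximum satisfies $a_{k-2}\le 2k-5=2(k-1)-3$. First I will check that $\gcd(A')=1$. If instead $\gcd(A')=d>1$, then $A'$ lies in $d\mathbb{Z}$ while $a_{k-1}\not\equiv 0 \pmod d$. In that case the sums $a_{k-1}+a_i$ for $0\le i\le k-2$ are $k-1$ elements outside $2^{\wedge}A'$, and $|2^{\wedge}A'|\ge 2(k-1)-3$ holds trivially. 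This gives $|2^{\wedge}A|\ge 3k-6$ directly.

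Assuming $\gcd(A')=1$, I split according to $l'=a_{k-2}$.
\begin{itemize}
\item If $l'\le 2(k-1)-5=2k-7$, the conjecture's small-diameter case gives $|2^{\wedge}A'|\ge l'+k-3$. I would take this from Theorem C's first line, which Lev proved for that range, or else by induction.
\item If $l'\in\{2k-6,2k-5\}=\{2(k-1)-4,\,2(k-1)-3\}$, Theorem E gives $|2^{\wedge}A'|\ge 3(k-1)-7=3k-10$.
\end{itemize}

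Next I count the new sums. Every element of $2^{\wedge}A'$ is at most $a_{k-2}+a_{k-3}$. The new sums are $a_{k-1}+a_i$ for $0\le i\le k-2$, and they exceed this bound whenever $a_{k-1}+a_i>a_{k-2}+a_{k-3}$. Since $a_{k-1}\ge 2k-2>a_{k-2}$, this holds at least for $a_i\ge a_{k-3}$. More precisely, the new sums are new as soon as $a_i> a_{k-2}+a_{k-3}-a_{k-1}$. As $a_{k-1}-a_{k-2}\ge 3$, this threshold is at most $a_{k-3}-3$.

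In the first sub-case ($l'\le 2k-7$) I will count carefully. At least all $i$ with $a_i\ge a_{k-3}-2$ contribute, which already gives at least $2$ or $3$ new sums. In addition, $[0,l']\setminus A'$ has only $l'-k+2$ elements, and I want $l'+k-3+(\#\text{new})\ge 3k-7$, that is, $\#\text{new}\ge 2k-4-l'$. The new sums are exactly the translates $a_{k-1}+A'$ lying above $a_{k-2}+a_{k-3}$. Their number is $|A'\cap(a_{k-2}+a_{k-3}-a_{k-1},\,a_{k-2}]|$. This interval has length $a_{k-1}-a_{k-3}\ge 2k-2-a_{k-3}$ and misses at most $l'-k+2$ points of $A'$. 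So the count is at least $(2k-2-a_{k-3})-(l'-k+2)$, which after using $a_{k-3}\le l'-1$ needs comparison with $2k-4-l'$. It is not immediately sufficient, and this is where I expect to use the full interval $[0,l']$ structure, Freiman-type density.

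In the second sub-case I need $3$ new sums. I get them from $a_{k-1}+a_{k-2}$, $a_{k-1}+a_{k-3}$, and $a_{k-1}+a_{k-4}$, the last being valid because $a_{k-4}>a_{k-3}-3$ fails only if there are gaps. If it fails, I will instead locate a missing element of the form $a_{k-1}+a_i$ in a gap of $2^{\wedge}A'$ near the top.

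The main obstacle is the top-end counting when $A'$ has gaps near $a_{k-2}$. I will handle it by choosing instead to delete an element whose removal keeps $a_{k-2}$ small, or by using the equality classification from Theorem 1.3 of the cited work when $|2^{\wedge}A'|$ is exactly $3(k-1)-7$. The point of the classification is that extremal $A'$ are rigid, so the missing sums can be verified by hand.
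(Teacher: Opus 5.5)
\textbf{There is a genuine gap: the step you flag as ``not immediately sufficient'' is where the whole proof lives, and your proposal does not supply it.}

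Your preliminary reductions are correct. The case $\gcd(A')>1$ gives $3k-6$ directly. Theorems C and E apply to $A'=A\setminus\{a_{k-1}\}$ as you say. And $a_{k-1}+a_i\notin 2^{\wedge}A'$ whenever $a_i>a_{k-2}+a_{k-3}-a_{k-1}$.

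However, $2^{\wedge}A=2^{\wedge}A'\cup(a_{k-1}+A')$, so $|2^{\wedge}A|=|2^{\wedge}A'|+|(a_{k-1}+A')\setminus 2^{\wedge}A'|$ holds exactly. The entire content of Theorem G is therefore the lower bound for the second term. The only new sums you actually certify are those above $\max 2^{\wedge}A'=a_{k-2}+a_{k-3}$. Your estimate gives at least $3k-3-2l'$ of them, which reaches the required $2k-4-l'$ only when $l'\leqslant k+1$. In the second sub-case it guarantees only $a_{k-1}+a_{k-2}$ and $a_{k-1}+a_{k-3}$.

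This cannot be fixed by sharper counting above the threshold. Take $k=10$ and $A=\{0,1,2,3,4,5,6,11,13,18\}$. Then $A'=[0,6]\cup\{11,13\}$ and $l'=13=2k-7$. The set $2^{\wedge}A'=[1,19]\cup\{24\}$ has exactly $20=l'+k-3$ elements, so three new sums are needed. Yet only $a_i\in\{11,13\}$ exceed the threshold $13+11-18=6$. The theorem survives only because $18+\{2,3,4,5\}=[20,23]$ lands in a hole of $2^{\wedge}A'$ below its maximum $24$, and nothing in your plan detects such holes.

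The second sub-case has the same problem: $a_{k-1}+a_{k-4}$ need not be new even when $A'$ is extremal. For $A'=[0,k-4]\cup\{2k-6,2k-5\}$, which has $|2^{\wedge}A'|=3k-10$, and $a_{k-1}=3k-7$, one gets $a_{k-1}+a_{k-4}=a_{k-2}+a_{k-3}$.

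So the missing idea is control of the sums $a_{k-1}+a_i$ that fall into holes of $2^{\wedge}A'$ below $a_{k-2}+a_{k-3}$, in terms of how far $|2^{\wedge}A'|$ exceeds its minimum. Your closing remedies do not provide this:
\begin{itemize}
\item ``Freiman-type density'' is never specified.
\item ``Delete a different element'' names neither the element nor the resulting bound.
\item The classification you invoke (Lemma~\ref{LL3-4}) covers only $\max A'=2|A'|-3$, i.e.\ $l'=2k-5$. It says nothing about $l'=2k-6$ or about the small-diameter sets of your first sub-case.
\end{itemize}

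The paper does not reprove Theorem G. It imports it from \cite{Wang2025}, where it is obtained by combining Theorem F, the classification of extremal sets with $a_{k-1}=2k-3$, and an induction. In the locally dense setting of Theorem F, the holes are handled through $B=[1,2k-4]\setminus 2^{\wedge}(A\setminus\{a_{k-1}\})$, the bound $|2^{\wedge}A|\geqslant 3k-5-|B|$, and the structure of $B$ given by Lemmas~\ref{L1} and~\ref{L2}. Non-locally-dense sets are reduced to this case as in the proof of Fact A: split at the least $t$ with $a_t\geqslant 2t$, apply Theorem F to $\{a_0,\dots,a_t\}$ and the induction hypothesis to $\{a_{t-2},\dots,a_{k-1}\}$, and use the equality cases to recover the unit lost to the three shared sums. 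Some structural input of this kind is indispensable; your counting does not close.
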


Up to now, the Freiman-Lev conjecture has remained an open unsolved problem when $a_{k-2}\geqslant 2k-4$ and $a_{k-1}\geqslant 2k-2$. In this paper, we resolve this final and most challenging case.

\begin{thm}\label{T} Let $A=\{a_{0}, a_{1}, \ldots, a_{k-1}\}$ be a set of $k>7$ integers such that
$$0=a_{0}<a_{1}<\cdots<a_{k-1},\; \gcd(A)=1,\;a_{k-2}\geqslant 2k-4,\;a_{k-1}\geqslant 2k-2.$$
Then $|2^{\wedge}A|\geqslant 3k-7$.
\end{thm}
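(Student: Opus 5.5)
We argue by induction on $k$, deleting the largest element. Write $l=a_{k-1}$ and $A'=A\setminus\{l\}=\{a_0,\dots,a_{k-2}\}$, so $A'\subseteq[0,a_{k-2}]$ with $0,a_{k-2}\in A'$.

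The elements
$$l+a_{k-2}>l+a_{k-3}>\cdots$$
lie strictly above $\max 2^{\wedge}A'=a_{k-2}+a_{k-3}$. Hence $|2^{\wedge}A|\geqslant |2^{\wedge}A'|+t$, where $t$ counts the sums $l+a_j$ exceeding $a_{k-2}+a_{k-3}$. This always gives $t\geqslant 1$. To reach $3k-7$ from the inductive bound $|2^{\wedge}A'|\geqslant 3(k-1)-7$, we need three new sums. So the plan is to find three elements of $2^{\wedge}A\setminus 2^{\wedge}A'$: the top sums $l+a_{k-2}$ and $l+a_{k-3}$ (new whenever $l+a_{k-3}>a_{k-2}+a_{k-3}$, which always holds), and one more, typically $l+a_{k-4}$, or a small sum $l+a_j$ with $j$ small that falls into a gap of $2^{\wedge}A'$.

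First we must reduce to a setting where an inductive bound for $A'$ is available. Let $d=\gcd(A')$.

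\emph{Case $d=1$.} The bound for $A'$ comes from whichever earlier result fits:
\begin{itemize}
\item if $a_{k-2}\leqslant 2(k-1)-5$, the conjecture's first range, established by Lev/Schoen (Theorem D, first line), gives $|2^{\wedge}A'|\geqslant a_{k-2}+k-3$;
\item if $2(k-1)-4\leqslant a_{k-2}\leqslant 2(k-1)-3$, Theorem E gives the bound;
\item if $a_{k-2}\geqslant 2k-4$, Theorems F and G together with induction via the present theorem give the bound.
\end{itemize}
Since the hypothesis forces $a_{k-2}\geqslant 2k-4=2(k-1)-2$, we are essentially always in the third item. In the first two items the linear bound $a_{k-2}+k-3\geqslant 3k-7$ already suffices once two top sums are added.

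\emph{Case $d>1$.} Then $A'$ lies in $d\mathbb{Z}$ and $l\not\equiv 0\pmod d$. Every sum $l+a_j$ is then new, giving $k-1$ new elements. Combined with the trivial bound $|2^{\wedge}A'|\geqslant 2(k-1)-3$, this gives $3k-6$.

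The base cases $k=8$ (and small $k$ inside the induction) are handled by appealing to Theorems E–G or by direct verification.

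The main obstacle is securing the third new sum when $l+a_{k-4}\leqslant a_{k-2}+a_{k-3}$, i.e.\ when $l-a_{k-2}$ is small compared with $a_{k-3}-a_{k-4}$. My first attempt would be to exploit the dichotomy on $a_{k-2}-a_{k-3}$.
\begin{itemize}
\item If $a_{k-2}-a_{k-3}$ is large, delete $a_{k-2}$ instead of $l$, or delete both, and re-apply the induction. Removing two elements must then yield six new sums, namely $l+a_j$ and $a_{k-2}+a_j$ above $\max 2^{\wedge}(A\setminus\{a_{k-2},l\})$.
\item Otherwise, use that some $l+a_j$ is missing from $2^{\wedge}A'$ because $l>a_{k-2}+a_1$ fails only in a tightly structured situation.
\end{itemize}
If the third sum still cannot be found, $A'$ is extremal, meaning $|2^{\wedge}A'|=3k-10$. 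At that point we invoke the extremal structure from Theorem F and the classification cited from \cite{Wang2025} (unions of progressions mod $3$), and check directly that adding $l$ creates at least three new sums.
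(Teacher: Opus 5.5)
Your reduction is sound as far as it goes. The sums $a_{k-1}+a_{k-2}$ and $a_{k-1}+a_{k-3}$ are always new. The case $\gcd(A')>1$ gives $3k-6$, exactly as in Proposition~\ref{P12}(iii). When $\gcd(A')=1$ and $k\geqslant 9$, the induction hypothesis together with Theorem G gives $|2^{\wedge}A'|\geqslant 3k-10$. So everything reduces to the case $|2^{\wedge}A'|=3k-10$ with $a_u+a_{k-1}\in 2^{\wedge}A'$ for every $u\leqslant k-4$, which is the paper's condition \eqref{eqa5.2}. That case is the whole difficulty of the theorem, and your plan for it does not work.

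You propose to use the structure of extremal $A'$ from Theorem F and \cite{Wang2025}, but no such structure is available in this range. Here $\max A'=a_{k-2}\geqslant 2|A'|-2$. The equality characterization in Theorem F covers only locally dense sets ($a_i<2i$ for all $i\leqslant k-3$), and Lemma~\ref{LL3-4} covers only $\max A'=2|A'|-3$. As soon as some $a_i\geqslant 2i$ with $i\leqslant k-3$, neither Theorem G nor the induction hypothesis says anything about equality cases; they only give the inequality. This includes every case with $a_{k-3}\geqslant 2k-6$, which is exactly when you need the induction hypothesis at all. So there is nothing to ``check directly.'' Your route would require carrying a full inverse theorem for $\max\geqslant 2|A|-2$ through the induction, which is a much stronger statement than Theorem~\ref{T}.

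Your intermediate suggestions do not close this gap. Deleting $a_{k-2}$, or both top elements, comes with no argument that six (or even three) new sums appear. The failure of $a_{k-1}>a_{k-2}+a_1$ is not a ``tightly structured situation'': it happens whenever the top gap $a_{k-1}-a_{k-2}$ is at most $a_1$.

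The missing idea is a second, independent constraint imposed before any structural work. Besides deleting $a_{k-1}$, the paper deletes $a_0$ and translates by $a_1$. The set $A\setminus\{a_0\}-a_1$ again satisfies the inductive bound, $a_1$ and $a_2$ are new, and a third new element exists unless $a_s\in 2^{\wedge}(A\setminus\{a_0\})$ for all $s\geqslant 3$, which is condition \eqref{eqa5.4}. The reflection $A^{*}=a_{k-1}-A$ swaps the roles of the two reductions.

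Only the doubly constrained case \eqref{eqa5.2} together with \eqref{eqa5.4} is analyzed structurally. There the forced relations ($a_3=a_1+a_2$, $a_4\in\{a_1+a_3,a_2+a_3\}$, and so on) are rigid enough to run iterated translations $A\to B\to C\to D$ (Claims 1--4) and split into overlapping blocks. The argument then uses classifications only at the thresholds $\max=2k-3$ and $2k-4$ (Lemma~\ref{LL3-4}, Proposition~\ref{LL3-3}). These enter via the extension results in Propositions~\ref{P1}, \ref{P6}, \ref{P3}, together with the locally dense Propositions~\ref{P16}, \ref{P16.1}, \ref{P16.3}. In this way the paper never needs equality cases in the general range, which is precisely what your argument lacks.
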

By Theorems C, E, G and Theorem \ref{T}, we know that Freiman-Lev conjecture is true.

Throughout this paper, let $A=\{a_{0}, a_{1}, \ldots, a_{k-1}\}$ such that
$$0=a_{0}<a_{1}<\cdots<a_{k-1} \text{ and } \gcd(A)=1. $$
Let $S(w)=\{w, w+a_{k-1}\}$ for any integer $w$ and
$$W=\left\{w\in [0, a_{k-1}]\backslash A:  S(w)\cap 2^{\wedge}A=\emptyset\right\}. $$

The paper is organized as follows. In Section \ref{S2}, we establish some fundamental combinatorial lemmas. We provide precise descriptions of how elements of the set $A$ must distribute within certain intervals when $W$ is non-empty, especially under the critical conditions where the largest elements $a_{k-1}=2k-3$ or $a_{k-1}=2k-4$. These results form the basic technical toolkit for the detailed structural analysis in subsequent sections.

In section \ref{S4}, we focus on the class of locally dense set. We establish that certain specific initial configurations (for example $(a_2,a_3,a_4)=(3,5,6))$ force the restricted sumset size of the set to satisfy a strengthened lower bound, typically $3k-5$.

In section \ref{S5}, we establishe a series of structural classifications, and derived bounds for integer sets whose largest element is near the critical thresholds (mainly $a_{k-1}=2k-3$ or $a_{k-1}=2k-4$). We obtain some inverse results (complete structural characterizations of extremal sets).

In section \ref{S7}, we investigate the growth of the restricted sumset when a dense set is extended by appending some elements that are larger than its original maximum.

In section \ref{S3}, we focus on the translative and recursive analysis of internal additive generation. We demonstrate how structural conditions such as a large greatest common divisor ($\gcd$) for a translated set or specific additive generation patterns force stronger lower bounds for $|2^{\wedge}A|$.

In section \ref{S6}, we first conduct a complete structural classification (inverse results) of sets at the critical values $a_{k-1}=2k-3$ or $2k-4$ and study their stability after adding large elements; subsequently, we employ translation operations (such as $B=A\backslash \{a_0\}-a_1$) and recursive generation relations to transform complex internal additive conditions into tractable subproblems. Finally, through a carefully designed inductive framework that integrates the various lemmas and propositions from Sections \ref{S2}-\ref{S3}, the proof is completed, thereby resolving the final and most challenging case of the Freiman-Lev conjecture.

\section{Lemmas}\label{S2}
The following two results are immediate consequence of pigeonhole principle.

\begin{lem}\label{LL3-2}
Let $A$ be a finite set of $k\geqslant 3$ integers such that $2k-4\leqslant a_{k-1}\leqslant 2k-3$.
Assume that $W\neq \emptyset$. Then for any $w\in W$, we have
\begin{equation}\label{e1-2}
|\{i, w-i\}\cap A|=1, \ \ i=0, 1, \ldots, \left\lfloor\frac{w}{2}\right\rfloor,
\end{equation}
\begin{equation}\label{e1-3}
|\{i, w+a_{k-1}-i\}\cap A|=1, \ \ i=w, w+1, \ldots, \left\lfloor\frac{w+a_{k-1}}{2}\right\rfloor.
\end{equation}
In particular, if $a_{k-1}=2k-4$, then $w$ is even for any $w\in W$.
\end{lem}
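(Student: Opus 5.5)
The plan is a direct counting argument: each of the listed pairs contains \emph{at most} one element of $A$, and counting $A$ over these pairs shows there is no room for any pair to contain none. Write $l=a_{k-1}$ and fix $w\in W$, so $w\notin A$ and $w,\,w+l\notin 2^{\wedge}A$.

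\emph{Step 1 (at most one).} Take a pair $\{i,w-i\}$ with $0\leqslant i\leqslant \lfloor w/2\rfloor$. If $i\neq w-i$, both elements cannot lie in $A$, since then $w=i+(w-i)\in 2^{\wedge}A$. If $i=w-i$, the pair is a singleton, so the bound is automatic. The same argument applies to $\{i,w+l-i\}$ for $w\leqslant i\leqslant\lfloor (w+l)/2\rfloor$, using $w+l\notin 2^{\wedge}A$.

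\emph{Step 2 (the pairs tile $[0,l]$).} The first family partitions $[0,w]$ and has $\lfloor w/2\rfloor+1$ members. The second family partitions $[w,l]$: as $i$ runs from $w$ up to $\lfloor (w+l)/2\rfloor$, the partner $w+l-i$ runs from $l$ down to $\lceil (w+l)/2\rceil$. This family has $\lfloor (w+l)/2\rfloor-w+1$ members. The two intervals overlap only in $w$, which is not in $A$, and $A\subseteq[0,l]$. Hence Step 1 gives
\[
k=|A\cap[0,w]|+|A\cap[w,l]|\leqslant \left\lfloor\frac w2\right\rfloor+\left\lfloor\frac{w+l}2\right\rfloor-w+2 .
\]

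\emph{Step 3 (forcing equality).} The only delicate point is the floor bookkeeping, which splits by the parity of $l$.

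If $l=2k-3$, then $l$ is odd, so exactly one of $w$ and $w+l$ is odd. Hence $\lfloor w/2\rfloor+\lfloor (w+l)/2\rfloor=w+(l-1)/2=w+k-2$. The right-hand side of the display is then exactly $k$.

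If $l=2k-4$ and $w$ were odd, the floor sum would be $w-1+l/2=w+k-3$. The right-hand side would then be $k-1<k$, a contradiction, so $w$ must be even. For even $w$ the floor sum is $w+k-2$, and the right-hand side is again $k$.

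In both cases equality holds throughout. Every pair therefore contributes exactly one element of $A$, which is (\ref{e1-2}) and (\ref{e1-3}). The evenness of $w$ when $a_{k-1}=2k-4$ was obtained along the way.
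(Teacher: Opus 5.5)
Your proof is correct and is exactly the pigeonhole argument the paper has in mind. The paper only asserts that the lemma is an immediate consequence of the pigeonhole principle. Your Step~2 count, together with the parity bookkeeping in Step~3, supplies the omitted details, including the evenness of $w$ when $a_{k-1}=2k-4$.
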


\begin{lem}\label{LL3-8}
Let $A$ be a finite set of $k\geqslant 4$ integers such that $a_{k-1}=2k-3$.
Assume that $W\neq \emptyset$. Then for any $w\in W$, we have

(1) If $w+1\notin 2^{\wedge}A$ and $2\mid w$, then
$$[0, w]\cap A=\left[0, \frac{w}{2}\right]. $$

(2) If $w+1\notin 2^{\wedge}A$ and $2\nmid w$, then
$$[0, w]\cap A=\left[0, \frac{w-1}{2}\right] $$
or there exists an integer $0\leqslant a<\frac{w-1}{2}$ such that
$$[0, w]\cap A=[0, a]\cup \left[\frac{w+1}{2}, w-a-1\right]. $$

(3) If $w-1\notin 2^{\wedge}A$, then $2\nmid w$ and
$$[0, w]\cap A=\left[0, \frac{w-1}{2}\right]. $$
\end{lem}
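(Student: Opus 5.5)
The plan is to read off the structure of $[0,w]\cap A$ by chaining two families of complementary pairs. By Lemma \ref{LL3-2}, since $a_{k-1}=2k-3$ and $w\in W$, we have $|\{i,w-i\}\cap A|=1$ for $0\leqslant i\leqslant \lfloor w/2\rfloor$. Also $0\in A$ and $w\notin A$, so $w\geqslant 1$. The extra hypothesis $w\pm 1\notin 2^{\wedge}A$ supplies a second family. For distinct $i,j\in A$ we cannot have $i+j=w+1$ (resp.\ $w-1$). So each pair $\{i,w+1-i\}$ with $i<(w+1)/2$ (resp.\ $\{i,w-1-i\}$ with $i<(w-1)/2$) contains at most one element of $A$. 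Alternating between ``exactly one'' and ``at most one'' will walk through the interval one step at a time. Each membership decision for $i$ forces the decision for $w-i$, and that in turn forces the neighbour. No further input from the paper is needed.

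\textbf{(1)} Here $w$ is even. The first family with $i=w/2$ is the singleton $\{w/2\}$, so $w/2\in A$. Then the pair $\{w/2,\,w/2+1\}$ (sum $w+1$) gives $w/2+1\notin A$. The pair $\{w/2-1,\,w/2+1\}$ (sum $w$) then gives $w/2-1\in A$. Next, $\{w/2-1,\,w/2+2\}$ gives $w/2+2\notin A$, and so on. By induction on $t$ we get $w/2-t\in A$ and $w/2+t+1\notin A$. Hence $[0,w/2]\subseteq A$ and $[w/2+1,w]\cap A=\emptyset$.

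\textbf{(3)} First rule out $w$ even. As above, $w/2\in A$. The pair $\{w/2-1,w/2\}$ (sum $w-1$) gives $w/2-1\notin A$, so $w/2+1\in A$ by the first family. Then $\{w/2-2,w/2+1\}$ gives $w/2-2\notin A$, and continuing down we get $i\notin A$ for every $0\leqslant i<w/2$. This contradicts $0\in A$, since $w\geqslant 2$ when $w$ is even. So $w$ is odd. Now induct upward from $0\in A$. If $i\in A$ with $i<(w-1)/2$, then $w-1-i\notin A$ (sum $w-1$), and $w-1-i=w-(i+1)$ forces $i+1\in A$. Thus $[0,(w-1)/2]\subseteq A$, and the first family then excludes every element of $[(w+1)/2,w]$.

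\textbf{(2)} Here $w$ is odd. Let $a$ be maximal with $[0,a]\subseteq A$. If $a\geqslant (w-1)/2$, the first family gives $[0,w]\cap A=[0,(w-1)/2]$. Otherwise $a<(w-1)/2$ and $a+1\notin A$. I claim every $j\in[a+1,(w-1)/2]$ lies outside $A$, by induction on $j$. If $j\notin A$, then $w-j\in A$. Since $j+1<w-j$ when $j\leqslant (w-3)/2$, the pair $\{j+1,w-j\}$ has sum $w+1$, which forces $j+1\notin A$. Now apply the first family. For $i\leqslant a$ we have $w-i\notin A$, and for $a+1\leqslant i\leqslant (w-1)/2$ we have $w-i\in A$. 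This gives exactly $[0,w]\cap A=[0,a]\cup[(w+1)/2,\,w-a-1]$.

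The main point of care is the boundary bookkeeping. One must check that each pair invoked in the second family consists of two \emph{distinct} integers, so that the restricted-sum condition applies. One must also check that the induction ranges stop exactly at $\lfloor w/2\rfloor$, or at $(w\pm1)/2$ as appropriate. I expect the parity exclusion in (3), which hinges on reaching $i=0$, to be the only step needing a separate argument.
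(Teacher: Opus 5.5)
Your proof is correct and is essentially the argument the paper intends: the paper only remarks that Lemma \ref{LL3-8} is an immediate consequence of the pigeonhole principle, and your alternation between the exact pairing (\ref{e1-2}) and the at-most-one pairing from $w\pm 1\notin 2^{\wedge}A$ is the same chaining technique written out in the proof of Lemma \ref{LL3-6}. Your boundary checks are also in order: the summed pairs consist of distinct integers, the ranges stop at $\lfloor w/2\rfloor$, and the descent in (3) reaches $0\in A$ to exclude even $w$.
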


\begin{lem}\label{LL3-6}
Let $A$ be a finite set of $k\geqslant 4$ integers such that $a_{k-1}=2k-4$.
Assume that $W\neq \emptyset$. Then for any $w\in W$, we have

(1) $|S(w-1)\cap 2^{\wedge}A|\geqslant 1$. Moreover, if $w-1\notin A$, then
$$|S(w-1)\cap 2^{\wedge}A|=1\Longleftrightarrow\left[k-2+\frac{w}{2}, a_{k-1}\right]\subseteq A. $$

(2) $|S(w+1)\cap 2^{\wedge}A|\geqslant 1$. Moreover, if $w+1\notin A$, then
$$|S(w+1)\cap 2^{\wedge}A|=1\Longleftrightarrow\left[0, \frac{w}{2}\right]\subseteq A. $$
\end{lem}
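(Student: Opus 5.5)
The plan is a counting argument based on pairings, as in Lemma \ref{LL3-2}. Write $l=a_{k-1}=2k-4$ and fix $w\in W$. By Lemma \ref{LL3-2}, $w$ is even. From \eqref{e1-2}, together with the middle term $i=w/2$ (the singleton $\{w/2\}$ must then lie in $A$), each pair $\{i,w-i\}$ with $0\leqslant i<w/2$ contains exactly one element of $A$. Since $w\notin A$, this gives
$$|A\cap[0,w-1]|=\frac{w}{2}+1.$$
In the same way, \eqref{e1-3} shows that $(w+l)/2\in A$, that every pair $\{i,w+l-i\}$ with $w\leqslant i<(w+l)/2$ contains exactly one element of $A$, and hence
$$|A\cap[w,l]|=\frac{l-w}{2}+1.$$
Adding the two counts recovers $|A|=k$.

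For part (1), I first show that $w-1\in 2^{\wedge}A$ always. If not, then each of the $w/2$ pairs $\{i,w-1-i\}$ with $0\leqslant i\leqslant (w-2)/2$, which partition $[0,w-1]$, contains at most one element of $A$. That gives $|A\cap[0,w-1]|\leqslant w/2$, contradicting the count above. This already proves $|S(w-1)\cap 2^{\wedge}A|\geqslant 1$.

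Now assume $w-1\notin A$. Then $|S(w-1)\cap 2^{\wedge}A|=1$ holds exactly when $w-1+l\notin 2^{\wedge}A$.

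Suppose first that $w-1+l\notin 2^{\wedge}A$. The pairs $\{i,w+l-1-i\}$ with $w\leqslant i\leqslant (w+l-2)/2$ partition $[w,l-1]$ into $(l-w)/2$ pairs, each containing at most one element of $A$. Adding $l\in A$ gives $|A\cap[w,l]|\leqslant (l-w)/2+1$. Since this equals the exact count, every such pair contains exactly one element of $A$.

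I then chain the two pairings. If $i\in A$ with $w<i\leqslant (w+l)/2$, then $w+l-i\notin A$ by the first pairing, so $i-1\in A$ by the second pairing (the pair $\{i-1,w+l-i\}$). Iterating downward forces $w\in A$, which is false. Hence $A\cap[w+1,(w+l)/2-1]=\emptyset$, and the first pairing then gives $[(w+l)/2,l]\subseteq A$. Since $(w+l)/2=k-2+w/2$, this is the required inclusion $[k-2+w/2,a_{k-1}]\subseteq A$.

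Conversely, suppose $[(w+l)/2,l]\subseteq A$. Then the first pairing gives $A\cap[w,(w+l)/2-1]=\emptyset$. Any representation $w+l-1=a+b$ with $a<b$ in $A$ would need $a\geqslant w-1$. But $w-1\notin A$, and every element of $A$ in $[w,l]$ is at least $(w+l)/2$, which would force $a+b\geqslant w+l$. So $w-1+l\notin 2^{\wedge}A$, and exactly one element of $S(w-1)$ lies in $2^{\wedge}A$.

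Part (2) is symmetric. First, $w+1+l\in 2^{\wedge}A$: otherwise the pairs $\{i,w+l+1-i\}$ with $i\in[w+1,(w+l)/2]$ partition $[w+1,l]$ into $(l-w)/2$ pairs, each with at most one element of $A$. Since $w\notin A$, this would give $|A\cap[w,l]|\leqslant (l-w)/2$, contradicting the exact count. Then the same chaining argument on $[0,w]$, using the pairs $\{i,w+1-i\}$ together with $\{i,w-i\}$ and propagating upward from $0\in A$, shows that $w+1\notin 2^{\wedge}A$ (when $w+1\notin A$) is equivalent to $[0,w/2]\subseteq A$. Alternatively, part (2) follows from part (1) by applying it to the reflected set $l-A$, which maps $w$ to $l-w$ and $W$ to the corresponding gap set.

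The main obstacle is the bookkeeping in the chaining step: the middle elements $(w\pm l)/2$ and $w/2$, and the fact that $w\notin A$ supplies the terminating contradiction. All other steps are direct pigeonhole counts.
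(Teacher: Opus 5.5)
Your proof is correct. It uses the same raw material as the paper, the pairings (\ref{e1-2}) and (\ref{e1-3}) from Lemma \ref{LL3-2}, but exploits them differently.

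\textbf{How the paper argues.} The paper never counts.
\begin{itemize}
\item To get $w-1\in 2^{\wedge}A$, it splits on whether $w-1\in A$. In the nontrivial case it runs an alternating chain outward from the known element $w/2$: the pairing $\{i,w-1-i\}$ gives $w/2-1\notin A$, then (\ref{e1-2}) gives $w/2+1\in A$, and so on until it forces $w-1\in A$.
\item For the forward implication in (1), it chains upward from $k-2+w/2\in A$. This uses only the ``at most one'' information coming from $w-1+a_{k-1}\notin 2^{\wedge}A$, together with the ``exactly one'' of (\ref{e1-3}).
\item Part (2) is declared similar and omitted.
\end{itemize}

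\textbf{How you argue.} You first extract the exact counts $|A\cap[0,w-1]|=w/2+1$ and $|A\cap[w,l]|=(l-w)/2+1$. After that:
\begin{itemize}
\item Both ``$\geqslant 1$'' statements become one-line pigeonhole arguments, with no case split on whether $w\mp 1\in A$.
\item The non-representability of $w-1+l$ is upgraded to ``exactly one element per pair''. You then chain downward to the terminal contradiction $w\in A$.
\end{itemize}
The price is the extra counting step. The gain is a uniform argument. In addition, your reflection $A\mapsto l-A$ actually supplies the omitted part (2). It sends $W$ to $l-W$, matches $S(w+1)$ with $S(l-w-1)$, and matches $w+1\notin A$ with $l-w-1\notin l-A$. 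It also matches $[0,w/2]\subseteq A$ with $[k-2+(l-w)/2,\,l]\subseteq l-A$.

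\textbf{One small slip to fix.} In your chaining step the range should be $w<i<(w+l)/2$, not $w<i\leqslant (w+l)/2$. At $i=(w+l)/2$ the ``pair'' $\{i,w+l-i\}$ is the singleton $\{(w+l)/2\}$, and that element is in $A$. So the inference $w+l-i\notin A$ fails there. Worse, starting the iteration from that element would ``prove'' $w\in A$ unconditionally. Your conclusion only concerns $[w+1,(w+l)/2-1]$, so starting from a hypothetical $i$ in that interval repairs it.
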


\begin{proof}
Since $a_{k-1}=2k-4$, by Lemma \ref{LL3-2} we have $2\mid w$ and
$$\frac{w}{2}, \ \ k-2+\frac{w}{2}\in A. $$

(1) If $w-1\in A$, then $w-1\in 2^{\wedge}A$. If $w-1\not\in A$, then $w-1\in 2^{\wedge}A$. Otherwise,
\begin{equation}\label{e1-10}|\{i, w-1-i\}\cap A|\leqslant 1, \ \ i=0, 1, \ldots, \frac{w}{2}-1. \end{equation}
Thus,
$$
\frac{w}{2}\in A \overset{(\ref{e1-10})}\Longrightarrow \frac{w}{2}-1\not\in A \overset{(\ref{e1-2})}\Longrightarrow \frac{w}{2}+1\in A\overset{(\ref{e1-10})}\Longrightarrow\frac{w}{2}-2\not\in A\overset{(\ref{e1-2})}\Longrightarrow\cdots\overset{(\ref{e1-2})}\Longrightarrow w-1\in A,
$$
a contradiction. Hence, $|S(w-1)\cap 2^{\wedge}A|\geqslant 1$.

If $|S(w-1)\cap 2^{\wedge}A|=1$, then $w-1+a_{k-1}\not\in 2^{\wedge}A$. Thus,
\begin{equation}\label{e1-26}|\{i, w-1+a_{k-1}-i\}\cap A|\leqslant 1, \ \ i=w-1, \ldots, k-3+\frac{w}{2}. \end{equation}
Hence,
$$
k-2+\frac{w}{2}\in A \overset{(\ref{e1-26})}\Longrightarrow k-3+\frac{w}{2}\not\in A \overset{(\ref{e1-3})}\Longrightarrow k-1+\frac{w}{2}\in A
\overset{(\ref{e1-26})}\Longrightarrow\cdots \overset{(\ref{e1-3})}\Longrightarrow a_{k-1}\in A,
$$
it follows that
$$\left[k-2+\frac{w}{2}, a_{k-1}\right]\subseteq A. $$

Conversely, if $\left[k-2+\frac{w}{2}, a_{k-1}\right]\subseteq A$, then by (\ref{e1-2}) we have
$$\left[w, k-3+\frac{w}{2}\right]\cap A=\emptyset, $$
thus, $w-1+a_{k-1}\not\in 2^{\wedge}A$, it follows that $|S(w-1)\cap 2^{\wedge}A|=1$.

(2) Similarly, we can show that $w+1+a_{k-1}\in 2^{\wedge}A$. The remainder proof is omitted here.

This completes the proof of Lemma \ref{LL3-6}.
\end{proof}

\begin{lem}[\cite{Wang}, Theorem 1.1]\label{LL3-1}
Let $A$ be a set of $k\geqslant 3$ integers such that $a_{k-1}\leqslant 2k-3$. Then $|W|\leqslant 2$.
\end{lem}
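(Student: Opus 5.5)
We argue by contradiction. Suppose $a_{k-1}\leqslant 2k-3$ and that $W$ contains three elements $w_1<w_2<w_3$. Write $l=a_{k-1}$. The plan is to squeeze the pairing identities of Lemma \ref{LL3-2} until they force a periodic structure on $A$, and then show that periodicity contradicts $0,l\in A$ together with $\gcd(A)=1$. Lemma \ref{LL3-2} is stated for $2k-4\leqslant l\leqslant 2k-3$, so the first step is to dispose of smaller $l$.

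\textbf{Step 1 (counting).} Fix $w\in W$. Since $w\notin 2^{\wedge}A$, each pair $\{i,w-i\}$ with $i<w-i$ contains at most one element of $A$; the same holds for the pairs $\{i,w+l-i\}$ with $i\geqslant w$. Counting these pairs together with the possible midpoints $w/2$ and $(w+l)/2$, and using $w\notin A$, we get
$$k=|A|\leqslant \Big\lfloor\frac w2\Big\rfloor+1+\Big\lfloor\frac{l-w}2\Big\rfloor+1\leqslant \frac l2+2 .$$
Hence $l\geqslant 2k-4$, so Lemma \ref{LL3-2} applies. Moreover, since $l\leqslant 2k-3$, this count is tight or off by one. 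Consequently each pair contains exactly one element of $A$, which is (\ref{e1-2}) and (\ref{e1-3}), and the parity of $w$ is constrained: $w$ is even when $l=2k-4$, and when $l=2k-3$ the parities of $w$ and $l-w$ are tied.

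\textbf{Step 2 (two reflections give a translation).} Relation (\ref{e1-2}) says that on $[0,w]$ the indicator $\chi$ of $A$ satisfies $\chi(i)+\chi(w-i)=1$ for $i\neq w/2$. Apply this to $w_1$ and to $w_2$. For $x\in[0,w_1]$ we have $\chi(x)=1-\chi(w_1-x)$, and also $\chi(w_1-x)=1-\chi(w_2-w_1+x)$ provided $w_1-x$ and $w_2-w_1+x$ both lie in $[0,w_2]$. Composing, $\chi(x)=\chi(x+d)$ with $d=w_2-w_1$, on a range of $x$ of length about $w_1$. The second family (\ref{e1-3}) gives the same kind of translation invariance on $[w_2,l]$. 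Adding $w_3$ extends the invariance across the middle range.

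The main bookkeeping task is to show that these partial periodicities overlap enough to cover all of $[0,l]$. For this I would use the exact interval conditions $i\in[0,\lfloor w/2\rfloor]$ and $i\in[w,\lfloor (w+l)/2\rfloor]$, and check the overlaps separately in the cases $w_1<w_2\leqslant l/2$, $w_1\leqslant l/2<w_2$, and so on.

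\textbf{Step 3 (contradiction).} Once $\chi$ is $d$-periodic on $[0,l]$ and complementary under $i\mapsto w_1-i$, the set $A$ is a union of residue classes modulo $d$ intersected with $[0,l]$. The complementarity under reflection forces exactly half of the residues to occur, paired by $r\mapsto w_1-r$. I expect this step to be the main obstacle. If $d=1$ or $d=2$, the complementary structure is impossible: for $d=1$ we would need $\chi(x)=1-\chi(w_1-x)$ with $\chi$ constant, and for $d=2$ we would need $w_1\notin A$ while $A$ contains all $x$ of one parity, which fails against $0\in A$, $w_1,w_2\notin A$. For $d\geqslant 3$, I would use $w_3$: it yields a second translation $d'=w_3-w_2$, so $A$ is periodic modulo $\gcd(d,d')$. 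Iterating with the pair $(w_1,w_3)$, this shrinks the period to $1$ or $2$, or to a modulus $m$ with all elements of $A$ in a subgroup coset structure incompatible with $\gcd(A)=1$ and $0,l\in A$. Combined with the density $k\geqslant (l+3)/2$ from Step 1, such a periodic set with half its residues cannot exist. The delicate part is controlling the boundary effects near $0$, near $l$, and around the midpoints. I would settle this by a direct case analysis on the parities of $w_1,w_2,w_3$, using Lemma \ref{LL3-8} and Lemma \ref{LL3-6} to pin down $[0,w]\cap A$ when $w\pm1\notin 2^{\wedge}A$.
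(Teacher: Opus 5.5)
Your Step 2 drops exactly the information the lemma depends on, so there is a genuine gap. Step 1 is fine; the ``off by one'' case is in fact impossible, and that is what forces $w$ even when $l=2k-4$. The trouble starts with (\ref{e1-2}): it gives $\chi(i)+\chi(w-i)=1$ only for $i\neq w/2$, and at the midpoint it forces $\chi(w/2)=1$ instead.

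Composing the reflections for $w_1<w_2$ therefore gives $\chi(x+d)=\chi(x)$, with $d=w_2-w_1$, only away from certain jump points:
\begin{itemize}
\item at $x=w_1/2$ one has $\chi(x)=1$ and $\chi(x+d)=0$;
\item at $x=w_1-w_2/2$ one has $\chi(x)=0$ and $\chi(x+d)=1$;
\item the family (\ref{e1-3}) produces two further jumps, at $x=(w_1+l)/2$ and $x=(w_2+l)/2-d$.
\end{itemize}
Each jump occurs whenever the point is an integer in range. Incidentally, the two families for $w_1,w_2$ alone already cover all of $[0,l-d]$, so $w_3$ is not needed for coverage.

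These jumps are not boundary effects that a final case check can absorb. They are the structure of every set with $|W|=2$. For example, $A=\{0,1,5,6,7\}$ has $W=\{2,3\}$, so $d=1$. Likewise $A=\{0,2,3,5,7\}$ has $W=\{4,6\}$, so $d=2$; its jumps sit at $x=1=w_1-w_2/2$ and $x=2=w_1/2$. These are cases (1) and (2) of Lemma \ref{LL3-4} with $k=5$. Your Step 3 derives contradictions for $d=1$ and $d=2$ using only $w_1,w_2$ and exact periodicity, so it would rule out these genuine examples. The argument proves too much.

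For $d\geqslant 3$ the plan fails for the same reason. A Fine--Wilf type reduction to period $\gcd(d,d')$ needs exact periodicity on long enough windows. With up to four jumps per difference, you do not get $A$ to be a union of residue classes, and so you cannot then invoke $\gcd(A)=1$.

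The real content of the lemma is showing that the jump patterns from the three differences $w_2-w_1$, $w_3-w_2$, $w_3-w_1$ are mutually incompatible. That has to use $\chi(0)=\chi(l)=1$, the forced midpoints and the exact counts from Step 1. Your proposal defers precisely this to an unspecified ``direct case analysis.''

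The tools you name for that case analysis do not apply as stated. Lemmas \ref{LL3-8} and \ref{LL3-6} describe $[0,w]\cap A$ only under the extra hypotheses $w\pm 1\notin 2^{\wedge}A$, and for $l=2k-3$, respectively $l=2k-4$. You never establish these hypotheses for your $w_i$.

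Note also that the paper does not prove this lemma itself; it quotes it from \cite{Wang}, Theorem 1.1. So your argument has to stand on its own, and as it stands the key step is missing.
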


\begin{lem}\label{LL3-7}
Let $A$ be a finite set of $k\geqslant 3$ integers such that $a_{k-1}=2k-3$ and $|2^{\wedge}A|=3k-6$.
Then $|W|=1$ or $2$. Moreover,

(1) $|W|=1$ if and only if
$$|S(b)\cap 2^{\wedge}A|=1 \text{ for all } b\in [0, a_{k-1}]\backslash(A\cup W). $$

(2) $|W|=2$ if and only if there exists a unique integer $\xi\in  [0, a_{k-1}]\backslash(A\cup W)$ such that
$$|S(\xi)\cap 2^{\wedge}A|=2. $$
\end{lem}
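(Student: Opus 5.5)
The plan is a direct counting argument. We decompose $[1,2a_{k-1}]$ into the disjoint pairs $S(b)$ and evaluate $|2^{\wedge}A|$ as a sum of local contributions. The bound $|W|\leqslant 2$ of Lemma~\ref{LL3-1} then pins down everything.

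\emph{Decomposition.} For $b$ running over $[1,a_{k-1}]$, the sets $S(b)=\{b,b+a_{k-1}\}$ are pairwise disjoint and their union is exactly $[1,2a_{k-1}]$. Every restricted sum $a_i+a_j$ with $i\neq j$ lies in $[1,a_{k-2}+a_{k-1}]\subseteq[1,2a_{k-1}-1]$. Hence
$$|2^{\wedge}A|=\sum_{b=1}^{a_{k-1}}|S(b)\cap 2^{\wedge}A|.$$

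\emph{Contributions of elements of $A$.} There are $k-2$ elements $b\in A$ with $0<b<a_{k-1}$. For each of them, $b=0+b$ and $b+a_{k-1}$ are both restricted sums, so $b$ contributes exactly $2$. For $b=a_{k-1}$, the element $a_{k-1}=0+a_{k-1}$ lies in $2^{\wedge}A$ while $2a_{k-1}$ does not, so it contributes exactly $1$. Altogether the elements of $A\cap[1,a_{k-1}]$ contribute $2(k-2)+1=2k-3$.

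\emph{Contributions of non-elements.} Since $0\in A$, the set $[0,a_{k-1}]\backslash A$ is contained in $[1,a_{k-1}]$. It has $a_{k-1}+1-k=k-2$ elements. Each $b$ in this set contributes $0$ if $b\in W$, and $1$ or $2$ otherwise. Let
$$t=\#\{\xi\in[0,a_{k-1}]\backslash(A\cup W):|S(\xi)\cap 2^{\wedge}A|=2\}.$$
Then the non-elements contribute $(k-2-|W|)+t$.

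\emph{Conclusion.} Adding the two contributions gives
$$|2^{\wedge}A|=3k-5-|W|+t.$$
The hypothesis $|2^{\wedge}A|=3k-6$ then forces $|W|=1+t$. In particular $|W|\geqslant 1$, and Lemma~\ref{LL3-1} gives $|W|\leqslant 2$, so $t\in\{0,1\}$ and $|W|\in\{1,2\}$. The two cases read off directly:

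(1) $|W|=1$ exactly when $t=0$, i.e. every $b\in[0,a_{k-1}]\backslash(A\cup W)$ has $|S(b)\cap 2^{\wedge}A|=1$.

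(2) $|W|=2$ exactly when $t=1$, i.e. there is a unique such $\xi$ with $|S(\xi)\cap 2^{\wedge}A|=2$.

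There is no real obstacle here. The only points needing care are that $2a_{k-1}\notin 2^{\wedge}A$, so $b=a_{k-1}$ contributes exactly $1$ and not $2$, and that the pairs $S(b)$ for $b\in[1,a_{k-1}]$ genuinely partition a range containing all of $2^{\wedge}A$.
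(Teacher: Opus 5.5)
Your proof is correct and is essentially the paper's argument: the paper's set $T=\{a_i:1\leqslant i\leqslant k-2\}\cup\{a_i+a_{k-1}:0\leqslant i\leqslant k-2\}$ is exactly the contribution of the pairs $S(b)$ with $b\in A\cap[1,a_{k-1}]$. The paper then adds the non-elements outside $W$ and invokes Lemma~\ref{LL3-1}, just as you do. Your version is slightly sharper because you write the exact identity $|2^{\wedge}A|=3k-5-|W|+t$, using that the $S(b)$ partition $[1,2a_{k-1}]\supseteq 2^{\wedge}A$. This makes both directions of (1) and (2), including the existence of $\xi$ when $|W|=2$, immediate, whereas the paper records only the lower bound and leaves the converse directions to a brief ``by contradiction'' remark.
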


\begin{proof}
Write
$$T=\{a_{i}: 1\leqslant i\leqslant k-2\}\cup\{a_{i}+a_{k-1}: 0\leqslant i\leqslant k-2\}. $$
Then $T\subseteq 2^{\wedge}A$ and $|T|=2k-3$. By the definition of $W$, we have
$$|S(b)\cap 2^{\wedge}A|\geqslant 1, \ \ b\in [0, a_{k-1}]\backslash(A\cup W), $$
thus,
$$|2^{\wedge}A|\geqslant |T|+|[0, a_{k-1}]\backslash(A\cup W)|=3k-5-|W|. $$
By Lemma \ref{LL3-1} we have $|W|\leqslant 2$, combining with $|2^{\wedge}A|=3k-6$, we have $|W|=1$ or $2$.

If $|W|=1$, then for any $b\in  [0, a_{k-1}]\backslash(A\cup W)$, we have $|S(b)\cap 2^{\wedge}A|=1$.
Otherwise, if there exists an integer $b\in  [0, a_{k-1}]\backslash(A\cup W)$ such that $|S(b)\cap 2^{\wedge}A|=2$,
then
$$|2^{\wedge}A|\geqslant |T|+|[0, a_{k-1}]\backslash(A\cup W)|+1=3k-5. $$
a contradiction.

If $|W|=2$, then there exists a unique integer $\xi\in  [0, a_{k-1}]\backslash(A\cup W)$ such that $|S(\xi)\cap 2^{\wedge}A|=2$.
Otherwise,
$$|2^{\wedge}A|\geqslant |T|+|[0, a_{k-1}]\backslash(A\cup W)|+2=3k-5. $$
a contradiction.

Since $|W|=1$ or $2$, combine with the above discussion, the sufficiency is easily to obtain by contradiction.

This completes the proof of Lemma \ref{LL3-7}.
\end{proof}

\begin{lem}\label{L4}
Let $A$ be a set of $k\geqslant 9$ integers such that $a_{k-3}\leqslant 2k-7$, $a_{k-2}=2k-4$, $a_{k-1}=2k-3$.
If $a_{k-3}-2\not\in A$, $a_{k-3}-1\not\in A$ and $|2^{\wedge}A|=3k-6$, then $a_{k-3}-1\notin 2^{\wedge}A$.
\end{lem}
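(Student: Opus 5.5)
We argue by contradiction, writing $a=a_{k-3}$, so that $a-2,a-1\notin A$ and $a\leqslant 2k-7$. Since $a_{k-1}=2k-3$ and $|2^{\wedge}A|=3k-6$, Lemma \ref{LL3-7} applies: $|W|\in\{1,2\}$, and the sets $S(b)$ for $b\in[0,a_{k-1}]\backslash(A\cup W)$ meet $2^{\wedge}A$ in exactly one element each, with at most one exception $\xi$, which exists exactly when $|W|=2$.

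\emph{Step 1.} The key observation is that
$$(a-1)+a_{k-1}=a+2k-4=a_{k-3}+a_{k-2}\in 2^{\wedge}A .$$
So if we assume $a-1\in 2^{\wedge}A$, then $|S(a-1)\cap 2^{\wedge}A|=2$. Since $a-1\notin A$ and $a-1\notin W$, Lemma \ref{LL3-7}(1) rules out $|W|=1$. Hence $|W|=2$, and $a-1$ is the unique exceptional element $\xi$. Consequently every other $b\in[0,a_{k-1}]\backslash A$ has $|S(b)\cap 2^{\wedge}A|\leqslant 1$.

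\emph{Step 2.} We extract constraints from the top of $A$. Since $a_{k-2}=a_{k-1}-1$, we have $b+a_{k-1}=(b+1)+a_{k-2}$. Thus $b+a_{k-1}\in 2^{\wedge}A$ whenever $b+1\in A\backslash\{a_{k-2}\}$.

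\begin{itemize}
\item For $w\in W$ this forces $w+1\notin A$.
\item For every non-exceptional $b\notin A\cup W$ with $b+1\in A$, it forces $b\notin 2^{\wedge}A$.
\item The interval $[a+1,2k-5]$ contains no element of $A$ and has length at least $2$. For $b$ in $[a+1,2k-6]$, $b+1\notin A$, and the sums landing at $b+a_{k-1}$ come only from pairs $(i,b+2k-3-i)$ with both entries at most $a$, or involving $a_{k-2}$.
\end{itemize}

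Combined with the symmetry relations (\ref{e1-2}) and (\ref{e1-3}) of Lemma \ref{LL3-2} for the two elements of $W$, this pins down where $W$ can lie. In particular, (\ref{e1-3}) with $i$ ranging near $a$ and $2k-4$ should force $W\subseteq[0,a-3]$ or force $W$ into the gap $[a+1,2k-5]$. We then treat these two possibilities separately.

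\emph{Step 3.} In each case we produce a second element $b\neq a-1$ with $|S(b)\cap 2^{\wedge}A|=2$, or a third element of $W$; either contradicts Step 1 and Lemma \ref{LL3-1}. The natural candidate is $b=a-2$, which is not in $A$ by hypothesis. We try to show both $a-2\in 2^{\wedge}A$ and $a-2+a_{k-1}\in 2^{\wedge}A$ as follows.

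\begin{itemize}
\item From $a-1\in 2^{\wedge}A$ we get a representation $a-1=x+y$ with $x<y\leqslant a-3$. Using the pairing $\{i,w-i\}$ of (\ref{e1-2}) for a $w\in W$, we shift this to $x+(y-1)$ or $(x-1)+y$, giving $a-2\in 2^{\wedge}A$.
\item For the upper element we use $a-2+a_{k-1}=a+(2k-5)$, and show via (\ref{e1-3}) that some pair $i+j=a+2k-5$ with $i,j\in A$ exists, for instance $i=a_{k-4}$ and $j$ just above.
\end{itemize}

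The hypotheses $k\geqslant 9$ and $a\leqslant 2k-7$ should be exactly what guarantees enough room for these shifts.

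I expect Step 3 to be the main obstacle, specifically showing that $a-2+a_{k-1}\in 2^{\wedge}A$. There is no direct identity analogous to Step 1, so this must come from the structural information about $W$. If it fails, the fallback is to use the full count $|2^{\wedge}A|\geqslant |T|+|[0,a_{k-1}]\backslash(A\cup W)|+1$ from the proof of Lemma \ref{LL3-7}. We would then exhibit one additional sum not accounted for, e.g. a sum in $[a+1,2k-5]$ together with its translate by $a_{k-1}$, and so reach $|2^{\wedge}A|\geqslant 3k-5$.
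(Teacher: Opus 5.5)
Your Step 1 matches the paper's opening, but the contradiction you aim for in Step 3 cannot exist, so the proposal has a genuine gap. Write $a=a_{k-3}$. The only elements of $A$ above $a$ are $2k-4$ and $2k-3$. Two distinct elements of $A$ not exceeding $a$ sum to at most $2a-1\leqslant a+2k-8$. Hence $(a-2)+a_{k-1}=a+2k-5$ could only arise as $(a-2)+(2k-3)$ or $(a-1)+(2k-4)$, and both are excluded by hypothesis. So $a-2+a_{k-1}\notin 2^{\wedge}A$ for every set satisfying the hypotheses, and $|S(a-2)\cap 2^{\wedge}A|\leqslant 1$ unconditionally. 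Your suggested witness $a_{k-4}+j$ would need $j\geqslant 2k-2$.

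In fact (\ref{e1-3}) shows $a-2\notin W$, since otherwise $a\in\{2k-6,2k-5\}$. So $a-2\in 2^{\wedge}A$ with $|S(a-2)\cap 2^{\wedge}A|=1$, which is perfectly consistent with $\xi=a-1$. Your fallback fails for the same reason. For $b\in[a+1,2k-5]$, a representation of $b+2k-3$ would need $b\in A$ or $b+1\in A\backslash\{2k-4\}$, so $b+a_{k-1}\notin 2^{\wedge}A$. No second doubly covered element exists where you look, and the count from Lemma \ref{LL3-7} cannot be pushed to $3k-5$ this way.

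The missing idea is that the contradiction must be structural: the paper contradicts $a-1,a-2\notin A$, not $|2^{\wedge}A|=3k-6$. The uniqueness of $\xi$ is used as in your second bullet of Step 2, but to feed Lemma \ref{LL3-8}. Take $w=\max(W\cap[0,a])$.
\begin{itemize}
\item One gets $w\leqslant a-3$ from (\ref{e1-3}), and then $w+1\notin A$ and $w+2\in A$.
\item So $w+1+a_{k-1}=(w+2)+a_{k-2}\in 2^{\wedge}A$, hence $w+1\notin 2^{\wedge}A$. In one subcase one also gets $w-1\notin 2^{\wedge}A$.
\item Lemma \ref{LL3-8} then fixes $A\cap[0,w]$, which forces $W\cap[0,a]=\{w_{1}\}$. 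The case $w_{1}>a$ separately yields $A=[0,k-3]\cup\{2k-4,2k-3\}$, a contradiction.
\end{itemize}
So $W$ splits, with $w_{1}\leqslant a-3$ and $w_{2}\in\{2k-6,2k-5\}$. This is neither of the two alternatives you anticipated in Step 2; both of those are ruled out along the way. Finally, reflecting the known shape of $A\cap[0,w_{1}]$ through (\ref{e1-2}) for $w_{2}$ puts an element of $A$ at $a-1$ or $a-2$, contradicting the hypothesis. Your Step 2 is too vague to deliver this, and Step 3 as designed cannot succeed.
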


\begin{proof}
Assume that $a_{k-3}-1\in 2^{\wedge}A$. Noting that
$$a_{k-3}-1+a_{k-1}=a_{k-3}+a_{k-2}\in 2^{\wedge}A, $$
we have
$$|S(a_{k-3}-1)\cap 2^{\wedge}A|=2. $$
By Lemma \ref{LL3-7} (2) we have $|W|=2$ and
\begin{equation}\label{eL4-1}|S(b)\cap 2^{\wedge}A|=1, \ \ b\in [0, a_{k-1}]\backslash(A\cup W), b\neq a_{k-3}-1.\end{equation}

Write $W=\{w_{1}, w_{2}\}$ with $w_{1}<w_{2}$. If $w_{1}>a_{k-3}$, then by (\ref{e1-3}) and $a_{k-2}=2k-4$ we have
$$2=\left|\left[w_{1}, a_{k-1}\right]\cap A\right|=\left\lfloor\frac{w_{1}+a_{k-1}}{2}\right\rfloor-w_{1}+1, $$
thus, $w_{1}=2k-6$, $w_{2}=2k-5$. By Lemma \ref{LL3-8} (1) we have $[0, k-3]\subseteq A$. Thus,
$$A=[0, k-3]\cup \{2k-4, 2k-3\}, $$
we know that $a_{k-3}=k-3$ and $a_{k-3}-1\in A$, a contradiction. Hence, $w_{1}<a_{k-3}$.

Write $w=\max(W\cap [0, a_{k-3}])$. Now, we shall show that $w=w_{1}$.
Clearly, $w\neq a_{k-3}-1$. If $w=a_{k-3}-2$, then by (\ref{e1-3}) we have
$$3=|\{a_{k-3}, a_{k-2}, a_{k-1}\}|=|[w, a_{k-1}]\cap A|=\left\lfloor\frac{w+a_{k-1}}{2}\right\rfloor-w+1, $$
thus, $a_{k-3}\geqslant  2k-6$, a contradiction. Hence, $w\leqslant a_{k-3}-3$.

By $a_{k-2}=2k-4$, $a_{k-3}\leqslant 2k-7$ and (\ref{e1-2}), (\ref{e1-3}), we have
$$w+1\not\in A, \ \ a_{\left\lfloor\frac{w}{2}\right\rfloor+1}=w+2, $$
thus,
$$w+1+a_{k-1}=w+2+a_{k-2}\in 2^{\wedge}A. $$
By (\ref{eL4-1}) we have
$$w+1\not\in 2^{\wedge}A. $$

If $2\mid w$, then by Lemma \ref{LL3-8} (1) we have
\begin{equation}\label{lem2.6-1}[0, w]\cap A=\left[0, \frac{w}{2}\right], \end{equation}
thus, $[1, w-1]\subseteq 2^{\wedge}A$, we have $w=w_{1}$.

If $2\nmid w$ and $1\in A$, then $w-1\not\in A$. If $w-1+a_{k-1}\not\in 2^{\wedge}A$, then
$$|\{i, w-1+a_{k-1}-i\}\cap A|\leqslant 1, \ \ i=w-1, \ldots, k-2+\frac{w-1}{2}, $$
combining with $w+1, 2k-5\not\in A$, we have
$$k-\frac{w+1}{2}=|[w-1, a_{k-1}]\cap A|\leqslant k-\frac{w+3}{2}, $$
a contradiction. Thus,
$$w-1+a_{k-1}\in 2^{\wedge}A. $$
By (\ref{eL4-1}) we have $w-1\notin 2^{\wedge}A$.
By Lemma \ref{LL3-8} (3) we have
\begin{equation}\label{lem2.6-2}[0,w]\cap A=\left[0, \frac{w-1}{2}\right], \end{equation}
thus, $[1, w-2]\subseteq 2^{\wedge}A$, we also have $w=w_{1}$.

Assume that $2\nmid w$ and $1\notin A$. If $w=1$, then $w=w_{1}$.
If $w>1$, then by Lemma \ref{LL3-8} (2) we have
\begin{equation}\label{lem2.6-3}[0, w]\cap A=\{0\}\cup \left[\frac{w+1}{2}, w-1\right]. \end{equation}
If $w=w_{2}$, then
$$1\leqslant w_{1}\leqslant \frac{w-1}{2}. $$
By (\ref{e1-2}) we have
$$|[0, w_{1}]\cap A|=\frac{w_{1}+1}{2}\geqslant 1, $$
combining with (\ref{lem2.6-3}) we have $w_{1}=1$.
By $a_{k-2}=2k-4$ and $a_{k-3}\leqslant 2k-7$, applying (\ref{e1-3}) to $w_{1}$ we have
$$2\notin A, \ \ 3, 4\in A, $$
combining with (\ref{lem2.6-3}) we have $w_{2}=5$. Again apply (\ref{e1-3}) to $w_{1}$ we have $a_{k-3}=2k-7$.
By $a_{k-2}=2k-4$, applying (\ref{e1-3}) to $w_{2}$ we have $6\notin A$. Again apply (\ref{e1-3}) to $w_{1}$ we have $a_{k-3}-1\in A$,
a contradiction. Thus, $w=w_{1}$.

Hence, $w_{2}>a_{k-3}$, so,
$$w_{2}=2k-6 \text{ or } 2k-5. $$

If $2\mid w$, then by (\ref{e1-2}) and (\ref{lem2.6-1}) we have
$$a_{k-3}=w_{2}-\frac{w}{2}-1 \text{ and } a_{k-3}-1\in A, $$
a contradiction.

If $2\nmid w$ and $1\in A$, then by (\ref{e1-2}) and (\ref{lem2.6-2}) we have
$$a_{k-3}=w_{2}-\frac{w-1}{2} \text{ and } a_{k-3}-1\in A, $$
a contradiction.

Assume that $2\nmid w$ and $1\notin A$. If $w=1$, then by $a_{k-2}=2k-4$ and (\ref{e1-3}) we have $2\notin A$.
By (\ref{e1-2}) we have $a_{k-3}=w_{2}-1$ and $a_{k-3}-1\in A$, a contradiction.
If $w=3$, then by (\ref{e1-2}) and (\ref{lem2.6-3}) we have $a_{k-3}=w_{2}-1$ and $a_{k-3}-2\in A$, a contradiction.
If $w\geqslant 5$, then by (\ref{e1-2}) and (\ref{lem2.6-3}) we have $a_{k-3}=w_{2}-1$ and $a_{k-3}-1\in A$, a contradiction.

This completes the proof of Lemma \ref{L4}.
\end{proof}

\begin{lem}\label{L3}
Let $A$ be a set of $k\geqslant 9$ integers such that $a_{k-3}=2k-7$, $a_{k-2}=2k-4$, $a_{k-1}=2k-3$ and $|2^{\wedge}A|=3k-6$.
If $2k-8\notin 2^{\wedge}A$ and there exists a positive integer $\alpha$ such that
$$a_{s}+\alpha\in 2^{\wedge}(A+\alpha), \ \ s=2, \ldots, k-1, $$
then $a_{k-4}+2k\notin 2^{\wedge}A$.
\end{lem}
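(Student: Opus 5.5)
We argue by contradiction: assume $a_{k-4}+2k\in 2^{\wedge}A$, and derive $|2^{\wedge}A|\geqslant 3k-5$ from the counting behind Lemma \ref{LL3-7}. The whole argument is bookkeeping of the pairs $S(b)=\{b,b+a_{k-1}\}$, with $a_{k-1}=2k-3$. Note that $a_{k-4}+2k=(a_{k-4}+3)+a_{k-1}$, so the assumption says exactly that the upper element of $S(a_{k-4}+3)$ lies in $2^{\wedge}A$.

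\emph{Translating the hypotheses.} The hypothesis $a_s+\alpha\in 2^{\wedge}(A+\alpha)$ means $a_s+\alpha=a_i+a_j+2\alpha$ with $i\neq j$. Equivalently,
$$a_s-\alpha\in 2^{\wedge}A, \qquad s=2,\ldots,k-1.$$
So for each $s$ the \emph{lower} element of $S(a_s-\alpha)$ is a restricted sum; when $a_s-\alpha\notin A$, this is genuine extra information. Next, $2k-8\notin 2^{\wedge}A$ together with $0\in A$ gives $2k-8\notin A$, hence $a_{k-4}\leqslant 2k-9$. It also gives $|\{i,2k-8-i\}\cap A|\leqslant 1$ for all $i$, a pairing constraint on $[0,2k-8]$. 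Finally, since the top three elements are $2k-7,2k-4,2k-3$, I expect to check directly that
$$4k-8,\ 4k-9\notin 2^{\wedge}A,$$
so $2k-5,2k-6\notin A$ have only their lower elements available. Together with $2k-8\notin 2^{\wedge}A$, this pins down which points near the top can belong to $W$.

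\emph{Locating the doubled pairs.} By Lemma \ref{LL3-7}, $|2^{\wedge}A|=3k-6$ forces one of two situations:
\begin{itemize}
\item[(a)] $|W|=1$ and every $b\in[0,a_{k-1}]\setminus(A\cup W)$ has $|S(b)\cap 2^{\wedge}A|=1$;
\item[(b)] $|W|=2$ and exactly one $\xi\notin A\cup W$ has $|S(\xi)\cap 2^{\wedge}A|=2$.
\end{itemize}
Set $b_0=a_{k-4}+3$.
\begin{itemize}
\item If $b_0\in A$, then $b_0=a_{k-3}$, i.e.\ $a_{k-4}=2k-10$. This case must be excluded separately, using the pairing constraint at $2k-8$ and the $\alpha$-condition for $s=k-3$, which gives $2k-7-\alpha\in 2^{\wedge}A$.
\item If $b_0\notin A$, I will show $b_0\in 2^{\wedge}A$ as well, so that $|S(b_0)\cap 2^{\wedge}A|=2$. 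The candidate for this is $b_0=a_s-\alpha$ for a suitable $s$, or a direct sum using $a_{k-4}$ and the small elements, obtained via Lemma \ref{LL3-2} applied to an element of $W$.
\end{itemize}
In either subcase we are in situation (b), with $\xi=b_0$ and $|W|=2$.

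\emph{Reaching the contradiction.} Now use $s=k-1$ and $s=k-2$, which give $2k-3-\alpha,\ 2k-4-\alpha\in 2^{\wedge}A$. Combined with the upper sums $a_i+a_{k-2}$, or with the second element of $W$ through (\ref{e1-3}), this should produce a second $b\neq b_0$ with $|S(b)\cap 2^{\wedge}A|=2$. Alternatively, it should show that the two elements of $W$ cannot coexist with the pairing constraint at $2k-8$; here one argues as in the proof of Lemma \ref{L4}, determining $W$ via Lemma \ref{LL3-8}. Either way, situation (b) fails, so $|2^{\wedge}A|\geqslant 3k-5$, a contradiction.

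\emph{Main obstacle.} The hardest step is exhibiting the second doubled pair, or showing $|W|\leqslant 1$: it requires the exact position of the $W$-elements relative to $a_{k-4}$. I would first try the case analysis of Lemma \ref{L4}: parity of $w$, and whether $1\in A$, determining $[0,w]\cap A$ via Lemma \ref{LL3-8}. I would then test the resulting explicit structures against $a_s-\alpha\in 2^{\wedge}A$.
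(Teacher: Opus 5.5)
There is a genuine gap: the mechanism your proof rests on, with $S(b_0)$, $b_0=a_{k-4}+3$, as the unique doubled pair, cannot occur. Every restricted sum $a_i+a_j$ ($i<j$) with $i\leqslant k-4$ is at most $a_{k-4}+a_{k-1}<a_{k-4}+2k$. So $a_{k-4}+2k\in 2^{\wedge}A$ forces $a_{k-4}+2k\in\{4k-11,4k-10,4k-7\}$. Since $a_{k-4}<2k-7$, this is exactly $a_{k-4}\in\{2k-11,2k-10\}$, hence $b_0\in\{2k-8,2k-7\}$. If $b_0=2k-7=a_{k-3}$, then $b_0\in A$ and cannot be $\xi$. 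If $b_0=2k-8$, then $b_0\notin 2^{\wedge}A$ by hypothesis, while its upper element $4k-11=a_{k-3}+a_{k-2}$ lies in $2^{\wedge}A$ regardless of the assumption. So $S(b_0)$ is never doubled, and the assumption adds no surplus to the count behind Lemma \ref{LL3-7}. Your claim that you land in situation (b) with $\xi=b_0$ is therefore unfounded. Showing $b_0\in 2^{\wedge}A$ would already be the contradiction, not a step toward it. The later search for a ``second doubled pair'' then has nothing to anchor it. As written, it lists what ``should'' happen rather than giving an argument.

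The assumption's real content is structural: it gives $2k-9,2k-8\notin A$. You also need to upgrade your pairing inequality to an equality. The set $[0,2k-8]\cap A=\{a_0,\ldots,a_{k-4}\}$ has $k-3$ elements, so $|\{i,2k-8-i\}\cap A|=1$ for $0\leqslant i\leqslant k-4$; with $2k-9\notin A$ this gives $a_1=1$. The paper then proceeds as follows:
\begin{itemize}
\item It takes an arbitrary $w\in W$; the value of $|W|$ is never needed.
\item It excludes $w\leqslant 5$ and $w\geqslant 2k-10$ by direct checks.
\item From (\ref{e1-3}) it gets $w+1\notin A$ and $w+2,w+3\in A$, and then shows $w+1\notin 2^{\wedge}A$.
\item It finishes with Lemma \ref{LL3-8}, split by the parity of $w$ and by whether $2\in A$.
\end{itemize}
Most branches do not end with $|2^{\wedge}A|\geqslant 3k-5$; they end with the $\alpha$-hypothesis failing. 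The small elements pin down $\alpha$: for instance $a_2=2$ forces $\alpha=1$, because $a_2-\alpha$ must be a positive restricted sum. After that, some $a_s-\alpha$ is visibly not in $2^{\wedge}A$. Only the remaining branches produce two doubled pairs contradicting Lemma \ref{LL3-7}. So the $\alpha$-condition must be used as a rigid constraint on the structure of $A$, not only as a source of extra sums, and your sketch contains none of these steps.
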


\begin{proof}
Since $2k-8\notin 2^{\wedge}A$, we have
$$|\{i, 2k-8-i\}\cap A|\leqslant 1, \ \ i=0, \ldots, k-4, $$
thus,
$$k-3=|A\backslash \{a_{k-3}, a_{k-2}, a_{k-1}\}|=|[0, 2k-8]\cap A|\leqslant k-3, $$
it follows that
\begin{equation}\label{e1-9}|\{i, 2k-8-i\}\cap A|=1, \ \ i=0, \ldots, k-4. \end{equation}

Assume that $a_{k-4}+2k\in 2^{\wedge}A$. Then
\begin{equation}\label{e3-3}a_{k-4}=2k-11 \text{ or } 2k-10, \end{equation}
thus,
\begin{equation}\label{e3-4}2k-9, \ \ 2k-8\not\in A. \end{equation}
By (\ref{e1-9}) and (\ref{e3-4})  we have
\begin{equation}\label{e1-8}a_{1}=1. \end{equation}

By Lemma \ref{LL3-7} we have $|W|=1$ or $2$. Let $w\in W$. Now, we shall show that
$$6\leqslant w\leqslant 2k-11. $$

If $w=2k-5$, then by $a_{k-3}=2k-7$, (\ref{e3-4}), (\ref{e1-8}) and (\ref{e1-2}) we have $a_{2}=3$, $a_{3}=4$.
Since there exists a positive integer $\alpha$ such that $3+\alpha\in 2^{\wedge}(A+\alpha)$,
thus, $\alpha=2$, it follows that $4+\alpha\not\in 2^{\wedge}(A+\alpha)$, a contradiction.

Since $1, 2k-7\in A$ we have $w\neq 2k-7, 2k-6$.

Noting that $2k-8+a_{k-1}=a_{k-3}+a_{k-2}\in 2^{\wedge}A$ we have $w\neq 2k-8$.

If $w=2k-9$, then by (\ref{e1-3}) we have  $2k-6=\frac{w+a_{k-1}}{2}\in A$, which is impossible. Thus, $w\neq 2k-9$.

If $w=2k-10$, then by (\ref{e1-2}) and (\ref{e1-8}) we have $2k-11\not\in A$, which contradicts with (\ref{e3-3}).

By (\ref{e1-8}) we have $w\neq1$. If $w=2$ or $3$, then by $a_{k-3}=2k-7$, $a_{k-2}=2k-4$ and (\ref{e1-2}), (\ref{e1-3}) we have $a_{2}=w+2$, $a_{3}=w+3$.
Since there exists a positive integer $\alpha$ such that $w+2+\alpha\in 2^{\wedge}(A+\alpha)$, we have $\alpha=w+1$, thus, $w+3+\alpha\not\in 2^{\wedge}(A+\alpha)$,
a contradiction.

If $w=4$, then by $a_{1}=1$ and (\ref{e1-2}), we have $a_{2}=2$ and $3\notin A$.
By $a_{k-3}=2k-7$, $a_{k-2}=2k-4$, (\ref{e3-4}) and (\ref{e1-3}), we have $a_{3}=6$, $a_{4}=7$, $a_{5}=9$,
thus, $3, 8\notin A$ and
$$|S(3)\cap 2^{\wedge}A|=|S(8)\cap 2^{\wedge}A|=2, $$
which contradicts with Lemma \ref{LL3-7}.

If $w=5$, then by $a_{1}=1$ and (\ref{e1-2}), we have $a_{2}=2$ or $3$.
By $a_{k-3}=2k-7$, $a_{k-2}=2k-4$, (\ref{e3-4}) and (\ref{e1-3}), we have $a_{3}=7$, $a_{4}=8$, $a_{5}=10$.
If $a_{2}=2$, then $3, 9\notin A$ and
$$|S(3)\cap 2^{\wedge}A|=|S(9)\cap 2^{\wedge}A|=2, $$
which contradicts with Lemma \ref{LL3-7}. Thus, $a_{2}=3$.
Since there exists a positive integer $\alpha$ such that $3+\alpha\in 2^{\wedge}(A+\alpha)$, we have $\alpha=2$, thus, $7+\alpha\not\in 2^{\wedge}(A+\alpha)$, a contradiction.

Hence,
$$6\leqslant w\leqslant 2k-11. $$
By (\ref{e1-8}) we have $w-1\notin A$.
By $a_{k-3}=2k-7$, $a_{k-2}=2k-4$ and (\ref{e1-3}) we have
\begin{equation}\label{e3-5}w+1\not\in A, \ \ w+2, \ \ w+3\in A. \end{equation}
Thus,
\begin{equation}\label{e3-8.1}w-2+a_{k-1}=(w+2)+a_{k-3}\in 2^{\wedge}A, \end{equation}\vskip-5mm
\begin{equation}\label{e3-8}w-1+a_{k-1}=(w+3)+a_{k-3}\in 2^{\wedge}A, \end{equation}
$$w+1+a_{k-1}=(w+2)+a_{k-2}\in 2^{\wedge}A. $$

Next, we will show that $w+1\not\in 2^{\wedge}A$. Otherwise, $|S(w+1)\cap 2^{\wedge}A|=2$.
By Lemma \ref{LL3-7} (2) and (\ref{e3-8}) we have
$$w-1\notin 2^{\wedge}A. $$
By Lemma \ref{LL3-8} (3), we have
$$[0,w]\cap A=\left[0, \frac{w-1}{2}\right]. $$
Since there exists a positive integer $\alpha$ such that $2+\alpha\in 2^{\wedge}(A+\alpha)$, we have $\alpha=1$.
Combining with (\ref{e3-5}) we know that
$$\left[1, \frac{w+1}{2}\right]\cup \{w+3\}\subseteq A+1, $$
thus, $w+3\not\in 2^{\wedge}(A+1)$, a contradiction. Hence,
$$w+1\notin 2^{\wedge}A. $$

If $2\mid w$, then by Lemma \ref{LL3-8} (1) we have
$$[0,w]\cap A=\left[0, \frac{w}{2}\right]. $$
Since there exists a positive integer $\alpha$ such that $2+\alpha\in 2^{\wedge}(A+\alpha)$, we have $\alpha=1$.
Combining with (\ref{e3-5}) we know that
$$\left[1, \frac{w}{2}+1\right]\cup \{w+3\}\subseteq A+1, $$
thus, $w+3\not\in 2^{\wedge}(A+1)$, a contradiction.

Hence, $2\nmid w$. If $2\in A$, then by (\ref{e1-9}) we have $2k-10\not\in A$, combining with (\ref{e3-3}) we have $a_{k-4}=2k-11$, again by (\ref{e1-9}) we have $3\not\in A$.
By Lemma \ref{LL3-8} (2) we have
$$[0,w]\cap A=\{0, 1, 2\}\cup \left[\frac{w+1}{2}, w-3\right], $$
it implies that $w-2, w-1\notin A$ and
$$w-2, \ \ w-1\in 2^{\wedge}A. $$
Combining with (\ref{e3-8.1}) and (\ref{e3-8}) we have
$$|S(w-2)\cap 2^{\wedge}A|=|S(w-1)\cap 2^{\wedge}A|=2, $$
which contradicts with Lemma \ref{LL3-7}.

Hence, $2\not\in A$. By Lemma \ref{LL3-8} (2) we have
$$[0,w]\cap A=\{0, 1\}\cup \left[\frac{w+1}{2}, w-2\right]. $$
Since there exists a positive integer $\alpha$ such that $\frac{w+1}{2}+\alpha\in 2^{\wedge}(A+\alpha), $
we have $\alpha=\frac{w-1}{2}$, thus,
$$\left\{\frac{w-1}{2}, \frac{w+1}{2}\right\}\cup \left[w, w-2+\frac{w-1}{2}\right]\subseteq A+\frac{w-1}{2}, $$
it follows that $w+2\not\in 2^{\wedge}\left(A+\frac{w-1}{2}\right)$, a contradiction.

This completes the proof of Lemma \ref{L3}.
\end{proof}

Similar to the proof of Lemma \ref{L3}, we have the following result.

\begin{lem}\label{L5}
Let $A$ be a set of $k\geqslant 9$ integers such that $a_{k-3}=2k-7$, $a_{k-2}=2k-4$, $a_{k-1}=2k-3$ and $|2^{\wedge}A|=3k-6$.
If $2k-8\notin 2^{\wedge}A$ and there exists a positive integer $\alpha$ such that
$$a_{2}=2a_{1}, \ \ a_{2}\neq a_{1}+\alpha, \ \ a_{s}+\alpha\in 2^{\wedge}(A+\alpha), \ \ s=3, \ldots, k-1, $$
then $a_{k-4}+2k\notin 2^{\wedge}A$.
\end{lem}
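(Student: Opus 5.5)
The plan is to copy the proof of Lemma \ref{L3}, changing only the steps where the hypothesis on $\alpha$ is applied to $s=2$. We argue by contradiction and assume $a_{k-4}+2k\in 2^{\wedge}A$.

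First, exactly as in Lemma \ref{L3}, the hypothesis $2k-8\notin 2^{\wedge}A$ together with a count of $[0,2k-8]\cap A$ gives the complementation property (\ref{e1-9}). The assumption then forces $a_{k-4}\in\{2k-11,2k-10\}$ and $2k-9,2k-8\notin A$. Hence $a_1=1$, and so $a_2=2a_1=2$. This is the main gain over Lemma \ref{L3}. By (\ref{e1-9}) we get $2k-10\notin A$, so $a_{k-4}=2k-11$.

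Next, take $w\in W$; by Lemma \ref{LL3-7}, $W$ is nonempty. We rerun the case elimination for $w$.

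\textbf{Large $w$.} The values $w=2k-8,2k-7,2k-6,2k-9,2k-10$ are excluded by the same arguments as before, and these did not use $\alpha$.

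\textbf{$w=2k-5$.} The old argument used $a_2=3$. Now $1,2\in A$, and (\ref{e1-2}) at $w=2k-5$ with $a_{k-3}=2k-7$ forces $2k-6\notin A$ and $2k-7\in A$. The old contradiction came from $a_2=3$, so here we instead use $a_2=2$: by (\ref{e1-2}) we have $3\notin A$ (since $2k-8\notin A$ would otherwise be needed to pair with $3$). Then we derive the contradiction from the $\alpha$-condition for $s=3$: $a_3+\alpha\in 2^{\wedge}(A+\alpha)$ requires $a_3+\alpha$ to be written as a sum of two distinct shifted elements, and small elements are scarce.

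\textbf{Small $w$.} The values $w=1,2,3$ are excluded because $a_1=1$ and $a_2=2$ lie in $A$ (and $w=3$ fails by (\ref{e1-2})). For $w=4$ and $w=5$ the old arguments already treated the branch $a_2=2$ through Lemma \ref{LL3-7} (two values $b$ with $|S(b)\cap2^{\wedge}A|=2$), so they transfer directly.

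This leaves $6\le w\le 2k-11$. From (\ref{e3-5}) we get $w+1\notin A$ and $w+2,w+3\in A$, along with the relations (\ref{e3-8.1}) and (\ref{e3-8}).

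\textbf{Odd $w$.} This is where Lemma \ref{L3} used the case $2\in A$ via Lemma \ref{LL3-8} (2), and that argument reused verbatim yields a contradiction through Lemma \ref{LL3-7} without invoking $\alpha$. The case $2\notin A$ no longer arises.

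\textbf{Remaining cases.} These are $w+1\in 2^{\wedge}A$, and $w$ even with $w+1\notin 2^{\wedge}A$. Lemma \ref{LL3-8} gives $[0,w]\cap A$ as an initial interval $[0,m]$ with $m\approx w/2$. In Lemma \ref{L3} these cases used $s=2$ to force $\alpha=1$. Now we only know $a_2=2\neq 1+\alpha$, i.e.\ $\alpha\neq1$, and we must use $s=3$: $a_3=3$ (since $m\ge3$), so $3+\alpha$ is a sum of two distinct elements of $A+\alpha$. This is possible only for $\alpha=1$ ($1+2$) or $\alpha=2$ ($2+3$). Thus $\alpha=2$.

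With $\alpha=2$, the set $A+2$ contains $[2,m+2]$ and $w+4,w+5$, and it omits $w+3$ and the integers between $m+2$ and $w+4$. Test $w+3+2=w+5\in A+2$ for $s$ with $a_s=w+3$: writing $w+5$ as a sum of two distinct elements of $A+2$ requires two elements of $[2,m+2]$ with sum $w+5>2m+3$, which is impossible. This contradiction finishes the proof.

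The main obstacle is getting this $\alpha=2$ bookkeeping right for each shape from Lemma \ref{LL3-8}, and also the $w=2k-5$ case. If the direct computation fails there, I would fall back on Lemma \ref{LL3-7}, exhibiting two gaps $b$ with $|S(b)\cap2^{\wedge}A|=2$.
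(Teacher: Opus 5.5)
\textbf{Overall.} Your strategy matches what the paper intends; it gives no separate proof, only ``similar to the proof of Lemma \ref{L3}''. You rerun Lemma \ref{L3} with $a_1=1$, $a_2=2$, $a_{k-4}=2k-11$ now forced, and replace each use of the $\alpha$-hypothesis at $s=2$. Most of your case analysis is valid:
\begin{itemize}
\item the large values of $w$;
\item $w\le 5$;
\item odd $w$ with $w+1\notin 2^{\wedge}A$, via Lemma \ref{LL3-8}\,(2) and Lemma \ref{LL3-7};
\item your $\alpha=2$ argument for the interval shapes $[0,w]\cap A=[0,m]$.
\end{itemize}

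\textbf{The gap: the case $w=2k-5$.} As written, this step does not work.
\begin{itemize}
\item You assert that (\ref{e1-2}) gives $3\notin A$. But (\ref{e1-2}) at $i=3$ pairs $3$ with $2k-8\notin A$, so it forces $3\in A$.
\item The closing contradiction (``small elements are scarce'') is never actually produced.
\end{itemize}
The case is in fact immediate. Since $2k-5=a_2+a_{k-3}=2+(2k-7)\in 2^{\wedge}A$, we have $2k-5\notin W$. Equivalently, (\ref{e1-2}) at $i=2$ fails because both $2$ and $2k-7$ lie in $A$.

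\textbf{A simplification you missed.} Since $2k-11\in A$, (\ref{e1-9}) at $i=3$ gives $3\notin A$ directly. The odd-$w$ argument you reuse verbatim from Lemma \ref{L3} already relies on this fact. It rules out at once every shape $[0,w]\cap A=[0,m]$ with $m\ge 3$ coming from Lemma \ref{LL3-8}\,(1) or (3). So the $\alpha=2$ bookkeeping you flagged as the main obstacle is correct but unnecessary.
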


\section{Results on locally dense sets}\label{S4}
In this section, let $A$ be a finite set of $k\geqslant 3$ integers such that
\begin{equation}\label{ee}a_{i}<2i, \ \ i=1, \ldots, k-2, \ \ a_{k-1}\geqslant 2k-2, \ \ \gcd(A)=1. \end{equation}
Write $b_{0}=0$ and
$$B=[1, 2k-4]\backslash 2^{\wedge}(A\backslash\{a_{k-1}\}):=\{b_{1}, b_{2}, \ldots, b_{m}\}, $$
where $b_{1}<b_{2}<\cdots<b_{m}$.

Noting that
$$[1, 2k-4]\backslash B, \;a_{k-1}+A\backslash\{a_{k-1}\}\subseteq 2^{\wedge}A, $$
we have
\begin{equation}\label{ee2}|2^{\wedge}A|\geqslant |[1, 2k-4]\backslash B|+|a_{k-1}+A\backslash\{a_{k-1}\}|=3k-5-m. \end{equation}

By the pigeonhole principle, we have the following facts:
\begin{itemize}
  \item If $2k-3\not\in 2^{\wedge}A$, then \begin{equation}\label{eq2}|\{i, 2k-3-i\}\cap A|=1, \ \ i=1, \ldots, k-2.\end{equation}
  \item If $a_{2}=3$ and $2k-2\not\in 2^{\wedge}A$, then \begin{equation}\label{eq3}|\{i, 2k-2-i\}\cap A|=1, \ \ i=3, \ldots, k-1.\end{equation}
  \item If $a_{2}=3$ and $2k-1\not\in 2^{\wedge}A$, then \begin{equation}\label{eq4}|\{i, 2k-1-i\}\cap A|=1, \ \ i=4, \ldots, k-1.\end{equation}
  \item If $a_{3}=5$ and $2k\not\in 2^{\wedge}A$, then \begin{equation}\label{eq5}|\{i, 2k-i\}\cap A|=1, \ \ i=5, \ldots, k.\end{equation}
  \item If $a_{3}=5$ and $2k+1\notin 2^{\wedge}A$, then \begin{equation}\label{eq6}|\{i, 2k+1-i\}\cap A|=1, \ \ i=6, \ldots, k.\end{equation}
\end{itemize}

\begin{lem}\label{P4}
Let $A$ be as in (\ref{ee}). If $a_{s}+a_{k-1}\in 2^{\wedge}(A\backslash\{a_{k-1}\})$ for all $0\leqslant s\leqslant k-4$, then
$|2^{\wedge}A|\geqslant 3k-6. $
\end{lem}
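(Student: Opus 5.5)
The plan is to reduce everything to a counting statement about the smaller set $A'=A\backslash\{a_{k-1}\}$, which has $k-1$ elements and largest element $a_{k-2}\leqslant 2k-5=2(k-1)-3$ by (\ref{ee}). Since $a_{k-1}>a_{k-2}$, the two sums $a_{k-3}+a_{k-1}<a_{k-2}+a_{k-1}$ both exceed $\max 2^{\wedge}A'=a_{k-3}+a_{k-2}$. Hence
$$|2^{\wedge}A|\geqslant |2^{\wedge}A'|+2.$$
So it suffices to prove $|2^{\wedge}A'|\geqslant 3k-8$. The hypothesis is exactly what makes this plausible. It says that the $k-3$ numbers $a_{k-1}+a_s$ ($0\leqslant s\leqslant k-4$), which lie in $[2k-2,\,a_{k-1}+a_{k-4}]$, already belong to $2^{\wedge}A'$. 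In particular they are all larger than $2k-4$.

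First I split $2^{\wedge}A'$ at $2k-4$. The part in $[1,2k-4]$ is $[1,2k-4]\backslash B$, of size $2k-4-m$. The part above $2k-4$ contains the $k-3$ hypothesized elements $a_{k-1}+a_s$. This only gives $|2^{\wedge}A'|\geqslant 3k-7-m$, so I also need elements of $2^{\wedge}A'$ above $2k-4$ that are not of the form $a_{k-1}+a_s$, or else I need $m\leqslant 1$. The natural candidates are the top chain sums
$$a_{k-2}+a_{k-3},\quad a_{k-2}+a_{k-4},\quad a_{k-3}+a_{k-4},\ \ldots$$
When $a_{k-1}>a_{k-2}+a_{k-3}$ there is a cleaner count: every $a_{k-1}+a_s$ then exceeds $\max 2^{\wedge}A'$. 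That contradicts the hypothesis already for $s=0$, unless $a_{k-1}\leqslant a_{k-2}+a_{k-3}$. So I will first record that $a_{k-1}\leqslant a_{k-2}+a_{k-3}\leqslant 4k-11$.

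Next I bound $m$ using local density. For each $b\in B$ the pigeonhole principle (as in (\ref{eq2})) forces $|\{i,b-i\}\cap A'|\leqslant 1$ for $i<b/2$. Together with $a_i<2i$, this forces $A'\cap[0,b]$ to have at most about $b/2+1$ elements, and in fact a rigid shape. I will show that each missing $b\in B$ produces a compensating element of $2^{\wedge}A'$ in $[2k-3,\infty)$ outside $\{a_{k-1}+a_s\}$. The compensation comes by reflecting $b$ through the top of $A'$: take a sum $a_{k-2}+a_j$ or $a_{k-3}+a_j$ lying strictly between consecutive hypothesized values $a_{k-1}+a_s<a_{k-1}+a_{s+1}$, or above $a_{k-1}+a_{k-4}$. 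This is essentially the argument of Lemma \ref{LL3-7}, with the pairs $S(b)$ replaced by $\{b,\,b+a_{k-1}\}$ intersected with $2^{\wedge}A'$.

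The main obstacle is the injection $b\mapsto$ (new large sum), since collisions with the $a_{k-1}+a_s$ must be excluded. I would first try the direct choice: send $b_j$ to the least element of $a_{k-2}+A'$ that exceeds $a_{k-1}+a_{s}$, where $s$ is chosen by the position of $b_j$. If that fails, I would use Theorem E and Theorem C on $A'$ (or on $A'/\gcd(A')$ when $\gcd(A')>1$) to get the base bound $|2^{\wedge}A'|\geqslant 3(k-1)-7$. Then it only remains to find two extra elements among the hypothesized sums that the extremal configurations cannot contain. The extremal sets are classified in \cite{Wang2025}, so this last step reduces to a finite check.
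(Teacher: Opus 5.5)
Your reduction is sound as far as it goes. Under the hypothesis, $2^{\wedge}A=2^{\wedge}(A\backslash\{a_{k-1}\})\cup\{a_{k-3}+a_{k-1},\,a_{k-2}+a_{k-1}\}$, and splitting at $2k-4$ reproduces (\ref{ee2}), $|2^{\wedge}A|\geqslant 3k-5-m$. Everything after that, however, is a plan rather than a proof, and it has a genuine gap.

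\begin{itemize}
\item The decisive step is never carried out. You would need $m-1$ further elements of $2^{\wedge}(A\backslash\{a_{k-1}\})$ above $2k-4$ that are not of the form $a_{k-1}+a_s$. You name an injection $b\mapsto$ (new large sum), concede that excluding collisions is ``the main obstacle,'' and stop. Note that only $m-1$ such sums are needed; an injection on all of $B$ would give $3k-5$, more than the lemma claims.
\item The analogy with Lemma \ref{LL3-7} does not transfer: that lemma concerns $a_{k-1}=2k-3$ and rests on $|W|\leqslant 2$.
\item The fallback bound is false in general. Theorems C and E do not give $|2^{\wedge}(A\backslash\{a_{k-1}\})|\geqslant 3(k-1)-7$. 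That set has $k-1$ elements and maximum $a_{k-2}\leqslant 2k-5$. Theorem E needs $a_{k-2}\in\{2k-6,2k-5\}$. For $a_{k-2}\leqslant 2k-7$, Theorem C yields only $a_{k-2}+k-3$, which is below $3k-10$ as soon as $a_{k-2}<2k-7$.
\item The ``finite check'' is not finite, since the extremal configurations form infinite families.
\item ``Two extra elements among the hypothesized sums'' is not meaningful, since those sums already lie in $2^{\wedge}(A\backslash\{a_{k-1}\})$ by hypothesis.
\end{itemize}

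The missing idea is that $A$ itself satisfies (\ref{ee}). So Theorem F applies directly to $A$, not just to $A\backslash\{a_{k-1}\}$. It gives $|2^{\wedge}A|\geqslant 3k-7$ together with the complete list of equality cases, leaving only the two mod-$3$ families to rule out. In each of them:
\begin{itemize}
\item $a_{k-1}=2k-2$, and $k-3\in A$ occurs at an index $s\leqslant k-4$;
\item $a_s+a_{k-1}=3k-5\equiv 1\pmod 3$;
\item $A\backslash\{a_{k-1}\}$ has only residues $0$ and $1$, so residue $1$ in $2^{\wedge}(A\backslash\{a_{k-1}\})$ arises only as a sum of one element $\equiv 0$ and one $\equiv 1$;
\item the largest such sum is $(k-3)+(2k-5)=3k-8<3k-5$.
\end{itemize}
Hence both extremal families violate the hypothesis, and $|2^{\wedge}A|\geqslant 3k-6$. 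This is the paper's entire proof; no analysis of $B$ and no injection are needed. You did mention the classification in \cite{Wang2025}, but aimed it at $A\backslash\{a_{k-1}\}$, where it does not apply, instead of at $A$.
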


\begin{proof}
By Theorem F we know that $|2^{\wedge}A|\geqslant3k-7$.
Assume that $|2^{\wedge}A|=3k-7$. If $k\equiv 0\pmod{3}$, then by Theorem F we have $a_{\frac{2k-6}{3}}=k-3\equiv0\pmod{3}$ and $a_{i}\equiv 1\pmod{3}$ for all $\frac{2k-3}{3}\leqslant i\leqslant k-1$,
it follows that
$$a_{\frac{2k-6}{3}}+a_{k-1}\not\in 2^{\wedge}(A\backslash\{a_{k-1}\}), $$
a contradiction.
If $k\equiv 1\pmod{3}$, then by Theorem F we have $a_{\frac{2k-8}{3}}=k-3\equiv1\pmod{3}$ and $a_{i}\equiv 0\pmod{3}$ for all $\frac{2k-5}{3}\leqslant i\leqslant k-1$,
it follows that
$$a_{\frac{2k-8}{3}}+a_{k-1}\not\in 2^{\wedge}(A\backslash\{a_{k-1}\}), $$
a contradiction.

Therefore, $|2^{\wedge}A|\geqslant 3|A|-6$.

This completes the proof of Lemma \ref{P4}.
\end{proof}

\begin{lem}[\cite{Wang2025}, Lemma 2.2]\label{L1}
For any integer $b\in B$, we have

(1) $b\not\in A\backslash\{a_{k-1}\}$, $2\mid b$, $\frac{b}{2}\in A\backslash\{a_{k-1}\}$;

(2) $\left|[0, b]\cap (A\backslash\{a_{k-1}\})\right|=\frac{b}{2}+1$. Moreover, $\left|\left\{i, b-i\right\}\cap (A\backslash\{a_{k-1}\})\right|=1$ for all $i=0, 1, \ldots, \frac{b}{2}$;

(3) Let $u$, $b$ be integers such that $b<k-2$ and $1\leqslant u\leqslant b$. If $2k-4+u\not\in 2^{\wedge}(A\backslash\{a_{k-1}\})$, then
$$|[b+1, 2k-5+u-b]\cap (A\backslash\{a_{k-1}\})|=k-2+\left\lfloor\frac{u}{2}\right\rfloor-b, $$
$$|\{i, 2k-4+u-i\}\cap (A\backslash\{a_{k-1}\})|=1, \ \ i=b+1, \ldots, k-2+\left\lfloor \frac{u}{2}\right\rfloor. $$

(4) If $b<2k-4$, then $a_{\frac{b}{2}+1}=b+1\in (A\backslash\{a_{k-1}\})$.
\end{lem}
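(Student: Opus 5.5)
Parts (1), (2) and (4) should follow from comparing two counts: pigeonhole applied to the sums $i+(b-i)$, and the local density hypothesis $a_i<2i$ for $1\leqslant i\leqslant k-2$. Part (3) should follow from the same comparison carried out near the top of the range. Throughout write $A'=A\backslash\{a_{k-1}\}$. Note that $a_{k-2}\leqslant 2k-5$, so $A'\subseteq[0,2k-5]$ and $|A'|=k-1$.

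\emph{Parts (1), (2), (4).} Let $b\in B$. Since $0\in A'$, $b\in A'$ would give $b=0+b\in 2^{\wedge}A'$; hence $b\notin A'$. As $b\notin 2^{\wedge}A'$, each pair $\{i,b-i\}$ with $i<b-i$ meets $A'$ in at most one element.

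First suppose $b$ is odd. Then $b\leqslant 2k-5$, and the pigeonhole bound gives $|[0,b]\cap A'|\leqslant (b+1)/2$. On the other hand, $i=(b+1)/2\leqslant k-2$ satisfies $a_i<2i=b+1$, so $a_i\leqslant b$ and $|[0,b]\cap A'|\geqslant i+1=(b+3)/2$. This contradiction shows $2\mid b$.

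Now let $b$ be even. The pairs with $i<b/2$ contribute at most $b/2$ elements, and the singleton $\{b/2\}$ contributes at most one more, so $|[0,b]\cap A'|\leqslant b/2+1$. Since $b/2\leqslant k-2$, density gives $a_{b/2}\leqslant b-1$, and therefore $|[0,b]\cap A'|\geqslant b/2+1$. Equality must hold throughout. Hence every pair $\{i,b-i\}$ meets $A'$ in exactly one element, including the singleton, so $b/2\in A'$. This proves (1) and (2).

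For (4), suppose $b<2k-4$. Then $b/2+1\leqslant k-2$, so density gives $a_{b/2+1}\leqslant b+1$. By (2), $[0,b]$ contains only $b/2+1$ elements of $A'$, so $a_{b/2+1}>b$. Hence $a_{b/2+1}=b+1$.

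\emph{Part (3).} Set $N=2k-4+u\notin 2^{\wedge}A'$. Consider the pairs $\{i,N-i\}$ for $i=b+1,\ldots,k-2+\lfloor u/2\rfloor$. When $u$ is even, the last of these is the singleton $\{N/2\}$.

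These pairs partition $I=[b+1,2k-5+u-b]$ into exactly $k-2+\lfloor u/2\rfloor-b$ classes. Each class meets $A'$ in at most one element, which gives the upper bound $|I\cap A'|\leqslant k-2+\lfloor u/2\rfloor-b$.

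For the matching lower bound, split $A'$ into three parts.
\begin{itemize}
\item By (2), $A'\cap[0,b]$ has exactly $b/2+1$ elements.
\item The part $A'\cap I$ is what we want to bound.
\item The remaining elements lie in $[2k-4+u-b,2k-5]$. Each such element $j$ has partner $N-j\in[u+1,b]$, and $N-j\notin A'$ because $N\notin 2^{\wedge}A'$. So the number of these elements is at most $|[u+1,b]\backslash A'|$.
\end{itemize}
Adding the three parts and using $|A'|=k-1$ gives a lower bound on $|I\cap A'|$. That bound involves $|A'\cap[0,u]|$, which I would control by using (2) once more together with the pairing $\{i,N-i\}$ for small $i$ (note $i\leqslant u\leqslant b<k-2$). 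The target is that the three counts force $|I\cap A'|\geqslant k-2+\lfloor u/2\rfloor-b$. Equality then holds, so every class meets $A'$ exactly once, which is the second assertion of (3).

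This lower-bound bookkeeping in (3) is the step I expect to be the main obstacle. The delicate point is matching the parity of $u$ through $\lfloor u/2\rfloor$ with the exact count in $[0,b]$ supplied by (2). If the direct count falls short, I would instead run the density argument with $a_{k-2}\leqslant 2k-5$ and a suitable intermediate index $a_i<2i$ to locate enough elements of $A'$ above $b$.
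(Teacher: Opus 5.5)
Your arguments for (1), (2) and (4) are correct. So is the upper bound in (3), obtained by partitioning $I=[b+1,2k-5+u-b]$ into the classes $\{i,N-i\}$.

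\textbf{The gap.} The lower bound in (3) is not proved, and the three-part count you propose cannot deliver it. That count gives $|A'\cap I|\geqslant (k-1)-(\frac{b}{2}+1)-|[u+1,b]\backslash A'|=k-1-b+u-|A'\cap[0,u]|$. To reach the target you would therefore need $|A'\cap[0,u]|\leqslant\lceil u/2\rceil+1$, and nothing yields this:
\begin{itemize}
\item The pairs $\{i,N-i\}$ with $i\leqslant u$ carry no information, because $N-i\geqslant 2k-4>a_{k-2}$.
\item The pair structure from (2) on $[0,b]$ does not bound $|A'\cap[0,u]|$ this way.
\end{itemize}
A concrete instance: take $k=7$, $A'=\{0,1,2,5,6,8\}$, $b=4$, $u=2$, $N=12$. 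All hypotheses hold, since $B=\{4\}$ and $12\notin 2^{\wedge}A'$. Yet $|A'\cap[0,2]|=3$, and your count gives only $|A'\cap[5,7]|\geqslant 1$, whereas the claim (true here) is $2$. The loss is in the top block $[N-b,2k-5]=[8,9]$. The pairing bound allows two elements of $A'$ there, but only one occurs. What rules out the second is local density ($a_{4}\leqslant 7$), not the condition $N\notin 2^{\wedge}A'$.

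\textbf{The fix.} The missing step is the density argument you only gesture at, applied at a specific index. Let $j=k-2+\lfloor u/2\rfloor-\frac{b}{2}$. Since $b$ is even and $1\leqslant u\leqslant b\leqslant k-3$, we have $1\leqslant j\leqslant k-2$. Hence
$a_{j}\leqslant 2j-1=2k-5+2\lfloor u/2\rfloor-b\leqslant 2k-5+u-b$.
So $a_{0},\ldots,a_{j}$ all lie in $[0,2k-5+u-b]$. Subtracting $|A'\cap[0,b]|=\frac{b}{2}+1$ from (2) gives $|A'\cap I|\geqslant k-2+\lfloor u/2\rfloor-b$ directly, with no three-part split. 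Combined with your upper bound, this forces equality and hence exactly one element of $A'$ in each class. Note that the hypothesis $N\notin 2^{\wedge}A'$ is used only for the upper bound.
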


\begin{lem}[\cite{Wang2025}, Lemma 2.3]\label{L2}
Assume that $m\geqslant 2$. For any $i\in \{0, \ldots, m-1\}$, we have $b_{i+1}\geqslant 2b_{i}+2$.
\end{lem}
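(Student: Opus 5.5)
The plan is to combine the two ``pairing'' structures from Lemma \ref{L1} (2), one for $b_i$ and one for $b_{i+1}$, and show that they are incompatible unless the gap between $b_i$ and $b_{i+1}$ is large. Write $A'=A\backslash\{a_{k-1}\}$ throughout.

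\textbf{Case $i=0$.} Here $b_0=0$ and the claim reads $b_1\geqslant 2$. This holds because every $b\in B$ is even by Lemma \ref{L1} (1), and $b\geqslant 1$ by the definition of $B$.

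\textbf{Case $1\leqslant i\leqslant m-1$.} Set $b=b_{i+1}$ and $t=b-b_i$. Both $b$ and $b_i$ are even, so $t$ is even and $t\geqslant 2$. Suppose, for contradiction, that $b\leqslant 2b_i$, i.e.\ $t\leqslant b_i$. By Lemma \ref{L1} (2) we have two exact pairing rules:
$$x\in A' \iff b-x\notin A' \quad (0\leqslant x\leqslant b), \qquad x\in A' \iff b_i-x\notin A' \quad (0\leqslant x\leqslant b_i).$$

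\textbf{Periodicity.} Composing these rules gives a $t$-periodicity of $A'$ on $[0,b]$. Take $0\leqslant j\leqslant b_i$. Since $b-(j+t)=b_i-j$, we get
$$j+t\in A' \iff b_i-j\notin A' \iff j\in A'.$$
Note that $j+t\leqslant b$, so the first rule applies at $x=j+t$.

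\textbf{Contradiction.} Now use the midpoints. By Lemma \ref{L1} (1), both $\frac{b}{2}$ and $\frac{b_i}{2}$ lie in $A'$. Put $y=\frac{b}{2}-t=\frac{b_i-t}{2}$; since $t\leqslant b_i$, this satisfies $0\leqslant y\leqslant b_i$. Applying the periodicity with $j=y$ gives $y\in A'$, because $y+t=\frac{b}{2}\in A'$. But $y$ and $b_i-y=\frac{b_i+t}{2}=\frac{b}{2}$ are two distinct points of $[0,b_i]$, distinct because $t\neq 0$. The pairing rule for $b_i$ says exactly one of them lies in $A'$, yet both do. This contradiction shows $t>b_i$, hence $b_{i+1}\geqslant 2b_i+1$, and evenness upgrades this to $b_{i+1}\geqslant 2b_i+2$.

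\textbf{Main obstacle.} The only delicate step is checking the index ranges where Lemma \ref{L1} (2) is applied: $j+t\leqslant b$ in the periodicity step, and $0\leqslant y\leqslant b_i$ together with $y\neq \frac{b_i}{2}$ in the final step. These are exactly where the hypothesis $t\leqslant b_i$ and the evenness of the $b$'s are used. If some boundary case slipped through, the fallback is a direct counting argument, comparing $|[0,b]\cap A'|=\frac{b}{2}+1$ with $|[0,b_i]\cap A'|=\frac{b_i}{2}+1$ using $b_i+1\in A'$ from Lemma \ref{L1} (4).
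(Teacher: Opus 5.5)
Your case $i=0$ is fine, but the case $i\geqslant 1$ has a genuine gap: the contradiction comes from misusing the pairing rule at the midpoint $x=\frac{b}{2}$, and once that is corrected there is no contradiction at all.

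\textbf{Where it breaks.} Lemma \ref{L1} (2) says $|\{x,b-x\}\cap A'|=1$ for $0\leqslant x\leqslant\frac{b}{2}$. For $x\neq\frac{b}{2}$ this is your rule $x\in A'\iff b-x\notin A'$. At $x=\frac{b}{2}$ it only says $\frac{b}{2}\in A'$, and your rule would read $\frac{b}{2}\in A'\iff\frac{b}{2}\notin A'$, which is false. The same exception occurs for $b_i$ at $\frac{b_i}{2}$.

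Hence $j+t\in A'\iff j\in A'$ holds only when $j\neq\frac{b_i}{2}$ and $j+t\neq\frac{b}{2}$, and it really fails at both excluded points:
\begin{itemize}
\item $\frac{b_i}{2}\in A'$ but $\frac{b_i}{2}+t\notin A'$, since its partner $b-(\frac{b_i}{2}+t)=\frac{b_i}{2}$ lies in $A'$;
\item $y\notin A'$ but $y+t\in A'$.
\end{itemize}
You apply periodicity exactly at $j=y$, where $y+t=\frac{b}{2}$ by definition, so the conclusion $y\in A'$ is unjustified. The correct rule for $b_i$ (valid at $y$, since $y\neq\frac{b_i}{2}$) gives $y\notin A'$, because $b_i-y=\frac{b}{2}\in A'$. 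Your list of delicate points checks $y\neq\frac{b_i}{2}$ but not $y+t\neq\frac{b}{2}$, and the latter is the one that fails.

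\textbf{Why your framework cannot be patched.} The two pairing structures together with the midpoints are consistent when $t\leqslant b_i$. The set $\{0,3,4,5,8,9\}$ satisfies the conclusions of Lemma \ref{L1} (1), (2) for both $b_i=6$ and $b=10$, with $t=4$. Adding $b_i+1\in A'$ does not help either: $\{0,2,4,5,7,9,11\}$ satisfies Lemma \ref{L1} (1), (2), (4) for both $b_i=8$ and $b=10$. So the density hypothesis $a_j<2j$ of (\ref{ee}) must be used directly.

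\textbf{How to close the argument.} Your fallback works once made precise as follows. Suppose $t\leqslant b_i$.
\begin{enumerate}
\item Then $t\leqslant\frac{b}{2}\leqslant b_i$. So $x\mapsto b-x$ maps $[b_i+1,b]$ bijectively onto $[0,t-1]$, and neither interval contains $\frac{b}{2}$.
\item Lemma \ref{L1} (2) for $b$ therefore gives $|[0,t-1]\cap A'|+|[b_i+1,b]\cap A'|=t$.
\item Lemma \ref{L1} (2) for $b$ and for $b_i$ gives $|[b_i+1,b]\cap A'|=(\frac{b}{2}+1)-(\frac{b_i}{2}+1)=\frac{t}{2}$. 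Hence $|[0,t-1]\cap A'|=\frac{t}{2}$.
\item But $1\leqslant\frac{t}{2}<k-2$, so (\ref{ee}) gives $a_{t/2}\leqslant t-1$. Then $a_0,\ldots,a_{t/2}$ are $\frac{t}{2}+1$ elements of $A'$ in $[0,t-1]$, a contradiction.
\end{enumerate}
Thus $b_{i+1}\geqslant 2b_i+1$, and parity upgrades this to $b_{i+1}\geqslant 2b_i+2$.
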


\begin{lem}\label{P18} Assume that $m\geqslant 3$. Let $k\geq 5$ be an integer and $A$ be as in (\ref{ee}) with $a_{3}=5$. Then
$$|2^{\wedge}A|\geqslant 3k-5-m+\left\lfloor\frac{b_{m-1}}{4}\right\rfloor. $$
In particular, if $b_{2}\geqslant 12$, $m\geqslant 3$ or $b_{2}\geqslant 10$, $m\geqslant 4$, then $|2^{\wedge}A|\geqslant 3k-5$.
\end{lem}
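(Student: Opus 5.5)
The plan is to improve the baseline estimate (\ref{ee2}) by exhibiting about $\lfloor b_{m-1}/4\rfloor$ further elements of $2^{\wedge}A$. Such elements must avoid both $[1,2k-4]$ and $a_{k-1}+A\backslash\{a_{k-1}\}$.

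\emph{Where the extra sums can live.} Write $A'=A\backslash\{a_{k-1}\}$. Every element of $2^{\wedge}A$ counted in (\ref{ee2}) lies in $[1,2k-4]$ or in $a_{k-1}+A'$. So the extra elements must be sums in $2^{\wedge}A'$ that lie in $[2k-3,\,a_{k-2}+a_{k-3}]$ and are not of the form $a_{k-1}+a_j$. The natural candidates are the integers $2k-4+u$ with $1\leqslant u\leqslant b_{m-1}$. The input is Lemma \ref{L1}(3), used in contrapositive form with $b=b_{m-1}$. Since $m\geqslant 3$, Lemma \ref{L2} gives $b_m\geqslant 2b_{m-1}+2$, and hence $b_{m-1}<k-2$, so Lemma \ref{L1}(3) applies with $b=b_{m-1}$. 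It says that whenever $2k-4+u\notin 2^{\wedge}A'$, the set $A'$ has the rigid complementary structure
$$|\{i,2k-4+u-i\}\cap A'|=1, \qquad i=b_{m-1}+1,\ldots,k-2+\lfloor u/2\rfloor .$$

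\emph{Many of the candidates are hit.} I would show that two such failures cannot occur for $u$ and $u+1$ both in $[1,b_{m-1}]$, or more robustly for $u$ and $u+2$. Two failures give two overlapping complementarity patterns on $[b_{m-1}+1,\,k-2+\lfloor u/2\rfloor]$. Composing them forces $A'$ to be periodic with period $1$ or $2$ on a long stretch. That contradicts the count $|[0,b]\cap A'|=b/2+1$ from Lemma \ref{L1}(2) applied at $b=b_m$, combined with Lemma \ref{L1}(4), which gives $b_{m-1}+1\in A'$. The hypothesis $a_3=5$ fixes $A'\cap[0,5]$ up to the choice of $a_1,a_2$, and I expect it to rule out the small exceptional configurations (the $3\mathbb{Z}$-type patterns of Theorem F). The conclusion I aim for is that at least $\lfloor b_{m-1}/2\rfloor$ of the integers $2k-4+u$, $1\leqslant u\leqslant b_{m-1}$, lie in $2^{\wedge}A'$.

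\emph{Removing the overlap with $a_{k-1}+A'$.} A hit $2k-4+u$ may coincide with $a_{k-1}+a_j$. Since $a_{k-1}\geqslant 2k-2$, this forces $a_j\leqslant u-2<b_{m-1}$. By Lemma \ref{L1}(2), $A'\cap[0,b_{m-1}]$ has exactly $b_{m-1}/2+1$ elements, and these are paired complementarily inside $[0,b_{m-1}]$. Coincidences with a fixed shift $a_{k-1}-(2k-4)$ can therefore absorb at most roughly half of the hits. So at least $\lfloor b_{m-1}/4\rfloor$ new elements survive, which together with (\ref{ee2}) gives the bound. For the ``in particular'' part: $b_2\geqslant 12$, $m\geqslant 3$ gives $b_{m-1}\geqslant 12$, so $\lfloor b_{m-1}/4\rfloor\geqslant 3\geqslant m$ when $m=3$. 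For larger $m$, Lemma \ref{L2} makes $b_{m-1}$ grow exponentially, so $\lfloor b_{m-1}/4\rfloor\geqslant m$. The case $b_2\geqslant 10$, $m\geqslant 4$ is the same, since then $b_{m-1}\geqslant b_3\geqslant 22$.

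The main obstacle is the overlap step. The bookkeeping must show that coincidences $2k-4+u=a_{k-1}+a_j$ and the failures from the second step do not conspire to destroy more than half of the $b_{m-1}/2$ hits. My first attempt would be a pairing argument: use the complementary pairs $\{i,b_{m-1}-i\}$ of Lemma \ref{L1}(2) to match each lost candidate with a distinct surviving one.
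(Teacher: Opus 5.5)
\textbf{There is a genuine gap, and it sits in the overlap step you flagged.} The failure is quantitative. Write $A'=A\backslash\{a_{k-1}\}$ as you do, and take $a_{k-1}=2k-2$. Condition (\ref{ee}) forces $a_1=1$, so the pair $\{1,b_{m-1}-1\}$ in Lemma \ref{L1}(2) gives $b_{m-1}-1\notin A'$. Hence $|A'\cap[0,b_{m-1}-2]|=b_{m-1}/2+1$. The translate $a_{k-1}+A'$ therefore already occupies $b_{m-1}/2+1$ of the $b_{m-1}$ positions $2k-4+[1,b_{m-1}]$.

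Your target of $\lfloor b_{m-1}/2\rfloor$ hits of $2^{\wedge}A'$ in that window is then compatible with every hit lying in $a_{k-1}+A'$, which leaves zero new elements. The claim that coincidences absorb at most half of the hits is unsupported: the pairs $\{i,b_{m-1}-i\}$ say nothing about which shifted positions are hit. Density $1/2$ is at or below break-even against the shifted copy of $A'\cap[0,b_{m-1}]$. So an argument like your second step (no failures at both $u$ and $u+1$) cannot produce any gain, even if completed. Your guess that $a_3=5$ only rules out small exceptional configurations also misses its real role.

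\textbf{What the paper uses instead.} The paper needs a hit density of about $3/4$, and that is where $a_3=5$ enters. By Lemma 2.7 of \cite{Wang2025}, the window $2k-4+[1,b_{m-1}]$ contains at least $b_{m-1}/2+\lfloor b_{m-1}/4\rfloor$ elements of $2^{\wedge}A$. By Proposition 2.8 of \cite{Wang2025}, at least one of $2k-3+b_{m-1}$, $2k-2+b_{m-1}$ lies in $2^{\wedge}A'$.

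The bookkeeping also avoids subtracting overlaps. It counts four disjoint pieces:
\begin{itemize}
\item $[1,2k-4]\cap 2^{\wedge}A'$, of size $2k-4-m$;
\item the window above;
\item the pair $\{2k-3+b_{m-1},2k-2+b_{m-1}\}$;
\item only the part $a_{k-1}+(A'\cap[b_{m-1}+1,a_{k-2}])$ of the translate. It has $k-2-b_{m-1}/2$ elements by Lemma \ref{L1}(2), and all of them exceed $2k-2+b_{m-1}$.
\end{itemize}
Summing gives $3k-5-m+\lfloor b_{m-1}/4\rfloor$. Your derivation of $b_{m-1}<k-2$ and your treatment of the ``in particular'' part via Lemma \ref{L2} are fine.
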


\begin{proof} Noting that
$$[1, 2k-4]\cap 2^{\wedge}(A\backslash\{a_{k-1}\}), \ \ 2k-4+[1, b_{m-1}], $$
$$\{2k-3+b_{m-1}, 2k-2+b_{m-1}\}, \ \ a_{k-1}+[b_{m-1}+1, a_{k-2}]\cap A $$
are  mutually disjoint parts. We have
$$|[1, 2k-4]\cap 2^{\wedge}(A\backslash\{a_{k-1}\})|=|[1, 2k-4]\backslash B|=2k-4-m. $$
By $a_{3}=5$ and Lemma 2.7 of \cite{Wang2025} we have $2k-4+[1, b_{m-1}]$ provides at least $\frac{b_{m-1}}{2}+\left\lfloor\frac{b_{m-1}}{4}\right\rfloor$ integers of $2^{\wedge}A$.
By Proposition 2.8 of \cite{Wang2025}, at least one of $2k-3+b_{m-1}$ and $2k-2+b_{m-1}$ belongs to $2^{\wedge}(A\backslash\{a_{k-1}\})$.
By Lemma \ref{L1} (2), we have
$$\left|[0, b_{m-1}]\cap (A\backslash\{a_{k-1}\})\right|=\frac{b_{m-1}}{2}+1, $$
thus,
$$|[b_{m-1}+1, a_{k-2}]\cap (A\backslash\{a_{k-1}\})|=k-2-\frac{b_{m-1}}{2}. $$
Hence, $a_{k-1}+[b_{m-1}+1, a_{k-2}]\cap A$ provides $k-2-\frac{b_{m-1}}{2}$ integers of $2^{\wedge}A$.
To sum up, we obtain at least $3k-5-m+\left\lfloor\frac{b_{m-1}}{4}\right\rfloor$ integers of $2^{\wedge}A$.

By Lemma \ref{L2} we have
$$b_{m-1}\geqslant 2b_{m-2}+2\geqslant \cdots\geqslant 2^{m-3}b_{2}+\sum_{i=1}^{m-3}2^{i}=2^{m-3}b_{2}+2^{m-2}-2. $$
If $b_{2}\geqslant 12$, $m\geqslant 3$ or $b_{2}\geqslant 10$, $m\geqslant 4$, then $\left\lfloor\frac{b_{m-1}}{4}\right\rfloor\geqslant m$,
thus, $|2^{\wedge}A|\geqslant 3k-5$.

This completes the proof of Lemma \ref{P18}.
\end{proof}

\begin{pro}\label{P16}
Let $k\geqslant 7$ be an integer and $A$ be as in (\ref{ee}) with $a_{2}=3$, $a_{3}=5$.
If $(a_{4}, a_{5})\neq (6, 8)$, then $|2^{\wedge}A|\geqslant 3k-5$.
\end{pro}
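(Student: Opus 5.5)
We work in the setting (\ref{ee}) with $a_1=1$ (forced, since $a_1<2$), $a_2=3$, $a_3=5$. The plan is to start from the counting bound (\ref{ee2}), $|2^{\wedge}A|\geqslant 3k-5-m$, and recover $m$ extra elements of $2^{\wedge}A$ outside the sets $[1,2k-4]\backslash B$ and $a_{k-1}+A\backslash\{a_{k-1}\}$ used there.

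\emph{Structure of $B$.} By Lemma \ref{L1}(1), every $b\in B$ is even, $b\notin A$ and $b/2\in A$. Since $2=1+1$ is not a restricted sum, $2\in B$ and $b_1=2$. Next, $4=1+3$, $6=1+5$ and $8=3+5$, so none of these lies in $B$. Lemma \ref{L2} gives $b_2\geqslant 2b_1+2=6$, and together with the previous line this yields $b_2\geqslant 10$. By Lemma \ref{L1}(2), $b_2=10$ would require $|\{i,10-i\}\cap A|=1$ for all $i\leqslant 5$. This fails at $i=5$, because $5\in A$ and $5$ is its own partner, so the count $|[0,10]\cap A|=6$ has to be checked directly. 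If $b_2\geqslant 12$ and $m\geqslant 3$, Lemma \ref{P18} already gives $|2^{\wedge}A|\geqslant 3k-5$. The same holds if $b_2\geqslant 10$ and $m\geqslant 4$. It therefore remains to handle $m\leqslant 2$, and $m=3$ with $b_2=10$.

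\emph{Small $m$.} For $m=1$ we need $2$ extra sums, and for $m=2$ we need $3$. Natural candidates lie just above $2k-4$: namely $2k-3,2k-2,2k-1,2k,2k+1$, all below $a_{k-1}+1\leqslant a_{k-1}+a_1$, together with one sum from $a_{k-1}+A$-type translates. Suppose such an element is missing. Then the pigeonhole facts (\ref{eq2})--(\ref{eq6}) give perfect pairings $|\{i,c-i\}\cap A|=1$ on long ranges. Combined with $a_i<2i$ and the known initial segment $\{0,1,3,5\}$, these pairings determine $a_4,a_5$. For example, pairing for $2k-2$ or $2k-1$ starting from $i=3$ or $4$ propagates backward and forces $4\notin A$ or $7\notin A$, and so on. The aim is to show that two of these top sums can fail simultaneously only if $a_4=6$ and $a_5=8$. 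Concretely, the idea is to case split on $a_4\in\{6,7\}$ (using $a_4<8$) and then on $a_5\in\{7,8,9\}$. In each branch other than $(6,8)$ we exhibit an element of $2k-4+[1,5]$ realized as $a_{k-2}+a_j$ or $a_{k-3}+a_j$ with small $j$. We also count the elements $2k-4+[1,b_{m}]$ via Lemma 2.7 of \cite{Wang2025}, exactly as in the proof of Lemma \ref{P18}, with $b_{m-1}$ replaced by whichever $b_i$ is available. This step gives at least $\frac{b}{2}+\lfloor b/4\rfloor$ new sums, which is $\geqslant 2$ already when $b\geqslant 4$.

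\emph{Main obstacle.} The delicate case is $m=3$ with $b_2=10$, and in general the case $m=2$ with $b_2$ small ($10$ or $12$). Here $\lfloor b_{m-1}/4\rfloor$ falls short of $m$ by exactly one. We plan to gain that last element from the upper end: use Proposition 2.8 of \cite{Wang2025} for both $b_{m-1}$ and $b_m$, or show that $a_{k-1}+a_{k-2}$ lies outside the previously counted windows. Our first attempt will be the pairing relations of Lemma \ref{L1}(2),(3) for $b=10$. These force $[0,10]\cap A$ to be one of very few sets; the candidates containing $\{0,1,3,5\}$ with $6$ elements and $10\notin A$ are essentially $\{0,1,3,5,6,8\}$ or $\{0,1,3,5,7,9\}$. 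The first is exactly the excluded configuration $(a_4,a_5)=(6,8)$, while the second should yield an extra sum near $2k-4+11$ through Lemma \ref{L1}(3).
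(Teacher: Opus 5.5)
\textbf{There is a genuine gap.} Your overall framework is the one the paper uses: the baseline (\ref{ee2}), Lemma~\ref{P18} for $m\geqslant 3$, and the pairings (\ref{eq2})--(\ref{eq6}) for $m\leqslant 2$. But neither half is completed, and several steps are wrong.

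For $m\geqslant 3$, your ``delicate case'' $b_2=10$ does not exist. Lemma~\ref{L1}(2) does not fail at $i=5$: the pair $\{5,5\}$ meets $A$ in exactly one element, as it must since $b_2/2\in A$. Running the pairing $|\{i,10-i\}\cap A|=1$ for $0\leqslant i\leqslant 5$, using $0,1,3,5\in A$ and $2,4\notin A$, forces $[0,10]\cap A=\{0,1,3,5,6,8\}$, i.e.\ $(a_4,a_5)=(6,8)$. Likewise $b_2=12$ forces $[0,12]\cap A=\{0,1,3,5,6,8,10\}$. Your second candidate $\{0,1,3,5,7,9\}$ is incompatible with $10\in B$, since $10=3+7$. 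Equivalently, in each admissible configuration $(a_4,a_5)\in\{(6,7),(6,9),(7,8),(7,9)\}$ one has $10,12\in 2^{\wedge}A$. Hence $b_2\geqslant 14$, and Lemma~\ref{P18} settles every $m\geqslant 3$ at once. This is how the paper proceeds; your write-up instead leaves $m=3$ open with only a vague plan.

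The case $m\leqslant 2$ is where the real work lies, and your sketch does not do it.
\begin{itemize}
\item By (\ref{ee2}) you need $m$ extra sums, not $m+1$, and your stated aim is false. Take $A=\{0\}\cup(2[1,k-2]-1)\cup\{2k-1\}$. Then $(a_4,a_5)=(7,9)$, $m=1$, $2k-3,2k+1\notin 2^{\wedge}A$, and $|2^{\wedge}A|=3k-5$ exactly, so two extra sums cannot be found.
\item Only $2k-3$ is automatically outside $a_{k-1}+(A\backslash\{a_{k-1}\})$. Your claim that $2k-3,\ldots,2k+1$ lie below $a_{k-1}+1$ fails for $a_{k-1}\in\{2k-2,2k-1,2k\}$. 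Being below $a_{k-1}+1$ would not suffice anyway, since $a_{k-1}=a_{k-1}+a_0$ is in the translate. For $a_{k-1}=2k-2$ the translate contains $2k-2,2k-1,2k+1,2k+3$. So you must split on $a_{k-1}$ and pick, in each branch, sums that avoid the translate.
\item The hardest branch ($m=2$, $2k-3\in 2^{\wedge}A$, $a_{k-1}=2k-2$) needs a real argument. The sums $2k,2k+2$ avoid the translate because $2,4\notin A$. If both were missing from $2^{\wedge}A$, alternating (\ref{eq5}) with the pigeonhole condition for $2k+2$ gives $k,k-2,k-4,\ldots\in A$. That forces $B=\{2\}$, contradicting $m=2$.
\item Take $(a_4,a_5)=(7,9)$ with $2k-3\notin 2^{\wedge}A$ (so $a_{k-2}=2k-5$) and $a_{k-1}=2k-1$. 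Then $a_{k-2}+5$, $a_{k-2}+7$, $a_{k-2}+9$ all lie in the translate. The paper first treats the all-odd set separately. Otherwise it takes the first index $j$ with $a_j\neq 2j-1$, so that $a_j=2j-2$ and $2j-6\notin A$. This gives $a_{k-2}+a_j=2k+2j-7\notin a_{k-1}+A$.
\end{itemize}
None of this is in your plan.

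Finally, your fallback of running the Lemma~2.7 window with $b_m$ instead of $b_{m-1}$ is unjustified. Lemma~\ref{L1}(3) requires $b<k-2$. This holds for $b_{m-1}$ because $2b_{m-1}+2\leqslant b_m\leqslant 2k-4$, but not in general for $b_m$. Moreover, that window overlaps the translate, so the net gain in Lemma~\ref{P18} over (\ref{ee2}) is only $\lfloor b_{m-1}/4\rfloor$, which is $0$ for $m\leqslant 2$.
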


\begin{proof}
Since $a_{4}<8$, $a_{5}<10$, it is sufficient to consider the following cases:

(i) $a_{1}=1$, $a_{2}=3$, $a_{3}=5$, $a_{4}=6$, $a_{5}=7$;

(ii) $a_{1}=1$, $a_{2}=3$, $a_{3}=5$, $a_{4}=6$, $a_{5}=9$;

(iii) $a_{1}=1$, $a_{2}=3$, $a_{3}=5$, $a_{4}=7$, $a_{5}=8$;

(iv) $a_{1}=1$, $a_{2}=3$, $a_{3}=5$, $a_{4}=7$, $a_{5}=9$.

It is easy to see that if $k=7, 8$, then $|2^{\wedge}A|\geqslant 3k-5$. Assume that $k\geqslant 9$.

By Lemma \ref{L1} we have $2\mid b$ for all $b\in B$. If $m\geqslant 3$, then $b_2\geqslant 14$. By Lemma \ref{P18} we have $|2^{\wedge}A|\geqslant 3k-5$.

Assume that $m=1$ or $2$. The case (i)-(iii) are similar, we only prove (i). We divide into the following cases:

{\bf Case 1}. $2k-3\not\in 2^{\wedge}A$. Since $2, 4\not\in A$, thus by (\ref{eq2}) we have
$$a_{k-2}=2k-5, \ \ a_{k-3}=2k-7.$$
Since  $a_{1}=1$, $a_{2}=3$, $a_{3}=5$, $a_{4}=6$, $a_{5}=7$, we have
$$2k-2, \ \ 2k-1, \ \ 2k, \ \ 2k+1, \ \ 2k+2\in 2^{\wedge}A. $$
If $a_{k-1}\geqslant 2k$, then $2k-2, 2k-1\not\in a_{k-1}+A\backslash\{a_{k-1}\}$.
If $a_{k-1}=2k-1$, then $2k-2, 2k+1\not\in a_{k-1}+A\backslash\{a_{k-1}\}$.
If $a_{k-1}=2k-2$, then $2k, 2k+2\not\in a_{k-1}+A\backslash\{a_{k-1}\}$.
Combining with (\ref{ee2}) we have $|2^{\wedge}A|\geqslant 3k-5$.

{\bf Case 2}. $2k-3\in 2^{\wedge}A$. Clearly,
$$2k-3\not\in a_{k-1}+A\backslash\{a_{k-1}\}. $$
If $m=1$, then combining with (\ref{ee2}) we have $|2^{\wedge}A|\geqslant 3k-5$.

Assume that $m=2$. We divide into the following two cases.

{\bf Case 2.1}. $a_{k-1}\geqslant 2k-1$. If $2k-2\in 2^{\wedge}A$, then by (\ref{ee2}) and $2k-3, 2k-2\not\in a_{k-1}+A\backslash\{a_{k-1}\}$,
we have $|2^{\wedge}A|\geqslant 3k-5$.

Assume that $2k-2\not\in 2^{\wedge}A$. Noting that $4\not\in A$, thus by (\ref{eq3}) we have $a_{k-2}=2k-6$. Since $a_{3}=5$, $a_{4}=6$, $a_{5}=7$, we have
$$2k-1, \ \ 2k, \ \ 2k+1\in 2^{\wedge}A. $$
If $a_{k-1}\geqslant 2k$, then $2k-1\not\in a_{k-1}+A\backslash\{a_{k-1}\}$.
If $a_{k-1}=2k-1$, then $2k+1\not\in a_{k-1}+A\backslash\{a_{k-1}\}$.
Combining with (\ref{ee2}) we have $|2^{\wedge}A|\geqslant 3k-5$.

{\bf Case 2.2}. $a_{k-1}=2k-2$. We shall show that $\{2k, 2k+2\}\cap 2^{\wedge}A\neq \emptyset$. Otherwise, we have
\begin{equation}\label{P16-2}|\{i, 2k+2-i\}\cap A|\leqslant 1, \ \ i=6, \ldots, k+1. \end{equation}

By (\ref{eq5}) we have $k\in A$. If $k$ is even, then
$$k\in A\overset{(\ref{P16-2})}\longrightarrow k+2\not\in A\overset{(\ref{eq5})}\longrightarrow k-2\in A\longrightarrow\cdots\longrightarrow 6\in A, $$
thus,
$$\{0, 1, 3, 5, 7\}\cup 2\left[3, \frac{k}{2}\right]\subseteq A\backslash\{a_{k-1}\}. $$
If $k+1\in A\backslash\{a_{k-1}\}$, then $[1, 2k-4]\backslash 2^{\wedge}(A\backslash\{a_{k-1}\})=\{2\}$, a contradiction.
Hence, $k+1\notin A$. By (\ref{eq5}) we have $k-1\in A$, thus, $[1, 2k-4]\backslash 2^{\wedge}(A\backslash\{a_{k-1}\})=\{2\}$,
which is also impossible.

If $k$ is odd, then
$$k\in A\overset{(\ref{P16-2})}\longrightarrow k+2\not\in A\overset{(\ref{eq5})}\longrightarrow k-2\in A\longrightarrow\cdots\longrightarrow 7\in A\longrightarrow 5\in A. $$
Combining with $a_{0}=0$, $a_{1}=1$, $a_{2}=3$, we have
$$2\left[0, \frac{k-1}{2}\right]+1\subseteq A\backslash\{a_{k-1}\}. $$
If $k+1\in A\backslash\{a_{k-1}\}$, then $[1, 2k-4]\backslash 2^{\wedge}(A\backslash\{a_{k-1}\})=\{2\}$, a contradiction.
Hence, $k+1\notin A$. By (\ref{eq5}) we have $k-1\in A$, thus, $[1, 2k-4]\backslash 2^{\wedge}(A\backslash\{a_{k-1}\})=\{2\}$,
which is also impossible.

So,
$$\{2k, 2k+2\}\cap 2^{\wedge}A\neq \emptyset. $$
Since $2, 4\notin A$, we have $2k, 2k+2\not\in a_{k-1}+A\backslash\{a_{k-1}\}$.
By (\ref{ee2}) we have $|2^{\wedge}A|\geqslant 3k-5$.

Now, we prove (iv). If $a_{i}=2i-1$ for all $i=1, \ldots, k-2$, then
$$A=\{0\}\cup (2\left[1, k-2\right]-1)\cup\{a_{k-1}\}, $$
thus, $|2^{\wedge}A|\geqslant 3k-5$.

Let $j$ be the smallest integer such that $a_{j}\neq 2j-1$. Then $j\geqslant 6$, thus, $2j-6\notin A$.
Since $a_{j-1}=2j-3$ and $a_{j}<2j$, we have $a_{j}=2j-2$.

The proof of case $2k-3\in 2^{\wedge}A$ is similar to Case 2, we omit it.

Now we assume that $2k-3\not\in 2^{\wedge}A$. Since $2, 4\not\in A$, thus by (\ref{eq2}) we have $a_{k-2}=2k-5$, $a_{k-3}=2k-7$.
Since  $a_{1}=1$, $a_{2}=3$, $a_{3}=5$, $a_{4}=7$, $a_{5}=9$ and $a_{j}=2j-2$, we have
$$2k-2, \ \ 2k, \ \ 2k+2, \ \ 2k+4, \ \ 2k+2j-7\in 2^{\wedge}A. $$
If $a_{k-1}\geqslant 2k+1$, then $2k-2, 2k\not\in a_{k-1}+A\backslash\{a_{k-1}\}$.
If $a_{k-1}=2k$, then $2k-2, 2k+2\not\in a_{k-1}+A\backslash\{a_{k-1}\}$.
If $a_{k-1}=2k-1$, then by $2j-6\notin A$ we have $2k-2, 2k+2j-7\not\in a_{k-1}+A\backslash\{a_{k-1}\}$.
If $a_{k-1}=2k-2$, then $2k, 2k+2\not\in a_{k-1}+A\backslash\{a_{k-1}\}$.
By (\ref{ee2}) we have $|2^{\wedge}A|\geqslant 3k-5$.

This completes the proof of Proposition \ref{P16}.
\end{proof}

\begin{lem}\label{L16}
Let $k\geqslant 10$ be an integer and $A$ be as in (\ref{ee}) satisfying $a_{2}=3$, $a_{3}=5$, $a_{4}=6$, $a_{5}=8$ and $a_{6}=11$.
If there exists an integer $n\geqslant \max(2k, a_{k-1}+1)$ such that $n+a_{i}\in 2^{\wedge}A$ for all $i=0, \ldots, k-4$, then $2k-3, 2k-2, 2k-1\in 2^{\wedge}A$.
\end{lem}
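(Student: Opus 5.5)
The approach is by contradiction, one case for each of $2k-3$, $2k-2$, $2k-1$. Assuming that value is missing from $2^{\wedge}A$, the pigeonhole facts (\ref{eq2}), (\ref{eq3}), (\ref{eq4}) fix the top of $A\backslash\{a_{k-1}\}$. Then the largest shifted sums $n+a_i$ (with $i$ close to $k-4$) turn out to be too large to be sums of two elements of $A\backslash\{a_{k-1}\}$, so they must be of the form $a_{k-1}+a_j$. This forces a rigid translation structure that clashes with the prescribed initial segment $\{0,1,3,5,6,8,11\}$.

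First I fix the beginning of $A$. Since $a_1<2$, $a_1=1$, so
$$\{0,1,3,5,6,8,11\}\subseteq A,\qquad 2,4,7,9,10\notin A.$$
Put $d=n-a_{k-1}\geqslant 1$. Each element $n+a_i$ ($0\leqslant i\leqslant k-4$) of $2^{\wedge}A$ falls into one of two types:
\begin{itemize}
\item it equals $a_{k-1}+a_j$, i.e.\ $a_i+d\in A\backslash\{a_{k-1}\}$;
\item it equals $x+y$ with $x,y\in A\backslash\{a_{k-1}\}$ distinct, which needs $2k+a_i\leqslant n+a_i\leqslant a_{k-2}+a_{k-3}$.
\end{itemize}

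\textbf{Case $2k-3\notin 2^{\wedge}A$.} By (\ref{eq2}), applied to the pairs $\{i,2k-3-i\}$:
\begin{itemize}
\item $2k-4,2k-6,2k-8,2k-9,2k-11,2k-14\notin A$ (from $1,3,5,6,8,11\in A$);
\item $2k-5,2k-7,2k-10,2k-12,2k-13\in A$ (from $2,4,7,9,10\notin A$).
\end{itemize}
Hence
$$a_{k-2}=2k-5,\quad a_{k-3}=2k-7,\quad a_{k-4}=2k-10,\quad a_{k-5}=2k-12,\quad a_{k-6}=2k-13.$$
Any sum of two distinct elements of $A\backslash\{a_{k-1}\}$ is at most $4k-12$, and the only sums near the top are $4k-12$, $4k-15$ and $4k-17$. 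Since $n\geqslant 2k$:
\begin{itemize}
\item $n+a_{k-4}\geqslant 4k-10$ must be of the first type, so $2k-10+d\in\{2k-7,2k-5\}$, i.e.\ $d\in\{3,5\}$.
\item $n+a_{k-5}\geqslant 4k-12$ and $n+a_{k-6}\geqslant 4k-13$ must then either equal one of the few top two-element sums or satisfy $a_i+d\in A$. Using the known holes $2k-6,2k-8,2k-9,2k-11$, I expect both values of $d$ to be excluded. For example, $d=3$ sends $2k-12$ to $2k-9\notin A$, and $d=5$ sends $2k-13$ to $2k-8\notin A$. I will check the exceptional coincidences with $4k-12$ and $4k-15$ by hand, using $n\geqslant\max(2k,a_{k-1}+1)$.
\end{itemize}

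\textbf{Cases $2k-2\notin 2^{\wedge}A$ and $2k-1\notin 2^{\wedge}A$.} These go the same way. Relation (\ref{eq3}) pairs $i$ with $2k-2-i$ for $3\leqslant i\leqslant k-1$, and (\ref{eq4}) pairs $i$ with $2k-1-i$ for $4\leqslant i\leqslant k-1$. Here $a_2=3$ as required, and the hypothesis $a_6=11$ supplies one more known element and hole. Together these determine the top five or six elements of $A\backslash\{a_{k-1}\}$. Again $n+a_{k-4}$ is forced to be of the first type, which leaves a small set of admissible $d$. Each such $d$ is then killed by an element $a_i$ with $i\in\{k-5,k-6,k-7\}$ whose translate $a_i+d$ lands in a hole, while $n+a_i$ exceeds all available two-element sums. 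The condition $k\geqslant 10$ keeps the top configuration disjoint from the initial block $[0,11]$, so the two pieces of structure do not interfere.

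The main obstacle is the bookkeeping in the second step of each case. For the indices $i$ where $n+a_i$ sits right at the threshold $a_{k-2}+a_{k-3}$, or just below it, a representation by two elements of $A\backslash\{a_{k-1}\}$ is still possible. There I must use the exact top structure, and possibly the lower bound $n\geqslant a_{k-1}+1\geqslant 2k-1$ when $a_{k-1}$ is small, to exclude each such representation individually. If a residual configuration survives, I will iterate the translation argument downward, showing that $a_i+d\in A$ for all $i$ in a long range. Together with the rigid initial segment $0,1,3,5,6,8,11$, this should give the final contradiction: for instance, $a_1+d$ and $a_2+d$ both in $A$ contradicts the known holes near the bottom.
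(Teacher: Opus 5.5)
Your proposal is correct and follows the paper's own proof: assume the value is missing, use (\ref{eq2})--(\ref{eq4}) to fix the top of $A\backslash\{a_{k-1}\}$, use $n+a_{k-4}$ to force $d=n-a_{k-1}\in\{3,5\}$ (resp.\ $\{2,5\}$ for $2k-2$ and $2k-1$), then kill each $d$ with $a_{k-5}$ or $a_{k-6}$; the fallback iteration you mention is never needed. Two small points to tighten: contrary to your remark, the top configuration is not disjoint from $[0,11]$ for small $k$, so $k=10,11$ must be checked directly (mostly the case assumption fails outright, e.g.\ $2k-3=6+11$ resp.\ $8+11$), and the coincidence $n+a_{k-5}=4k-12$ when $d=3$ is excluded by $n=a_{k-1}+3\geqslant 2k+1$ (from $a_{k-1}\geqslant 2k-2$), not by $n\geqslant a_{k-1}+1$.
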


\begin{proof}
Assume that $2k-3\notin 2^{\wedge}A$. If $k=10$, then $2k-3=a_{4}+a_{6}\in 2^{\wedge}A$, a contradiction.
If $k=11$, then $2k-3=a_{5}+a_{6}\in 2^{\wedge}A$, a contradiction.

Let $k\geqslant 12$. Then by (\ref{eq2}) we have
$$a_{k-2}=2k-5, \ \ a_{k-3}=2k-7, \ \ a_{k-4}=2k-10, \ \ a_{k-5}=2k-12, \ \ a_{k-6}=2k-13. $$
Since $n+a_{i}\in 2^{\wedge}A$ for $i=0, \ldots, k-4$, we have
$$n+a_{k-4}=a_{k-3}+a_{k-1}, \ \ a_{k-3}+a_{k-2}  \text{ or }  a_{k-2}+a_{k-1}. $$
By $n\geqslant 2k$ we have
$$n=a_{k-1}+3 \text{ or } a_{k-1}+5. $$
Since $a_{k-1}\geqslant 2k-2$, we have $a_{k-5}+n\geqslant 4k-11$,
thus, $n=a_{k-1}+5\geqslant 2k+3$ and
$$a_{k-5}+n=a_{k-1}+a_{k-3}. $$
Hence, $a_{k-6}+n\notin 2^{\wedge}A$, a contradiction. Therefore, $2k-3\in 2^{\wedge}A$.

Similarly, by (\ref{eq3}), (\ref{eq4}) we can show that $2k-2, 2k-1\in 2^{\wedge}A$.

This completes the proof of Lemma \ref{L16}.
\end{proof}

\begin{pro}\label{P16.1}
Let $k\geqslant 7$ be an integer and $A$ be as in (\ref{ee}) satisfying $a_{2}=3$, $a_{3}=5$, $a_{4}=6$, $a_{5}=8$.
If there exists an integer $n\geqslant \max(2k, a_{k-1}+1)$ such that $n+a_{i}\in 2^{\wedge}A$ for all $i=0, \ldots, k-4$,
then $|2^{\wedge}A|\geqslant 3k-5$.
\end{pro}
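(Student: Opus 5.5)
Let $A$ be as in (\ref{ee}) with $a_{1}=1$, $a_{2}=3$, $a_{3}=5$, $a_{4}=6$, $a_{5}=8$. We start from the counting bound (\ref{ee2}), $|2^{\wedge}A|\geqslant 3k-5-m$. We must make up the deficit $m$ either by exhibiting extra elements of $2^{\wedge}A$ outside $[1,2k-4]\cup(a_{k-1}+A\backslash\{a_{k-1}\})$, or by showing that $m$ is small. Small $k$ (say $k\leqslant 9$) will be handled by direct inspection, as in Proposition \ref{P16}, so we assume $k\geqslant 10$.

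\emph{Bounding $m$.} By Lemma \ref{L1}, every $b\in B$ is even, $b/2\in A$, and $b+1\in A$. Since $2,4\notin A$ and $a_{1}=1$, we get $2=a_{0}+\ldots$ only if $2\in 2^{\wedge}$, which fails, so $b_{1}=2$. By Lemma \ref{L2}, $b_{2}\geqslant 6$; but $6=1+5\in 2^{\wedge}A$, so in fact $b_{2}\geqslant 10$ (since $8=3+5$). If $m\geqslant 4$, or $m\geqslant3$ with $b_{2}\geqslant 12$, Lemma \ref{P18} gives the claim. The leftover cases are $m\leqslant 2$, and $m=3$ with $b_{2}=10$. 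In the latter case Lemma \ref{L1}(4) gives $11\in A$, i.e.\ $a_{6}=11$ (since $5\in A$ and $9,10\notin A$). We treat this together with the general $a_{6}=11$ case below.

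\emph{The hypothesis on $n$.} This is the key new input. The elements $n+a_{i}$, $0\leqslant i\leqslant k-4$, are $k-3$ distinct elements of $2^{\wedge}A$ lying at or above $2k$. Most of them lie inside $a_{k-1}+A$, so they are not extra in themselves. Their use is to force structure near the top of $A$, as in the proof of Lemma \ref{L16}. Whenever one of $2k-3,2k-2,2k-1$ fails to lie in $2^{\wedge}A$, the facts (\ref{eq2})--(\ref{eq4}) pin down $a_{k-2},a_{k-3},\dots$, and the translate $n+A$ then cannot fit into $2^{\wedge}A$.

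\begin{itemize}
\item If $a_{6}=11$, Lemma \ref{L16} gives $2k-3,2k-2,2k-1\in 2^{\wedge}A$.
\item If $a_{6}\neq 11$, then $a_{6}\in\{9,10\}$. We will run the same argument: assume $2k-3\notin 2^{\wedge}A$, use (\ref{eq2}) to determine $a_{k-2},\dots,a_{k-6}$, and derive a contradiction from $n\geqslant 2k$ through the values $n+a_{k-4},n+a_{k-5},n+a_{k-6}$. We do the same for $2k-2$ and $2k-1$.
\end{itemize}

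Once these three integers are in $2^{\wedge}A$, we check how many of them lie in $a_{k-1}+A\backslash\{a_{k-1}\}$. Since $a_{k-1}\geqslant 2k-2$ and $2,4\notin A$:
\begin{itemize}
\item if $a_{k-1}\geqslant 2k$, all three are extra;
\item if $a_{k-1}=2k-1$, then $2k-3$ and $2k-2$ are extra;
\item if $a_{k-1}=2k-2$, then $2k-3$ and $2k-1$ are extra (the latter since $1\in A$ would give $2k-1=a_{k-1}+1$, so here we instead use $2k$ or $2k+2$, which avoid $a_{k-1}+A$ because $2,4\notin A$).
\end{itemize}
The needed membership of $2k$ or $2k+2$ in $2^{\wedge}A$ will be argued as in Case 2.2 of Proposition \ref{P16}. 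This yields at least two extra elements, which settles $m\leqslant 2$.

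\emph{The case $m=3$, $b_{2}=10$.} Here we need three extra elements. We plan to combine Lemma \ref{P18}'s count, which gives $3k-5-m+\lfloor b_{2}/4\rfloor=3k-6$, with one more element among $2k-3,2k-2,2k-1$ not already counted there.

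\emph{Main obstacle.} We expect two hardest steps. The first is the analogue of Lemma \ref{L16} when $a_{6}\in\{9,10\}$, for small $k$ and for each of $2k-2$ and $2k-1$. The second is the $a_{k-1}=2k-2$ subcase, where the parity chain arguments of Proposition \ref{P16} must be redone with $6,8\in A$.
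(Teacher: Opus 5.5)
Your reduction to $m\leqslant 2$, or to $m=3$ with $b_{2}=10$ and $a_{6}=11$, is fine, and so is your use of Lemma \ref{L16} when $a_{6}=11$. The rest of the plan has two genuine gaps.

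\textbf{First gap: the analogue of Lemma \ref{L16} for $a_{6}\in\{9,10\}$ is false.} You cannot hope to prove $2k-3\in 2^{\wedge}A$ in that case. Take $k=14$, $A=\{0,1,3,5,6,8,10,11,13,16,18,21,23,26\}$ and $n=31$.
\begin{itemize}
\item Condition (\ref{ee}) holds, and $n\geqslant\max(2k,a_{k-1}+1)=28$.
\item The numbers $n+a_{i}$, $0\leqslant i\leqslant 10$, are $31,32,34,36,37,39,41,42,44,47,49$. All lie in $2^{\wedge}A$: $41=18+23$, and the others are $26+a_{j}$.
\item Yet $2k-3=25\notin 2^{\wedge}A$.
\end{itemize}
The chain of Lemma \ref{L16} breaks here: with $10,11\in A$ one has $2k-13,2k-14\notin A$, so $a_{k-6}=2k-15$ is allowed. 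Then $n+a_{k-6}=4k-12=a_{k-3}+a_{k-2}$ when $a_{k-1}=2k-2$ and $n=2k+3$. In this example $B=\{2,20\}$ and $a_{k-1}=2k-2$. Of your candidates only $2k=28$ is an extra element, since $2k-3$ and $2k+2=30$ are not in $2^{\wedge}A$.

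The missing idea is to exploit $2k-3\notin 2^{\wedge}A$ rather than refute it.
\begin{itemize}
\item By (\ref{eq2}), $a_{k-2}=2k-5$ and $a_{k-3}=2k-7$, so $2k-2,2k-1,2k,2k+1,2k+3\in 2^{\wedge}A$.
\item If $a_{k-1}\geqslant 2k-1$, this already gives two elements outside $a_{k-1}+A\backslash\{a_{k-1}\}$.
\item If $a_{k-1}=2k-2$, take $2k$ together with $2k+2=9+a_{k-3}$ or $2k+5=10+a_{k-2}$. In the example this second element is $33=10+23$.
\item Similarly, if $2k-2\notin 2^{\wedge}A$ (with $a_{k-1}\geqslant 2k-1$), (\ref{eq3}) gives $a_{k-2}=2k-6$ and $a_{k-3}=2k-9$. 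Then $2k-1$ (for $a_{k-1}\geqslant 2k$) or one of $2k+1,2k+3$ (for $a_{k-1}=2k-1$) accompanies $2k-3$.
\end{itemize}

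\textbf{Second gap: the case $m=3$, $b_{2}=10$ double counts.} Adding one of $2k-3,2k-2,2k-1$ to the bound $3k-6$ of Lemma \ref{P18} does not work. That bound already counts elements of $2^{\wedge}A$ in the window $2k-4+[1,b_{2}]=[2k-3,2k+6]$. You must return to (\ref{ee2}) and exhibit three elements of $2^{\wedge}A$ outside $[1,2k-4]\cup(a_{k-1}+A\backslash\{a_{k-1}\})$.
\begin{itemize}
\item For $a_{k-1}\geqslant 2k$, these are $2k-3,2k-2,2k-1$.
\item For $a_{k-1}=2k-1$, the third is $2k+1$, or else, via (\ref{eq6}), $2k+3=a_{k-3}+a_{6}$.
\item For $a_{k-1}=2k-2$, after $2k-3$ and one of $2k,2k+2$, the third is exactly where the hypothesis on $n$ is needed. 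Set $d=n-a_{k-1}\geqslant 2$. There is $p\leqslant k-4$ with $a_{p}+d\notin A$: check $d\leqslant 5$ directly, and for $d\geqslant 6$ the $k-3$ numbers $a_{i}+d$ cannot all lie in $A\cap[6,a_{k-1}]$, which has only $k-4$ elements. Then $n+a_{p}\in 2^{\wedge}A$, $n+a_{p}>2k+2$, and $n+a_{p}$ is not of the form $a_{k-1}+a_{j}$.
\end{itemize}
Your proposal never uses the $n$-hypothesis in this way.

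Finally, for $k\leqslant 9$ a ``direct inspection'' is not a finite check, since $a_{k-1}$ is unbounded. The paper instead shows that no admissible $n$ exists when $k=7,8,9$.
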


\begin{proof}
Since $n+a_{i}\in 2^{\wedge}A$ for $i=0, \ldots, k-4$, we have
\begin{equation}\label{eqb1}n+a_{k-4}=a_{k-3}+a_{k-1}, \ \ a_{k-3}+a_{k-2}  \text{ or }  a_{k-2}+a_{k-1}. \end{equation}

If $k=7$, then $A=\{0, 1, 3, 5, 6, 8, a_{6}\}$. By $n\geqslant 14$ and (\ref{eqb1}), we have $n=a_{6}+1$ or $a_{6}+3$,
it implies that either $n+a_{2}\not\in 2^{\wedge}A$ or $n+a_{1}\not\in 2^{\wedge}A$, a contradiction.

By a similar discussion, it is also impossible for $k=8, 9$.

Now we assume that $k\geqslant 10$. We divide into the following two cases:

{\bf Case 1. }$m=1$ or $2$. If $2k-3\not\in 2^{\wedge}A$, then by $2, 4\not\in A$ and (\ref{eq2}) we have $a_{k-2}=2k-5$, $a_{k-3}=2k-7$.
Since $a_{1}=1$, $a_{2}=3$, $a_{3}=5$, $a_{4}=6$, $a_{5}=8$, we have
$$2k-2, 2k-1, 2k, 2k+1, 2k+3\in 2^{\wedge}A. $$
If $a_{k-1}\geqslant 2k$, then $2k-2, 2k-1\not\in a_{k-1}+A\backslash\{a_{k-1}\}$.
If $a_{k-1}=2k-1$, then $2k-2, 2k+1\not\in a_{k-1}+A\backslash\{a_{k-1}\}$. By (\ref{ee2}) we have $|2^{\wedge}A|\geqslant 3k-5$.

Assume that $a_{k-1}=2k-2$. Then
$$2k, 2k+2, 2k+5\not\in a_{k-1}+A\backslash\{a_{k-1}\}. $$
Noting that $a_{6}<12$. If $a_{6}=11$, then by Lemma \ref{L16} we have $2k-3\in 2^{\wedge}A$, a contradiction.
Hence, $a_{6}=9$ or $10$, so, $2k+2\in 2^{\wedge}A$ or $2k+5\in 2^{\wedge}A$. By (\ref{ee2}) we have $|2^{\wedge}A|\geqslant 3k-5$.

Assume that $2k-3\in 2^{\wedge}A$. Clearly,
$$2k-3\not\in a_{k-1}+A\backslash\{a_{k-1}\}. $$
If $m=1$, then by (\ref{ee2}) we have $|2^{\wedge}A|\geqslant 3k-5$.

Assume that $m=2$. For case $a_{k-1}=2k-2$, the proof is similar to Proposition \ref{P16}, we omit it.
Assume that $a_{k-1}\geqslant 2k-1$. If $2k-2\in 2^{\wedge}A$, then by (\ref{ee2}) and
$$2k-3, 2k-2\not\in a_{k-1}+A\backslash\{a_{k-1}\}, $$
we have $|2^{\wedge}A|\geqslant 3k-5$.

Assume that $2k-2\not\in 2^{\wedge}A$. By $4, 7\not\in A$ and (\ref{eq3}) we have
$$a_{k-2}=2k-6, \ \ a_{k-3}=2k-9. $$
Since $a_{3}=5$, $a_{4}=6$, $a_{5}=8$, we have
$$2k-1, 2k, 2k+2\in 2^{\wedge}A. $$
If $a_{k-1}\geqslant 2k$, then $2k-1\not\in a_{k-1}+A\backslash\{a_{k-1}\}$, thus, $|2^{\wedge}A|\geqslant 3k-5$.
Assume that $a_{k-1}=2k-1$. Then
$$2k+1, 2k+3\not\in a_{k-1}+A\backslash\{a_{k-1}\}. $$
Noting that $a_{6}<12$. If $a_{6}=11$, then by Lemma \ref{L16} we have $2k-2\in 2^{\wedge}A$, a contradiction.
Hence, $a_{6}=9$ or $10$, so, $2k+1\in 2^{\wedge}A$ or $2k+3\in 2^{\wedge}A$. By (\ref{ee2}) we have $|2^{\wedge}A|\geqslant 3k-5$.

{\bf Case 2. }$m\geqslant 3$. By $b_{2}\geqslant 10$ and Lemma \ref{P18}, it is sufficient to consider
$$b_{2}=10, \ \ m=3. $$
Thus, $9, 10\not\in A$. By $a_{6}<12$ we have $a_{6}=11$. By Lemma \ref{L16}, we have
$$2k-3, 2k-2, 2k-1\in 2^{\wedge}A. $$

If $a_{k-1}\geqslant 2k$, then $2k-3, 2k-2, 2k-1\notin a_{k-1}+A\backslash\{a_{k-1}\}$, thus, $|2^{\wedge}A|\geqslant 3k-5$.

If $a_{k-1}=2k-1$, then
$$2k-3, 2k-2, 2k+1, 2k+3, 2k+6\notin a_{k-1}+A\backslash\{a_{k-1}\}. $$
If $2k+1\in 2^{\wedge}A$, then $|2^{\wedge}A|\geqslant 3k-5$. Assume that $2k+1\notin 2^{\wedge}A$.
By (\ref{eq6}) we have
$$a_{k-2}=2k-6, \ \ a_{k-3}=2k-8, $$
thus, $2k+3=a_{k-3}+a_{6}\in 2^{\wedge}A$, it follows that $|2^{\wedge}A|\geqslant 3k-5$.

Assume that $a_{k-1}=2k-2$. Then
$$2k-3, 2k, 2k+2, 2k+5, 2k+7, 2k+8\notin a_{k-1}+A\backslash\{a_{k-1}\}. $$
If $2k\in 2^{\wedge}A$, put $x=2k$. Assume that $2k\notin 2^{\wedge}A$.
By (\ref{eq5}) we have $a_{k-2}=2k-7$, $a_{k-3}=2k-9$, thus, $2k+2=a_{k-3}+a_{6}\in 2^{\wedge}A$. In this case, put $x=2k+2$.

Put $d=n-a_{k-1}=n-(2k-2)\geqslant2$. We shall show that there exists an integer $p\in[0,k-4]$ such that
\begin{equation}\label{eq:P16.1-new}
a_p+d\notin A
\end{equation}
and
$$n+a_p>2k+2. $$

If $d=2$, then $a_{3}+d=5+2=7\notin A$. If $d=3$, then $a_{4}+d=6+3=9\notin A$.
If $d=4$, then $a_{2}+d=3+4=7\notin A$.
If $d=5$, then $a_{3}+d=5+5=10\notin A$.
Thus, (\ref{eq:P16.1-new}) holds for $2\leqslant d\leqslant5$, and in these four cases we have respectively
$$n+a_p=2k+5,\quad 2k+7,\quad 2k+5,\quad 2k+8. $$
Hence,
$$n+a_p>2k+2. $$

Assume that $d\geqslant6$. If
$$a_i+d\in A,\qquad i=0,\ldots,k-4, $$
then
$$\{a_i+d:0\leqslant i\leqslant k-4\}\subseteq A\cap[6,a_{k-1}]. $$
Since $A\cap[0,5]=\{0,1,3,5\}$, we have
$$|A\cap[6,a_{k-1}]|=k-4. $$
On the other hand,
$$|\{a_i+d:0\leqslant i\leqslant k-4\}|=k-3, $$
a contradiction. Thus, there exists an integer $p\in[0,k-4]$ such that
$$a_p+d\notin A. $$
Moreover,
$$n+a_p\geqslant n=a_{k-1}+d\geqslant2k+4>2k+2. $$

By the assumption of Proposition \ref{P16.1}, we have $n+a_p\in2^{\wedge}A$.
If $n+a_p=a_{k-1}+a_j$ for some $0\leqslant j\leqslant k-2$, then
$$a_j=n+a_p-a_{k-1}=a_p+d, $$
which contradicts with $a_p+d\notin A$.
Hence,
$$n+a_p\notin a_{k-1}+A\backslash\{a_{k-1}\}. $$

Since $n+a_p>2k+2$, the three integers $2k-3$, $x$, $n+a_p$ are distinct. Moreover,
$$2k-3,\ x,\ n+a_p>2k-4 $$
and
$$2k-3,\ x,\ n+a_p
\notin a_{k-1}+A\backslash\{a_{k-1}\}. $$
Thus, by $m=3$ and (\ref{ee2}), we have
$$|2^{\wedge}A|\geqslant 3k-5-m+3=3k-5. $$

This completes the proof of Proposition \ref{P16.1}.
\end{proof}

Similarly, we have the following proposition.

\begin{pro}\label{P16.3}
Let $k\geqslant 7$ be an integer and $A$ be as in (\ref{ee}) satisfying $a_{2}=2$, $a_{3}=5$, $a_{4}=6$, $a_{5}=9$. If there exists an integer $n\geqslant \max(2k, a_{k-1}+1)$ such that
$$n+a_{i}\in 2^{\wedge}A, \ \ i=0, \ldots, k-4. $$
then $|2^{\wedge}A|\geqslant 3k-5$.
\end{pro}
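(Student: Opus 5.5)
The plan is to mirror the proof of Proposition \ref{P16.1}, with $A\cap[0,9]=\{0,1,2,5,6,9\}$, so that $3,4,7,8\notin A$. First I would rule out the small values $k=7,8,9$ by direct inspection, as in Proposition \ref{P16.1}. Since $n\geqslant\max(2k,a_{k-1}+1)$ and $n+a_{k-4}\in 2^{\wedge}A$, we have
$$n+a_{k-4}\in\{a_{k-3}+a_{k-1},\ a_{k-3}+a_{k-2},\ a_{k-2}+a_{k-1}\},$$
which forces $n-a_{k-1}$ to be small. For $k\leqslant 9$ the set $A$ has only a few unknown elements, and one checks that some $n+a_i$ with $i\leqslant k-4$ falls outside $2^{\wedge}A$. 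For $k\geqslant 10$ I would split on $m$, using (\ref{ee2}): $|2^{\wedge}A|\geqslant 3k-5-m$, so it suffices to exhibit $m$ elements of $2^{\wedge}A$ greater than $2k-4$ that do not lie in $a_{k-1}+A\backslash\{a_{k-1}\}$.

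For $m\geqslant 3$, Lemma \ref{L1} and Lemma \ref{L2} give $b_1=4$ (since $3,4\notin A$ and $2\in A$) and $b_2\geqslant 10$. Lemma \ref{P18} applies because $a_3=5$, so only the case $b_2=10$, $m=3$ remains. This case forces $10\notin A$, and by $a_6<12$ we get $a_6=11$. Here I would prove an analogue of Lemma \ref{L16}: if $2k-3$ (resp.\ $2k-2$, $2k-1$) is not in $2^{\wedge}A$, then (\ref{eq2}) (resp.\ its analogues for $a_2=2$) determines $a_{k-2},\dots,a_{k-6}$. Then $n$ is pinned to $a_{k-1}+3$ or $a_{k-1}+5$ as before, and some $n+a_i$ is missed. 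With $2k-3,2k-2,2k-1\in 2^{\wedge}A$ in hand, the cases $a_{k-1}\geqslant 2k$ and $a_{k-1}=2k-1$ follow as in Proposition \ref{P16.1}.

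The case $a_{k-1}=2k-2$ uses the $d$-trick. Set $d=n-a_{k-1}\geqslant 2$. For $d\geqslant 6$, counting $\{a_i+d:0\leqslant i\leqslant k-4\}$ against $A\cap[6,a_{k-1}]$ produces $p$ with $a_p+d\notin A$. For $2\leqslant d\leqslant 5$ I would choose $p$ explicitly: $d=2$ gives $a_3+2=7\notin A$; $d=3$ gives $a_2+3=5\in A$, so instead use $a_4+3=9\in A$ or $a_1+3=4\notin A$; in general pick one of $3,4,7,8,10$. Then $n+a_p\in 2^{\wedge}A\backslash(a_{k-1}+A)$, and together with $2k-3$ and one of $2k,2k+2$ this gives three new elements.

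For $m\in\{1,2\}$ I would follow Case 1 of Proposition \ref{P16.1}. If $2k-3\notin 2^{\wedge}A$, (\ref{eq2}) with $3,4\notin A$ gives $a_{k-2}=2k-6$ or similar, and small sums $a_i+a_{k-2}$, $a_i+a_{k-3}$ supply two elements not in $a_{k-1}+A$, for each value of $a_{k-1}$. If $2k-3\in 2^{\wedge}A$ and $m=2$, I would use the analogues of (\ref{eq3})--(\ref{eq5}) adapted to $a_2=2$, taking as starting index the first position where $2k-2-i$ exceeds the known initial block.

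The main obstacle is that the pigeonhole facts (\ref{eq3})--(\ref{eq6}) were stated for $a_2=3$, so they must be rederived for the configuration $\{0,1,2,5,6,9\}$. In particular, the subcase $a_{k-1}=2k-2$ (where $2k,2k+2$ may both be in $a_{k-1}+A$ because $2\in A$) requires new witnesses, such as $2k+1$ or $2k+3$, obtained from $a_{k-3}+a_6$-type sums. I would try first to get them from the analogue of Lemma \ref{L16}.
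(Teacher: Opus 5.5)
Your outline, mirroring Proposition \ref{P16.1}, is what the paper's ``similarly'' points to. However, three of its concrete steps fail for the configuration $A\cap[0,9]=\{0,1,2,5,6,9\}$.

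First, small $k$ are not vacuous. At $k=8$, $A=\{0,1,2,5,6,9,10,15\}$ with $n=19$ satisfies every hypothesis: $19=9+10$, $20=5+15$, $21=6+15$, $24=9+15$, $25=10+15$. At $k=9$, $\{0,1,2,5,6,9,10,13,18\}$ with $n=22$ does the same. These cases must be settled by computing $|2^{\wedge}A|$ (here $20\geqslant 19$ and $24\geqslant 22$), not by exhibiting a missing $n+a_i$.

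Second, and more seriously, consider the branch $m\leqslant 2$, $2k-3\notin 2^{\wedge}A$, $a_{k-1}=2k-2$. Here (\ref{eq2}) gives $a_{k-2}=2k-6$ and $a_{k-3}=2k-7$. Of the small sums $2k-2,2k-1,2k,2k+2,2k+3$, only $2k+2$ avoids $a_{k-1}+\{0,1,2,5\}$. You have this backwards: $2k+2$ is new because $4\notin A$, whereas your proposed witness $2k+3=a_{k-1}+a_3$ is not. Take $A=\{0,1,2,5,6,9,10,13,14,18\}$ with $k=10$ and $n=22$. It satisfies all hypotheses, has $2k+1\notin 2^{\wedge}A$, and has $|2^{\wedge}A|=25=3k-5$, exactly one element beyond the count (\ref{ee2}). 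So your claim of two small-sum witnesses for every value of $a_{k-1}$ is false, and no L16-type argument can produce a second one. What is actually needed there is a proof that $m=1$.

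Third, for $m=2$ and $2k-3\in 2^{\wedge}A$ you invoke analogues of (\ref{eq3})--(\ref{eq5}). With $1,2\in A$, the interval $[3,2k-5]$ splits into $k-3$ classes but contains only $k-4$ elements of $A$. So the pigeonhole has one unit of slack and forces no structure.

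The missing idea is to use your $d$-trick in every case, not only for $m=3$. Put $d=n-a_{k-1}\geqslant 1$. There is always some $p$ with $a_p+d\notin A$:
\begin{itemize}
\item for $d=1,2,3,4,5$, take $p=2,1,0,0,2$ respectively;
\item for $d\geqslant 6$, count: $A\cap[0,5]=\{0,1,2,5\}$, so $A\cap[6,\infty)$ has only $k-4$ elements.
\end{itemize}
Hence $n+a_p\geqslant 2k$ is always an element of $2^{\wedge}A$ outside $[1,2k-4]\cup(a_{k-1}+A\backslash\{a_{k-1}\})$.

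Now split on $2k-3$:
\begin{itemize}
\item If $2k-3\in 2^{\wedge}A$, then $2k-3$ and $n+a_p$ settle $m\leqslant 2$.
\item If $2k-3\notin 2^{\wedge}A$ and $a_{k-1}\geqslant 2k-1$, use $2k-2=a_{k-3}+a_3$ together with $n+a_p$.
\item If $2k-3\notin 2^{\wedge}A$ and $a_{k-1}=2k-2$, use $2k+2=a_{k-3}+a_5$ together with some $n+a_p\neq 2k+2$.
\end{itemize}
Such a $p$ exists unless $d=4$ and $a_p+4\in A$ for all $1\leqslant p\leqslant k-4$. Since $a_{k-4}=2k-10$ and $|A\cap[5,2k-6]|=k-4$, that would force $a_{p+2}=a_p+4$. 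That is, $A\backslash\{a_{k-1}\}=\{0\}\cup\{x\leqslant 2k-6: x\equiv 1,2\pmod 4\}$, for which $B=\{4\}$ and one witness suffices.

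Finally, the case $b_2=10$ that you plan to treat with an L16 analogue is empty, because $10=a_1+a_5$. Lemma \ref{L1} also excludes $12$ (the pair $\{4,8\}$ misses $A$), $14$ and $16$. So $b_2\geqslant 18$, and Lemma \ref{P18} disposes of $m\geqslant 3$ at once.
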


\section{Structure and inverse results on dense sets}\label{S5}
\begin{lem}[\cite{Wang2025}, Theorem 1.3]\label{LL3-4}
Let $A$ be a finite set of $k\geqslant 3$ integers such that $a_{k-1}=2k-3$.
Then $|2^{\wedge}A|=3k-7$ if and only if $k\geqslant 4$ and one of the following cases holds:

(1) $A=[0, \theta-k+1]\cup [\theta, 2k-3]$, $\theta=k, k+1, \ldots, 2k-4$;

(2) $A=\{2i, 2j-1: i\in [0, \theta], j\in[\theta+1, k-1]\}$, $\theta=1, 2, \ldots,  k-3$;

(3) $A=\{3i, 3j-k: i\in [0, \theta], j\in[\theta+1, k-1]\}$, $\theta=\left\lfloor\frac{k-3}{3}\right\rfloor+1, \ldots, \left\lfloor\frac{2k-3}{3}\right\rfloor$, $3\nmid k$;

(4) $A=4\left[0, \frac{k-2}{2}\right]\cup \left(4\left[1, \frac{k}{2}\right]-3\right)$, $2\mid k$;

(5) $A=\left\{3i: 0\leqslant i\leqslant \frac{2k-3}{3}\right\}\cup\left\{\theta+3i: 0\leqslant i\leqslant \frac{k-3}{3}\right\}$, $\theta=1, \ldots, k-1$, $3\nmid\theta$, $3\mid k$;

(6) $A=\{0, 1, 4, 5, 6, 9\}$, $\{0, 3, 4, 5, 8, 9\}$, $\{0, 1, 2, 5, 6, 7, 11\}$, $\{0, 1, 3, 4, 7, 8, 11\}$,

$\{0, 1, 4, 5, 6, 10, 11\}$, $\{0, 1, 4, 5, 7, 8, 11\}$, $\{0, 1, 5, 6, 7, 10, 11\}$, $\{0, 3, 4, 6, 7, 10, 11\}$,

$\{0, 3, 4, 7, 8, 10, 11\}$, $\{0, 4, 5, 6, 9, 10, 11\}$, $\{0, 1, 2, 6, 7, 8, 12, 13\}$, $\{0, 1, 4, 5, 6, 9, 10, 13\}$,

$\{0, 1, 5, 6, 7, 8, 12, 13\}$, $\{0, 1, 5, 6, 7, 11, 12, 13\}$, $\{0, 2, 3, 5, 7, 8, 10, 13\}$,

$\{0, 2, 3, 5, 8, 10, 11, 13\}$, $\{0, 3, 4, 7, 8, 9, 12, 13\}$, $\{0, 3, 5, 6, 8, 10, 11, 13\}$,

$\{5i, 5i+\theta, 15: i=0, 1, 2\}\cup \{5-\theta, 10-\theta\}$, $\theta=1, 2, 3, 4$,

$\{5i, 5i+\theta, 15: i=0, 1, 2\}\cup \{10-\theta, 15-\theta\}$, $\theta=1, 2, 3, 4$,

$\{0, 1, 2, 6, 7, 8, 9, 14, 15\}$, $\{0, 1, 4, 5, 7, 8, 11, 12, 15\}$, $\{0, 1, 6, 7, 8, 9, 13, 14, 15\}$,

$\{0, 3, 4, 7, 8, 10, 11, 14, 15\}$, $\{0, 1, 2, 7, 8, 9, 10, 15, 16, 17\}$, $\{0, 1, 5, 6, 7, 10, 11, 12, 16, 17\}$,

$\{0, 2, 3, 5, 7, 8, 10, 12, 15, 17\}$, $\{0, 2, 5, 7, 9, 10, 12, 14, 15, 17\}$, $\{0, 3, 4, 6, 7, 10, 11, 13, 14, 17\}$.
\end{lem}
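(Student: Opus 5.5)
This statement is quoted from \cite{Wang2025} (Theorem 1.3 there), so in the present paper it is simply invoked. If I had to reprove it, I would split it into the easy direction and the inverse direction.

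\emph{Sufficiency.} For each family (1)--(5) I would compute $2^{\wedge}A$ directly and check that it has size $3k-7$. For instance, in (1) the set is a union of two intervals, and $2^{\wedge}A$ consists of three intervals whose lengths add up to $3k-7$. In (2)--(5) the set is a union of two arithmetic progressions with a common difference, and one counts residue class by residue class. The sporadic sets in (6) are finite and are verified by a direct check.

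\emph{Necessity.} Suppose $a_{k-1}=2k-3$ and $|2^{\wedge}A|=3k-7$. I would start from the counting used in the proof of Lemma \ref{LL3-7}:
\[
|2^{\wedge}A|\geqslant |T|+|[0,a_{k-1}]\backslash(A\cup W)|=3k-5-|W|.
\]
Combined with Lemma \ref{LL3-1}, this forces $|W|=2$. It also forces $|S(b)\cap 2^{\wedge}A|=1$ for every $b\in[0,a_{k-1}]\backslash(A\cup W)$.

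Write $W=\{w_1,w_2\}$. Lemma \ref{LL3-2} then gives complementary pairings of $A$ about $w_1,w_2$ and about $w_1+a_{k-1},w_2+a_{k-1}$. Two reflections compose to a translation by $w_2-w_1$ (or by $a_{k-1}+w_1-w_2$), so $A$ is periodic with that period on a large window. This periodicity is what produces the arithmetic-progression structure with difference $d\in\{1,2,3,4\}$ appearing in (1)--(5).

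I would then split into cases according to the parities of $w_1,w_2$, their positions relative to $a_{k-1}/2$, and whether $w_i\pm1\in 2^{\wedge}A$. Lemma \ref{LL3-8} settles the initial segment $[0,w]\cap A$ in each case, and the rigidity from the equality condition then propagates that structure across all of $[0,a_{k-1}]$.

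\emph{Main obstacle.} The difficulty is the small-$k$ and boundary configurations, where the periodicity window is too short to pin down $A$. I would handle these ($k\leqslant 10$, i.e.\ $a_{k-1}\leqslant 17$) by exhaustive enumeration, which is exactly where the sporadic list (6) comes from.
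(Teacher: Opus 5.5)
You are right that this paper gives no proof of the statement and simply imports it from \cite{Wang2025}. Your sufficiency direction and your opening count are also fine. The decomposition $|2^{\wedge}A|=|T|+\sum_{b\in[0,a_{k-1}]\backslash A}|S(b)\cap 2^{\wedge}A|$ is exact, so Lemma \ref{LL3-1} forces $|W|=2$ and $|S(b)\cap 2^{\wedge}A|=1$ for every other $b\notin A$, as in Lemma \ref{LL3-7} and Proposition \ref{LL3-3}.

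As a reproof, however, the sketch breaks down at the next step, which is where the theorem actually lives. Because $a_{k-1}$ is odd, each $w\in W$ forces exactly one midpoint ($w/2$ or $(w+a_{k-1})/2$) into $A$. The pairings of Lemma \ref{LL3-2} are complementations only away from that point. Composing them with $d=w_2-w_1$ and $d'=a_{k-1}-d$ gives $i\in A\Leftrightarrow i+d\in A$ for all $0\leqslant i\leqslant a_{k-1}-d$, and $i\in A\Leftrightarrow i+d'\in A$ for all $0\leqslant i\leqslant d$, each with at most two exceptions. So the periodicity holds on the whole interval for every $k$, and window length is not the obstacle. The obstacle is that nothing in your sketch bounds $d,d'$ or controls the defects. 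In family (4) one has $W=\{2,2k-5\}$, so $d=2k-7$. For the sporadic set $\{0,1,4,5,6,9\}=\{0,1,4\}+\{0,5\}$ one has $W=\{3,8\}$, $d=5$, $d'=4$: the set is exactly $5$-periodic, the $4$-relation fails precisely at its two defects $i=2,4$, and $A$ is not of any of the forms (1)--(5).

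Hence your steps ``this periodicity produces the arithmetic-progression structure with difference $d\in\{1,2,3,4\}$'' and ``the rigidity propagates across $[0,a_{k-1}]$'' are asserted, not proved. Going from two defective periods with $d+d'=a_{k-1}$ to a union of two progressions with common difference at most $4$ needs a real argument. Something like a defect-tolerant Fine--Wilf lemma would do, combined with:
\begin{itemize}
\item the remaining equality constraints (for each $b\notin A\cup W$, either $b\notin 2^{\wedge}A$ or $b+a_{k-1}\notin 2^{\wedge}A$);
\item $|A|=k$;
\item $\gcd(A)=1$.
\end{itemize}
For the same reason, the cutoff $k\leqslant 10$ for exhaustive search is read off from list (6) rather than derived. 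You would need a quantitative version of that argument showing the (at most four) defects cannot block the collapse once $k\geqslant 11$. A smaller point: Lemma \ref{LL3-8} requires $w+1\notin 2^{\wedge}A$ or $w-1\notin 2^{\wedge}A$, and its part (2) still leaves a free parameter. So it does not settle $[0,w]\cap A$ in every branch of your case split.
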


\begin{pro}\label{LL3-3}
Let $A$ be a finite set of $k\geqslant 5$ integers such that $a_{1}=1$, $a_{2}=2$ and $a_{k-1}=2k-4$.
Then $|2^{\wedge}A|=3k-7$ if and only if
$$A=\{0, 1, 2, 5, 6, 7, 11, 12\}, \{0, 1, 2, 6, 7, 8, 9, 13, 14\}, \{0, 1, 2, 3, 7, 8, 9, 10, 15, 16\}$$
or
$$A=\left[0, a\right]\cup \left[k-2+a, a_{k-1}\right], \ \ a=2, \ldots, k-3. $$
\end{pro}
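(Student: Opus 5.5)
Both directions are handled separately, with the real work in the forward direction.

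\emph{Sufficiency} is a direct count. For $A=[0,a]\cup[k-2+a,2k-4]$ with $2\leqslant a\leqslant k-3$, we have $|A|=(a+1)+(k-2-a+1)=k$. The restricted sumset is
$$2^{\wedge}A=[1,2a-1]\cup[k-2+a,\,k-2+2a]\cup[2k-4+2a-(2k-4-(k-2+a))+\cdots],$$
that is, the union of $2^{\wedge}[0,a]=[1,2a-1]$, of $[0,a]+[k-2+a,2k-4]=[k-2+a,2k-4+a]$, and of $2^{\wedge}[k-2+a,2k-4]=[2k-3+2a,4k-9]$. Because $2a-1<k-2+a$, the first two intervals are disjoint. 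Since $2k-4+a<2k-3+2a$, the last two are disjoint as well. Summing the lengths gives $(2a-1)+(k-1)+(2k-5-2a)=3k-7$. The three sporadic sets are checked by hand.

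\emph{Necessity.} Assume $|2^{\wedge}A|=3k-7$ with $a_1=1$, $a_2=2$, $a_{k-1}=2k-4$. As in the proof of Lemma \ref{LL3-7}, let $T=\{a_i:1\leqslant i\leqslant k-2\}\cup\{a_i+a_{k-1}:0\leqslant i\leqslant k-2\}$, so $|T|=2k-3$. Every $b\in[0,a_{k-1}]\setminus(A\cup W)$ contributes at least one further element of $S(b)\cap 2^{\wedge}A$. There are $a_{k-1}+1-k=k-3$ non-elements, so
$$|2^{\wedge}A|\geqslant 2k-3+(k-3-|W|)=3k-6-|W|.$$
Hence $|W|\geqslant1$. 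Lemma \ref{LL3-1} gives $|W|\leqslant2$, and every $w\in W$ is even by Lemma \ref{LL3-2}. In addition there is at most one $b$ with $|S(b)\cap2^{\wedge}A|=2$, and none at all when $|W|=1$. Fix $w\in W$. Since $0,1,2\in A$, relation (\ref{e1-2}) forces $w-1,w-2\notin A$, provided $w\geqslant4$. It also forces $w\neq 2$.

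The core step applies Lemma \ref{LL3-6} to $w\pm1$. We have $w+1\notin A$, since $1\in A$ together with (\ref{e1-2}) rules it out. If $|S(w+1)\cap 2^{\wedge}A|=1$, then Lemma \ref{LL3-6}(2) gives $[0,w/2]\subseteq A$, and (\ref{e1-2}) then gives $[w/2+1,w]\cap A=\emptyset$. Symmetrically, if $|S(w-1)\cap2^{\wedge}A|=1$, Lemma \ref{LL3-6}(1) gives $[k-2+w/2,a_{k-1}]\subseteq A$, and (\ref{e1-3}) gives $[w,k-3+w/2]\cap A=\emptyset$.

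\emph{Case $|W|=1$.} Both conclusions hold, so
$$A\supseteq[0,w/2]\cup[k-2+w/2,2k-4]\quad\text{and}\quad [w/2+1,k-3+w/2]\cap A=\emptyset.$$
Counting elements shows these cover all of $A$. Thus $A=[0,a]\cup[k-2+a,2k-4]$ with $a=w/2$, and $a\geqslant2$ because $2\in A$.

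\emph{Case $|W|=2$.} Write $W=\{w_1<w_2\}$. At most one of the four integers $w_1\pm1,w_2\pm1$ is the exceptional $\xi$, so at least three of the four "$=1$" conclusions hold. These interval constraints pin down long initial and final runs of $A$. The remaining middle part is forced by (\ref{e1-2}) and (\ref{e1-3}) applied to both $w_1$ and $w_2$. A length count should then show $k\leqslant10$, which leaves a finite check producing the three sporadic sets.

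The main obstacle is this $|W|=2$ case. The difficulty is controlling which of $w_i\pm1$ is $\xi$, and the possibility that $w_1+1=w_2-1$ coincide. I would split on the position of $\xi$ and on the parity of $w_2-w_1$, using the reflection symmetry $x\mapsto a_{k-1}-x$ of the constraints. The goal is to reduce to small $k$ and then verify computationally.
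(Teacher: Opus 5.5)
Your sufficiency count is correct, and for $w<2k-6$ your $|W|=1$ argument is essentially the paper's Case 2.1. The gap is the step ``$w+1\notin A$, since $1\in A$ together with (\ref{e1-2})''. Relation (\ref{e1-2}) with $i=1$ gives $w-1\notin A$; it says nothing about $w+1>w$. The only constraint on $w+1$ is (\ref{e1-3}) with $i=w+1$, which pairs it with $a_{k-1}-1=2k-5$. Since $w$ is even and $w\notin A$, you have $4\leqslant w\leqslant 2k-6$. When $w=2k-6$, the index $i=w+1=2k-5$ is the midpoint of (\ref{e1-3}), which forces $w+1\in A$. Then the equivalence in Lemma \ref{LL3-6}(2) is unavailable, and $[0,w/2]\subseteq A$ can fail.

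The statement itself refutes your conclusion. The sets $\{0,1,2,5,6,7,11,12\}$ (with $k=8$, $W=\{10\}$) and $\{0,1,2,3,7,8,9,10,15,16\}$ (with $k=10$, $W=\{14\}$) have $|2^{\wedge}A|=3k-7$ and $|W|=1$ with $w=2k-6$. Neither has the form $[0,a]\cup[k-2+a,2k-4]$. So your $|W|=1$ case would exclude two of the listed extremal sets. Your expectation that the sporadic sets come from $|W|=2$ is also wrong: only $\{0,1,2,6,7,8,9,13,14\}$, with $W=\{4,12\}$, does.

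What is missing is a separate analysis of $w=2k-6$, where Lemma \ref{LL3-6}(1) only gives $\{2k-5,2k-4\}\subseteq A$. The paper (Case 2.2) takes $\alpha$ maximal with $[0,\alpha]\subseteq A$. Suppose $\alpha\leqslant k-6$ and $\beta$ is the least element of $[\alpha+1,k-4]\cap A$. Then $|S(2k-6-\beta)\cap 2^{\wedge}A|=1$ forces $\beta\geqslant 2\alpha+2$. In turn $|S(2\alpha-1)\cap 2^{\wedge}A|=1$ forces $2\alpha+1\in A$, contradicting the minimality of $\beta$. This yields
$$A=[0,\alpha]\cup[k-3,2k-7-\alpha]\cup\{2k-5,2k-4\}\ \text{ with } 2\leqslant\alpha\leqslant k-5, \quad\text{or}\quad A=[0,k-3]\cup\{2k-5,2k-4\}.$$
Only then does $|2^{\wedge}A|=3k-7$ isolate the two sporadic sets.

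The same oversight undermines your $|W|=2$ plan. There the paper shows $w_2=2k-6$ is forced, so $w_2+1\in A$ and its ``$=1$'' conclusion never applies. Moreover, the exceptional element is $\xi=w_1-1$. So your premise that at least three of the four conclusions hold is false: only two are available. Beyond that, your $|W|=2$ case is a plan (``should show $k\leqslant 10$'') rather than an argument.

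A minor slip as well: $w-2\notin A$ requires $w\geqslant 6$, since for $w=4$ one has $w-2=2\in A$.
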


\begin{proof}
(Sufficiency)It is easy to verify.

(Necessity)
Write
$$T=\{a_{i}: 1\leqslant i\leqslant k-2\}\cup\{a_{i}+a_{k-1}: 0\leqslant i\leqslant k-2\}. $$
Then $T\subseteq 2^{\wedge}A$ and $|T|=2k-3$. By the definition of $W$ we have
$$|S(b)\cap 2^{\wedge}A|\geqslant 1, \ \ b\in [0, a_{k-1}]\backslash(A\cup W), $$
thus,
$$|2^{\wedge}A|\geqslant |T|+|[0, a_{k-1}]\backslash(A\cup W)|\geqslant 3k-6-|W|. $$
By Lemma \ref{LL3-1} we have $|W|\leqslant 2$, combining with $|2^{\wedge}A|=3k-7$, we have $|W|=1$ or $2$.

{\bf Case 1. }$|W|=2$. Write $W=\{w_{1}, w_{2}\}$ with $w_{1}<w_{2}$.
By Lemma \ref{LL3-2} and $a_{k-1}=2k-4$, we have $2\mid w_{1}$, $2\mid w_{2}$. Since $a_{1}=1$, $a_{2}=2$, we have
$$4\leqslant w_{1}<w_{2}\leqslant 2k-6. $$

By (\ref{e1-2}) and $1\in A$, $w_{1}, w_{2}\geqslant 4$, we have $w_{1}-1, w_{2}-1\not\in A$.
If $\left[k-2+\frac{w_{1}}{2}, a_{k-1}\right]\subseteq A$, then
$$\left[w_{1}+1, a_{k-1}-1\right]+a_{k-1}\subseteq 2^{\wedge}A. $$
Since $w_{2}\in \left[w_{1}+1, a_{k-1}-1\right]$, we have $w_{2}+a_{k-1}\in 2^{\wedge}A$, which is impossible.
Hence, by Lemma \ref{LL3-6} (1) we have
$$|S(w_{1}-1)\cap 2^{\wedge}A|=2. $$
Combining with $|2^{\wedge}A|=3k-7$, we have
\begin{equation}\label{e1-19}|S(b)\cap 2^{\wedge}A|=1, \ \ b\in [0, a_{k-1}]\backslash(A\cup W), \ \ b\neq w_{1}-1. \end{equation}
By Lemma \ref{LL3-6} (1) and (\ref{e1-19}) we have
\begin{equation}\label{e1-23}\left[k-2+\frac{w_{2}}{2}, a_{k-1}\right]\subseteq A. \end{equation}
If $w_{2}<2k-6$, then $w_{2}+1<k-2+\frac{w_{2}}{2}$. By (\ref{e1-3}) and (\ref{e1-23}) we have $w_{2}+1\not\in A$.
By Lemma \ref{LL3-6} (2) and (\ref{e1-19}) we have
$$\left[0, \frac{w_{2}}{2}\right]\subseteq A, $$
thus, $[1, w_{2}-1]\subseteq 2^{\wedge}A$, that is, $w_{1}\in 2^{\wedge}A$, which is impossible.
Hence, $w_{2}=2k-6$.
By (\ref{e1-3}) we have $a_{k-1}-1=2k-5\in A$, again by (\ref{e1-3}) we have
\begin{equation}\label{e1-24}w_{1}+1\not\in A, \ \ w_{1}+2\in A. \end{equation}
By Lemma \ref{LL3-6} (2) and (\ref{e1-19}) we have
\begin{equation}\label{e5-1}\left[0, \frac{w_{1}}{2}\right]\subseteq A, \end{equation}
thus, by (\ref{e1-2}) and (\ref{e1-24}) we have
$$\left[\frac{w_{1}}{2}+1, w_{1}+1\right]\cap A=\emptyset. $$
By (\ref{e1-2}), (\ref{e5-1}) and $w_{2}=2k-6$ we have
\begin{equation}\label{e5-2}\left[2k-7-w_{1}, 2k-7-\frac{w_{1}}{2}\right]\subseteq A, \ \ \left[2k-6-\frac{w_{1}}{2}, w_{2}\right]\cap A=\emptyset. \end{equation}

If $w_{1}+2\neq \frac{w_{2}}{2}$, then by $w_{1}+2\in A$ and (\ref{e1-2}), we have $2k-8-w_{1}\not\in A$. Moreover,
$$2k-8-w_{1}+a_{k-1}=(2k-7-w_{1})+(2k-5)\in 2^{\wedge}A. $$
By (\ref{e1-19}) we have $2k-8-w_{1}\not\in 2^{\wedge}A$, thus,
$$|\{i, 2k-8-w_{1}-i\}\cap A|\leqslant 1, \ \ i=0, 1, \ldots, k-4-\frac{w_{1}}{2}. $$
By (\ref{e1-2}) and (\ref{e5-2}) we have
$$\frac{w_{2}}{2}+1=|[0, w_{2}]\cap A|=|[0, 2k-8-w_{1}]\cap A|+\frac{w_{1}}{2}+1, $$
thus,
$$|[0, 2k-8-w_{1}]\cap A|=k-3-\frac{w_{1}}{2}, $$
it follows that
$$|\{i, 2k-8-w_{1}-i\}\cap A|=1, \ \ i=0, 1, \ldots, k-4-\frac{w_{1}}{2}. $$
Hence, $k-4-\frac{w_{1}}{2}\in A$.
By (\ref{e1-3}) we have $k-2+\frac{w_{1}}{2}\in A$. So, $2k-6\in 2^{\wedge}A$, which contradicts with $w_{2}=2k-6$.
Hence, $w_{1}+2=\frac{w_{2}}{2}$, which implies that $w_{1}=k-5$, and then
$$A=\left[0, \frac{k-5}{2}\right]\cup \left[k-3, \frac{3k-9}{2}\right]\cup \{2k-5, 2k-4\}, $$
combining with $|2^{\wedge}A|=3k-7$, we have
\begin{equation}\label{e1-4}A=\{0, 1, 2, 6, 7, 8, 9, 13, 14\}. \end{equation}

{\bf Case 2. }$|W|=1$. Let $W=\{w\}$. Then $4\leqslant w\leqslant 2k-6$, $w-1\notin A$ and $2\mid w$.
Combining with $|2^{\wedge}A|=3k-7$, we have
\begin{equation}\label{e1-6}|S(b)\cap 2^{\wedge}A|=1 \text{ for all } b\in [0, a_{k-1}]\backslash(A\cup W). \end{equation}
By Lemma \ref{LL3-6} (1) and (\ref{e1-6}), we have
$$\left[k-2+\frac{w}{2}, a_{k-1}\right]\subseteq A. $$

{\bf Case 2.1. }$4\leqslant w<2k-6$. Then by (\ref{e1-3}) we have $w+1\not\in A$.
By Lemma \ref{LL3-6} (2) and (\ref{e1-6}), we have
$$\left[0, \frac{w}{2}\right]\subseteq A. $$
Thus,
\begin{equation}\label{e1-25}A=\left[0, \frac{w}{2}\right]\cup \left[k-2+\frac{w}{2}, a_{k-1}\right]. \end{equation}

{\bf Case 2.2. }$w=2k-6$. Then $2k-5\in A$.
Let $\alpha$ be an integer such that $[0, \alpha]\subseteq A$ and $\alpha+1\not\in A$. Then
$$2\leqslant \alpha\leqslant k-3. $$

If $\alpha=k-3$, then
\begin{equation}\label{e1-5}A=[0, k-3]\cup \{2k-5, 2k-4\}. \end{equation}

Since $k-3\in A$, we have $\alpha\neq k-4$. If $\alpha=k-5$, then
\begin{equation}\label{e1-27}A=[0, k-5]\cup \{k-3, k-2, 2k-5, 2k-4\}. \end{equation}

If $\alpha\leqslant k-6$, then we shall show that
\begin{equation}\label{e1-29}[\alpha+1, k-4]\cap A=\emptyset. \end{equation}
Suppose that $[\alpha+1, k-4]\cap A\neq\emptyset$.
Let $\beta=\min([\alpha+1, k-4]\cap A)$. Then $\alpha+2\leqslant \beta\leqslant k-4$ and
\begin{equation}\label{e5-3}[\alpha+1, \beta-1]\cap A=\emptyset. \end{equation}
By the definition of $\alpha$ and (\ref{e5-3}), combining with (\ref{e1-2}), we have
\begin{equation}\label{e1-28}[2k-6-\alpha, 2k-6]\cap A=\emptyset, \ \ [2k-5-\beta, 2k-7-\alpha]\subseteq A. \end{equation}
In addition, since $\beta\leqslant k-4$, we have $\beta<2k-5-\beta$, again by (\ref{e1-2}) we have
$$2k-6-\beta\notin A. $$
Clearly,
$$2k-6-\beta+a_{k-1}=(2k-5-\beta)+(2k-5)\in 2^{\wedge}A. $$
By (\ref{e1-6}) we have $2k-6-\beta\not\in 2^{\wedge}A$, which implies that
$$k-1+\alpha-\beta=|[0, 2k-6-\beta]\cap A|\leqslant \left\lfloor\frac{2k-6-\beta}{2}\right\rfloor+1. $$
Thus, $\beta\geqslant 2\alpha+2$. Hence, $2\alpha-1\notin A$. Since $2\alpha-1\in 2^{\wedge}A$, we have $2\alpha-1+a_{k-1}\not\in 2^{\wedge}A$,
it follows that
$$|\{i, 2\alpha-1+a_{k-1}-i\}\cap A|\leqslant 1, \ \ i=2\alpha-1, \ldots, k-3+\alpha, $$
thus,
$$k-1-\alpha=|[2\alpha-1, a_{k-1}]\cap A|\leqslant k-1-\alpha, $$
which implies that
$$|\{i, 2\alpha-1+a_{k-1}-i\}\cap A|=1, \ \ i=2\alpha-1, \ldots, k-3+\alpha. $$
Combining with $2k-6\not\in A$, we have $2\alpha+1\in A$.
On the other hand, since $\beta\geqslant 2\alpha+2$, by the minimality of $\beta$, we have $2\alpha+1\notin A$, a contradiction.

By the definition of $\alpha$ and (\ref{e1-27}), (\ref{e1-2}), (\ref{e1-29}) we have
$$A=[0, \alpha]\cup [k-3, 2k-7-\alpha]\cup \{2k-5, 2k-4\}, $$
where $2\leqslant \alpha\leqslant k-5$. Since $|2^{\wedge}A|=3k-7$, we have
\begin{equation}\label{e1-30}A=\{0, 1, 2, 5, 6, 7, 11, 12\} \text{ or } \{0, 1, 2, 3, 7, 8, 9, 10, 15, 16\}. \end{equation}

By (\ref{e1-4}), (\ref{e1-25}), (\ref{e1-5}) and (\ref{e1-30}), we have completed the proof of necessity.

This completes the proof of Proposition \ref{LL3-3}.
\end{proof}

By Lemma \ref{LL3-4} and Proposition \ref{LL3-3}, it is to obtain the following Propositions \ref{P2}-\ref{P7-2}

\begin{pro}\label{P2}
Let $A$ be a finite set of $k\geqslant 8$ integers such that $a_{k-2}=2k-4$, $a_{k-1}=2k-3$.
If there exists a positive integer $\alpha$ such that
$$a_{s}+\alpha\in 2^{\wedge}(A+\alpha), \ \ s=2, \ldots, k-1, $$
then $|2^{\wedge}A|\geqslant 3k-6$.
\end{pro}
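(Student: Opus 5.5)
By Theorem E (applied with $l=a_{k-1}=2k-3$) we already have $|2^{\wedge}A|\geqslant 3k-7$. So it suffices to rule out $|2^{\wedge}A|=3k-7$. Assume equality for contradiction. Then Lemma \ref{LL3-4} says $A$ is one of the sets in families (1)--(6). The plan is to check that none of them can satisfy both $a_{k-2}=2k-4$ and the hypothesis on $\alpha$.

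First I will rewrite the hypothesis intrinsically. Since $2^{\wedge}(A+\alpha)=2^{\wedge}A+2\alpha$, the condition $a_s+\alpha\in 2^{\wedge}(A+\alpha)$ is equivalent to
$$a_s-\alpha\in 2^{\wedge}A,\qquad s=2,\ldots,k-1 .$$
Taking $s=2$ and noting that $a_s-\alpha$ is a sum of two distinct nonnegative elements, hence at least $a_1$, we get $1\leqslant \alpha\leqslant a_2-a_1$.

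Next I will use $a_{k-2}=2k-4$, i.e. $2k-4,2k-3\in A$, to discard families directly.
\begin{itemize}
\item In family (2), the top of $A$ is the odd progression $\{2j-1\}$ with $j\leqslant k-1$ and $\theta\leqslant k-3$. Hence $a_{k-2}=2k-5$, which is impossible.
\item In families (3) and (5) (and in (4)), the top elements lie in arithmetic progressions of difference $3$ (resp. $4$). So $2k-4$ and $2k-3$ cannot both belong to $A$ unless the residue classes involved happen to match. A short computation of the largest two elements in each family, done separately for $k \bmod 3$ and the parity of $k$, should exclude all of them, or leave only a few explicit $\theta$ to treat by hand.
\item Family (6) has $k\leqslant 10$. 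With $k\geqslant 8$, only finitely many explicit sets remain, and for each we test the condition $a_s-\alpha\in2^{\wedge}A$ for the finitely many admissible $\alpha\leqslant a_2-a_1$.
\end{itemize}

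The main case is family (1): $A=[0,\theta-k+1]\cup[\theta,2k-3]$, which does satisfy $a_{k-2}=2k-4$. Write $t=\theta-k+1\geqslant 1$, and take $s$ with $a_s=\theta$. We need $\theta-\alpha\in2^{\wedge}A$ with $\theta-\alpha<\theta$. The only sums below $\theta$ are sums of two distinct elements of $[0,t]$, and these lie in $[1,2t-1]$. Hence
$$\alpha\geqslant \theta-2t+1=2k-1-\theta .$$
If $t\geqslant 2$, then $a_1=1$ and $a_2=2$, so $\alpha=1$. This forces $\theta\geqslant 2k-2$, contradicting $\theta\leqslant 2k-4$. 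If $t=1$, then $a_2=\theta=k$, so $s=2$ and $a_2-\alpha=k-\alpha$ must equal $1$ (the only sum below $\theta$), giving $\alpha=k-1$. Now test $s=3$: we need $a_3-\alpha=2\in 2^{\wedge}A$, which fails since $2\notin\{1\}$ (and $2<\theta$ because $k\geqslant 8$). This contradiction finishes family (1).

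I expect the main obstacle to be the bookkeeping for families (3)--(5) and the explicit list (6): keeping track, for each residue of $k$ and each admissible $\theta$, of whether $2k-4,2k-3\in A$, and otherwise exhibiting a specific $s$ with $a_s-\alpha\notin 2^{\wedge}A$. There I would first try the index $s$ of the smallest element of the upper progression, exactly as in family (1).
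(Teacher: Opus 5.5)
Your route is the paper's own: the paper simply asserts that Proposition~\ref{P2} follows from the classification in Lemma~\ref{LL3-4}, on top of the bound $3k-7$ from Theorem E. Your reduction to $a_s-\alpha\in 2^{\wedge}A$ and your elimination of families (1) and (2) are correct.

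The weak point is your forecast for the remaining families, which is wrong as stated. The requirement $2k-4,2k-3\in A$ does not discard family (4) at all: its two top elements are always $2k-4\equiv 0$ and $2k-3\equiv 1 \pmod 4$. It also fails to discard family (3) when $k\equiv 2\pmod 3$ and $\theta=(2k-4)/3$, and family (5) when $\theta=k-1$. In addition, twelve of the explicit sets in (6) with $k\in\{8,9,10\}$ survive it. Your fallback test at the least element of the upper progression does not handle these by itself; in (4) that element is $a_1$, which is outside the range $s\geqslant 2$.

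What makes all of this immediate is a sharper form of your bound on $\alpha$. The only element of $2^{\wedge}A$ smaller than $a_2$ is $a_0+a_1=a_1$. Since $a_2-\alpha<a_2$, the case $s=2$ forces $\alpha=a_2-a_1$ exactly. The remaining cases then close as follows.
\begin{itemize}
\item In (4): $\alpha=3$, and $a_3-\alpha=2\notin 2^{\wedge}A$.
\item In the surviving members of (3) and (5): $a_1=3$, $a_2=6$, so $\alpha=3$. At the index $s$ with $a_s=k-1$ you would need $k-4\in 2^{\wedge}A$. This is impossible, because every element of $2^{\wedge}A$ below $k-1$ is a multiple of $3$ while $3\nmid k-4$.
\item In the twelve sets from (6): $a_3-\alpha\notin 2^{\wedge}A$ in every case except $\{0,4,5,6,9,10,11,14,15\}$. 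There $a_3-\alpha=5$ is a sum, but $a_4-\alpha=8$ is not.
\end{itemize}
With these checks filled in, your argument is complete.
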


\begin{pro}\label{P2-1}
Let $A$ be a finite set of $k\geqslant 8$ integers such that $a_{k-2}=2k-4$, $a_{k-1}=2k-3$.
If there exists a positive integer $\alpha$ such that
$$a_{2}=2a_{1}, \ \ a_{2}\neq a_{1}+\alpha, \ \ a_{s}+\alpha\in 2^{\wedge}(A+\alpha), \ \ s=3, \ldots, k-1, $$
then $|2^{\wedge}A|\geqslant 3k-7$. Moreover, $|2^{\wedge}A|=3k-7$ if and only if
$$A=\{0, 1, 2, 6, 7, 8, 12, 13\}. $$
In this case, $\alpha=5$.
\end{pro}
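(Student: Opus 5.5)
The plan mirrors the (omitted) proof of Proposition \ref{P2}, with the extra hypothesis $a_{2}=2a_{1}$ replacing the lost condition at $s=2$. The baseline is always the count used in Lemma \ref{LL3-7}. Put
$$T=\{a_{i}:1\leqslant i\leqslant k-2\}\cup\{a_{i}+a_{k-1}:0\leqslant i\leqslant k-2\}.$$
Every $b\in[0,a_{k-1}]\backslash(A\cup W)$ contributes at least one element of $S(b)\cap 2^{\wedge}A$, so
$$|2^{\wedge}A|\geqslant 3k-5-|W|\geqslant 3k-7$$
by Lemma \ref{LL3-1}. The inequality is therefore immediate. All the work is in the equality case, where we will exploit the translation hypothesis.

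\textbf{Step 1: reduce to $|W|=2$.} Assume $|2^{\wedge}A|=3k-7$. Then $|W|=2$, say $W=\{w_{1},w_{2}\}$, and $|S(b)\cap 2^{\wedge}A|=1$ for every $b\in[0,a_{k-1}]\backslash(A\cup W)$. Apply (\ref{e1-2}) and (\ref{e1-3}) to both $w_{1}$ and $w_{2}$, using $a_{k-2}=2k-4$. Then $a_{k-3}\leqslant 2k-6$, and (\ref{e1-3}) forces the tail of $A$ to be a reflection of the segment between $w$ and $(w+a_{k-1})/2$. Together with Lemma \ref{LL3-8} applied to $w_{1},w_{2}$ (whichever of $w_{j}\pm1\notin 2^{\wedge}A$ is forced by the rigidity condition), this pins $[0,w_{j}]\cap A$ down to one of the interval or two-block shapes listed in Lemma \ref{LL3-8}.

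\textbf{Step 2: eliminate shapes with the translation hypothesis.} Note that $a_{s}+\alpha\in 2^{\wedge}(A+\alpha)$ means $a_{s}-\alpha\in 2^{\wedge}A\cup\{\ldots\}$ after shifting; concretely, $a_{s}+\alpha=(a_{i}+\alpha)+(a_{j}+\alpha)$ with $i\neq j$, i.e. $a_{s}=a_{i}+a_{j}+\alpha$. Apply this to $s=3$, which is the smallest element constrained, and to $s=k-1,k-2$.
\begin{itemize}
\item[(a)] For $s=3$ we get $a_{3}-\alpha\in\{a_{1},a_{2},a_{1}+a_{2},\ldots\}$, which limits $\alpha$ to at most a couple of values. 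The condition $a_{2}\neq a_{1}+\alpha$ excludes $a_{3}=a_{2}+\alpha$ with pair $\{0,1\}$.
\item[(b)] For $s=k-1$, we need $a_{k-1}-\alpha=a_{i}+a_{j}$, so $2k-3-\alpha\in 2^{\wedge}A$, and similarly for $2k-4-\alpha$.
\item[(c)] Since every large element must be a sum of two smaller shifted ones, we need $a_{s}-\alpha\in 2^{\wedge}A$ for all $s\geqslant 3$. This says that $A\backslash\{a_{0},a_{1},a_{2}\}-\alpha\subseteq 2^{\wedge}A$.
\end{itemize}

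\textbf{Step 3: handle the surviving configurations.} As in Lemma \ref{L3} (whose Lemma \ref{L5} is the analogue under our hypothesis), the interval shapes $[0,w/2]$ fail, because with $\alpha$ fixed by (a) some element $w+2$ or $w+3$ is not representable. This leaves configurations where $A$ splits into three blocks of equal length with gaps, of the form $\{0,1,2\}\cup\{6,7,8\}\cup\{12,13\}$. Reflecting via (\ref{e1-2}) and (\ref{e1-3}) shows that each block has length $a_{2}+1=3$ and the gaps equal the block length shift $\alpha=5$. This forces $k=8$ and $A=\{0,1,2,6,7,8,12,13\}$. One then checks directly that $|2^{\wedge}A|=17=3k-7$ and that $\alpha=5$ works: $a_{s}-5\in\{1,2,3,7,8\}\subseteq 2^{\wedge}A$ for $s\geqslant 3$, and $a_{2}=2\neq 1+5$.

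\textbf{Main obstacle.} The main obstacle is Step 3, the exhaustive elimination of the parity and two-block cases of Lemma \ref{LL3-8}(2) for $w_{1}$ and $w_{2}$ simultaneously. Small $w$ (such as $w\leqslant 5$) will need ad hoc treatment, as in the proofs of Lemmas \ref{L4} and \ref{L3}. My first attempt will be to use Lemma \ref{LL3-4}/Proposition \ref{LL3-3} applied to $A\backslash\{a_{k-1}\}$, which has maximum $2k-4=2(k-1)-2$, to bound its restricted sumset. If that removal costs too little, I will instead fall back on direct case analysis of the two-block structure.
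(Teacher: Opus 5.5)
Your lower bound is correct. The count $|2^{\wedge}A|\geqslant |T|+|[0,a_{k-1}]\backslash(A\cup W)|=3k-5-|W|$, combined with Lemma \ref{LL3-1}, gives $3k-7$. In the equality case it also correctly forces $|W|=2$ and $|S(b)\cap 2^{\wedge}A|=1$ for every other $b\notin A$.

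\textbf{The gap is the equality characterization, which is not proved.} Step 3 asserts the answer (``three blocks of length $a_{2}+1$ with gaps $\alpha$'') instead of deriving it, and that picture is not what actually has to be eliminated. Before the translation hypothesis is used, the extremal sets with $k\geqslant 8$, $a_{k-2}=2k-4$, $a_{k-1}=2k-3$ and $a_{2}=2a_{1}$ include three infinite families that are not block-shaped:
\begin{itemize}
\item $[0,\theta-k+1]\cup[\theta,2k-3]$ with $k+1\leqslant \theta\leqslant 2k-4$;
\item $\{0,3,\ldots,2k-3\}\cup\{k-1,k+2,\ldots,2k-4\}$ for $3\mid k$;
\item $\{0,3,\ldots,2k-4\}\cup\{k-1,k+2,\ldots,2k-3\}$ for $k\equiv 2\pmod 3$, e.g.\ $\{0,3,6,7,9,10,12,13\}$.
\end{itemize}
Your sketch never says how these are excluded. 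The set $[0,2]\cup[9,13]$ (with $k=8$) also refutes your Step 1 claim $a_{k-3}\leqslant 2k-6$. Your proposed fallback cannot work either: $A\backslash\{a_{k-1}\}$ has maximum $2k-4=2(k-1)-2$. That is outside the hypotheses of Lemma \ref{LL3-4}, which needs $2(k-1)-3$, and of Proposition \ref{LL3-3}, which needs $2(k-1)-4$.

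\textbf{The missing idea is to apply Lemma \ref{LL3-4} to $A$ itself.} Since $a_{k-1}=2k-3$, that lemma lists every set with $|2^{\wedge}A|=3k-7$, and the paper's proof is just a filtering of this list. Imposing $k\geqslant 8$, $a_{k-2}=2k-4$ and $a_{2}=2a_{1}$ leaves the three families above together with $\{0,1,2,6,7,8,12,13\}$, $\{0,1,2,6,7,8,9,14,15\}$ and $\{0,1,2,7,8,9,10,15,16,17\}$. Then use $a_{s}-\alpha\in 2^{\wedge}A$ for $s\geqslant 3$ on each case.
\begin{itemize}
\item For $[0,m]\cup[\theta,2k-3]$, the elements of $2^{\wedge}A$ below $\theta$ form exactly $[1,2m-1]$. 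If $m\geqslant 3$, then $s=3$ gives $\alpha\leqslant 2$, and the index $s$ with $a_{s}=\theta$ then forces $\theta\geqslant 2k-3$, a contradiction. If $m=2$, then $s=3$ gives $\alpha\geqslant k-2$, and $s=k-1$ fails.
\item For the two mod-$3$ families, every element of $2^{\wedge}A$ below $k-1$ is divisible by $3$. So the index with $a_{s}=k-1$ forces $\alpha\equiv k-1\not\equiv 0\pmod 3$. Then $9-\alpha\equiv 2(k-1)\pmod 3$, and every element of $2^{\wedge}A$ in that class is at least $2k+1$. Hence the index with $a_{s}=9$ fails.
\item $\{0,1,2,6,7,8,9,14,15\}$ fails at $s\in\{3,6,4\}$.
\item $\{0,1,2,7,8,9,10,15,16,17\}$ fails at $s\in\{3,6\}$.
\item For $\{0,1,2,6,7,8,12,13\}$, $s=3$ gives $\alpha\in\{3,4,5\}$ and $s=5$ forces $\alpha=5$.
\end{itemize}
The last item is the uniqueness of $\alpha$ that your final check omits: you only verified that $\alpha=5$ works, not that it is forced.
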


\begin{pro}\label{P2-2}
Let $A$ be a finite set of $k\geqslant 8$ integers such that $a_{k-2}=2k-4$, $a_{k-1}=2k-3$.
If $A$ satisfies
$$a_{s}\in 2^{\wedge}(A\backslash\{a_{0}\}), \ \ s=3, \ldots, k-1, $$
then $|2^{\wedge}A|\geqslant 3k-7$. Moreover, $|2^{\wedge}A|=3k-7$ if and only if
\begin{itemize}
  \item $A=\{0, 1\}\cup [k, 2k-3];$
  \item $A=\{0, 1, 5, 6, 7, 8, 12, 13\};$
  \item $A=\{0, 1, 5, 6, 7, 11, 12, 13\};$
  \item $A=\{0, 1, 4, 5, 6, 9, 10, 14, 15\};$
  \item $A=\{0, 1, 6, 7, 8, 9, 13, 14, 15\}. $
\end{itemize}
\end{pro}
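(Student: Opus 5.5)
By Theorem E (take $l=a_{k-1}=2k-3$), every $A$ as in the statement satisfies $|2^{\wedge}A|\geqslant 3k-7$. So the lower bound needs no work, and the hypothesis $a_{s}\in 2^{\wedge}(A\backslash\{a_{0}\})$, $s=3,\ldots,k-1$, only matters for the equality case. For equality I will not argue from scratch. Since $a_{k-1}=2k-3$, Lemma \ref{LL3-4} lists every set with $|2^{\wedge}A|=3k-7$. I will go through that list with $k\geqslant 8$ and keep exactly those members with $a_{k-2}=2k-4$ that satisfy the representability hypothesis. The sufficiency direction is then a direct verification on the five listed sets.

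First I filter by $a_{k-2}=2k-4$, i.e. $2k-4\in A$. Family (2) has $a_{k-2}=2(k-2)-1=2k-5$, so it drops out. In family (3) the top two elements are $3(k-1)-k=2k-3$ and $3(k-2)-k=2k-6$, so it drops out. In family (4) ($2\mid k$) the top two elements are $2k-3$ and $2k-4-4+1=2k-7$ (case $k\equiv 0$ or $2$ mod $4$ to be checked), so I expect it to drop out too. For family (5) ($3\mid k$) I will check the residues: $2k-4\equiv 2\pmod 3$ must lie in the $\theta$-progression, and I will test whether the representability condition can then hold. I expect it to fail at the smallest element of the $\theta$-progression lying above the multiples of $3$ that are present, since that element is neither a sum of two multiples of $3$ nor a sum involving a smaller element of its own class.

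The main family is (1): $A=[0,\theta-k+1]\cup[\theta,2k-3]$ with $\theta\leqslant 2k-4$. Consider the element $\theta$ itself.
\begin{itemize}
\item It cannot be a sum of two distinct small nonzero elements, because such sums are at most $2(\theta-k+1)-1<\theta$ when $\theta\leqslant 2k-4$.
\item It cannot use any element of the upper block, because every such sum exceeds $\theta$.
\end{itemize}
So $\theta$ must equal $a_{0}$, $a_{1}$ or $a_{2}$. This forces $\theta-k+1\leqslant 1$, i.e. $\theta=k$. That gives $A=\{0,1\}\cup[k,2k-3]$, and here every $a_{s}$ with $s\geqslant 3$ equals $1+(a_{s}-1)$, so the condition holds.

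Finally, among the sporadic sets in Lemma \ref{LL3-4}(6), I keep those with $k\geqslant 8$ and penultimate element $2k-4$. I then test the representability condition element by element. The aim is to end with exactly $\{0,1,5,6,7,8,12,13\}$, $\{0,1,5,6,7,11,12,13\}$, $\{0,1,4,5,6,9,10,14,15\}$ and $\{0,1,6,7,8,9,13,14,15\}$. Sets such as $\{0,1,2,6,7,8,12,13\}$ should fail, here because $6$ is not a sum of two distinct elements of $\{1,2\}$.

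I expect the main obstacle to be this bookkeeping: making sure family (5) and the $5i$-parametrised sporadic sets are correctly discarded, and that no sporadic set is wrongly kept or missed.
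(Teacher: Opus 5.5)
Your strategy is the one the paper intends; the paper just says the proposition follows from Lemma \ref{LL3-4}. You get the lower bound from Theorem E, then filter the extremal sets with $a_{k-1}=2k-3$ by $2k-4\in A$ and by the representability hypothesis. Your treatment of family (1), of family (2), and of the sporadic sets is correct, and your final list is correct. However, two of your claims at the $a_{k-2}=2k-4$ stage are false, so as written two infinite subfamilies are discarded for a reason that does not hold.

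\begin{itemize}
\item \textbf{Family (4).} The multiples of $4$ run up to $4\cdot\frac{k-2}{2}=2k-4$, so the top two elements are $2k-3$ and $2k-4$, not $2k-3$ and $2k-7$. Hence every member of family (4) has $a_{k-2}=2k-4$. An example is $\{0,1,4,5,8,9,12,13\}$ for $k=8$, which indeed has $|2^{\wedge}A|=17=3k-7$.
\item \textbf{Family (3).} When $k\equiv 2\pmod 3$ and $\theta=\lfloor\frac{2k-3}{3}\rfloor=\frac{2k-4}{3}$, we get $3\theta=2k-4\in A$, so again $a_{k-2}=2k-4$. An example is $\{0,3,6,7,9,10,12,13\}$ for $k=8$.
\end{itemize}

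These subfamilies must instead be removed by the representability hypothesis, which is easy.
\begin{itemize}
\item In family (4) the set starts $0,1,4,5,8$. The element $a_{4}=8$ is not among the sums $1+4$, $1+5$, $4+5$, which are $5,6,9$.
\item In the family (3) subcase, the smallest element $\not\equiv 0\pmod 3$ is $k-1\equiv 1\pmod 3$. It is preceded only by the multiples of $3$ in $[0,k-2]$, so its index is $\frac{k+1}{3}\geqslant 3$. It cannot be a sum of two distinct nonzero smaller elements, since all of those are $\equiv 0\pmod 3$.
\item In family (5) you should say explicitly that $2k-4$ lying in the progression $\theta,\theta+3,\dots,\theta+k-3\leqslant 2k-4$ forces $\theta=k-1$. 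Only then does $\theta$ have index $\frac{k}{3}\geqslant 3$, so your residue argument applies. For small $\theta$ the element $\theta$ would be $a_{1}$ and exempt from the hypothesis.
\end{itemize}

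With these corrections the filtering goes through and leaves exactly the five listed sets.
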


\begin{pro}\label{P7-1}
Let $A$ be a finite set of $k\geqslant 3$ integers such that $a_{k-3}=2k-6$, $a_{k-2}=2k-5$ and $2k-4\leqslant a_{k-1}\leqslant 2k-3$.
If there exists a positive integer $\alpha$ such that
$$a_{s}+\alpha\in 2^{\wedge}(A+\alpha), \ \ s=2, \ldots, k-1, $$
then $|2^{\wedge}A|\geqslant 3k-6$.
\end{pro}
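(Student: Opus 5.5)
Throughout, note that $a_s+\alpha=(a_i+\alpha)+(a_j+\alpha)$ with $i\neq j$ is the same as $a_s=a_i+a_j+\alpha$. So the hypothesis on $\alpha$ says exactly that
$$a_s-\alpha\in 2^{\wedge}A, \qquad s=2,\ldots,k-1.$$
For $s=2$, the only pair $i\neq j$ with $a_i+a_j<a_2$ is $\{i,j\}=\{0,1\}$. Hence $\alpha$ is forced to be $\alpha=a_2-a_1$. This reduction turns the $\alpha$-condition into a concrete membership test that can be checked against explicit sets.

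Since $0,a_{k-1}\in A$, $\gcd(A)=1$ and $2k-4\leqslant a_{k-1}\leqslant 2k-3$, Theorem E gives $|2^{\wedge}A|\geqslant 3k-7$. Suppose, for contradiction, that $|2^{\wedge}A|=3k-7$. We then use the inverse results of Section \ref{S5} to pin down $A$, and show that no resulting set satisfies the $\alpha$-condition. We split according to $a_{k-1}$.

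\textbf{Case $a_{k-1}=2k-3$.} By Lemma \ref{LL3-4}, $A$ lies in one of the families (1)--(6), and additionally $a_{k-3}=2k-6$, $a_{k-2}=2k-5$. So the top three elements $2k-6,2k-5,2k-3$ must match the tail of the family. This fixes or severely restricts $\theta$ in each family:
\begin{itemize}
\item In (1), it forces $\theta\leqslant 2k-6$.
\item In (2), the tail of odd numbers cannot contain $2k-6$, so (2) is impossible.
\item In (3) and (5), the residue classes mod $3$ leave at most one admissible $\theta$.
\item Family (4) and the sporadic sets in (6) can be checked one by one.
\end{itemize}
For each surviving set we compute $\alpha=a_2-a_1$ and exhibit one index $s$ with $a_s-\alpha\notin 2^{\wedge}A$. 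In family (1) we have $a_1=1$ and $a_2=2$ (or $a_1=1$, $a_2=\theta$ when $\theta-k+1=1$). We take $a_s=\theta$ and aim to show $\theta-1\notin 2^{\wedge}A$: indeed $\theta-1\notin A$, any sum of two small elements is at most $2(\theta-k+1)-1<\theta-1$, and any sum involving an element $\geqslant\theta$ exceeds $\theta-1$. The families (3), (4), (5) should be handled in the same way, using their congruence structure; the sporadic sets are a finite check.

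\textbf{Case $a_{k-1}=2k-4$.} Here we pass to the reflected set $A'=a_{k-1}-A$. It satisfies $|2^{\wedge}A'|=|2^{\wedge}A|$ and $\gcd(A')=1$, its largest element is $2k-4$, and since $a_{k-3}=2k-6$ and $a_{k-2}=2k-5$ it has $a'_1=1$, $a'_2=2$. Proposition \ref{LL3-3} (valid once $k\geqslant 5$) therefore classifies $A'$. Reflecting back, $A$ is one of three sporadic sets or
$$A=[0,k-2-a]\cup[2k-4-a,2k-4],\qquad 2\leqslant a\leqslant k-3.$$
In the interval-union case we again have $\alpha=a_2-a_1$ ($=1$ when $k-2-a\geqslant 2$). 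We take $a_s=2k-4-a$, the first element of the upper block, and aim to show $a_s-\alpha\notin 2^{\wedge}A$, exactly as in family (1). The three sporadic sets are checked directly. For $3\leqslant k\leqslant 4$ the sets are tiny and are checked by hand.

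The main obstacle is the bookkeeping in the case $a_{k-1}=2k-3$. Families (3)--(5) and the long sporadic list in (6) each have to be matched against $a_{k-3}=2k-6$, $a_{k-2}=2k-5$, and then refuted by finding a specific failing $s$. I would handle this uniformly by always testing $s$ equal to the index of the first element of the second arithmetic block (or the first gap), since that is where $a_s-\alpha$ typically falls into a hole of $2^{\wedge}A$.
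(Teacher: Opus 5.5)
Your route is the one the paper intends: it only says the proposition follows from Lemma \ref{LL3-4} and Proposition \ref{LL3-3}. Your reduction is correct. The $s=2$ condition forces $\alpha=a_2-a_1$, so the hypothesis reads $a_s-\alpha\in 2^{\wedge}A$, which makes the final check concrete.

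\textbf{The gap: family (2) is not excluded.} Your matching of the extremal families of Lemma \ref{LL3-4} against $a_{k-3}=2k-6$, $a_{k-2}=2k-5$ is wrong, and one error leaves a genuine extremal set unrefuted. Taking $\theta=k-3$ in family (2) gives $A=\{0,2,4,\ldots,2k-6\}\cup\{2k-5,2k-3\}$, which has exactly the required top three elements. For $k=5$ it is $\{0,2,4,5,7\}$, which the paper lists in Proposition \ref{P7}. This set does fail the hypothesis: for $k\geqslant 5$ one has $\alpha=2$, and $a_{k-2}-\alpha=2k-7$ is odd and smaller than every odd element of $A$, so $2k-7\notin 2^{\wedge}A$. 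But your proof never checks it.

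\textbf{Families (1) and (4) are empty.} Both contain $2k-4$, so they have $a_{k-2}=2k-4$. Your claim that (1) survives for $\theta\leqslant 2k-6$ is therefore false, though harmless.

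\textbf{What actually needs refuting.} The surviving sets are:
\begin{itemize}
\item family (2) with $\theta=k-3$;
\item family (3), only for $k\equiv 1\pmod 3$ with $3\theta=2k-5$, namely $(3\mathbb{Z}\cap[0,2k-5])\cup((3\mathbb{Z}+2)\cap[k-2,2k-3])$;
\item family (5), only with $\theta=k-2$;
\item the sporadic sets $\{0,2,3,5,8,10,11,13\}$, $\{0,3,5,6,8,10,11,13\}$, $\{0,2,5,7,8,10,12,13,15\}$, $\{0,3,5,7,8,10,12,13,15\}$ and $\{0,2,5,7,9,10,12,14,15,17\}$, each of which fails at $s=3$ or $s=4$.
\end{itemize}

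\textbf{Your uniform test index also breaks down.} Testing $a_s$ equal to the first element of the upper block fails whenever that element is $a_2$, because $a_2-\alpha=a_1\in 2^{\wedge}A$ always holds. This happens in three places, and in each $s=3$ works instead:
\begin{itemize}
\item your interval case with $k-2-a=1$, i.e.\ $A=\{0,1\}\cup[k-1,2k-4]$, where $\alpha=k-2$ and $a_3-\alpha=2\notin 2^{\wedge}A$ for $k\geqslant 5$;
\item the family (3) survivor at $k=7$, namely $\{0,3,5,6,8,9,11\}$;
\item the family (5) survivor at $k=6$, namely $\{0,3,4,6,7,9\}$.
\end{itemize}
With these corrections the argument is complete.
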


\begin{pro}\label{P7}
Let $A$ be a finite set of $k\geqslant 3$ integers such that $a_{k-3}=2k-6$, $a_{k-2}=2k-5$ and $2k-4\leqslant a_{k-1}\leqslant 2k-3$.
If there exists a positive integer $\alpha$ such that
$$a_{2}=2a_{1}, \ \ a_{2}\neq a_{1}+\alpha, \ \ a_{s}+\alpha\in 2^{\wedge}(A+\alpha), \ \ s=3, \ldots, k-1, $$
then $|2^{\wedge}A|\geqslant 3k-7$. Moreover, $|2^{\wedge}A|=3k-7$ if and only if
\begin{itemize}
  \item $A=\{0, 2, 4, 5, 7\}$. In this case, $\alpha=1$ or $3;$
  \item $A=\{0, 1, 2, 6, 7, 8\}$. In this case, $\alpha=5.$
\end{itemize}
\end{pro}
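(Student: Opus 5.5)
The approach has two steps: first obtain the bound $|2^{\wedge}A|\geqslant 3k-7$ from the general results, then characterise the equality case by running through the known extremal structures and testing each against the hypotheses.

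\textbf{Step 1: the bound.} Since $a_{k-3}=2k-6$ and $a_{k-2}=2k-5$ are consecutive, $\gcd(A)=1$ holds automatically.
\begin{itemize}
\item If $a_{k-1}=2k-3$, I use the counting from the proof of Lemma \ref{LL3-7}. The set $T=\{a_i:1\leqslant i\leqslant k-2\}\cup\{a_i+a_{k-1}:0\leqslant i\leqslant k-2\}$ lies in $2^{\wedge}A$ and has $2k-3$ elements. Every $b\in[0,a_{k-1}]\backslash(A\cup W)$ contributes a further element of $S(b)$. Together with $|W|\leqslant 2$ (Lemma \ref{LL3-1}) this gives $|2^{\wedge}A|\geqslant 3k-5-|W|\geqslant 3k-7$.
\item If $a_{k-1}=2k-4$, Theorem E gives $|2^{\wedge}A|\geqslant 3k-7$ directly.
\end{itemize}
So the inequality costs nothing. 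The whole content lies in the equality case, where $A$ must belong to a known extremal family and the extra hypotheses must be tested on it.

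\textbf{Step 2a: equality with $a_{k-1}=2k-3$.} By Lemma \ref{LL3-4}, $A$ is one of the families (1)--(6).
\begin{itemize}
\item Impose $a_{k-3}=2k-6$ and $a_{k-2}=2k-5$, i.e. the top three elements are consecutive. In families (2)--(5) the top elements form progressions of difference $2$, $3$ or $4$, so this essentially excludes them for $k$ large. What survives is family (1) with $\theta\leqslant 2k-5$, together with finitely many sporadic sets from (6) and small-$k$ members of the families.
\item Next impose $a_2=2a_1$. In family (1) this forces $a_1=1$, $a_2=2$, hence $\theta-k+1\geqslant 2$.
\item Then impose the $\alpha$-condition, which I rewrite as $a_s-\alpha\in 2^{\wedge}A$ for $3\leqslant s\leqslant k-1$, with $\alpha\neq a_2-a_1$.
\item Apply it to $s=3$ and to $s$ with $a_s=\theta$, the smallest element of the top block. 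Since $[\theta-k+2,\theta-1]$ is disjoint from $A$, and $2^{\wedge}A$ misses $[2(\theta-k+1),\theta]$ apart from sums with $0,1,\dots$, I expect a contradiction unless $k$ is tiny.
\end{itemize}
For the sporadic sets and small $k$ I simply check directly. This should leave only $A=\{0,2,4,5,7\}$, where $\alpha=1$ or $3$.

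\textbf{Step 2b: equality with $a_{k-1}=2k-4$.} Here I split on $a_1$.
\begin{itemize}
\item If $a_1=1$, then $a_2=2$ and Proposition \ref{LL3-3} lists the extremal sets. The block family $[0,a]\cup[k-2+a,2k-4]$ is killed by the $\alpha$-condition exactly as in Step 2a: the bottom element $k-2+a$ of the top block cannot be written as $\alpha$ plus a restricted sum without $\alpha=1=a_2-a_1$, or a similar violation. Of the three sporadic sets, only those with three consecutive top elements survive the check, and this should produce $\{0,1,2,6,7,8\}$ with $\alpha=5$.
\item If $a_1\geqslant 2$, there is no ready-made classification, so I redo the $W$-analysis of Proposition \ref{LL3-3} using Lemmas \ref{LL3-2} and \ref{LL3-6}. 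In that setting every $w\in W$ is even, and equality forces $|S(b)\cap 2^{\wedge}A|=1$ for all but at most one $b$. Since $a_1\geqslant 2$ and $a_2=2a_1$, the element $1$ lies in neither $A$ nor $2^{\wedge}A$. Hence $1\in W$ unless $1+a_{k-1}\in 2^{\wedge}A$, which contradicts the parity of $W$, so $1+a_{k-1}=(a_{k-3})+(a_{k-2})-\dots$ must arise in a constrained way. From Lemma \ref{LL3-2} I would then derive that $A$ is nearly symmetric, and rule it out using the $\alpha$-condition at $s=k-1,k-2,k-3$.
\end{itemize}

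\textbf{Main obstacle.} I expect the hardest step to be this last subcase, $a_{k-1}=2k-4$ with $a_1\geqslant 2$, since no classification from the earlier results applies there. My first attempt would be to show that the pair $\{1,1+a_{k-1}\}$ together with $a_2-1\notin A$ already yields two extra missing configurations. That would push the count in Step 1 up to $3k-6$ and make the subcase vacuous.
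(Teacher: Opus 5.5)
Step~1 is correct, and your overall plan is the paper's route: take the bound from Theorem E (or your $W$-count), then filter the extremal lists of Lemma~\ref{LL3-4} and Proposition~\ref{LL3-3} through the hypotheses. The equality analysis, however, has two genuine gaps.

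\textbf{Step 2a misreads the hypothesis.} When $a_{k-1}=2k-3$, the top three elements are $2k-6,2k-5,2k-3$ and $2k-4\notin A$, so they are not consecutive.
\begin{itemize}
\item Family (1) of Lemma~\ref{LL3-4} is therefore excluded outright: it always has $a_{k-2}=2k-4$. Your analysis of it is moot.
\item Families (2), (3), (5) are \emph{not} excluded for large $k$. Family (2) with $\theta=k-3$, i.e.\ $\{0,2,\dots,2k-6,2k-5,2k-3\}$, family (3) with $k\equiv 1\pmod 3$ and $\theta=(2k-5)/3$, and family (5) with $\theta=k-2$ all meet the top condition for every admissible $k$.
\item The last two also have $a_{1}=3$, $a_{2}=6=2a_{1}$ once $k\geqslant 10$, resp.\ $k\geqslant 9$.
\item Your target $\{0,2,4,5,7\}$ is exactly the $k=5$ member of family (2), which your argument had discarded.
\end{itemize}
These infinite families must be killed by the $\alpha$-condition, read as $a_{s}-\alpha\in 2^{\wedge}A$. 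In family (2), every odd element of $2^{\wedge}A$ is at least $2k-5$. So for $k\geqslant 6$, $a_{3}=6$ forces $\alpha=4$, and then $a_{k-2}-4=2k-9\notin 2^{\wedge}A$. In (3) and (5), every element of $2^{\wedge}A$ below $k-2$ (the least element of $A$ not divisible by $3$) is a multiple of $3$. Hence $a_{3}=9$ forces $\alpha=6$, and then $k-8\notin 2^{\wedge}A$; the cases $k=9,10$ need a hand check. As written, Step 2a leaves infinitely many candidates unexamined.

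\textbf{For $a_{k-1}=2k-4$ you miss the idea that makes the case routine.} The reflection $A^{*}=(2k-4)-A$ has $a_{1}^{*}=2k-4-a_{k-2}=1$ and $a_{2}^{*}=2k-4-a_{k-3}=2$. So Proposition~\ref{LL3-3} applies to $A^{*}$ whatever $a_{1}$ is. Reflecting its list, every extremal $A$ here is either $[0,b]\cup[k-2+b,2k-4]$ with $1\leqslant b\leqslant k-4$, or one of
$\{0,1,5,6,7,10,11,12\}$, $\{0,1,5,6,7,8,12,13,14\}$, $\{0,1,6,7,8,9,13,14,15,16\}$.
All of these have $a_{1}=1$. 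So your ``main obstacle'' $a_{1}\geqslant 2$ is empty, and the $W$-count you sketch for it (not yet an argument) is unnecessary.

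Your $a_{1}=1$ bookkeeping is also wrong. None of the three sporadic sets of Proposition~\ref{LL3-3} satisfies $a_{k-3}=2k-6$. The set $\{0,1,2,6,7,8\}$ is the block member with $k=6$, $b=2$. That is precisely the set your claim ``the block family is killed by the $\alpha$-condition'' would discard, yet with $\alpha=5$ we have $6-5=1\in 2^{\wedge}A$.

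The correct check runs as follows.
\begin{itemize}
\item $a_{2}=2$ forces $b\geqslant 2$ and kills the sporadic sets.
\item For $b\geqslant 3$: $a_{3}=3$ forces $\alpha=2$, and then $a_{b+1}-2=k-4+b\notin 2^{\wedge}A$.
\item For $b=2$: $a_{3}=k$ forces $k-\alpha\in\{1,2,3\}$. The $k-3$ consecutive values $a_{s}-\alpha$ are all $<k$, so they must lie in $\{1,2,3\}$. This gives $k=6$, $\alpha=5$.
\end{itemize}
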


\begin{pro}\label{P7-2}
Let $A$ be a finite set of $k\geqslant 3$ integers such that $a_{k-3}=2k-6$, $a_{k-2}=2k-5$ and $2k-4\leqslant a_{k-1}\leqslant 2k-3$.
If $A$ satisfies
$$a_{s}\in 2^{\wedge}(A\backslash\{a_{0}\}), \ \ s=3, \ldots, k-1, $$
then $|2^{\wedge}A|\geqslant 3k-7$. Moreover, $|2^{\wedge}A|=3k-7$ if and only if
\begin{itemize}
  \item $A=\{0, 2, 3, 5\};$
  \item $A=\{0, 2, 3, 5, 8, 10, 11, 13\};$
  \item $A=\{0, 1, 5, 6, 7, 8, 12, 13, 14\};$
  \item $A=\{0, 1, 6, 7, 8, 9, 13, 14, 15, 16\};$
  \item $A=\{0, 1\}\cup [k-1, 2k-4].$
\end{itemize}
\end{pro}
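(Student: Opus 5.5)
The lower bound $|2^{\wedge}A|\geqslant 3k-7$ needs no new work. Here $a_{k-1}\in\{2k-4,2k-3\}$, and $\gcd(A)=1$ holds throughout. So Theorem E (with $l=a_{k-1}$) already gives $|2^{\wedge}A|\geqslant 3k-7$ for $k\geqslant 3$. All the effort goes into the equality case: assume $|2^{\wedge}A|=3k-7$ and determine $A$ from the two available inverse theorems, Lemma \ref{LL3-4} and Proposition \ref{LL3-3}. Before doing so, I record what the hypothesis $a_s\in 2^{\wedge}(A\backslash\{a_0\})$, $s\geqslant 3$, forces on small elements. Since $a_3$ must be a sum of two distinct elements of $\{a_1,a_2\}$, we get $a_3=a_1+a_2$. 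More generally, every $a_s$ with $s\geqslant 3$ is a sum of two distinct smaller nonzero elements of $A$. This is the filter applied to each candidate extremal set.

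\emph{Case $a_{k-1}=2k-3$.} Go through the list (1)--(6) of Lemma \ref{LL3-4} and keep only the sets with $a_{k-3}=2k-6$ and $a_{k-2}=2k-5$. The families (2)--(5) are unions of residue classes modulo $2,3,4$. In each, I check when two consecutive integers $2k-6,2k-5$ can sit just below a top element $2k-3$ with $2k-4\notin A$; this pins down the parameter $\theta$ and leaves only small $k$. Family (1) is excluded here, since it would contain $2k-4$. The sporadic sets in (6), together with the survivors from (2)--(5), are then tested directly against the filter.
\begin{itemize}
\item $\{0,2,3,5\}$ (family (2) with $k=4$) passes: $5=2+3$.
\item $\{0,2,3,5,8,10,11,13\}$ from (6) passes.
\end{itemize}
I expect every other candidate to fail, typically because $a_3\neq a_1+a_2$ or because the second block starts too high to be a sum of two small elements.

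\emph{Case $a_{k-1}=2k-4$.} Here $a_{k-3},a_{k-2},a_{k-1}$ are the three consecutive integers $2k-6,2k-5,2k-4$. Pass to the reflection $A'=a_{k-1}-A$. It contains $0,1,2$ as its three smallest elements, has the same largest element $2k-4$, and satisfies $|2^{\wedge}A'|=|2^{\wedge}A|=3k-7$ (for $k\geqslant 5$; the cases $k\leqslant 4$ are checked by hand). Proposition \ref{LL3-3} then lists $A'$: the three sporadic sets, and the family $[0,a]\cup[k-2+a,2k-4]$ with $2\leqslant a\leqslant k-3$. Reflecting back, $A=[0,k-2-a]\cup[2k-4-a,2k-4]$.
\begin{itemize}
\item If $k-2-a\geqslant 2$, the element $2k-4-a$ must be a sum of two distinct elements of $[1,k-2-a]$. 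The largest such sum is $2k-5-2a<2k-4-a$, so these sets fail.
\item Only $a=k-3$ survives, giving $A=\{0,1\}\cup[k-1,2k-4]$. Here every $a_s$ with $s\geqslant 3$ equals $1+(a_s-1)$, so it passes.
\item The three sporadic reflected sets are checked directly. Their reflections $\{0,1,5,6,7,8,12,13\}$-type sets yield $\{0,1,5,6,7,8,12,13,14\}$ and $\{0,1,6,7,8,9,13,14,15,16\}$ as the survivors.
\end{itemize}

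Sufficiency is a direct computation of $|2^{\wedge}A|$ for each listed set. The main obstacle is the bookkeeping in the first case: confirming, family by family in Lemma \ref{LL3-4}, that the conditions $a_{k-3}=2k-6$, $a_{k-2}=2k-5$ together with $a_3=a_1+a_2$ leave nothing beyond the two small sets. For the modular families I would first use the residues of $2k-6,2k-5,2k-3$ to fix $\theta$, and only then apply the additive filter.
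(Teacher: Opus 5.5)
Your plan matches what the paper indicates. Theorem E gives the bound, and the equality case comes from filtering the list of Lemma \ref{LL3-4} when $a_{k-1}=2k-3$, and the list of Proposition \ref{LL3-3} applied to the reflection $a_{k-1}-A$ when $a_{k-1}=2k-4$. The survivors you identify are exactly the listed sets.

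There are two small corrections. First, in families (2), (3), (5) of Lemma \ref{LL3-4} the top condition does not leave only small $k$. It leaves one member for every admissible $k$, for example $\{0,2,4,\dots,2k-6,2k-5,2k-3\}$. For large $k$ these fail by a residue obstruction, not by the second block starting too high: the first element of the second residue class is not a sum of two elements of the first class. Second, at $k=3$ the listed set $\{0,1\}\cup[k-1,2k-4]=\{0,1,2\}$ satisfies all hypotheses but has $|2^{\wedge}A|=3=3k-6$. So your sufficiency check, like the statement itself, needs $k\geqslant 4$.
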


\section{Maximal element extensions of dense sets}\label{S7}

\begin{pro}\label{P1}
Let $A$ be a set of $k\geqslant 8$ integers such that $a_{k-2}=2k-4$, $a_{k-1}=2k-3$ and $|2^{\wedge}A|=3k-6$.
If there exists a positive integer $\alpha$ such that
$$a_{s}+\alpha\in 2^{\wedge}(A+\alpha), \ \ s=2, \ldots, k-1, $$
then for any integer $n\geqslant 2k$, we have
$$|2^{\wedge}(A\cup \{n\})|\geqslant 3k-3. $$
\end{pro}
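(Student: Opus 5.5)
We need $|2^{\wedge}(A\cup\{n\})|\geqslant 3k-3$, i.e. three new sums beyond the $3k-6$ elements of $2^{\wedge}A$. Write $A'=A\cup\{n\}$. The new sums are $n+A$, which consists of $k$ distinct integers, and every element of $n+A$ exceeding $\max 2^{\wedge}A=4k-7$ is automatically new. So the plan is to show that at least three elements of $n+A$ lie outside $2^{\wedge}A$.

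\textbf{Easy range.} If $n\geqslant 4k-7$, all of $n+a_1,\ldots,n+a_{k-1}$ exceed $4k-7$, which gives $k-1\geqslant 3$ new sums. So assume $2k\leqslant n\leqslant 4k-8$. The three largest new sums are $n+a_{k-1}=n+2k-3$, $n+a_{k-2}=n+2k-4$ and $n+a_{k-3}$. The first two exceed $4k-7$ as soon as $n\geqslant 2k-2$, so they are new. It therefore suffices to find one further element $n+a_j\notin 2^{\wedge}A$ with $j\leqslant k-3$.

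\textbf{Finding the third sum.} Suppose, for contradiction, that $n+a_j\in 2^{\wedge}A$ for all $0\leqslant j\leqslant k-3$. Then $n\in 2^{\wedge}A$ and the whole translate $n+(A\setminus\{a_{k-2},a_{k-1}\})$ lies in $2^{\wedge}A\cap[2k,4k-7]$. I will count $2^{\wedge}A$ via $T=\{a_i:1\leqslant i\leqslant k-2\}\cup\{a_i+a_{k-1}:0\leqslant i\leqslant k-2\}$ and the gap set $W$, as in Lemma \ref{LL3-7}. Since $|2^{\wedge}A|=3k-6$, Lemma \ref{LL3-7} applies: $|W|\in\{1,2\}$, and apart from at most one exceptional $\xi$, every $b\notin A\cup W$ has $|S(b)\cap 2^{\wedge}A|=1$. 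Because $a_{k-2}=2k-4=a_{k-1}-1$, we have $a_i+a_{k-2}=a_i+a_{k-1}-1$. Hence $b+a_{k-1}\in2^{\wedge}A$ whenever $b+1\in A$, $b\leqslant 2k-5$, and in that case $b\notin 2^{\wedge}A$ (except for $b=\xi$).

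Now set $d=n-a_{k-1}\geqslant 3$. Every sum $n+a_j$ with $n+a_j>2k-3$ has the form $a_{k-1}+(a_j+d)$. Membership in $2^{\wedge}A$ forces either $a_j+d\in A$, or $a_j+d=b\notin A$ with $S(b)$ meeting $2^{\wedge}A$ in its upper element. I will run a pigeonhole argument like the one in Proposition \ref{P16.1}, case $d\geqslant 6$: the $k-2$ values $a_j+d$ must land in a set of roughly $|A\cap[d,a_{k-1}]|$ plus the "upper-type" non-elements. Combined with the rigid local structure forced by Lemma \ref{LL3-2} and Lemma \ref{LL3-8} around each $w\in W$, this should give too few available slots.

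\textbf{Using $\alpha$.} The hypothesis on $\alpha$ is what makes $A$ non-extremal; indeed Proposition \ref{P2} already yields $3k-6$ from it. I would use it as in Lemma \ref{L3}: it forces $a_2$, $a_3$ into specific positions relative to $\alpha$, and this rules out the small-$W$ configurations in which the pigeonhole count is tight. I expect the main obstacle to be exactly these tight configurations, where $w\in W$ is very small or very large ($w\in\{2k-6,2k-5\}$) and $A$ is nearly $[0,k-3]\cup\{2k-4,2k-3\}$. There I would use the explicit forms from Lemma \ref{LL3-8} to exhibit a concrete $j$ (for example $j=1$ or $j=2$) with $n+a_j\notin 2^{\wedge}A$. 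I would handle the small values $d=3,4,5$ separately by direct inspection, as in Proposition \ref{P16.1}.
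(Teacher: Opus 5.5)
Your reduction to finding one more new sum $n+a_j$ with $j\leqslant k-3$ is correct. However, that extra sum is never actually produced, and the global slot count you propose cannot produce it.

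Under your contradiction hypothesis, the $k-2$ distinct values $a_j+d$ ($0\leqslant j\leqslant k-3$) lie in $[d,2k-4]$. Each is either an element of $A$ or a gap $b$ with $b+a_{k-1}\in 2^{\wedge}A$. Since $|A\cap[d,2k-4]|=k-1-|A\cap[0,d-1]|$, the count gives a contradiction only if there are at most $|A\cap[0,d-1]|-2$ such upper-type gaps in $[d,2k-4]$. Nothing in your outline bounds this number. On the contrary, every gap $b\leqslant 2k-6$ with $b+1\in A$ is upper-type, since $b+a_{k-1}=(b+1)+a_{k-2}$, so such gaps are typically plentiful.

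The analogy with Proposition \ref{P16.1} does not transfer. There one only needed $n+a_p\notin a_{k-1}+A\backslash\{a_{k-1}\}$; here $n+a_p$ must avoid all of $2^{\wedge}A$. Phrases like ``this should give too few available slots'' and using $\alpha$ ``as in Lemma \ref{L3}'' leave the entire substance of the proof open.

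The missing idea is to work from the top.
\begin{itemize}
\item Since $n>a_{k-1}$, we have $n+a_{k-3}>a_{k-3}+a_{k-1}$, and the only element of $2^{\wedge}A$ above $a_{k-3}+a_{k-1}$ is $a_{k-2}+a_{k-1}=4k-7$. In your language, the only slot in $(a_{k-3},2k-4]$ is $a_{k-2}$.
\item So either $n+a_{k-3}$ is new, or $a_{k-3}=4k-7-n\leqslant 2k-7$. In the latter case $a_{k-3}+a_{k-1}\leqslant 4k-10$, so $4k-8,4k-9\notin 2^{\wedge}A$. Hence if $a_{k-3}-1\in A$ or $a_{k-3}-2\in A$, the third new sum ($4k-8$ or $4k-9$) appears at once.
\item If both are absent, Lemma \ref{L4} yields $a_{k-3}-1\notin 2^{\wedge}A$. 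Pigeonholing the $k-3$ elements of $A\cap[0,a_{k-3}-1]$ into the pairs $\{i,a_{k-3}-1-i\}$ then forces $a_{k-3}=2k-7$. Hence $n=2k$ and $2k-8\notin 2^{\wedge}A$.
\item Finally, Lemma \ref{L3}, which is the only place the $\alpha$-hypothesis is used, gives $a_{k-4}+2k\notin 2^{\wedge}A$.
\end{itemize}
The third new sum therefore comes from one of the specific indices: $k-3$, the index of $a_{k-3}-1$ or $a_{k-3}-2$, or $k-4$. Without this chain, or an equivalent targeted analysis identifying which $n+a_j$ is new, your proposal does not prove the statement.
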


\begin{proof}
It is easy to see that
$$\{n+a_{k-2}, n+a_{k-1}\}\subseteq 2^{\wedge}(A\cup \{n\})\backslash 2^{\wedge}A. $$
If $a_{k-3}+n\in 2^{\wedge}(A\cup \{n\})\backslash 2^{\wedge}A$, the result holds.
Assume that $a_{k-3}+n\in 2^{\wedge}A$. Then
\begin{equation}\label{e111}a_{k-3}+n=a_{k-2}+a_{k-1}. \end{equation}
By $n\geqslant 2k$ we have
$$a_{k-3}\leqslant 2k-7. $$
If $a_{k-3}-1\in A$, then $(a_{k-3}-1)+n\in 2^{\wedge}(A\cup \{n\})\backslash 2^{\wedge}A$, the result holds.
If $a_{k-3}-2\in A$, then $(a_{k-3}-2)+n\in 2^{\wedge}(A\cup \{n\})\backslash 2^{\wedge}A$, the result holds.

Assume that $a_{k-3}-2$, $a_{k-3}-1\not\in A$.
By Lemma \ref{L4} we have $a_{k-3}-1\not\in 2^{\wedge}A$, it follows that
$$|\{i, a_{k-3}-1-i\}\cap A|\leqslant 1, \ \ i=0, 1, \ldots, \left\lfloor\frac{a_{k-3}-1}{2}\right\rfloor, $$
thus,
$$k-3=|[0, a_{k-3}-1]\cap A|\leqslant \left\lfloor\frac{a_{k-3}-1}{2}\right\rfloor+1, $$
which implies that $a_{k-3}=2k-7$ and $2k-8\not\in 2^{\wedge}A$. By (\ref{e111}) we have $n=2k$. By Lemma \ref{L3} we have
$$a_{k-4}+n\in 2^{\wedge}(A\cup \{n\})\backslash 2^{\wedge}A, $$
the result holds.

This completes the proof of Proposition \ref{P1}.
\end{proof}

Similar to the proof of Proposition \ref{P1}, combining with Lemmas \ref{L4}, \ref{L5}, we have the following result.

\begin{pro}\label{P6}
Let $A$ be a finite set of $k\geqslant 8$ integers such that $a_{k-2}=2k-4$, $a_{k-1}=2k-3$ and $|2^{\wedge}A|=3k-6$.
If there exists a positive integer $\alpha$ such that
$$a_{2}=2a_{1}, \ \ a_{2}\neq a_{1}+\alpha, \ \ a_{s}+\alpha\in 2^{\wedge}(A+\alpha), \ \ s=3, \ldots, k-1, $$
then for any integer $n\geqslant 2k$, we have
$$|2^{\wedge}(A\cup \{n\})|\geqslant 3k-3. $$
\end{pro}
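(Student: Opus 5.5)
We follow the proof of Proposition \ref{P1} essentially verbatim; the only change is that Lemma \ref{L5} replaces Lemma \ref{L3} at the final step. Since $n\geqslant 2k>a_{k-1}+a_{k-2}-a_{k-3}$ fails only in a controlled way, we first note that
$$n+a_{k-2},\; n+a_{k-1}\in 2^{\wedge}(A\cup\{n\})\backslash 2^{\wedge}A,$$
because both exceed $a_{k-2}+a_{k-1}=\max 2^{\wedge}A$. Together with $|2^{\wedge}A|=3k-6$, this gives $3k-4$ elements. It therefore suffices to find one more element of the form $a_j+n$, with $j\leqslant k-3$, lying outside $2^{\wedge}A$.

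First take $j=k-3$. If $a_{k-3}+n\in 2^{\wedge}A$, then necessarily $a_{k-3}+n=a_{k-2}+a_{k-1}=4k-7$, since this is the only sum of two distinct elements of $A$ exceeding $n+a_{k-3}-1$ in the relevant range. Combined with $n\geqslant 2k$, this gives $a_{k-3}\leqslant 2k-7$. Now $a_{k-3}-1+n=4k-8$ and $a_{k-3}-2+n=4k-9$. The only candidates in $2^{\wedge}A$ near this value are $a_{k-3}+a_{k-1}\leqslant 4k-10$ and $a_{k-3}+a_{k-2}$, so neither $4k-8$ nor $4k-9$ lies in $2^{\wedge}A$. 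Hence if $a_{k-3}-1\in A$ or $a_{k-3}-2\in A$, we obtain the extra element and are done.

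Otherwise $a_{k-3}-1,a_{k-3}-2\notin A$, and the hypotheses of Lemma \ref{L4} hold ($k\geqslant 9$ will be needed; the case $k=8$ is to be checked directly from the finitely many possibilities). Lemma \ref{L4} then gives $a_{k-3}-1\notin 2^{\wedge}A$. By the pigeonhole principle, $k-3=|[0,a_{k-3}-1]\cap A|\leqslant \lfloor (a_{k-3}-1)/2\rfloor+1$, which forces $a_{k-3}=2k-7$ and in turn $2k-8\notin 2^{\wedge}A$. From $a_{k-3}+n=4k-7$ we get $n=2k$.

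Now the hypotheses of Lemma \ref{L5} are exactly in force: $a_{k-3}=2k-7$, $a_{k-2}=2k-4$, $a_{k-1}=2k-3$, $|2^{\wedge}A|=3k-6$, $2k-8\notin2^{\wedge}A$, and the $\alpha$-condition with $a_2=2a_1$ and $a_2\neq a_1+\alpha$. Lemma \ref{L5} yields $a_{k-4}+2k=a_{k-4}+n\notin 2^{\wedge}A$, which is the required third new element. Thus $|2^{\wedge}(A\cup\{n\})|\geqslant 3k-3$.

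The main obstacle is Lemma \ref{L5} itself, which is stated in the paper without proof ("similar to Lemma \ref{L3}"). If I had to verify it, I would redo the case analysis of Lemma \ref{L3} on the location of $w\in W$. The only difference is that the shift condition no longer applies to $s=2$, so each place in that proof that invoked "$a_2+\alpha\in 2^{\wedge}(A+\alpha)$" must instead use $a_2=2a_1$ and $a_2\ne a_1+\alpha$. I expect the small cases $w\leqslant 5$ to be the delicate part of that verification.
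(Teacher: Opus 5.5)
Your proposal is correct and is exactly the paper's argument: run the proof of Proposition \ref{P1} verbatim, using Lemma \ref{L4} to force $a_{k-3}=2k-7$, $n=2k$ and $2k-8\notin 2^{\wedge}A$, and then Lemma \ref{L5} in place of Lemma \ref{L3} to make $a_{k-4}+n$ the third new sum. You are right that Lemmas \ref{L4} and \ref{L5} are only stated for $k\geqslant 9$, a point the paper passes over silently; for $k=8$ that branch is vacuous, since by direct computation none of the eleven sets $\{0,a_1,2a_1,a_3,a_4,a_5,12,13\}$ with $a_5\leqslant 9$ and $a_5-1,a_5-2\notin A$ has $|2^{\wedge}A|=18$.
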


\begin{pro}\label{P3}
Let $A$ be a set of $k\geqslant 8$ integers such that $a_{k-2}=2k-4$, $a_{k-1}=2k-3$.
If $a_{s}\in 2^{\wedge}(A\backslash\{a_{0}\})$ for all $s=3, \ldots, k-1$, then for any integers $n_{1}\geqslant 2k$, $n_{2}\geqslant 2k+2$ and $n_{1}<n_{2}$, we have
$$|2^{\wedge}(A\cup \{n_{1}, n_{2}\})|\geqslant 3k-1. $$
\end{pro}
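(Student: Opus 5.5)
Write $A'=A\cup\{n_1,n_2\}$; this set has $k+2$ elements. The target $3k-1$ equals $(3k-7)+6$. The plan is to take the bound $|2^{\wedge}A|\geqslant 3k-7$ from Proposition \ref{P2-2}, whose hypotheses are exactly ours, and then exhibit six sums involving $n_1$ or $n_2$ that lie outside $2^{\wedge}A$. Every element of $2^{\wedge}A$ is at most $a_{k-2}+a_{k-1}=4k-7$. A sum $n_j+a_i$ exceeding $4k-7$ is therefore automatically new. Sums involving $n_2$ do not coincide with the corresponding sums involving $n_1$ unless they are equal as integers.

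First, the top sums. The four integers
$$n_1+a_{k-2},\ n_1+a_{k-1},\ n_2+a_{k-2},\ n_2+a_{k-1}$$
are all at least $2k+(2k-4)=4k-4>4k-7$. The sum $n_1+n_2$ is larger still. Among these five, a collision can only be $n_1+a_{k-1}=n_2+a_{k-2}$, which happens exactly when $n_2=n_1+1$. In that case $n_1+n_2=2n_1+1$, and $n_2+a_{k-1}$ are still distinct from the rest, so I keep a distinct list. Concretely, I take
$$\{n_1+a_{k-2},\ n_1+a_{k-1},\ n_2+a_{k-1},\ n_1+n_2\}\cup\{n_2+a_{k-2}\ \text{if } n_2\neq n_1+1\}.$$
This gives five new sums, or four when $n_2=n_1+1$. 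In the latter case I add $n_2+a_{k-3}$. It exceeds $n_1+a_{k-2}$ only if $a_{k-3}\geqslant 2k-4$, which is false, so this needs care; instead I compare $n_2+a_{k-3}=n_1+1+a_{k-3}$ against $n_1+a_{k-2}$ directly, and they are distinct when $a_{k-3}\ne 2k-5$. I also use the largest sums via $n_2+a_i$ for small $i$, which avoid all of $2^{\wedge}A$ once $n_2+a_i>4k-7$.

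Second, one or two further new sums from $n_1+a_{k-3}$, $n_2+a_{k-3}$, $n_1+a_{k-4}$, and so on. If $n_1+a_{k-3}>4k-7$, it is new. Otherwise $a_{k-3}\leqslant 2k-7$ and $n_1+a_{k-3}$ must equal an element of $2^{\wedge}A$ near the top, namely $a_{k-3}+a_{k-1}$, $a_{k-3}+a_{k-2}$ or $a_{k-2}+a_{k-1}$. This argument copies the proof of Proposition \ref{P1}. The hypothesis $a_s\in 2^{\wedge}(A\setminus\{a_0\})$ for $s\ge3$ forces $A\setminus\{0\}$ to be additively generated from $a_1,a_2$. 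In particular, the gap below $a_{k-3}$ is controlled: $a_{k-3}-1$ or $a_{k-3}-2$ lies in $A$, or $a_{k-3}$ is a sum that pins down $a_{k-4}$. From this I get a translate $n_1+a_j$ with $j\leqslant k-4$ lying in $(4k-9,4k-7]$ but not in $2^{\wedge}A$.

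Third, the extremal cases. If $|2^{\wedge}A|\geqslant 3k-6$, five new sums suffice. If $|2^{\wedge}A|=3k-7$, the explicit list in Proposition \ref{P2-2} applies. For $A=\{0,1\}\cup[k,2k-3]$ every sum $n_i+a$ with $a\geqslant k$ is new except possibly near $4k-7$, and a direct count gives at least six new sums. The four sporadic sets have $k\in\{8,9\}$ and can be checked by hand.

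The main obstacle is the second step in the non-extremal case. It requires an analogue of Lemmas \ref{L4}, \ref{L3} under the generation hypothesis of Proposition \ref{P2-2}: showing that when $a_{k-3}-1,a_{k-3}-2\notin A$, the element $a_{k-3}$ still cannot be absorbed. I would first try to show that the generation hypothesis forces $a_{k-3}\geqslant 2k-6$ or $a_{k-3}-1\in A$, via the pigeonhole count used in Proposition \ref{P1}.
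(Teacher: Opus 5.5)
\textbf{There is a genuine gap: five new sums are not secured when $n_2=n_1+1$.} Your third step is the paper's argument. Exhibit at least five sums outside $2^{\wedge}A$. If $|2^{\wedge}A|\geqslant 3k-6$ you are done; if $|2^{\wedge}A|=3k-7$, check the explicit list of Proposition \ref{P2-2}. Your handling of that list is fine.

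However, you only obtain five new sums when $n_2\neq n_1+1$. When $n_2=n_1+1$ your list has four: $n_1+a_{k-2}$, $n_1+a_{k-1}$, $n_2+a_{k-1}$, $n_1+n_2$. For a fifth you only show $n_2+a_{k-3}\neq n_1+a_{k-2}$ when $a_{k-3}\neq 2k-5$. You do not show that it avoids $2^{\wedge}A$, and you do not treat $a_{k-3}=2k-5$. The fallback to $n_2+a_i$ for small $i$ fails when $n_2$ is small: for $n_2=2k+2$, every $a_i\leqslant 2k-9$ gives $n_2+a_i\leqslant 4k-7$. So "five new sums suffice" is unjustified in that case. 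You then lean on your second step (analogues of Lemmas \ref{L4} and \ref{L3}), which you leave open.

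\textbf{The second step is unnecessary; an elementary observation closes the gap.} The only element of $2^{\wedge}A$ larger than $a_{k-3}+a_{k-1}$ is $a_{k-2}+a_{k-1}=4k-7$. Since $n_1\geqslant 2k>a_{k-1}$, both $n_1+a_{k-3}$ and $n_2+a_{k-3}$ exceed $a_{k-3}+a_{k-1}$. Now split on $a_{k-3}$.
\begin{itemize}
\item If $a_{k-3}=2k-5$, then $n_1+a_{k-3}\geqslant 4k-5>4k-7$. So it is new, and it is strictly below $n_1+a_{k-2}$, hence distinct from the four top sums.
\item If $a_{k-3}\leqslant 2k-6$ and $n_2=n_1+1$, consider the consecutive integers $n_1+a_{k-3}$ and $n_1+a_{k-3}+1$. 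Both lie strictly below $n_1+a_{k-2}$, so both differ from the four top sums. At most one of them equals $4k-7$, so the other is a fifth new sum.
\end{itemize}
The paper compresses this into the remark that exactly four new sums would force $n_1=2k-2$.

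With five new sums in every case, your third step completes the proof as it stands. The bound $|2^{\wedge}A|\geqslant 3k-7$ can come from Theorem E or from Proposition \ref{P2-2}. The detour through $a_{k-3}-1$, $a_{k-3}-2$ and the window $(4k-9,4k-7]$ can be dropped entirely.
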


\begin{proof}
Clearly,
$$\{a_{k-2}+n_{1}, a_{k-1}+n_{1}, a_{k-1}+n_{2}, n_{1}+n_{2}\}\subseteq 2^{\wedge}(A\cup \{n_{1}, n_{2}\})\backslash 2^{\wedge}A. $$
If $|2^{\wedge}(A\cup \{n_{1}, n_{2}\})\backslash 2^{\wedge}A|=4$, then
$$a_{k-3}+n_{1}=a_{k-2}+a_{k-1}, \ \ a_{k-2}+n_{2}=a_{k-1}+n_{1}, \ \ a_{k-3}+n_{2}= a_{k-2}+n_{1}, $$
thus, $n_{1}=2k-2$, a contradiction. Hence,
$$|2^{\wedge}(A\cup \{n_{1}, n_{2}\})\backslash 2^{\wedge}A|\geqslant 5. $$

By Theorem E we have $|2^{\wedge}A|\geqslant 3k-7$. If $|2^{\wedge}A|\geqslant 3k-6$, then
$$|2^{\wedge}(A\cup \{n_{1}, n_{2}\})|\geqslant 3k-1. $$

Assume $|2^{\wedge}A|=3k-7$. By Proposition \ref{P2-2} we have the following cases:

\begin{itemize}
  \item $A=\{0, 1\}\cup [k, 2k-3]$.
  \item $A=\{0, 1, 5, 6, 7, 8, 12, 13\}$.
  \item $A=\{0, 1, 5, 6, 7, 11, 12, 13\}$.
  \item $A=\{0, 1, 4, 5, 6, 9, 10, 14, 15\}$.
  \item $A=\{0, 1, 6, 7, 8, 9, 13, 14, 15\}$.
\end{itemize}
In all above cases, we have
$$|2^{\wedge}(A\cup \{n_{1}, n_{2}\})|\geqslant 3k-1. $$

This completes the proof of Proposition \ref{P3}.
\end{proof}

\section{Translative-recursive analysis of internal additive generation}\label{S3}

\begin{pro}\label{P12}
Let $A=\{a_{0}, a_{1}, \ldots, a_{k-1}\}$ be a set of integers such that $0=a_{0}<a_{1}<\cdots<a_{k-1}$ and $\gcd(A)=1$. Then

(i) If $a_{k-1}\geqslant 2k-2$ and $a_{k-1}-a_{1}<2k-4$, then $|2^{\wedge}A|\geqslant 3k-7$.

(ii) If $\gcd(A\backslash\{a_{0}\}-a_{1})\geqslant 2$, then $|2^{\wedge}A|\geqslant 3k-6$.

(iii) If $\gcd(A\backslash\{a_{k-1}\})\geqslant 2$, then $|2^{\wedge}A|\geqslant 3k-6$.
\end{pro}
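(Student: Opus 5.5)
Parts (ii) and (iii) reduce to a residue-class splitting argument combined with the elementary bound $|2^{\wedge}X|\geqslant 2|X|-3$, valid for every finite set $X$ of integers with $|X|\geqslant 2$. Part (i) is a reduction to the set $A\backslash\{a_{0}\}-a_{1}$ combined with Theorems C and E. I do them in the order (ii), (iii), (i), because (ii) is used in (i).

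For (ii), put $d=\gcd(A\backslash\{a_{0}\}-a_{1})\geqslant 2$, so $A\backslash\{0\}\subseteq a_{1}+d\mathbb{Z}$. Since $\gcd(A)=1$ and every nonzero element of $A$ is $\equiv a_{1}\pmod d$, we must have $a_{1}\not\equiv 0\pmod d$. Then
$$2^{\wedge}A=(A\backslash\{0\})\cup 2^{\wedge}(A\backslash\{0\}),$$
where the first part lies in the class $a_{1}$ and the second in the class $2a_{1}$. These classes differ exactly because $a_{1}\not\equiv 0$, so the union is disjoint. Hence
$$|2^{\wedge}A|\geqslant (k-1)+2(k-1)-3=3k-6.$$

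Part (iii) is symmetric. With $d=\gcd(A\backslash\{a_{k-1}\})\geqslant 2$ we have $A\backslash\{a_{k-1}\}\subseteq d\mathbb{Z}$, and $\gcd(A)=1$ forces $a_{k-1}\not\equiv 0\pmod d$. The sets $2^{\wedge}(A\backslash\{a_{k-1}\})$ and $a_{k-1}+(A\backslash\{a_{k-1}\})$ then lie in distinct residue classes and together make up $2^{\wedge}A$. This gives $2k-5+k-1=3k-6$.

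For (i), put $B=A\backslash\{a_{0}\}-a_{1}$. Then $|B|=k-1$, $0\in B$, and $\max B=a_{k-1}-a_{1}\leqslant 2k-5=2(k-1)-3$. Also $a_{1}\geqslant 3$, since $a_{k-1}\geqslant 2k-2$.

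If $\gcd(B)\geqslant 2$, part (ii) already gives $3k-6$. So assume $\gcd(B)=1$. Theorem C (when $\max B\leqslant 2(k-1)-5$) or Theorem E (when $\max B\in\{2k-6,2k-5\}$) then gives
$$|2^{\wedge}B|\geqslant \min\bigl(a_{k-1}-a_{1}+k-3,\;3k-10\bigr).$$
Now $2^{\wedge}A\supseteq (2^{\wedge}B+2a_{1})\cup\{a_{1},\ldots,a_{k-1}\}$. The remaining task is to find enough elements $a_{i}$ outside $2^{\wedge}(A\backslash\{0\})\subseteq[a_{1}+a_{2},\,a_{k-2}+a_{k-1}]$ to make up the deficit.

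Certainly $a_{1}$ and $a_{2}$ lie below $a_{1}+a_{2}$, so they are new. In the Theorem C branch, the bound $l_{B}+k-3$ together with $l_{B}\geqslant 2k-2-a_{1}$ is still short when $a_{1}$ is large. In that case I would count all $a_{i}<a_{1}+a_{2}$, and use that a large $a_{1}$ packs $A\backslash\{0\}$ into a window of length $<2k-4$ starting at $a_{1}\geqslant 3$. This should force many $a_{i}$ below $2a_{1}+a_{1}$.

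The main obstacle is the Theorem E branch ($3k-10$), where three new elements are needed. There I would show that $a_{1}$, $a_{2}$ and at least one further $a_{j}$ are not of the form $a_{s}+a_{t}$ with $1\leqslant s<t$. If that fails, I would use the extremal classification for $\max B=2(k-1)-3$ (Lemma \ref{LL3-4}) to treat the sets with $|2^{\wedge}B|=3k-10$ directly.
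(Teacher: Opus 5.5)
Your parts (ii) and (iii) are correct and follow the paper's argument. The paper also separates the residue classes $a_1$ and $2a_1$ (resp.\ $0$ and $a_{k-1}$) modulo $d$. In (ii) it lists the $2k-5$ sums $a_1+a_t$ $(2\leqslant t\leqslant k-1)$ and $a_m+a_{k-1}$ $(2\leqslant m\leqslant k-2)$, which is your bound $|2^{\wedge}X|\geqslant 2|X|-3$ made explicit.

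Part (i), however, is not proved. Moreover, your plan cannot be completed as stated, because it bounds $|2^{\wedge}B|$ and the number $N$ of $a_i\notin 2^{\wedge}(A\backslash\{0\})$ separately. Take $k=19$ and
\[
A=\{0,5,6,11,16,17,21,22,23,26,27,28,29\}\cup[31,36].
\]
Here $\gcd(A)=1$, $a_{k-1}=36=2k-2$ and $a_{k-1}-a_1=31<2k-4$. The set $B=A\backslash\{0\}-5$ contains $1$ and has $\max B=31=2|B|-5$. So you are in your Theorem C branch, with $|2^{\wedge}B|\geqslant 31+18-2=47=3k-10$. But every element from $a_3$ on is a sum of two distinct earlier ones: $11=5+6$, $16=5+11$, $17=6+11$, $21=5+16$, $22=5+17$, $23=6+17$, $26=5+21$, $27=5+22$, $28=5+23$, $29=6+23$, $31=5+26,\dots,34=5+29$, $35=6+29$, $36=5+31$. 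Hence $N=2$, and your estimate gives only $49<50=3k-7$. In fact $|2^{\wedge}B|=51$, so (i) holds here; it is the decoupled count that loses. Closing this would need extra structural information about $B$, such as an inverse result for Theorem C at $\max B=2|B|-5$, which neither your proposal nor the paper provides. Your Theorem E fallback has the same problem. Lemma \ref{LL3-4} classifies equality only for $\max B=2|B|-3$. For $\max B=2|B|-4$ the paper has only Proposition \ref{LL3-3}, which requires $b_1=1$, $b_2=2$.

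The missing idea is a reflection. Put $A^{*}=a_{k-1}-A$. Then:
\[
\gcd(A^{*})=1,\quad a^{*}_{k-2}=a_{k-1}-a_1<2k-4,\quad a^{*}_{k-1}=a_{k-1}\geqslant 2k-2,\quad 2^{\wedge}A^{*}=2a_{k-1}-2^{\wedge}A.
\]
So Theorem G applies to $A^{*}$ and gives $|2^{\wedge}A|=|2^{\wedge}A^{*}|\geqslant 3k-7$ at once. The hypothesis $a_{k-1}-a_1<2k-4$ is exactly Theorem G's hypothesis $a_{k-2}<2k-4$ read from the other end of the interval. That is why the paper needs no counting at all for (i).
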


\begin{proof}
(i) If $a_{k-1}-a_{1}<2k-4$, then the reflection set
$$A^{*}=a_{k-1}-A:=\{a_{0}^{*}, a_{1}^{*}, \ldots, a_{k-1}^{*}\}$$
satisfies $a_{k-2}^{*}<2k-4$, $a_{k-1}^{*}\geqslant 2k-2$. By Theorem G, we have
$$|2^{\wedge}A|=|2^{\wedge}A^{*}|\geqslant 3k-7. $$

(ii) Write
$$A_{1}=\{a_{s}: s=1, \ldots, k-1\}, $$
$$A_{2}=\{a_{1}+a_{t}: t=2, \ldots, k-1\}\cup \{a_{m}+a_{k-1}: m=2, \ldots, k-2\}. $$
Put
$$\widetilde{A}:=A\backslash\{a_{0}\}-a_{1}=\{0, a_{2}-a_{1}, \ldots, a_{k-1}-a_{1}\}. $$
Since $\gcd(\widetilde{A})=d\geqslant 2$, we have
$$a_{s}-a_{1}\equiv 0\pmod{d}, \ \ s=2, \ldots, k-1. $$
If there exist $2\leqslant i, j \leqslant k-1$ such that $a_{1}+a_{i}=a_{j}$, then
$$2a_{1}\equiv a_{1}+a_{i}\equiv a_{j}\equiv a_{1}\pmod{d}, $$
thus, $d\mid a_{1}$, it implies that $d\mid \gcd(A)$, which contradicts with $\gcd(A)=1$.
Hence,
$$A_{1}\cap A_{2}=\emptyset, $$
so,
$$|2^{\wedge}A|\geqslant |A_{1}|+|A_{2}|=3k-6. $$

(iii) If $\gcd(A\backslash\{a_{k-1}\})\geqslant 2$, then by $\gcd(A)=1$ we have
$$2^{\wedge}(A\backslash\{a_{k-1}\})\cap (a_{k-1}+A\backslash\{a_{k-1}\})=\emptyset, $$
it follows that
$$|2^{\wedge}A|\geqslant \left|2^{\wedge}(A\backslash\{a_{k-1}\})\right|+|a_{k-1}+A\backslash\{a_{k-1}\}|\geqslant 3k-6. $$

This completes the proof of Proposition \ref{P12}.
\end{proof}

\begin{pro}\label{lem2.13}
Let $A=\{a_{0}, a_{1}, \ldots, a_{k-1}\}$ be a set of $k\geqslant 6$ integers such that $\gcd(A)=1$, $a_0=0$ and $a_1\geqslant 2$ or $a_{1}=1$, $a_{2}\geqslant 3$.
If $a_4=a_2+a_3$ and
\begin{equation}\label{e6-1}a_{s}\in 2^{\wedge}(A\backslash\{a_{0}\}), \ \ s=3, \ldots, k-1, \end{equation}
then $|2^{\wedge}A|\geqslant 3k-7$ or $\left|2^{\wedge}(A\backslash\{a_{0}\}-a_{1})\backslash 2^{\wedge}(A\backslash\{a_{0}, a_{1}\}-a_{1})\right|\geqslant 4$.
\end{pro}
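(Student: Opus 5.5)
The plan is to translate the second alternative into a condition on $A$ itself, show that three of the four required elements always exist, and then show that if the fourth is missing, $|2^{\wedge}A|\geqslant 3k-7$.

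\textbf{Step 1 (reformulation).} Put $\widetilde{A}=A\backslash\{a_{0}\}-a_{1}=\{0,a_{2}-a_{1},\ldots,a_{k-1}-a_{1}\}$. Every element of $2^{\wedge}\widetilde{A}$ outside $2^{\wedge}(A\backslash\{a_{0},a_{1}\}-a_{1})$ is a sum $0+(a_{j}-a_{1})$ with $j\geqslant 2$. Such an element is new exactly when $a_{j}-a_{1}\neq (a_{i}-a_{1})+(a_{l}-a_{1})$ for all $2\leqslant i<l$. This is the same as
$$a_{1}+a_{j}\notin 2^{\wedge}(A\backslash\{a_{0},a_{1}\}).$$
Let $J$ be the set of indices $j\geqslant 2$ with this property. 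Then the second alternative of the statement says exactly $|J|\geqslant 4$.

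\textbf{Step 2 (three indices come for free).} The smallest element of $2^{\wedge}(A\backslash\{a_{0},a_{1}\})$ is $a_{2}+a_{3}=a_{4}$.
\begin{itemize}
\item Since $a_{1}+a_{2}<a_{1}+a_{3}<a_{2}+a_{3}$, we get $2,3\in J$.
\item For $j=4$ we have $a_{2}+a_{3}=a_{4}<a_{1}+a_{4}<a_{2}+a_{4}$. The only sums of $A\backslash\{a_{0},a_{1}\}$ lying in this open interval would be $a_{2}+a_{l}$ with $l\geqslant 5$, or $a_{3}+a_{l}$ with $l\geqslant 4$. Both kinds exceed $a_{2}+a_{4}$, so $4\in J$.
\end{itemize}
Hence $|J|\geqslant 3$. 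It remains to show that if $J=\{2,3,4\}$, then $|2^{\wedge}A|\geqslant 3k-7$.

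\textbf{Step 3 (the generated case).} Assume that for every $j\geqslant 5$,
$$a_{1}+a_{j}=a_{i}+a_{l}, \qquad 2\leqslant i<l<j$$
(note $l<j$ is forced because $a_{i}>a_{1}$). Together with (\ref{e6-1}) and $a_{4}=a_{2}+a_{3}$, this is a strong recursive condition: each $a_{j}$ differs from a sum of two earlier elements by $a_{1}$, and each $a_{s}$ with $s\geqslant 3$ is itself a sum of two earlier nonzero elements. I would exploit it in one of two ways.
\begin{itemize}
\item \textbf{Progression structure.} Derive that $A\backslash\{a_{0},a_{1}\}$ is nearly an arithmetic-progression-like set. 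Then either $\gcd(A\backslash\{a_{0}\}-a_{1})\geqslant 2$, and Proposition~\ref{P12}(ii) gives $|2^{\wedge}A|\geqslant 3k-6$, or $a_{k-1}-a_{1}<2k-4$, and Proposition~\ref{P12}(i) gives $|2^{\wedge}A|\geqslant 3k-7$ (its hypothesis $a_{k-1}\geqslant 2k-2$ is then automatic).
\item \textbf{Direct count.} Use the three pairwise disjoint families
$$\{a_{1},\ldots,a_{k-1}\},\qquad \{a_{1}+a_{2},\,a_{1}+a_{3},\,a_{1}+a_{4}\}\backslash A,\qquad \{a_{i}+a_{k-1}:\ i\geqslant 1\}.$$
The last family lies above $a_{k-1}$ and has $k-2$ elements.
\end{itemize}
In the count, the first and third families already give $2k-3$ elements. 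The missing $k-4$ elements must come from sums $a_{i}+a_{l}\leqslant a_{k-1}$ that are not in $A$, or from sums above $a_{k-1}$ that are not of the form $a_{k-1}+a_{i}$, such as $a_{k-2}+a_{k-3}$. I would extract these by an induction on $k$, deleting $a_{k-1}$ and tracking how many new sums each top element contributes. The recursion $a_{1}+a_{j}=a_{i}+a_{l}$ with $l<j$ bounds the gaps $a_{j}-a_{j-1}$ from above; this is what makes the induction go through.

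\textbf{Main obstacle.} I expect Step 3 to be the hard part: turning "every $a_{1}+a_{j}$ with $j\geqslant 5$ is internally generated" into a quantitative lower bound. The likely difficulty is that the inductive count may lose one element at each step unless the near-progression structure is pinned down first. My first attempt would be to show that the differences $a_{j+1}-a_{j}$ take at most two values. That would reduce the problem either to the gcd criteria of Proposition~\ref{P12} or to a direct verification.
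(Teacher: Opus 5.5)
\textbf{The gap is in Step 3.} Your Steps 1 and 2 are correct and match the first half of the paper's proof. The indices $2,3,4\in J$ are exactly the paper's new elements $b_1=a_2-a_1$, $b_2=a_2$, $b_3=2a_2$ of $B=A\backslash\{a_0\}-a_1$. But Step 3 carries all the content, and you only outline it. Two things show the outline cannot be completed as it stands.

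First, it never uses the hypothesis ``$a_1\geqslant 2$ or $a_2\geqslant 3$'', yet the conclusion fails without it. Take $A=\{0,1,2,3\}\cup[5,k]$. It satisfies $a_4=a_2+a_3$ and (\ref{e6-1}), has $J=\{2,3,4\}$, and has $|2^{\wedge}A|=2k-1<3k-7$ for $k\geqslant 7$.

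Second, your route via Proposition~\ref{P12} cannot work. Under the actual hypotheses, $J=\{2,3,4\}$ forces
\[
A=\{0,a_2\}\cup\{a_1+ja_2:\ 0\leqslant j\leqslant k-3\}.
\]
For this set, $\gcd(A\backslash\{a_0\}-a_1)=\gcd(A\backslash\{a_{k-1}\})=1$ and $a_{k-1}-a_1=(k-3)a_2\geqslant 2k-4$. So none of (i)--(iii) applies, and $a_{k-1}\geqslant 2k-2$ is not a hypothesis here anyway. Since this set has $|2^{\wedge}A|=3k-7$ exactly, an induction that may lose even one element per step will not close either.

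\textbf{The missing idea is to pin down the structure before counting.}

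Use (\ref{e6-1}) with $s=3$ to get $a_3=a_1+a_2$. Hence $a_4=a_1+2a_2$ and $\gcd(a_1,a_2)=1$.

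Let $s\geqslant 4$ be the least index with $b_s\neq (s-1)a_2$, so that $a_t=a_1+(t-2)a_2$ for $3\leqslant t\leqslant s$. By (\ref{e6-1}), write $a_{s+1}=a_i+a_j$. The choice $i=2$ would force $b_s=(s-1)a_2$, so $i\neq 2$, and therefore $b_s\equiv a_1\pmod{a_2}$.

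On the other hand, every sum of two distinct elements of $B\backslash\{b_0\}$ lying below $b_s$ is $\equiv 0$ or $-a_1\pmod{a_2}$. So $b_s\in 2^{\wedge}(B\backslash\{b_0\})$ would give $a_2=2a_1$, i.e. $(a_1,a_2)=(1,2)$, which is precisely the excluded case. Hence $s+1\in J$ and $|J|\geqslant 4$.

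If no such $s$ exists, then $B=\{0,a_2-a_1\}\cup\{a_2,2a_2,\ldots,(k-3)a_2\}$. A direct count gives $|2^{\wedge}B|=(2k-7)+(k-2)=3k-9$. By (\ref{e6-1}), $|2^{\wedge}A|=|2^{\wedge}B|+|\{a_1,a_2\}|=3k-7$.
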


\begin{proof}
By $\gcd(A)=1$ and (\ref{e6-1}) we have
\begin{equation}\label{e6-2}\gcd(a_{1}, a_{2})=1. \end{equation}

Write
$$B=A\backslash\{a_{0}\}-a_{1}:=\{b_{0}, b_{1}, \ldots, b_{k-2}\}, $$
where $b_{i}=a_{i+1}-a_{1}$ for all $i=0, \ldots, k-2$.
Then
$$b_{1}=a_{2}-a_{1}, \ \ b_{2}=a_{2}, \ \ b_{3}=2a_{2}, $$
thus, $b_{3}\neq b_{1}+b_{2}$, it follows that
$$b_{3}\in 2^{\wedge}B\backslash 2^{\wedge}(B\backslash\{b_{0}\}). $$
Clearly,
$$b_{1}, b_{2}\in 2^{\wedge}B\backslash 2^{\wedge}(B\backslash\{b_{0}\}). $$

If $b_{j}=(j-1)a_{2}$ for $j=2, \ldots, k-2$, then
$$B=\left\{a_{2}-a_{1}\right\}\cup [0, k-3]a_{2}. $$
By (\ref{e6-2}) we have
$$|2^{\wedge}(A\backslash\{a_{0}\})|=|2^{\wedge}B|=3k-9, $$
thus,
$$|2^{\wedge}A|=|2^{\wedge}(A\backslash\{a_{0}\})|+|\{a_{1}, a_{2}\}|=3k-7. $$

If there exists $4\leqslant j\leqslant k-2$ such that $b_{j}\neq (j-1)a_{2}$, then
write
$$s=\min\{4\leqslant j\leqslant k-2: b_{j}\neq (j-1)a_{2}\}. $$
Thus,
\begin{equation}\label{e1-11}B\cap [0, b_{s}]=\{0, a_{2}-a_{1}, a_{2}, 2a_{2}, \ldots, (s-2)a_{2}, b_{s}\}, \end{equation}
it follows that
$$a_{t}=a_{1}+(t-2)a_{2}, \ \ t=3, \ldots, s. $$
By (\ref{e6-1}) we have
$$a_{s+1}=a_{i}+a_{j} \text{ for some } 0<i<j<s+1. $$
If $i=2$, then there exists an integer $j<s+1$ such that
$$b_{s}=a_{s+1}-a_{1}=a_{2}+a_{j}-a_{1}=(j-1)a_{2}>(s-2)a_{2}, $$
thus, $j=s$. Hence, $b_{s}=(s-1)a_{2}$, a contradiction.
Hence, $i\neq 2$, it follows that
$$a_{s+1}=\left\{
\begin{array}{ll}2a_{1}+(s-2)a_{2} & \text{ if } i=1,\\
\\
2a_{1}+(i+j-4)a_{2} & \text{ if } i\geqslant 3,
\end{array}\right.$$
so,
\begin{equation}\label{e1-13}b_{s}=a_{s+1}-a_{1}=a_{1}+(s-2)a_{2} \text{ or } a_{1}+(i+j-4)a_{2}. \end{equation}
If $b_{s}\in 2^{\wedge}(B\backslash\{b_{0}\})$, then by (\ref{e1-11}) we have
$$b_{s}=b_{p}+b_{q} \text{ for some } 0<p<q<s, $$
thus,
$$b_{s}=(s-1)a_{2}-a_{1} \text{ or } (p+q-2)a_{2}. $$
Combining with (\ref{e1-13}), we have $a_{2}=2a_{1}$.
By (\ref{e6-2}) we have $a_{1}=1$, $a_{2}=2$, a contradiction.
Thus, $b_{s}\not\in 2^{\wedge}(B\backslash\{b_{0}\})$, it follows that
$$\{b_{1}, b_{2}, b_{3}, b_{s}\}\subseteq 2^{\wedge}B\backslash 2^{\wedge}(B\backslash\{b_{0}\}). $$
Hence,
$$|2^{\wedge}(A\backslash\{a_{0}\}-a_{1})\backslash 2^{\wedge}(A\backslash\{a_{0}, a_{1}\}-a_{1})|\geqslant 4. $$

This completes the proof of Proposition \ref{lem2.13}.
\end{proof}

\begin{pro}\label{lem2.14}
Let $A=\{a_{0}, a_{1}, \ldots, a_{k-1}\}$ be a set of $k\geq 8$ integers such that $\gcd(A)=1$, $a_0=0$, $a_1\geqslant 2$  or $a_{1}=1$, $a_{2}\geqslant 4$, $a_{3}=a_{1}+a_{2}$ and $a_4=a_1+a_3$.
For each of the sets $A$, $A\backslash\{a_{0}\}-a_{1}$, $A\backslash \{a_{0},a_{1}\}-a_{2}$, every element from the fifth onward is expressible as the sum of two distinct nonzero elements of that set.
Then one of the following cases holds:

(i) $a_{6}=3a_{1}+a_{2}$. In this case, $A\cap [a_{1}, a_{6}]=\{2, 3, 5, 7, 8, 9\}$;

(ii) $a_{6}=2a_{1}+2a_{2}$. In this case, $|2^{\wedge}A|\geqslant 3k-7$.
\end{pro}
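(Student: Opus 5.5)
We first pin down the possible values of $a_5$ and $a_6$ from the three generation hypotheses, and then split according to the form of $a_6$.

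\textbf{Step 1: consequences of $a_3=a_1+a_2$ and $a_4=a_1+a_3$.} These give $a_4=2a_1+a_2$. Put $B=A\backslash\{a_0\}-a_1$. Its elements begin $0,\ a_2-a_1,\ a_2,\ a_1+a_2,\ldots$, and the translate of $a_5$ is $a_5-a_1$. Put $C=A\backslash\{a_0,a_1\}-a_2$. Its elements begin $0,\ a_1,\ 2a_1,\ a_5-a_2,\ a_6-a_2,\ldots$.

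The hypothesis on $A$ says $a_5=a_i+a_j$ with $0<i<j\leqslant 4$, so
$$a_5\in\{2a_1+a_2,\ 3a_1+a_2,\ a_1+2a_2,\ 2a_1+2a_2,\ 3a_1+2a_2\}.$$
Since $a_5>a_4=2a_1+a_2$, the value $2a_1+a_2$ is excluded. We will intersect this list with the constraint from $B$: the element $a_5-a_1$ (the fifth element of $B$, from index $4$ on) must be a sum of two distinct nonzero elements among $a_2-a_1,\ a_2,\ a_1+a_2,\ 2a_1+a_2$ (and $a_5-a_1$ itself is too large to be a summand). The constraint from $C$ concerns the element $a_6-a_2$, which must be a sum of two distinct nonzero elements of $\{a_1,\ 2a_1,\ a_5-a_2\}$.

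\textbf{Step 2: compare the two lists for $a_5$, then determine $a_6$.} We expect the comparison to force $a_5=3a_1+a_2$ or $a_5=a_1+2a_2$, using $a_1\geqslant 2$ or ($a_1=1$, $a_2\geqslant 4$) and $\gcd(A)=1$ to rule out coincidences such as $a_2=2a_1$ or $a_2=3a_1$. For $a_6$, the condition from $C$ gives
$$a_6-a_2\in\{3a_1,\ a_5-a_2+a_1,\ a_5-a_2+2a_1\}.$$
Intersecting this with the representations of $a_6$ coming from $A$ and from $B$ should leave exactly $a_6=3a_1+a_2$ or $a_6=2a_1+2a_2$. In the first case the chain forces $a_5=3a_1+a_2-{}$something. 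We then have to use that $a_5<a_6$ and still satisfies all three generation conditions. We expect this to force small parameters, namely $a_1=2$, $a_2=3$, which gives $A\cap[a_1,a_6]=\{2,3,5,7,8,9\}$. To confirm this, check that $a_5=8$ and $a_6=9$ come from $8=3+5$ and $9=2+7$, and verify that other pairs $(a_1,a_2)$ violate either the condition on $B$ or the condition on $C$.

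\textbf{Step 3: the case $a_6=2a_1+2a_2$.} Here the initial segment is $\{0,\ a_1,\ a_2,\ a_1+a_2,\ 2a_1+a_2,\ a_1+2a_2,\ 2a_1+2a_2\}$, a piece of the lattice $\{xa_1+ya_2\}$. We aim for $|2^{\wedge}A|\geqslant 3k-7$ by the translative argument of Proposition \ref{lem2.13}. The elements $a_1,\ a_2$ and a few low sums lie in $2^{\wedge}A$ but not in $2^{\wedge}(A\backslash\{a_0\})$. We then pass to $B$ and exhibit at least four elements of $2^{\wedge}B\backslash 2^{\wedge}(B\backslash\{b_0\})$, namely $b_1,\ b_2$, a third element arising because $b_3=a_1+a_2$ is not $b_1+b_2$, and the first index where the lattice pattern breaks. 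Iterating this (or applying Proposition \ref{P12}(ii) when $\gcd$ is at least $2$) gives the bound, with the case where the lattice pattern persists to the end computed directly as $3k-7$.

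\textbf{Expected main obstacle.} The hardest part will be the bookkeeping in Step 2: making sure every coincidence among the candidate values is excluded by $\gcd(A)=1$ and the restriction on $(a_1,a_2)$. We will first try to organise it as a table of representations for $a_5$ and $a_6$ in each of the three sets.
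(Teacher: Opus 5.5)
\textbf{Step 3 has a genuine gap.} The hypothesis on $B=A\backslash\{a_0\}-a_1$ says exactly that $b_t\in 2^{\wedge}(B\backslash\{b_0\})$ for all $4\leqslant t\leqslant k-2$. Hence $2^{\wedge}B\backslash 2^{\wedge}(B\backslash\{b_0\})\subseteq\{b_1,b_2,b_3\}$, and a fourth element ``at the first index where the pattern breaks'' cannot exist. Proposition \ref{lem2.13} gets its $b_s$ only because there $B$ is not assumed to be generated. In the main proof your situation is Claim 2(iii), which is disposed of before this proposition is used. Likewise, $2^{\wedge}A\backslash 2^{\wedge}(A\backslash\{a_0\})=\{a_1,a_2\}$ exactly, with no ``low sums''.

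Even four new elements would only give $|2^{\wedge}A|\geqslant|2^{\wedge}C|+6$, and nothing in this proposition gives $|2^{\wedge}C|\geqslant 3|C|-7$. In the main proof that bound comes from the induction hypothesis, which needs $\gcd(C)=1$ and $\max C\geqslant 2|C|-2$; neither is assumed here. ``Iterating'' would also need a generation hypothesis for $C\backslash\{c_0\}-c_1$ that you are not given. Case (ii) is purely structural and must be proved structurally.

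\textbf{The missing idea} is that the three generation conditions determine all of $A$. Put $d=a_1+a_2$. Since every $a_s$ with $s\geqslant 3$ is generated, $\gcd(a_1,a_2)=\gcd(A)=1$, and the restriction on $(a_1,a_2)$ gives $d\geqslant 5$.

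Starting from $a_6=2d$, the condition on $A$ leaves six candidates for $a_7$: $3a_1+2a_2$, $a_1+3a_2$, $2a_1+3a_2$, $3d$, $4a_1+3a_2$, $3a_1+4a_2$. All but $3d$ violate the condition on $b_6$ or on $c_5$.

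Now induct. Suppose $s\geqslant 8$ and $a_j=(j-4)d$ for $6\leqslant j<s$. Then $a_s$ is one of $a_1+(s-5)d$, $a_1+(s-4)d$, $a_2+(s-5)d$, $a_2+(s-4)d$, or $jd$ with $s-4\leqslant j\leqslant 2s-11$.
\begin{itemize}
\item The earlier nonzero elements of $B$ lie in the classes $0,-a_1,-2a_1 \pmod d$, with only $d$ in class $0$. Those of $C$ lie in $0,a_1,2a_1$, again with only $d$ in class $0$. Since $d\geqslant 5$, no sum of two distinct such elements is $\equiv 0$.
\item This kills the first two candidates via $b_{s-1}$ and the next two via $c_{s-2}$.
\item For $a_s=jd$, the only admissible representation is $b_{s-1}=d+(a_2+(j-2)d)$. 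This needs $j-2\leqslant s-6$, which forces $j=s-4$.
\end{itemize}
Hence $A=\{0,a_1,a_2,a_1+d,a_2+d\}\cup\{d,2d,\dots,(k-5)d\}$. Counting restricted sums in the five distinct classes $0,\pm a_1,\pm 2a_1 \pmod d$ gives $(2k-11)+2(k-3)+2=4k-15\geqslant 3k-7$ for $k\geqslant 8$. So the tail is a progression with difference $a_1+a_2$, not a two-dimensional lattice piece, and the count is $4k-15$, not $3k-7$.

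\textbf{Step 2} follows the paper's case analysis in spirit, but its predictions need correcting.
\begin{itemize}
\item The summands available for $b_4=a_5-a_1$ are only $b_1,b_2,b_3=a_2-a_1,\,a_2,\,a_1+a_2$. Your $2a_1+a_2$ is $a_4$, not an element of $B$.
\item $a_5=3a_1+a_2$ is impossible, since it needs $a_2\in\{2a_1,3a_1\}$.
\item $a_5=2a_1+2a_2$ survives the $A$/$B$ comparison. It is eliminated only by combining $c_4\in\{3a_1+a_2,4a_1+a_2\}$ with the condition on $b_5$. So $a_5=a_1+2a_2$ always.
\item Then $c_4\in\{3a_1,\,2a_1+a_2,\,3a_1+a_2\}$. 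The last fails at $b_5$, and $c_4=2a_1+a_2$ is case (ii).
\item $c_4=3a_1$ needs $b_5=2a_1+a_2=3a_2-a_1$, i.e.\ $2a_2=3a_1$. This gives $(a_1,a_2)=(2,3)$ and case (i).
\end{itemize}
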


\begin{proof}
Write
$$B=A\backslash\{a_{0}\}-a_{1}:=\{b_{0}, b_{1}, \ldots, b_{k-2}\}, $$
$$C=A\backslash \{a_{1}, a_{0}\}-a_{2}:=\{c_{0}, c_{1}, \ldots, c_{k-3}\}, $$
where $b_{i}=a_{i+1}-a_{1}$, $c_{j}=a_{j+2}-a_{2}$ for all $i=0, \ldots, k-2$, $j=0, \ldots, k-3$. Then
\begin{equation}\label{e7-1}a_{s}\in 2^{\wedge}(A\backslash\{a_{0}\}), \ \ s=3, \ldots, k-1, \end{equation}\vskip -7mm
\begin{equation}\label{e7-2}b_{t}\in 2^{\wedge}(B\backslash\{b_{0}\}), \ \ t=4, \ldots, k-2, \end{equation}\vskip -5mm
\begin{equation}\label{e7-3}c_{m}\in 2^{\wedge}(C\backslash\{c_{0}\}), \ \ m=4, \ldots, k-3. \end{equation}
By (\ref{e7-1}) and $\gcd(A)=1$ we have
\begin{equation}\label{e7-4}\gcd(a_{1}, a_{2})=1. \end{equation}

Since $a_{3}=a_{1}+a_{2}$ and $a_4=a_1+a_3$, we have $a_{4}=2a_{1}+a_{2}$, thus,
$$b_{1}=a_{2}-a_{1}, \ \ b_{2}=a_{2}, \ \ b_{3}=a_{1}+a_{2}, \ \ c_{1}=a_{1}, \ \ c_{2}=2a_{1}. $$
By (\ref{e7-2}) we have
$$b_{4}\in \{2a_{2}-a_{1}, \ \ 2a_{2}, \ \ a_{1}+2a_{2}\}. $$

If $b_{4}=2a_{2}-a_{1}$, then $a_{5}=2a_{2}$. Combining with (\ref{e7-1}) we have
$a_{5}=a_{1}+a_{4}$, that is, $2a_{2}=3a_{1}+a_{2}$, thus, $a_{2}=3a_{1}$. By (\ref{e7-4}) we have $a_{1}=1$ and $a_{2}=3$, a contradiction.

If $b_{4}=a_{1}+2a_{2}$, then $a_{5}=2a_{1}+2a_{2}$, thus, $c_{3}=2a_{1}+a_{2}$.
By (\ref{e7-3}) we have
$$c_{4}\in \{3a_{1}+a_{2}, \ \ 4a_{1}+a_{2}\}. $$
If $c_{4}=3a_{1}+a_{2}$, then $b_{5}=2a_{1}+2a_{2}$. By (\ref{e7-2}) we have
$2a_{1}+2a_{2}=3a_{2}$, thus, $a_{2}=2a_{1}$. By (\ref{e7-4}) we have $a_{1}=1$ and $a_{2}=2$, a contradiction.
If $c_{4}=4a_{1}+a_{2}$, then $b_{5}=3a_{1}+2a_{2}$. By (\ref{e7-2}) we have
$3a_{1}+2a_{2}=3a_{2}$ or $a_{1}+3a_{2}$, thus, $a_{2}=2a_{1}$ or $3a_{1}$. By (\ref{e7-4}) we have $a_{1}=1$ and $a_{2}=2$ or $3$, a contradiction.

Hence,
$$b_{4}=2a_{2}, \ \ a_{5}=a_{1}+2a_{2}, \ \ c_{3}=a_{1}+a_{2}. $$
By (\ref{e7-3}) we have
$$c_{4}\in \{3a_{1}, \ \ 2a_{1}+a_{2}, \ \ 3a_{1}+a_{2}\}. $$

If $c_{4}=3a_{1}+a_{2}$, then $b_{5}=2a_{1}+2a_{2}$. By (\ref{e7-2}) we have $2a_{1}+2a_{2}=3a_{2}-a_{1}$ or $3a_{2}$,
which implies that $a_{2}=2a_{1}$ or $3a_{1}$. By (\ref{e7-4}) we have $a_{1}=1$ and $a_{2}=2$ or $3$, a contradiction.

If $c_{4}=3a_{1}$, then $b_{5}=2a_{1}+a_{2}$, $a_{6}=3a_{1}+a_{2}$.
By (\ref{e7-2}) we have $2a_{1}+a_{2}=3a_{2}-a_{1}$, which implies that $2a_{2}=3a_{1}$.
By (\ref{e7-4}) we have $a_{1}=2$ and $a_{2}=3$.
Thus,
$$a_{3}=5, \ \ a_{4}=7, \ \ a_{5}=8, \ \ a_{6}=9. $$

If $c_{4}=2a_{1}+a_{2}$, then $b_{5}=a_{1}+2a_{2}$, $a_{6}=2a_{1}+2a_{2}$, which implies that the following figure:

\begin{center}
\begin{tabular}{|c|ccccc|}
  \hline
  & $a_{7}$ & $\overset{-a_{1}}\longrightarrow$ & $b_{6}$ & $\overset{-(a_{2}-a_{1})}\longrightarrow$ &$c_{5}$ \\
  \hline
  (1) & $3a_{1}+2a_{2}$ && $2a_{1}+2a_{2}$ & &$3a_{1}+a_{2}$ \\
  \hline
  (2) & $a_{1}+3a_{2}$ && $3a_{2}$ & &$a_{1}+2a_{2}$ \\
  \hline
  (3) & $2a_{1}+3a_{2}$ && $a_{1}+3a_{2}$ & &$2a_{1}+2a_{2}$\\
  \hline
  (4) & $3a_{1}+3a_{2}$ && $2a_{1}+3a_{2}$ & &$3a_{1}+2a_{2}$\\
  \hline
  (5) & $4a_{1}+3a_{2}$ && $3a_{1}+3a_{2}$ & &$4a_{1}+2a_{2}$\\
  \hline
  (6) & $3a_{1}+4a_{2}$ && $2a_{1}+4a_{2}$ & &$3a_{1}+3a_{2}$\\
  \hline
\end{tabular}
\end{center}
For case (1), combining with $b_{6}\in 2^{\wedge}(B\backslash\{b_{0}\})$, we have $2a_{1}+2a_{2}=3a_{2}-a_{1}$ or $3a_{2}$,
thus, $a_{1}=1$ and $a_{2}=2$ or $3$, a contradiction.
For case (2), combining with $c_{5}\in 2^{\wedge}(C\backslash\{c_{0}\})$, we have $a_{1}+2a_{2}=3a_{1}+a_{2}$ or $4a_{1}+a_{2}$,
thus, $a_{1}=1$ and $a_{2}=2$ or $3$, a contradiction.
For case (3), combining with $c_{5}\in 2^{\wedge}(C\backslash\{c_{0}\})$, we have $2a_{1}+2a_{2}=4a_{1}+a_{2}$,
thus, $a_{1}=1$ and $a_{2}=2$, a contradiction.
For cases (5) and (6), combining with $c_{1}=a_{1}$, $c_{2}=2a_{1}$, $c_{3}=a_{1}+a_{2}$ and $c_{4}=2a_{1}+a_{2}$, we both have $c_{5}\not\in 2^{\wedge}(C\backslash\{c_{0}\})$, a contradiction.

Now we consider case (4). Let $s\geqslant 8$ be an integer such that
$$a_{j}=(j-4)(a_{1}+a_{2}), \ \ 6\leqslant j\leqslant s-1. $$
Noting that
$$A\cap [a_{1}, a_{s-1}]=\{a_{1}, a_{2}, 2a_{1}+a_{2}, a_{1}+2a_{2}\}\cup [1, s-5](a_{1}+a_{2}), $$
thus,
$$B\cap [b_{1}, b_{s-2}]=\{a_{2}-a_{1}, a_{1}+a_{2}, 2a_{2}\}\cup ([0, s-6](a_{1}+a_{2})+a_{2}), $$
$$C\cap [c_{1}, c_{s-3}]=\{2a_{1}, a_{1}+a_{2}\}\cup ([0, s-6](a_{1}+a_{2})+a_{1}). $$
By (\ref{e7-1}) we have
$$a_{s}\in \left\{
\begin{array}{rl}
(s-5)(a_{1}+a_{2})+a_{1}, &(s-5)(a_{1}+a_{2})+a_{2}, \\
(s-4)(a_{1}+a_{2})+a_{1}, &(s-4)(a_{1}+a_{2})+a_{2}, \\
j(a_{1}+a_{2}), & s-4\leqslant j\leqslant 2s-11
\end{array}
\right\}, $$

If $a_{s}=(s-5)(a_{1}+a_{2})+a_{1}$ or $(s-4)(a_{1}+a_{2})+a_{1}$, then
$$b_{s-1}=(s-5)(a_{1}+a_{2}) \text{ or } (s-4)(a_{1}+a_{2}), $$
combining with $b_{s-1}\in 2^{\wedge}(B\backslash\{b_{0}\})$, we have $a_{1}=1$ and $a_{2}=2$, a contradiction.

If $a_{s}=(s-5)(a_{1}+a_{2})+a_{2}$ or $(s-4)(a_{1}+a_{2})+a_{2}$, then
$$c_{s-2}=(s-5)(a_{1}+a_{2}) \text{ or } (s-4)(a_{1}+a_{2}), $$
combining with $c_{s-2}\in 2^{\wedge}(C\backslash\{c_{0}\})$, then $a_{1}=1$ and $a_{2}=2$, a contradiction.

Hence, $a_{s}=j(a_{1}+a_{2})$ for some $s-4\leqslant j\leqslant 2s-11$, it follows that
$$b_{s-1}=(j-1)(a_{1}+a_{2})+a_{2}. $$
By $b_{s-1}\in 2^{\wedge}(B\backslash\{b_{0}\})$, we have
$$b_{s-1}=(a_{1}+a_{2})+((s-6)(a_{1}+a_{2})+a_{2}), $$
which implies that
$$a_{s}=(s-4)(a_{1}+a_{2}). $$

Therefore,
$$a_{j}=(j-4)(a_{1}+a_{2}), \ \ 6\leqslant j\leqslant k-1, $$
it follows that
$$A=\{a_{1}, a_{2}, 2a_{1}+a_{2}, a_{1}+2a_{2}\}\cup [0, k-5](a_{1}+a_{2}). $$
Combining with $k\geqslant 8$ and (\ref{e7-4}), we have
$$|2^{\wedge}A|\geqslant 4k-15\geqslant 3k-7. $$

This completes the proof of Proposition \ref{lem2.14}.
\end{proof}

\begin{pro}\label{P17}
Let $A$ be a set of $k\geqslant 10$ integers such that $\gcd(A)=1$, $a_{k-2}\geqslant 2k-4$, $a_{k-1}\geqslant 2k-2$ and

(i) $a_{k-1}-a_{1}\geqslant 2k-4$, $a_{k-2}-a_{1}<2k-6$;

(ii) $a_{s}\in 2^{\wedge}(A\backslash\{a_{0}\})$ for all $s=3, \ldots, k-1$;

(iii) $a_{t}+a_{k-1}\in 2^{\wedge}(A\backslash\{a_{k-1}\})$ for all $t=0, \ldots, k-4$.

\noindent Then $|2^{\wedge}A|\geqslant 3k-7$.
\end{pro}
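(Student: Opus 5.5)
Since (ii) says that each $a_{s}$ with $s\geqslant 3$ is a sum of two distinct nonzero elements, I will pass to the translate
$$B=A\backslash\{a_{0}\}-a_{1}=\{b_{0},\dots,b_{k-2}\},\qquad b_{i}=a_{i+1}-a_{1}.$$
This is a set of $k-1$ integers with $b_{0}=0$. By (i), $b_{k-2}=a_{k-1}-a_{1}\geqslant 2k-4=2(k-1)-2$ and $b_{k-3}=a_{k-2}-a_{1}<2k-6=2(k-1)-4$. So $B$ is exactly in the range of Theorem G with $k-1$ in place of $k$, as long as $\gcd(B)=1$. If $\gcd(B)\geqslant 2$, Proposition \ref{P12}(ii) already gives $|2^{\wedge}A|\geqslant 3k-6$. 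Hence I assume $\gcd(B)=1$, and Theorem G gives
$$|2^{\wedge}(A\backslash\{a_{0}\})|=|2^{\wedge}B|\geqslant 3(k-1)-7=3k-10.$$

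Next I count the elements of $2^{\wedge}A$ that are not in $2^{\wedge}(A\backslash\{a_{0}\})$. These are the elements $a_{s}$, $s\geqslant 1$, that are not sums of two distinct nonzero elements. By (ii) every $a_{s}$ with $s\geqslant 3$ is such a sum, and $a_{1}$ is always new. The element $a_{2}$ is new unless $a_{2}=2a_{1}$, and even then $2a_{1}$ is not a restricted sum. So both $a_{1}$ and $a_{2}$ are new, and $|2^{\wedge}A|\geqslant 3k-8$. One more element is needed.

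To find it I will use hypothesis (iii) together with $a_{k-2}\geqslant 2k-4$. The element $a_{k-2}+a_{k-1}$ is the largest restricted sum, and (iii) controls the sums $a_{t}+a_{k-1}$ only for $t\leqslant k-4$. I will look at $a_{k-3}+a_{k-1}$. If $a_{k-3}+a_{k-1}\neq a_{i}+a_{j}$ for all $i<j\leqslant k-2$, then it witnesses that $\{a_{k-3},a_{k-2},a_{k-1}\}$-type sums beyond $2^{\wedge}B$ give extra elements. Better, I will redo the count directly. Apply the $3(k-1)-7$ bound to $C=A\backslash\{a_{k-1}\}$ where possible, or compare $2^{\wedge}B$ with the elements $a_{k-2}+a_{k-1}-2a_{1}$ and so on. The cleanest route is to show that equality $|2^{\wedge}B|=3k-10$ forces $B$ into the rigid extremal structure. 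Theorem G for $B$ comes from reflecting to Theorem F/Lemma \ref{LL3-4}-type structures, whose extremal sets are unions of progressions modulo $2$ or $3$. So in the equality case I will use those explicit structures to show that (ii) or (iii) fails, or that a third new element exists.

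The main obstacle is this equality case $|2^{\wedge}B|=3k-10$. Theorem G as stated gives no inverse structure. I would handle it by reflecting: $B^{*}=b_{k-2}-B$ has its second-largest element large, which is not the Theorem F setting. So first I will try a direct argument. Take $t=k-4$ in (iii): $a_{k-4}+a_{k-1}$ must equal $a_{k-3}+a_{k-2}$. Together with the $t=k-5$ case this pins the top gaps, $a_{k-1}-a_{k-2}=a_{k-3}-a_{k-4}$, and so on. Combined with $a_{k-2}-a_{1}<2k-6$ and $a_{k-2}\geqslant 2k-4$, this gives $a_{1}\geqslant 3$. Then the small sums $a_{1}+a_{2}$ and $a_{1}+a_{k-1}$ in $2^{\wedge}A$, or the element $a_{k-2}+a_{k-1}$ versus sums inside $B$, should supply the missing element. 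I will check this against the structures of Lemma \ref{LL3-4} and Proposition \ref{LL3-3} for small residual cases, using $k\geqslant 10$.
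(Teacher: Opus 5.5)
\textbf{There is a genuine gap: the proposal never proves the one extra element it needs.}

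Your passage to $B=A\backslash\{a_{0}\}-a_{1}$ is correct, and so is applying Theorem G to get $|2^{\wedge}B|\geqslant 3k-10$, hence $|2^{\wedge}A|\geqslant 3k-8$. But the proposal stops exactly where the content of the proposition lies.

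Since $2^{\wedge}A=2^{\wedge}(A\backslash\{a_{0}\})\cup(A\backslash\{a_{0}\})$, and by (ii) the only elements of $A\backslash\{a_{0}\}$ outside $2^{\wedge}(A\backslash\{a_{0}\})$ are $a_{1},a_{2}$, you have $|2^{\wedge}A|=|2^{\wedge}B|+2$ \emph{exactly}. So the ``one more element'' can never come from $a_{1}+a_{2}$, $a_{1}+a_{k-1}$, $a_{k-3}+a_{k-1}$ or $a_{k-2}+a_{k-1}$. These are sums of two distinct nonzero elements and are already counted in $|2^{\wedge}B|$. Symmetrically, (iii) gives $|2^{\wedge}A|=|2^{\wedge}(A\backslash\{a_{k-1}\})|+2$, so that route also stalls at $3k-8$.

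The proposition is therefore equivalent to the stronger bound $|2^{\wedge}B|\geqslant 3|B|-6$, i.e.\ to ruling out equality in Theorem G for $B$. The facts you extract, $a_{k-4}+a_{k-1}=a_{k-3}+a_{k-2}$ and $a_{1}\geqslant 3$, do not bear on this. There is also no inverse form of Theorem G available to check ``explicit extremal structures'' against.

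\textbf{The missing idea} is to rewrite (ii) and (iii) as hypotheses on $B$ itself: $b_{j}+a_{1}\in 2^{\wedge}(B+a_{1})$ for $j\geqslant 2$, and $b_{t}+b_{k-2}\in 2^{\wedge}(B\backslash\{b_{k-2}\})$ for $t\leqslant k-5$. These are precisely the hypotheses under which the paper's dense-set results give $3|B|-6$. The paper then locates where local density of $B$ last fails.

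If $b_{j}<2j$ for all $j\leqslant k-3$, Lemma~\ref{P4} applies directly, since its hypothesis excludes the extremal sets of Theorem F. Otherwise $s=\max\{j\in[2,k-3]: b_{j-1}\geqslant 2j-2\}$ forces $b_{s-1}=2s-2$ and $b_{s}=2s-1$.

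If $s=k-3$, Proposition~\ref{P2} (with $\alpha=a_{1}$) gives $|2^{\wedge}(B\backslash\{b_{k-2}\})|\geqslant 3k-12$. If this is strict, the sums $b_{k-4}+b_{k-2}$ and $b_{k-3}+b_{k-2}$ finish the count. In the equality case, Proposition~\ref{P1} with $n=b_{k-2}\geqslant 2k-4$ gives $|2^{\wedge}B|\geqslant 3k-9$.

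If $s\leqslant k-4$, split $B$ into $B_{1}=\{b_{0},\dots,b_{s+1}\}$ and $B_{2}=\{b_{s-1},\dots,b_{k-2}\}$. Proposition~\ref{P7-1} gives $|2^{\wedge}B_{1}|\geqslant 3s$. After translation by $b_{s-1}$, $B_{2}$ is locally dense and satisfies the hypothesis of Lemma~\ref{P4}, so $|2^{\wedge}B_{2}|\geqslant 3(k-s)-6$. The two restricted sumsets share at most three elements, so $|2^{\wedge}B|\geqslant 3k-9$.

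Your proposal needs some argument of this kind, one that uses (ii) and (iii) to exclude the extremal case.
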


\begin{proof}
Write
$$B=A\backslash\{a_{0}\}-a_{1}=\{b_{0}, b_{1}, \ldots, b_{k-2}\}, \text{ where } b_{j}=a_{j+1}-a_{1} \text{ for all } j=0, \ldots, k-2. $$
By (i) we have
\begin{equation}\label{e4-2}b_{k-3}<2k-6, \ \ b_{k-2}\geqslant 2k-4. \end{equation}
By (ii) we have
\begin{equation}\label{e3-2}b_{j}+a_{1}\in 2^{\wedge}(B+a_{1}), \ \ j=2, \ldots, k-2. \end{equation}
By (iii) we have
\begin{equation}\label{e3-7}b_{t}+b_{k-2}\in 2^{\wedge}(B\backslash\{b_{k-2}\}), \ \ t=0, \ldots, k-5. \end{equation}

If $b_{j}<2j$ for all $j=1, \ldots, k-3$, then by (\ref{e3-7}) and Lemma \ref{P4} we have
$$|2^{\wedge}(A\backslash\{a_{0}\})|=|2^{\wedge}B|\geqslant 3k-9, $$
thus,
$$|2^{\wedge}A|\geqslant |2^{\wedge}(A\backslash\{a_{0}\})|+|\{a_{1}, a_{2}\}|\geqslant 3k-7. $$

Write $s=\max\{j\in [2, k-3]: b_{j-1}\geqslant 2j-2\}$. Then $b_{j}<2j$, $j=s, \ldots, k-3$ and $b_{s-1}\geqslant 2s-2$, thus,
$$2s-2\leqslant b_{s-1}<b_{s}<2s, $$
it follows that
\begin{equation}\label{e4-1}b_{s-1}=2s-2, \ \ b_{s}=2s-1. \end{equation}

{\bf Case 1. }$s=k-3$. Then
\begin{equation}\label{e1-32}b_{k-4}=2k-8=2|B\backslash\{b_{k-2}\}|-4, \ \ b_{k-3}=2k-7=2|B\backslash\{b_{k-2}\}|-3. \end{equation}
By (\ref{e3-2}), (\ref{e1-32}) and Proposition \ref{P2} we have
$$\left|2^{\wedge}(B\backslash\{b_{k-2}\})\right|\geqslant 3|B\backslash\{b_{k-2}\}|-6=3k-12. $$

If $\left|2^{\wedge}(B\backslash\{b_{k-2}\})\right|\geqslant 3k-11$, then
\begin{eqnarray*}
\left|2^{\wedge}A\right|
&\geqslant&
\left|2^{\wedge}(A\backslash\{a_{0}\})\right|+|\{a_{1}, a_{2}\}|=\left|2^{\wedge}B\right|+|\{a_{1}, a_{2}\}|\\
&\geqslant&
\left|2^{\wedge}(B\backslash\{b_{k-2}\})\right|+|\{b_{k-4}+b_{k-2}, b_{k-3}+b_{k-2}\}|+|\{a_{1}, a_{2}\}|\\
&\geqslant&
3k-7.
\end{eqnarray*}

Assume that $\left|2^{\wedge}(B\backslash\{b_{k-2}\})\right|=3k-12$.
By (\ref{e4-2}) we have $b_{k-2}\geqslant 2|B\backslash\{b_{k-2}\}|$.
By (\ref{e3-2}), (\ref{e1-32}) and Proposition \ref{P1} we have
$$|2^{\wedge}(A\backslash\{a_{0}\})|=|2^{\wedge}B|\geqslant 3k-9, $$
thus,
$$|2^{\wedge}A|\geqslant |2^{\wedge}(A\backslash\{a_{0}\})|+|\{a_{1}, a_{2}\}|\geqslant 3k-7. $$

{\bf Case 2. }$s\leqslant k-4$.
Define the sets $B_{1}$ and $B_{2}$ by
$$B_{1}=\{b_{0}, b_{1}, \ldots, b_{s-1}, b_{s}, b_{s+1}\}, $$
$$B_{2}=\{b_{s-1}, b_{s}, b_{s+1}, \ldots, b_{k-3}, b_{k-2}\}. $$
Since $b_{s}-b_{s-1}=1$, we have $\gcd(B_{1})=\gcd(B_{2})=1$.
By $b_{s+1}<2s+2$ and (\ref{e4-1}) we have
$$b_{s+1}=2s \text{ or } 2s+1, $$
it follows that
$$b_{s+1}=2|B_{1}|-4 \text{ or } 2|B_{1}|-3. $$
By (\ref{e3-2}) and Proposition \ref{P7-1} we have
\begin{equation}\label{P17-1.1}|2^{\wedge}B_{1}|\geqslant 3|B_{1}|-6=3s. \end{equation}

Define the set $B^{-}_{2}$ by
$$B^{-}_{2}=B_{2}-b_{s-1}=\{0, 1, b_{s+1}-b_{s-1}, \ldots, b_{k-2}-b_{s-1}\}. $$
Noting that
$$\max B^{-}_{2}=b_{k-2}-b_{s-1}\geqslant 2(k-s)-2=2|B^{-}_{2}|-2. $$
For any $j=1, \ldots, k-s-2$, we have
$$b_{s+j-1}-b_{s-1}<2(s+j-1)-(2s-2)=2j. $$
By (\ref{e3-7}) we have
$$(b_{k-2}-b_{s-1})+(b_{t}-b_{s-1})\in 2^{\wedge}(B_{2}^{-}\backslash\{b_{k-2}-b_{s-1}\}), \ \ s-1\leqslant t\leqslant k-5. $$
By Lemma \ref{P4} we have
\begin{equation}\label{P17-2.1}|2^{\wedge}B_{2}|=|2^{\wedge}B^{-}_{2}|\geqslant 3k-3s-6. \end{equation}

Since
$$2^{\wedge}B_{1}\cup2^{\wedge}B_{2}\subseteq 2^{\wedge}B$$
and
$$2^{\wedge}B_{1}\cap2^{\wedge}B_{2}=\{b_{s-1}+b_{s}, b_{s-1}+b_{s+1}, b_{s}+b_{s+1}\}, $$
by (\ref{P17-1.1}) and (\ref{P17-2.1}) we have
$$|2^{\wedge}(A\backslash\{a_{0}\})|=|2^{\wedge}B|\geqslant 3k-9, $$
thus,
$$|2^{\wedge}A|\geqslant |2^{\wedge}(A\backslash\{a_{0}\})|+|\{a_{1}, a_{2}\}|\geqslant 3k-7. $$

This completes the proof of Proposition \ref{P17}.
\end{proof}

\begin{pro}\label{P10}
Let $A$ be a set of $k\geqslant 10$ integers such that $\gcd(A)=1$, $a_{k-2}\geqslant 2k-4$, $a_{k-1}\geqslant 2k-2$ and

(i) $a_{k-1}-a_{1}\geqslant 2k-4$, $a_{k-2}-a_{1}\geqslant 2k-6$, $a_{k-1}-a_{2}\geqslant 2k-6$, $a_{k-2}-a_{2}<2k-8$;

(ii) $a_{s}\in 2^{\wedge}(A\backslash\{a_{0}\})$ for all $s=3, \ldots, k-1$;

(iii) $a_{t}+a_{k-1}\in 2^{\wedge}(A\backslash\{a_{k-1}\})$ for all $t=0, \ldots, k-4$;

(iv) $a_{4}=a_{1}+a_{3}$, $a_{4}\neq a_{3}+a_{2}-a_{1}$;

(v) for every $s\geqslant 5$, there exist $1<i<j<s$ such that $a_{s}=a_{i}+a_{j}-a_{1}$.

\noindent Then $|2^{\wedge}A|\geqslant 3k-7$.
\end{pro}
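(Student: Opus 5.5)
We follow the template of Proposition \ref{P17}, but pass to the second translate. Write
$$B=A\backslash\{a_{0}\}-a_{1}=\{b_{0},\ldots,b_{k-2}\},\qquad C=A\backslash\{a_{0},a_{1}\}-a_{2}=\{c_{0},\ldots,c_{k-3}\},$$
where $c_{j}=a_{j+2}-a_{2}$.

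\textbf{Step 1: reduce to $C$.} By (ii),
$$|2^{\wedge}A|\geqslant |2^{\wedge}(A\backslash\{a_0\})|+|\{a_1,a_2\}|=|2^{\wedge}B|+2.$$
It therefore suffices to show $|2^{\wedge}B|\geqslant 3k-9$. For this we use $|2^{\wedge}B|\geqslant |2^{\wedge}(B\backslash\{b_0\})|+|2^{\wedge}B\backslash 2^{\wedge}(B\backslash\{b_0\})|$, noting that $|2^{\wedge}(B\backslash\{b_0\})|=|2^{\wedge}C|$. Two new sums are always available in $2^{\wedge}B\backslash 2^{\wedge}(B\backslash\{b_0\})$, namely $b_1$ and $b_2$. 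A third comes from (iv): $b_3=a_4-a_1=a_3$, and $a_4\neq a_3+a_2-a_1$ says exactly that $b_3\neq b_1+b_2$, so $b_3$ is new.

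\textbf{Step 2: apply the earlier machinery to $C$.} The target becomes $|2^{\wedge}C|\geqslant 3(k-2)-6=3k-12$. Condition (v) translates to $c_{s-2}\in 2^{\wedge}(C\backslash\{c_0\})$ for every $s\geqslant 5$, i.e. $c_{t}\in 2^{\wedge}(C\backslash\{c_0\})$ for $t\geqslant 3$; this is hypothesis (ii) of Proposition \ref{P17} for $C$. Condition (iii) transfers verbatim: $c_t+c_{k-3}\in 2^{\wedge}(C\backslash\{c_{k-3}\})$ for $t\leqslant k-6$. Condition (i) gives
$$c_{k-3}=a_{k-1}-a_2\geqslant 2k-6=2|C|-2,\qquad c_{k-4}<2k-8=2|C|-4.$$
This is precisely the hypothesis of Theorem G for $C$ (so $|2^{\wedge}C|\geqslant 3|C|-7$), provided $\gcd(C)=1$.

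\textbf{Step 3: get the missing unit.} To reach $3k-12$, we need one more unit beyond Theorem G.
\begin{itemize}
\item If $\gcd(C)\geqslant 2$, then Proposition \ref{P12}(ii), applied to $B$ (whose translate $B\backslash\{b_0\}-b_1$ is $C$), gives $|2^{\wedge}B|\geqslant 3(k-1)-6$ directly.
\item If $\gcd(C)=1$, we redo the argument of Proposition \ref{P17} on $C$ in place of $B$. Choose $s$ maximal with $c_{s-1}\geqslant 2s-2$. If no such $s$ exists, the locally dense Lemma \ref{P4} applies to $C$ and gives $3|C|-6$. If $s=|C|-2$, use Propositions \ref{P2} and \ref{P1}. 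If $s$ is smaller, split $C$ into $C_1$ and $C_2$ and use Propositions \ref{P7-1} and \ref{P4}, subtracting the overlap of three sums. In each branch we obtain $|2^{\wedge}C|\geqslant 3|C|-6=3k-12$.
\item The case where the bound falls short by one, i.e. $|2^{\wedge}C|=3k-13$, is absorbed by a fourth new element of $2^{\wedge}B\backslash 2^{\wedge}(B\backslash\{b_0\})$. We find it via the structure of $b_s$ as in Proposition \ref{lem2.13}, using that $a_1,a_2$ are not in the forbidden small configurations $(1,2),(1,3)$.
\end{itemize}

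\textbf{Main obstacle.} The hardest step is verifying that the hypotheses of Propositions \ref{P2}, \ref{P1} and \ref{P7-1} hold for the pieces of $C$, since they require $c_j+\alpha\in 2^{\wedge}(C+\alpha)$ for some positive shift $\alpha$. The natural choice is $\alpha=a_2-a_1$, which rewrites this membership as $c_j+a_2-a_1\in 2^{\wedge}$ of the corresponding piece of $B$, i.e. $b_{j+1}\in 2^{\wedge}(B\backslash\{b_0\})$. That membership should follow from (v), and we plan to check this index bookkeeping carefully. The other delicate point is the boundary case $s=|C|-2$ combined with $|2^{\wedge}C|=3|C|-7$. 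There we would compare against the extremal list in Proposition \ref{P2-1} and rule it out using (iv).
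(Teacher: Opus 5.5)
\textbf{Verdict: there is a genuine gap.} Your Step 1 reduction is correct and matches the paper. The transfer of hypotheses to $C$ in Steps 2--3 fails, and it fails exactly at the ``index bookkeeping'' you postponed.

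\textbf{What (v) actually gives.} From $a_s=a_i+a_j-a_1$ you get $b_{s-1}=b_{i-1}+b_{j-1}$, that is, $c_{s-2}+b_1=(c_{i-2}+b_1)+(c_{j-2}+b_1)$. This is the \emph{shifted} condition $c_t+b_1\in 2^{\wedge}(C+b_1)$ for $3\leqslant t\leqslant k-3$. It is not $c_t\in 2^{\wedge}(C\backslash\{c_0\})$, so your claim that it is hypothesis (ii) of Proposition~\ref{P17} for $C$ is wrong.

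\textbf{Why Propositions~\ref{P2}, \ref{P1}, \ref{P7-1} do not apply.} The shifted condition fails at $t=2$. We have $c_2+b_1=b_3$, and (iv) says precisely that $b_3\neq b_1+b_2$, so $b_3\notin 2^{\wedge}(B\backslash\{b_0\})$. Since $C_1+b_1\subseteq B\backslash\{b_0\}$ for any initial segment $C_1$ of $C$, also $c_2+b_1\notin 2^{\wedge}(C_1+b_1)$. In fact the only shift that makes index $2$ work is $\alpha=c_2-c_1=a_1$. This differs from $b_1$ by (iv), and (v) says nothing about that shift. So your assertion that every branch of the Proposition~\ref{P17} template yields $3|C|-6$ is unsupported.

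\textbf{Your fallback cannot work.} By (v), $b_j\in 2^{\wedge}(B\backslash\{b_0\})$ for all $j\geqslant 4$. Hence $2^{\wedge}B\backslash 2^{\wedge}(B\backslash\{b_0\})=\{b_1,b_2,b_3\}$ exactly. Likewise, by (ii), $2^{\wedge}A\backslash 2^{\wedge}(A\backslash\{a_0\})=\{a_1,a_2\}$ exactly. There is no fourth new element to absorb a deficit, so $|2^{\wedge}C|\geqslant 3k-12$ must be proved outright.

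\textbf{The missing idea.} Use what (iv) does provide, namely $c_2=2c_1$ and $c_2\neq c_1+b_1$, together with the shifted condition from index $3$ on. These are the hypotheses of the variant Propositions~\ref{P2-1}, \ref{P6} and \ref{P7}. Those only give $3|\cdot|-7$, and their extremal sets must be eliminated by hand; this is how the paper proceeds.
\begin{itemize}
\item \emph{Case $s=k-4$.} The paper starts from Theorem E for $C\backslash\{c_{k-3}\}$. In the $3k-16$ case, Proposition~\ref{P2-1} forces $C\backslash\{c_{k-3}\}=\{0,1,2,6,7,8,12,13\}$ with $b_1=5$; this is refuted via (ii), not via (iv) as you suggest. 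In the $3k-15$ case it uses Proposition~\ref{P6}.
\item \emph{Case $s\leqslant k-5$.} Proposition~\ref{P7} leaves $C_1=\{0,1,2,6,7,8\}$ (with $b_1=5$) and $C_1=\{0,2,4,5,7\}$ (with $b_1\in\{1,3\}$). All of these except $b_1=1$ contradict (ii). For $b_1=1$ one shows $c_5=8$ and adjoins it, so that $|2^{\wedge}(C_1\cup\{c_5\})|=12$ with overlap at most $6$ against $2^{\wedge}C_2$. This recovers the missing unit.
\end{itemize}
This extremal case analysis is what your Step 3 lacks.
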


\begin{proof}
Write
$$B=A\backslash\{a_{0}\}-a_{1}:=\{b_{0}, b_{1}, \ldots, b_{k-2}\}, \text{ where } b_{j}=a_{j+1}-a_{1} \text{ for all } j=0, \ldots, k-2. $$
Since $a_{4}\neq a_{3}+a_{2}-a_{1}$, we have
$$b_{3}\not\in 2^{\wedge}(B\backslash\{b_{0}\}). $$
By (v) we have
$$b_{s-1}=a_{s}-a_{1}=(a_{i}-a_{1})+(a_{j}-a_{1})\in 2^{\wedge}(B\backslash\{b_{0}\}), $$
thus,
\begin{equation}\label{e1-34}b_{s}\in 2^{\wedge}(B\backslash\{b_{0}\}), \ \ s=4, \ldots, k-2. \end{equation}

Write
$$C=B\backslash\{b_{0}\}-b_{1}:=\{c_{0}, c_{1}, \ldots, c_{k-3}\}, \text{ where } c_{j}=b_{j+1}-b_{1} \text{ for all } j=0, \ldots, k-3. $$
Then
\begin{equation}\label{e11-1}c_{2}=2c_{1}, \ \ c_{2}\neq c_{1}+b_{1}. \end{equation}
By (\ref{e1-34}) we have
\begin{equation}\label{e11-2}c_{s}+b_{1}\in 2^{\wedge}(C+b_{1}), \ \ s=3, \ldots, k-3. \end{equation}
By (i) we have
$$c_{k-4}=b_{k-3}-b_{1}=a_{k-2}-a_{2}<2k-8. $$
By (iii) we have
\begin{equation}\label{e3-6}c_{t}+c_{k-3}\in 2^{\wedge}(C\backslash\{c_{k-3}\}), \ \ t=0, \ldots, k-6. \end{equation}

If $c_{j}<2j$ for $j=1, \ldots, k-4$, then by (\ref{e3-6}) and  Lemma \ref{P4} we have
$$|2^{\wedge}(B\backslash\{b_{0}\})|=|2^{\wedge}C|\geqslant 3k-12, $$
thus,
$$|2^{\wedge}(A\backslash\{a_{0}\})|=|2^{\wedge}B|\geqslant |2^{\wedge}(B\backslash\{b_{0}\})|+|\{b_{1}, b_{2}, b_{3}\}|\geqslant 3k-9, $$
it follows that
$$|2^{\wedge}A|\geqslant |2^{\wedge}(A\backslash\{a_{0}\})|+|\{a_{1}, a_{2}\}|\geqslant 3k-7. $$

Write $s=\max\{j\in [2, k-4]: c_{j-1}\geqslant 2j-2\}$. Then
$$c_{j}<2j,  \ \ j=s, \ldots, k-4 \text{ and } c_{s-1}\geqslant 2s-2, $$
thus,
$$2s-2\leqslant c_{s-1}<c_{s}<2s, $$
it follows that
\begin{equation}\label{e4-3}c_{s-1}=2s-2, \ \ c_{s}=2s-1. \end{equation}

{\bf Case 1. }$s=k-4$. Then
\begin{equation}\label{e1-33}c_{k-5}=2k-10=2|C\backslash\{c_{k-3}\}|-4, \ \ c_{k-4}=2k-9=2|C\backslash\{c_{k-3}\}|-3. \end{equation}
By Theorem E we have
$$\left|2^{\wedge}(C\backslash\{c_{k-3}\})\right|\geqslant 3|C\backslash\{c_{k-3}\}|-7=3k-16. $$

If $\left|2^{\wedge}(C\backslash\{c_{k-3}\})\right|\geqslant 3k-14$, then
\begin{eqnarray*}
\left|2^{\wedge}A\right|
&\geqslant&
\left|2^{\wedge}(A\backslash\{a_{0}\})\right|+|\{a_{1}, a_{2}\}|=\left|2^{\wedge}B\right|+|\{a_{1}, a_{2}\}|\\
&\geqslant&
\left|2^{\wedge}(B\backslash\{b_{0}\})\right|+|\{b_{1}, b_{2}, b_{3}\}|+|\{a_{1}, a_{2}\}|=\left|2^{\wedge}C\right|+|\{b_{1}, b_{2}, b_{3}\}|+|\{a_{1}, a_{2}\}|\\
&\geqslant&
\left|2^{\wedge}(C\backslash\{c_{k-3}\})\right|+|\{c_{k-5}+c_{k-3}, c_{k-4}+c_{k-3}\}|+|\{b_{1}, b_{2}, b_{3}\}|+|\{a_{1}, a_{2}\}|\\
&\geqslant&
3k-7.
\end{eqnarray*}

If $\left|2^{\wedge}(C\backslash\{c_{k-3}\})\right|=3k-16$,
then by (\ref{e11-1}), (\ref{e11-2}), (\ref{e1-33}) and Proposition \ref{P2-1} we have
$$C\backslash\{c_{k-3}\}=\{0, 1, 2, 6, 7, 8, 12, 13\}, \ \ b_{1}=5, $$
thus,
$$B=\{0, 5, 6, 7, 11, 12, 13, 17, 18, b_{9}\}, $$
it implies that
$$A\backslash\{a_{0}\}=B+a_{1}=\{a_{1}, 5+a_{1}, 6+a_{1}, 7+a_{1}, 11+a_{1}, 12+a_{1}, 13+a_{1}, 17+a_{1}, 18+a_{1}, a_{10}\}, $$
By (ii) we have $a_{3}\in 2^{\wedge}(A\backslash\{a_{0}\})$, thus, $a_{1}=1$, it follows that
$$A=\{0, 1, 6, 7, 8, 12, 13, 14, 18, 19, a_{10}\}. $$
But, $12\notin 2^{\wedge}(A\backslash\{a_{0}\})$, which contradicts with (ii).

Assume that
$$\left|2^{\wedge}(C\backslash\{c_{k-3}\})\right|=3k-15=3|C\backslash\{c_{k-3}\}|-6. $$
By (i) we have
$$c_{k-3}=a_{k-1}-a_{2}\geqslant 2k-6=2|C\backslash\{c_{k-3}\}|. $$
By (\ref{e11-1}), (\ref{e11-2}), (\ref{e1-33}) and Proposition \ref{P6} we have
$$|2^{\wedge}(B\backslash\{b_{0}\})|=|2^{\wedge}C|\geqslant 3k-12, $$
thus,
\begin{eqnarray*}
|2^{\wedge}A|
&\geqslant&
|2^{\wedge}(A\backslash\{a_{0}\})|+|\{a_{1}, a_{2}\}|=|2^{\wedge}B|+|\{a_{1}, a_{2}\}|\\
&\geqslant&
|2^{\wedge}(B\backslash\{b_{0}\})|+|\{b_{1}, b_{2}, b_{3}\}|+|\{a_{1}, a_{2}\}|\\
&\geqslant&
3k-7.
\end{eqnarray*}

{\bf Case 2. }$s\leqslant k-5$. Define
$$C_{2}=\{c_{s-1}, c_{s}, c_{s+1}, \ldots, c_{k-4}, c_{k-3}\}. $$
Since $c_{s}-c_{s-1}=1$, we have $\gcd(C_{2})=1$. Define
$$C^{-}_{2}=C_{2}-c_{s-1}=\{0, 1, c_{s+1}-c_{s-1}, \ldots, c_{k-3}-c_{s-1}\}. $$
Noting that
$$\max C^{-}_{2}=c_{k-3}-c_{s-1}\geqslant 2(k-1-s)-2=2|C^{-}_{2}|-2. $$
For any $j=1, \ldots, k-s-3$, we have
$$c_{s+j-1}-c_{s-1}<2(s+j-1)-(2s-2)=2j. $$
By (\ref{e3-6}) we have
$$(c_{t}-c_{s-1})+(c_{k-3}-c_{s-1})\in 2^{\wedge}(C_{2}^{-}\backslash\{c_{k-3}-c_{s-1}\}), \ \ s-1\leqslant t\leqslant k-6. $$
By Lemma \ref{P4} we have
\begin{equation}\label{P17-2}|2^{\wedge}C_{2}|=|2^{\wedge}C^{-}_{2}|\geqslant 3k-3s-9. \end{equation}

Define
$$C_{1}=\{c_{0}, c_{1}, \ldots, c_{s-1}, c_{s}, c_{s+1}\}. $$
By $c_{s+1}<2s+2$ and (\ref{e4-3}), we have
$$c_{s+1}=2s \text{ or } 2s+1, $$
it follows that
$$c_{s+1}=2|C_{1}|-4 \text{ or } 2|C_{1}|-3. $$
By Theorem E we have
$$|2^{\wedge}C_{1}|\geqslant 3s-1. $$

If $|2^{\wedge}C_{1}|\geqslant 3s$, then by
$$2^{\wedge}C_{1}\cup2^{\wedge}C_{2}\subseteq 2^{\wedge}C, $$
$$2^{\wedge}C_{1}\cap2^{\wedge}C_{2}=\{c_{s-1}+c_{s}, c_{s-1}+c_{s+1}, c_{s}+c_{s+1}\} $$
and (\ref{P17-2}), we have
$$|2^{\wedge}(B\backslash\{b_{0}\})|=|2^{\wedge}C|\geqslant 3k-12. $$

Assume that $|2^{\wedge}C_{1}|=3s-1$. By (\ref{e11-1}), (\ref{e11-2}), (\ref{e4-3}) and Proposition \ref{P7}, we have
$$C_{1}=\{0, 2, 4, 5, 7\} \text{ or } \{0, 1, 2, 6, 7, 8\}. $$

If $C_{1}=\{0, 1, 2, 6, 7, 8\}$, then $b_{1}=5$, thus,
$$b_{1}=5, \ \ b_{2}=6, \ \ b_{3}=7, \ \ b_{4}=11, \ \ b_{5}=12, \ \ b_{6}=13. $$
By (ii) and the definition of $B$ we have $a_{1}=1$, thus,
$$a_{1}=1, \ \ a_{2}=6, \ \ a_{3}=7, \ \ a_{4}=8, \ \ a_{5}=12, \ \ a_{6}=13, \ \ a_{7}=14. $$
But, $a_{5}\notin 2^{\wedge}(A\backslash\{a_{0}\})$, which contradicts with (ii).

Hence, $C_{1}=\{0, 2, 4, 5, 7\}$, it follows that $s=3$ and $b_{1}=1$ or $3$.
If $b_{1}=3$, then
$$b_{1}=3, \ \ b_{2}=5, \ \ b_{3}=7, \ \ b_{4}=8, \ \ b_{5}=10. $$
By (ii) and the definition of $B$ we have $a_{1}=2$, thus,
$$a_{1}=2, \ \ a_{2}=5, \ \ a_{3}=7, \ \ a_{4}=9, \ \ a_{5}=10, \ \ a_{6}=12. $$
But, $a_{5}\notin 2^{\wedge}(A\backslash\{a_{0}\})$, which contradicts with (ii).
Hence, $b_{1}=1$, it follows that
$$b_{1}=1, \ \ b_{2}=3, \ \ b_{3}=5, \ \ b_{4}=6, \ \ b_{5}=8. $$
By (\ref{e1-34}) we have $b_{6}=9$ or $b_{6}\geqslant 11$, then
$$c_{5}=b_{6}-b_{1}=8 \text{ or } \geqslant 10. $$
Since $c_{j}<2j$ for $j=s, \ldots, k-4$, we have $c_{5}<10$, thus, $c_{5}=8$. Hence,
$$|2^{\wedge}(C_{1}\cup \{c_{5}\})|=12=3s+3. $$
By
$$2^{\wedge}(C_{1}\cup \{c_{5}\})\cup2^{\wedge}C_{2}\subseteq 2^{\wedge}C, $$
$$2^{\wedge}(C_{1}\cup \{c_{5}\})\cap 2^{\wedge}C_{2}=\{c_{i}+c_{j}: 2\leqslant i<j\leqslant 5\}$$
and (\ref{P17-2}) we have
$$|2^{\wedge}(B\backslash\{b_{0}\})|=|2^{\wedge}C|\geqslant |2^{\wedge}(C_{1}\cup \{c_{5}\})|+|2^{\wedge}C_{2}|-6\geqslant 3k-12. $$
Thus,
\begin{eqnarray*}
|2^{\wedge}A|
&\geqslant&
|2^{\wedge}(A\backslash\{a_{0}\})|+|\{a_{1}, a_{2}\}|=|2^{\wedge}B|+|\{a_{1}, a_{2}\}|\\
&\geqslant&
|2^{\wedge}(B\backslash\{b_{0}\})|+|\{b_{1}, b_{2}, b_{3}\}|+|\{a_{1}, a_{2}\}|\\
&\geqslant&
3k-7.
\end{eqnarray*}

This completes the proof of Proposition \ref{P10}.
\end{proof}

\section{Proof of Theorem \ref{T}}\label{S6}
We shall prove Theorem \ref{T} by induction on $k=|A|$. By calculation, we have the following base case:

\begin{BC}
Let $3\leqslant k\leqslant 9$. Let $A=\{a_{0}, a_{1}, \ldots, a_{k-1}\}$ be a set of integers such that $0=a_{0}<a_{1}<\cdots<a_{k-1}$ and $\gcd(A)=1$.
If $a_{k-1}\geqslant 2k-2$, then $|2^{\wedge}A|\geqslant 3k-7$, except for
$$A=\{0, a, b, a+b, 2a+b, a+2b, 2a+2b\}, $$
where $a$ and $b$ are two sufficiently large and coprime positive integers. In this case, we have $|2^{\wedge}A|=3k-8$.
\end{BC}

Assume that Freiman-Lev conjecture is true for all sets $A$ satisfying
$$0=a_{0}<a_{1}<\cdots<a_{l-1},\; \gcd(A)=1, \;a_{l-2}\geqslant 2l-4, \;a_{l-1}\geqslant 2l-2,$$
where $3\leqslant l<k$ and $l\neq 7$.

Next, we shall show that Theorem \ref{T} holds for all sets $A$ with $|A|=k\geqslant 10$ and
$$0=a_{0}<a_{1}<\cdots<a_{k-1},\; \gcd(A)=1, \;a_{k-2}\geqslant 2k-4, \;a_{k-1}\geqslant 2k-2. $$

By the induction hypothesis and Theorem G, we have the following basic fact:

\begin{BF}
Let $A=\{a_{0}, a_{1}, \ldots, a_{l-1}\}$ be a set of integers such that $3\leqslant l<k$, $l\neq 7$ and
$$0=a_{0}<a_{1}<\cdots<a_{l-1}, \ \ \gcd(A)=1, \ \ a_{l-1}\geqslant 2l-2. $$
Then $|2^{\wedge}A|\geqslant 3l-7$.
\end{BF}

By Proposition \ref{P12}, it is sufficient to consider that the set $A$ satisfying
\begin{equation}\label{eqa5.1}\gcd(A)=1, \;a_{k-2}\geqslant 2k-4, \;a_{k-1}\geqslant 2k-2, \ \ a_{k-1}-a_{1}\geqslant 2k-4, \end{equation}
$$\gcd(A\backslash\{a_{0}\}-a_{1})=1, \ \ \gcd(A\backslash\{a_{k-1}\})=1.$$

Assume that $\gcd(A\backslash\{a_{k-1}\})=1$. Since $a_{k-2}\geqslant 2k-4$, applying basic fact to $A\backslash\{a_{k-1}\}$, we have
$$|2^{\wedge}(A\backslash\{a_{k-1}\})|\geqslant 3(k-1)-7=3k-10. $$
It is easy to see that
$$a_{k-2}+a_{k-1}, a_{k-3}+a_{k-1}\in 2^{\wedge}A\backslash2^{\wedge}(A\backslash\{a_{k-1}\}).$$
If there exists an integer $u\in [0, k-4]$ such that
$$a_{u}+a_{k-1}\in 2^{\wedge}A\backslash2^{\wedge}(A\backslash\{a_{k-1}\}), $$
then
$$|2^{\wedge}A|\geqslant |2^{\wedge}(A\backslash\{a_{k-1}\})|+|\{a_{k-2}+a_{k-1}, a_{k-3}+a_{k-1}, a_{u}+a_{k-1}\}|\geqslant 3k-7. $$

Assume that $\gcd(A\backslash\{a_{0}\}-a_{1})=1$. By (\ref{eqa5.1}) we have
$$\max(A\backslash\{a_{0}\}-a_{1})=a_{k-1}-a_{1}\geqslant 2k-4=2|A\backslash\{a_{0}\}-a_{1}|-2, $$
applying basic fact to $A\backslash\{a_{0}\}-a_{1}$, we have
$$|2^{\wedge}(A\backslash\{a_{0}\})|=|2^{\wedge}(A\backslash\{a_{0}\}-a_{1})|\geqslant 3k-10. $$
Clearly,
$$2^{\wedge}(A\backslash\{a_{0}\})\subseteq 2^{\wedge}A, \ \ \{a_{1}, a_{2}\}\subseteq 2^{\wedge}A\backslash 2^{\wedge}(A\backslash\{a_{0}\})\subseteq A. $$
If there exists an integer $s\in [3, k-1]$ such that
$$a_{s}\in 2^{\wedge}A\backslash2^{\wedge}(A\backslash\{a_{0}\}), $$
then
$$|2^{\wedge}A|\geqslant |2^{\wedge}(A\backslash\{a_{0}\})|+|\{a_{1}, a_{2}, a_{s}\}|\geqslant 3k-7. $$

By the above discussion, it is remain to consider
\begin{equation}\label{eqa5.2}a_{u}+a_{k-1}\in 2^{\wedge}(A\backslash\{a_{k-1}\}), \ \ u=0, \ldots, k-4,\end{equation}
\begin{equation}\label{eqa5.4}a_{s}\in 2^{\wedge}(A\backslash\{a_{0}\}), \ \ s=3, \ldots, k-1. \end{equation}

By $\gcd(A)=1$ and (\ref{eqa5.4}) we have
\begin{equation}\label{eqa4.1}\gcd(a_{1}, a_{2})=1. \end{equation}
Moreover, we have the following two statements:

\begin{FactA}
Assume that $A$ satisfies (\ref{eqa5.1}), (\ref{eqa5.2}), (\ref{eqa5.4}). If $a_{1}=1$, $a_{2}<4$, then $|2^{\wedge}A|\geqslant 3k-7$.
\end{FactA}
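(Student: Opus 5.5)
Fact A concerns $a_{1}=1$ with $a_{2}\in\{2,3\}$. I would split into these two cases and reduce each to the locally dense results of Section \ref{S4} or to the reflected set.

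\textbf{Case $a_{2}=2$.} Put $B=A\backslash\{a_{0}\}-a_{1}=\{0,1,b_{2},\ldots,b_{k-2}\}$ with $b_{j}=a_{j+1}-1$. Condition (\ref{eqa5.4}) gives $b_{j}+1\in 2^{\wedge}(B+1)$ for $j\geqslant 2$. Condition (\ref{eqa5.2}) translates into $b_{t}+b_{k-2}\in 2^{\wedge}(B\backslash\{b_{k-2}\})$. The plan is the same split used in Propositions \ref{P17} and \ref{P10}.
\begin{itemize}
\item If $b_{j}<2j$ for all $j\leqslant k-3$, then Lemma \ref{P4} gives $|2^{\wedge}B|\geqslant 3k-9$. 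Adding $a_{1},a_{2}$ then gives $|2^{\wedge}A|\geqslant 3k-7$.
\item Otherwise, let $s$ be maximal with $b_{s-1}\geqslant 2s-2$, so that $b_{s-1}=2s-2$ and $b_{s}=2s-1$. Cut $B$ into $B_{1}=\{b_{0},\ldots,b_{s+1}\}$ and $B_{2}=\{b_{s-1},\ldots,b_{k-2}\}$. Bound $B_{1}$ by Proposition \ref{P7-1}, or by Propositions \ref{P2} and \ref{P1} when $s=k-3$. Bound $B_{2}-b_{s-1}$ by Lemma \ref{P4}.
\end{itemize}
The case $a_{k-2}-a_{1}\geqslant 2k-6$ does not fit this directly. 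There I would instead note $a_{k-1}-a_{2}\geqslant 2k-6$ and pass to $C=A\backslash\{a_{0},a_{1}\}-a_{2}$. Its first elements are very restricted, since $a_{3}\in\{3,4\}$ is forced by (\ref{eqa5.4}).

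\textbf{Case $a_{2}=3$.} Here (\ref{eqa5.4}) forces $a_{3}=4$. The next elements satisfy $a_{4}\in\{5,7\}$, and so on. I would try to show $A$ is locally dense, i.e. $a_{i}<2i$ for $i\leqslant k-2$, after possibly discarding $a_{k-1}$. First check: each $a_{s}$ is a sum of two smaller nonzero elements, so gaps cannot double. Then Theorem F together with Lemma \ref{P4} (using (\ref{eqa5.2})) gives $|2^{\wedge}A|\geqslant 3k-6$. Where $a_{3}=5$ patterns arise, Propositions \ref{P16}, \ref{P16.1} and \ref{P16.3} supply $3k-5$.

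\textbf{Main obstacle.} Local density can fail in the middle, and $a_{k-2}\geqslant 2k-4$ makes $a_{k-2}$ itself non-dense. I would handle this as in Case 2 of Proposition \ref{P17}. Locate the last index $s$ with $a_{s}\geqslant 2s$. Split $A$ there, apply the Basic fact (the induction hypothesis, $l<k$, $l\neq 7$) to the top part, and apply Theorem E or Theorem F to the bottom part. Then subtract the three overlapping sums.

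The delicate point is the equality configurations, where a piece attains exactly $3l-7$. These are classified in Lemma \ref{LL3-4} and Proposition \ref{LL3-3}. For each such configuration I would verify that (\ref{eqa5.2}) or (\ref{eqa5.4}) fails, or else gain an extra sum directly. I also need to avoid the exceptional size-$7$ base case; this works because that set has $a_{1}\geqslant 2$.
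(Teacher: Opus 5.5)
Your decomposition is oriented the wrong way, so the core of the plan never applies. The template of Propositions \ref{P17} and \ref{P10} needs the \emph{top} of the shifted set to be locally dense, which is exactly their hypothesis $a_{k-2}-a_{1}<2k-6$. Under Fact A you always have $b_{k-3}=a_{k-2}-1\geqslant 2k-5\geqslant 2(k-3)$. So your first bullet never occurs, and no index $s$ with $b_j<2j$ for $s\leqslant j\leqslant k-3$ exists. The case you set aside as ``not fitting'' is therefore every case, and the passage to $C$ you propose for it is not developed. Likewise $A$ is never locally dense up to $k-2$, since $a_{k-2}\geqslant 2(k-2)$. And ``the last index $s$ with $a_{s}\geqslant 2s$'' is just the top of $A$, so it gives no split.

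What $a_{1}=1$, $a_{2}<4$ and (\ref{eqa5.4}) actually give is density at the \emph{bottom}: $a_{3}=a_{1}+a_{2}\leqslant 4$ and $a_{4}\leqslant a_{2}+a_{3}\leqslant 7$. Hence the \emph{first} index $t$ with $a_{t}\geqslant 2t$ satisfies $5\leqslant t\leqslant k-2$, uniformly in $a_2\in\{2,3\}$. The paper applies Theorem F to $A_{1}=\{a_{0},\ldots,a_{t}\}$ and the Basic fact to $A_{2}=\{a_{t-2},\ldots,a_{k-1}\}$, whose translate has maximum $a_{k-1}-a_{t-2}>2|A_{2}|-2$. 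The needed $\gcd(A_{2}-a_{t-2})=1$ comes from (\ref{eqa5.2}): the largest $a_{m}\not\equiv a_{t-2}\pmod d$ would give $a_{m}+a_{k-1}\notin 2^{\wedge}(A\backslash\{a_{k-1}\})$. The overlap is exactly $\{a_{t-2}+a_{t-1},a_{t-2}+a_{t},a_{t-1}+a_{t}\}$. So the count reaches $3k-7$ only if $|2^{\wedge}A_{1}|\geqslant 3|A_{1}|-6$.

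That last requirement is where the real work is, and your treatment of it would fail. The relevant extremal sets are those in the equality clause of Theorem F, not those of Lemma \ref{LL3-4} or Proposition \ref{LL3-3}, which concern $a_{k-1}=2k-3$ or $2k-4$. Prefixes $0,1,3,4,6$ are excluded by (\ref{eqa5.4}). But $A_{1}=\{0,1,3,4,7,10\}$ satisfies (\ref{eqa5.4}), and the gluing then stalls at $3k-8$ with no evident extra sum. The paper abandons the gluing here. Since $a_{4}=a_{2}+a_{3}$, Proposition \ref{lem2.13} gives either $3k-7$ directly, or four elements of $2^{\wedge}B\backslash 2^{\wedge}(B\backslash\{0\})$ with $B=A\backslash\{a_0\}-a_1$. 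In the second case, Proposition \ref{P12}(ii) (if $\gcd(C)>1$) or the Basic fact applied to $C$ yields $|2^{\wedge}B|\geqslant 3k-9$.

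Finally, your argument for avoiding the size-$7$ exception is false. The set $\{0,1,5,6,7,11,12\}$ (the case $a=1$, $b=5$) has only $13=3\cdot 7-8$ restricted sums. In any case, $a_{t-1}-a_{t-2}$ need not be $1$. The paper handles $t=k-5$ by taking the $8$-element top piece $\{a_{k-8},\ldots,a_{k-1}\}$, for which the Base case gives at least $17$, and subtracting the $6$ sums from $\{a_{k-8},\ldots,a_{k-5}\}$.
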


In fact, by (\ref{eqa5.4}) we have $a_{3}=a_{1}+a_{2}$, $a_{4}=a_{1}+a_{3}$ or $a_{2}+a_{3}$.
Since $a_{1}=1$, $a_{2}<4$, we have $a_{3}<5$, $a_{4}<8$.
Let $t$ be an integer such that
$$a_{t}\geqslant 2t, \ \ a_{j}<2j, \ \ j=1, 2, \ldots, t-1. $$
By (\ref{eqa5.1}) we have
$$5\leqslant t\leqslant k-2. $$

Put $A_{1}=\{a_{0}, a_{1}, \ldots, a_{t}\}$. Then $|A_{1}|=t+1\geqslant 6$.
By Theorem F we have $|2^{\wedge}A_{1}|\geqslant 3(t+1)-7$. 
Moreover, if $|2^{\wedge}A_{1}|=3(t+1)-7$, then by Theorem F we have either
$$A_{1}=\{0,1,3,4,7,10\}, $$
or
$$a_{1}=1, \ \ a_{2}=3, \ \ a_{3}=4, \ \ a_{4}=6. $$
For the latter case, $a_{4}\notin 2^{\wedge}(A\backslash\{a_{0}\})$, which contradicts with (\ref{eqa5.4}). 

It remains to consider $A_{1}=\{0,1,3,4,7,10\}$. Hence, 
$$a_{1}=1, \ \ a_{2}=3, \ \ a_{3}=4, \ \ a_{4}=7=a_{2}+a_{3}. $$
By (\ref{eqa5.4}) and Proposition \ref{lem2.13}, we have
$$|2^{\wedge}A|\geqslant 3k-7$$
or
\begin{equation}\label{P15-special}
\left|
2^{\wedge}(A\backslash\{a_{0}\}-a_{1})
\backslash
2^{\wedge}(A\backslash\{a_{0},a_{1}\}-a_{1})
\right|
\geqslant4.
\end{equation}
If $|2^{\wedge}A|\geqslant3k-7$, then Fact A follows.

Assume that (\ref{P15-special}) holds. Put
$$B=A\backslash\{a_{0}\}-a_{1}, \ \ C=A\backslash\{a_{0}, a_{1}\}-a_{2}. $$
Since $a_{1}=1$, $a_{2}=3$ and $a_{3}=4$, we have $\{0, 2, 3\}\subseteq B$, thus,
$$\gcd(B)=1. $$

If $\gcd(C)>1$, then $C=B\backslash\{0\}-2$. By Proposition \ref{P12} (ii), we have
$$|2^{\wedge}B|\geqslant3|B|-6=3k-9.$$

If $\gcd(C)=1$, then $|C|=k-2$ and by (\ref{eqa5.1}),
$$\max C=a_{k-1}-a_{2}=a_{k-1}-3\geqslant2k-5>2k-6=2|C|-2.$$
By basic fact, we have
$$|2^{\wedge}C|\geqslant3|C|-7=3k-13.$$
By (\ref{P15-special}), we have
$$|2^{\wedge}B|
\geqslant |2^{\wedge}C|+4
\geqslant3k-9.$$

Thus, in both cases,
$$|2^{\wedge}(A\backslash\{a_{0}\})|
=|2^{\wedge}B|
\geqslant3k-9.$$
Since $a_{1}=1$, $a_{2}=3$ and $\{1,3\}\subseteq 2^{\wedge}A\backslash 2^{\wedge}(A\backslash\{a_{0}\})$, 
we have
$$|2^{\wedge}A|\geqslant3k-9+2=3k-7. $$
Therefore, Fact A follows in this case.

Hence, for the remaining case,
\begin{equation}\label{P15-1}
|2^{\wedge}A_{1}|\geqslant3(t+1)-6=3t-3.
\end{equation}

If $t\neq k-5$, then put $A_{2}=\{a_{t-2}, a_{t-1}, \ldots, a_{k-1}\}$, thus, $|A_{2}|=k-t+2\geqslant 4$ and $|A_{2}|\neq 7$.
Let $A^{-}_{2}=A_{2}-a_{t-2}$. Assume that $\gcd(A^{-}_{2})=d>1$. Then
$$a_{j}\equiv a_{t-2}\pmod{d}, \ \ j=t-2, \ldots, k-1. $$
If $a_{i}\equiv a_{t-2}\pmod{d}$ for all $i=0, \ldots, t-3$, then $d\mid \gcd(A)$, a contradiction.
Let $a_{m}$ be the largest integer of $A$ such that $a_{m}\not \equiv a_{t-2}\pmod{d}$.
Then $0\leqslant m\leqslant t-3\leqslant k-5$ and
$$a_{m}+a_{k-1}\notin 2^{\wedge}(A\backslash\{a_{k-1}\}), $$
which contradicts with (\ref{eqa5.2}). Thus,
$$\gcd(A^{-}_{2})=1. $$
Since
$$\max A^{-}_{2}=a_{k-1}-a_{t-2}>2k-2t+2=2|A^{-}_{2}|-2, $$
applying basic fact to $A^{-}_{2}$, we have
\begin{equation}\label{P15-2}|2^{\wedge}A_{2}|=|2^{\wedge}A^{-}_{2}|\geqslant 3|A^{-}_{2}|-7=3k-3t-1. \end{equation}

In addition,
$$2^{\wedge}A_{1}\cap 2^{\wedge}A_{2}=\{a_{t-2}+a_{t-1}, a_{t-2}+a_{t}, a_{t-1}+a_{t}\}. $$
By (\ref{P15-1}) and (\ref{P15-2}) we have
$$|2^{\wedge}A|\geqslant |2^{\wedge}A_{1}|+|2^{\wedge}A_{2}|-3=3k-7. $$

If $t=k-5$, then put $B_{2}=\{a_{k-8}, \ldots, a_{k-1}\}$, thus, $|B_{2}|=8$.
Let $B^{-}_{2}=B_{2}-a_{k-8}$. Similarly, we have $\gcd(B^{-}_{2})=1$. Noting that
$$\max(B^{-}_{2})=a_{k-1}-a_{k-8}>14, $$
by Base case we have
$$|2^{\wedge}B_{2}|=|2^{\wedge}B^{-}_{2}|\geqslant 3|B^{-}_{2}|-7=17. $$
In addition,
$$2^{\wedge}A_{1}\cap 2^{\wedge}B_{2}=\{a_{i}+a_{j}: k-8\leqslant i<j\leqslant k-5\}. $$
Thus,
$$|2^{\wedge}A|\geqslant |2^{\wedge}A_{1}|+|2^{\wedge}B_{2}|-6=3k-7. $$

Noting that if $A$ satisfies that (\ref{eqa5.1}), (\ref{eqa5.2}), (\ref{eqa5.4}), then the reflection set $A^{*}$ satisfies
\begin{equation}\label{facta}\gcd(A^{*})=1, a_{k-2}^{*}\geqslant 2k-4, a_{k-1}^{*}\geqslant 2k-2, a_{k-1}^{*}-a_{1}^{*}\geqslant 2k-4\end{equation}
\begin{equation}\label{factc}a_{t}^{*}+a_{k-1}^{*}\in 2^{\wedge}(A^{*}\backslash\{a_{k-1}^{*}\}), \ \  t=0, \ldots, k-4, \end{equation}
\begin{equation}\label{factd} a_{j}^{*}\in 2^{\wedge}(A^{*}\backslash\{a_{0}^{*}\}), \ \ j=3, \ldots, k-1. \end{equation}

Similar to the proof of Fact A, we have

\begin{FactB}
Assume that $A^{*}$ satisfies (\ref{facta})-(\ref{factd}). If $a_{1}^{*}=1$, $a_{2}^{*}<4$, then $|2^{\wedge}A|=|2^{\wedge}A^{*}|\geqslant 3k-7$.
\end{FactB}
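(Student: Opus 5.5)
The plan is to run the argument of Fact A on the reflected set $A^{*}=a_{k-1}-A$ and then carry the conclusion back to $A$. Reflection preserves the restricted sumset up to an affine map: $2^{\wedge}A^{*}=2a_{k-1}-2^{\wedge}A$, hence $|2^{\wedge}A^{*}|=|2^{\wedge}A|$. It therefore suffices to show $|2^{\wedge}A^{*}|\geqslant 3k-7$.

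The hypotheses (\ref{facta})--(\ref{factd}) for $A^{*}$ are exactly the hypotheses (\ref{eqa5.1}), (\ref{eqa5.2}), (\ref{eqa5.4}) used in the proof of Fact A, written for $A^{*}$. Two points need a brief check.
\begin{itemize}
\item The gcd conditions carry over. We have $\gcd(A^{*})=\gcd(A)=1$. Also $A^{*}\backslash\{a^{*}_{0}\}-a^{*}_{1}$ is a reflection of $A\backslash\{a_{k-1}\}$ (up to translation), and $A^{*}\backslash\{a^{*}_{k-1}\}$ is a reflection of $A\backslash\{a_{0}\}-a_{1}$. So the two gcd-one conditions in (\ref{eqa5.1}) are swapped and survive; in any case Proposition \ref{P12} could be applied directly to $A^{*}$.
\item Conditions (\ref{eqa5.2}) and (\ref{eqa5.4}) swap roles under reflection. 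This is precisely how (\ref{factc}) and (\ref{factd}) were obtained.
\end{itemize}
So $A^{*}$ is a set of $k\geqslant 10$ integers satisfying every hypothesis of Fact A with $a_{1}^{*}=1$ and $a_{2}^{*}<4$.

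Next I repeat the steps of Fact A verbatim with $A^{*}$ in place of $A$.
\begin{itemize}
\item From (\ref{factd}), $a_{3}^{*}=a_{1}^{*}+a_{2}^{*}<5$ and $a_{4}^{*}<8$.
\item Choose $t$ with $a^{*}_{t}\geqslant 2t$ and $a^{*}_{j}<2j$ for $j<t$. Then $5\leqslant t\leqslant k-2$, using $a^{*}_{k-2}\geqslant 2k-4$.
\item Apply Theorem F to $A_{1}^{*}=\{a^{*}_{0},\ldots,a^{*}_{t}\}$.
\item Exclude the extremal configurations. The configuration $a^{*}_{4}=6$ is ruled out by (\ref{factd}). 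The set $\{0,1,3,4,7,10\}$ is handled by Proposition \ref{lem2.13} together with the basic fact applied to $B^{*}=A^{*}\backslash\{a^{*}_{0}\}-a^{*}_{1}$ and $C^{*}=A^{*}\backslash\{a^{*}_{0},a^{*}_{1}\}-a^{*}_{2}$. This uses $\max C^{*}=a^{*}_{k-1}-3\geqslant 2k-5$, which follows from (\ref{facta}).
\item This gives $|2^{\wedge}A_{1}^{*}|\geqslant 3t-3$.
\end{itemize}

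For the upper block I take $A_{2}^{*}=\{a^{*}_{t-2},\ldots,a^{*}_{k-1}\}$, or the $8$-element block $B_{2}^{*}$ when $t=k-5$, so that its size is never $7$.
\begin{itemize}
\item To show the translated block has gcd $1$, I use (\ref{factc}). If the block were contained in a single residue class, the largest $a^{*}_{m}$ outside that class would give $a^{*}_{m}+a^{*}_{k-1}\notin 2^{\wedge}(A^{*}\backslash\{a^{*}_{k-1}\})$ with $m\leqslant k-4$, a contradiction.
\item Apply the basic fact, or the Base case for size $8$, to the translated block.
\item The two restricted sumsets overlap only in the $3$ (respectively $6$) sums of elements common to both blocks. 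This yields $|2^{\wedge}A^{*}|\geqslant 3k-7$.
\end{itemize}

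The only step needing care, as in Fact A, is the translated-block gcd and the lower bound on its maximum, $a^{*}_{k-1}-a^{*}_{t-2}>2(k-t+2)-2$. This follows from $a^{*}_{k-1}\geqslant 2k-2$ and $a^{*}_{t-2}<2(t-2)$, exactly as before. Hence Fact B holds.
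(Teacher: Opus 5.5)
Your proposal is correct and follows the paper's own route. The paper proves Fact B simply as ``similar to the proof of Fact A'': rerun that argument on $A^{*}$, with (\ref{facta})--(\ref{factd}) playing the roles of (\ref{eqa5.1}), (\ref{eqa5.2}), (\ref{eqa5.4}), and transfer back via $|2^{\wedge}A^{*}|=|2^{\wedge}A|$. One small imprecision: in the $\{0,1,3,4,7,10\}$ case the basic fact is applied only to $C^{*}$, with Proposition \ref{P12}(ii) applied to $B^{*}$ when $\gcd(C^{*})>1$; also, the extra gcd-one conditions you check are never actually used.
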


Assume that set $A$ satisfies (\ref{eqa5.1}), (\ref{eqa5.2}), (\ref{eqa5.4}). By Proposition \ref{P17}, it is sufficient to consider the set $A$ satisfying
\begin{equation}\label{eqa5.23}a_{k-2}-a_{1}\geqslant 2k-6. \end{equation}

Write
$$B=A\backslash\{a_{0}\}-a_{1}=\{0, a_{2}-a_{1}, \ldots, a_{k-1}-a_{1}\}:=\{b_{0}, b_{1}, \ldots, b_{k-3}, b_{k-2}\}, $$
$$C=B\backslash \{b_{0}\}-b_{1}=A\backslash \{a_{0}, a_{1}\}-a_{2}=\{c_{0}, c_{1}, \ldots, c_{k-4}, c_{k-3}\}, $$
$$D=C\backslash\{c_{0}\}-c_{1}=A\backslash \{a_{0}, a_{1}, a_{2}\}-a_{3}. $$

To complete the proof of Theorem \ref{T}, we will establish the relationship between $|2^{\wedge}A|$ and $|2^{\wedge}B|$, $|2^{\wedge}C|$, $|2^{\wedge}D|$.
The proof is accomplished with the following four claims.

\begin{ClaimA}Assume that set $A$ satisfies (\ref{eqa5.1}), (\ref{eqa5.2}), (\ref{eqa5.4}) and (\ref{eqa5.23}).
If $a_{k-1}-a_{2}<2k-6$, then $|2^{\wedge}A|\geqslant 3k-7$.
\end{ClaimA}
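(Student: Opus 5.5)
The plan is to pass to $B=A\backslash\{a_{0}\}-a_{1}=\{b_{0},\ldots,b_{k-2}\}$ and its tail $C=B\backslash\{b_{0}\}-b_{1}=A\backslash\{a_{0},a_{1}\}-a_{2}$, and to reduce everything to a lower bound of the form $|2^{\wedge}B|\geqslant 3k-9$.

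\emph{Reduction.} By (\ref{eqa5.4}), every $a_{s}$ with $s\geqslant 3$ lies in $2^{\wedge}(A\backslash\{a_{0}\})$, while $a_{1},a_{2}$ do not. Hence
$$|2^{\wedge}A|\geqslant |2^{\wedge}B|+2,$$
and it suffices to show $|2^{\wedge}B|\geqslant 3k-9=3|B|-6$.

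\emph{Shape of $B$ and $C$.} By (\ref{eqa5.1}) and (\ref{eqa5.23}) we have
$$b_{k-2}=a_{k-1}-a_{1}\geqslant 2|B|-2,\qquad b_{k-3}=a_{k-2}-a_{1}\geqslant 2|B|-4.$$
The hypothesis of Claim 1 gives $b_{k-2}-b_{1}=a_{k-1}-a_{2}<2k-6=2|B|-4$. Hence the reflected set $B^{*}=b_{k-2}-B$ has largest element $\geqslant 2|B|-2$ and second largest element $b_{k-2}-b_{1}<2|B|-4$. Since $\gcd(B)=1$ by (\ref{eqa5.1}), Theorem G applied to $B^{*}$ gives $|2^{\wedge}B|\geqslant 3k-10$. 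So the whole task is to gain one more element.

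\emph{Gaining the extra element.} I would redo the argument of Proposition \ref{P17} for $B^{*}$ in place of $A\backslash\{a_{0}\}$. This is the main obstacle: checking that, after reflection, the hypotheses (\ref{eqa5.2}) and (\ref{eqa5.4}) become exactly the two conditions used there.
\begin{itemize}
\item The tail-covering condition (\ref{eqa5.2}), namely $a_{u}+a_{k-1}\in 2^{\wedge}(A\backslash\{a_{k-1}\})$, should translate into the hypothesis of Lemma \ref{P4} for a suitable tail block of $B^{*}$.
\item The generation condition (\ref{eqa5.4}) should supply the ``$\alpha$-condition'' $b_{j}+\alpha\in 2^{\wedge}(B+\alpha)$ with $\alpha=a_{1}$, as needed for Propositions \ref{P2}, \ref{P1}, \ref{P7-1}.
\end{itemize}
If the translation works, I would take $s$ to be the last index with $b^{*}_{s-1}\geqslant 2s-2$, as in Proposition \ref{P17}. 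Then $b^{*}_{s-1}=2s-2$ and $b^{*}_{s}=2s-1$, and I split $B^{*}$ into two blocks overlapping in three elements.
\begin{itemize}
\item The head block has maximum $2|B_{1}|-4$ or $2|B_{1}|-3$. It is treated by Proposition \ref{P7-1}, or by Propositions \ref{P2}/\ref{P1} when $s$ is extremal.
\item The tail block is locally dense and is treated by Lemma \ref{P4}.
\end{itemize}
The two estimates should add to $3|B|-6$ after subtracting the three shared sums.

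\emph{Fallback via $C$.} If the reflection does not transport (\ref{eqa5.4}) correctly, I would work with $C$ directly. Here $|C|=k-2$ and $\max C<2|C|-2$, so Theorems C and E give
$$|2^{\wedge}C|\geqslant \min\{\,c_{k-3}+|C|-2,\;3|C|-7\,\}.$$
Then $|2^{\wedge}B|\geqslant |2^{\wedge}C|+|\{b_{1},b_{2}\}|$, plus $b_{3}$ whenever $b_{3}\notin 2^{\wedge}(B\backslash\{b_{0}\})$. The few deficient configurations are pinned down by the inverse results, Lemma \ref{LL3-4} and Propositions \ref{LL3-3}, \ref{P2-2}, \ref{P7-2}. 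Each resulting explicit family would be checked against (\ref{eqa5.2}) and (\ref{eqa5.4}) (in particular $\gcd(a_{1},a_{2})=1$), and I expect each to be either excluded or to satisfy $|2^{\wedge}A|\geqslant 3k-7$ directly. The small cases $a_{1}=1$, $a_{2}<4$ are already covered by Fact A.
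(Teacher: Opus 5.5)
Your reduction to $|2^{\wedge}B|\geqslant 3k-9$ is correct, and so is the bound $|2^{\wedge}B|\geqslant 3k-10$ from Theorem G applied to $B^{*}$. There is a genuine gap, however: the step meant to gain the last element fails, exactly at the translation you flagged. Since $B^{*}=b_{k-2}-B=a_{k-1}-(A\backslash\{a_{0}\})=A^{*}\backslash\{a^{*}_{k-1}\}$, the two hypotheses become the following.
(\ref{eqa5.2}) becomes the \emph{generation} condition $b^{*}_{j}\in 2^{\wedge}(B^{*}\backslash\{b^{*}_{0}\})$ for $3\leqslant j\leqslant k-2$, which is the restriction of (\ref{factd}).
(\ref{eqa5.4}) becomes $b^{*}_{t}+a^{*}_{k-1}\in 2^{\wedge}B^{*}$ for $0\leqslant t\leqslant k-4$. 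This is (\ref{factc}), a covering by an element lying \emph{outside} $B^{*}$.

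Neither is what the template of Proposition~\ref{P17} needs.
The $\alpha$-condition with $\alpha=a_{1}$ holds for $B$, not for $B^{*}$, so Propositions~\ref{P7-1}, \ref{P2} and \ref{P1} do not apply. The head block then gets only $3|B_{1}|-7$ from Theorem E, and Proposition~\ref{P7-2} shows this is sharp under the generation condition (e.g.\ $\{0,1\}\cup[s+1,2s]$).
Lemma~\ref{P4} needs covering by $\max B^{*}$ on the tail. A direct computation shows this is equivalent to $b_{j}\in 2^{\wedge}(B\backslash\{b_{0}\})$ for the corresponding $j\geqslant 3$, which is not a hypothesis of Claim 1.
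With the tools actually available (Theorem E on the head, the induction hypothesis on the tail), the two-block count for $B^{*}$ gives only $(3s-1)+(3(k-s)-7)-3=3k-11$. So nothing is gained over Theorem G.

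The fallback through $C$ does not close the gap either.
It needs $\gcd(C)=1$; otherwise Proposition~\ref{P12}(ii) is the right tool.
When $\max C\leqslant 2|C|-5$, Theorem C only gives $|2^{\wedge}C|\geqslant \max C+|C|-2$. This can be far below the $3|C|-6$ you need even after adding $b_{1},b_{2},b_{3}$.
When $|2^{\wedge}C|=3|C|-7$, you need two elements $b_{j}$ with $j\geqslant 3$ outside $2^{\wedge}(B\backslash\{b_{0}\})$. None of the cited inverse results classifies that situation, so ``I expect each to be excluded'' is not an argument.

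The missing idea is to reflect $A$ itself and keep $a^{*}_{k-1}$ in the tail block. The paper splits $A^{*}$ into two blocks:
$\widetilde{A}_{1}=\{a^{*}_{0},\ldots,a^{*}_{s+1}\}$, with $|2^{\wedge}\widetilde{A}_{1}|\geqslant 3s-1$ by Theorem E;
$\widetilde{A}_{2}=\{a^{*}_{s-1},\ldots,a^{*}_{k-1}\}$, with $|2^{\wedge}\widetilde{A}_{2}|\geqslant 3k-3s-4$ by the induction hypothesis (or the base case when $|\widetilde{A}_{2}|=7$).
Then exactly one element is missing.
Either $|2^{\wedge}\widetilde{A}_{1}|\geqslant 3s$ supplies it, or Proposition~\ref{P7-2} (via (\ref{factd})) forces $\widetilde{A}_{1}$ into an explicit list. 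Its members are treated one at a time with Propositions~\ref{P16}, \ref{P16.1}, \ref{P16.3} and Theorem F; an external covering of the type (\ref{factc}) is precisely what Propositions~\ref{P16.1} and \ref{P16.3} assume.
The case $s=k-3$ is settled by Proposition~\ref{P3}, adding both $a^{*}_{k-2}$ and $a^{*}_{k-1}$.
The case $a^{*}_{j}<2j$ for all $j\leqslant k-3$ is settled by Fact B.
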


Since $a_{k-1}-a_{2}<2k-6$, we have  $a_{k-3}^{*}<2k-6$.
If $a_{j}^{*}<2j$ for $j=1, \ldots, k-3$, then $a_{1}^{*}=1$, $a_{2}^{*}<4$.
By Fact B we have $|2^{\wedge}A|=|2^{\wedge}A^{*}|\geqslant 3k-7$.

Let $s\in [2, k-3]$ be an integer such that $a_{s-1}^{*}\geqslant 2(s-1)$ and $a_{j}^{*}<2j$, $j=s, \ldots, k-3$.
Then
$2s-2\leqslant a_{s-1}^{*}<a_{s}^{*}<2s, $
thus,
\begin{equation}\label{eqa4.8}a_{s-1}^{*}=2s-2, \ \ a_{s}^{*}=2s-1. \end{equation}

If $s=k-3$, then
\begin{equation}\label{eqa4.11}a_{k-4}^{*}=2k-8, \ \ a_{k-3}^{*}=2k-7, \end{equation}
By Proposition \ref{P3} we have
$$|2^{\wedge}A|=|2^{\wedge}A^{*}|\geqslant 3k-7. $$

Now, we assume that $s<k-3$. Then $a_{s+1}^{*}<2s+2$. Combining with (\ref{eqa4.8}), we have
\begin{equation}\label{eqa4.10}a_{s+1}^{*}=2s \text{ or } 2s+1. \end{equation}
Write $$\widetilde{A}_{1}=\{a_{0}^{*}, a_{1}^{*}, \ldots, a_{s-1}^{*}, a_{s}^{*}, a_{s+1}^{*}\}, \ \ \widetilde{A}_{2}=\{a_{s-1}^{*}, a_{s}^{*}, a_{s+1}^{*}, \ldots, a_{k-1}^{*}\}. $$
By Theorem E we have
$$|2^{\wedge}\widetilde{A}_{1}|\geqslant 3(s+2)-7=3s-1. $$

Define the set $\widetilde{A}^{-}_{2}$ by
$$\widetilde{A}^{-}_{2}=\widetilde{A}_{2}-a_{s-1}^{*}=\{0, 1, a_{s+1}^{*}-a_{s-1}^{*}, \ldots, a_{k-1}^{*}-a_{s-1}^{*}\}. $$
Then
\begin{equation}\label{eqa5.6}\max\widetilde{A}^{-}_{2}=a_{k-1}^{*}-a_{s-1}^{*}\geqslant 2(k-s+1)-2=2|\widetilde{A}^{-}_{2}|-2. \end{equation}
Noting that the third smallest integer of $\widetilde{A}^{-}_{2}$ is either 2 or 3.
If $|\widetilde{A}^{-}_{2}|=7$ and $|2^{\wedge}\widetilde{A}^{-}_{2}|<14$, then by Base case we have the largest integer of $\widetilde{A}^{-}_{2}$ is not greater than 8, which contradicts with (\ref{eqa5.6}).
Hence, if $|\widetilde{A}^{-}_{2}|=7$, then
\begin{equation}\label{e2-2-3}|2^{\wedge}\widetilde{A}^{-}_{2}|\geqslant 14. \end{equation}
Assume that $|\widetilde{A}^{-}_{2}|\neq 7$. By (\ref{eqa5.6}), applying basic fact to $\widetilde{A}^{-}_{2}$, we have
\begin{equation}\label{e2-2}|2^{\wedge}\widetilde{A}_{2}|=|2^{\wedge}\widetilde{A}^{-}_{2}|\geqslant 3k-3s-4. \end{equation}

If $|2^{\wedge}\widetilde{A}_{1}|\geqslant 3|\widetilde{A}_{1}|-6=3s$, then by
$$2^{\wedge}\widetilde{A}_{1}\cup2^{\wedge}\widetilde{A}_{2}\subseteq 2^{\wedge}A^{*}$$
and
$$2^{\wedge}\widetilde{A}_{1}\cap2^{\wedge}\widetilde{A}_{2}=\{a_{s-1}^{*}+a_{s}^{*}, a_{s-1}^{*}+a_{s+1}^{*}, a_{s}^{*}+a_{s+1}^{*}\}, $$
combining with (\ref{e2-2-3}), (\ref{e2-2}) we have
$$|2^{\wedge}A|=|2^{\wedge}A^{*}|\geqslant |2^{\wedge}\widetilde{A}_{1}|+|2^{\wedge}\widetilde{A}_{2}|-3\geqslant 3k-7. $$

If $|2^{\wedge}\widetilde{A}_{1}|=3|\widetilde{A}_{1}|-7=3s-1$, then by (\ref{factd}), (\ref{eqa4.8}), (\ref{eqa4.10}) and Proposition \ref{P7-2} we need consider the following cases:

\begin{itemize}
  \item $\widetilde{A}_{1}=\{0, 2, 3, 5\}$. Then $s=2$. Since $a_{j}^{*}<2j$, $j=2, \ldots, k-3$,
  combining with (\ref{eqa5.23}) and the fact that $a_{1}^{*}=2$, we have
  $$a_{j}^{*}-a_{1}^{*}<2(j-1), \ \ j=2, \ldots, k-3, \ \ a_{k-2}^{*}-a_{1}^{*}\geqslant 2k-6. $$
  Moreover, for all $t=1, \ldots, k-4$, by (\ref{factc}) we have
  $$(a_{k-1}^{*}-a_{1}^{*})+(a_{t}^{*}-a_{1}^{*})\in 2^{\wedge}(\widetilde{A}^{-}_{2}\backslash\{a_{k-1}^{*}-a_{1}^{*}\}). $$
  By (\ref{factd}) we have $a_{4}^{*}\geqslant 7$, thus, $a_{4}^{*}=7$.
  Hence,
  $$1, 3, 5\in \widetilde{A}^{-}_{2}\backslash\{a_{k-1}^{*}-a_{1}^{*}\}. $$
  By Proposition \ref{P16} and Proposition \ref{P16.1} we have
  $$|2^{\wedge}(\widetilde{A}^{-}_{2}\backslash\{a_{k-1}^{*}-a_{1}^{*}\})|\geqslant 3(k-2)-5=3k-11, $$
  thus,
  \begin{eqnarray*}
  |2^{\wedge}\widetilde{A}_{2}|
  &=&
  |2^{\wedge}\widetilde{A}^{-}_{2}|\geqslant |2^{\wedge}(\widetilde{A}_{2}\backslash\{a_{k-1}^{*}\})|+|\{a_{k-3}^{*}+a_{k-1}^{*}, a_{k-2}^{*}+a_{k-1}^{*}\}|\\
  &\geqslant&
  |2^{\wedge}(\widetilde{A}^{-}_{2}\backslash\{a_{k-1}^{*}-a_{1}^{*}\})|+2\geqslant 3k-9,
  \end{eqnarray*}
  it follows that
  $$|2^{\wedge}A|=|2^{\wedge}A^{*}|\geqslant |2^{\wedge}\widetilde{A}_{2}|+|\{a_{1}^{*}, a_{2}^{*}\}|\geqslant 3k-7. $$
  \item $\widetilde{A}_{1}=\{0, 2, 3, 5, 8, 10, 11, 13\}$. Then $s=6$. By (\ref{factd}) we have $a_{8}^{*}\geqslant 14$.
  Since $a_{j}^{*}<2j$, $j=6, \ldots, k-3$, we have $a_{8}^{*}<16$, thus, $a_{8}^{*}=14$ or $15$. Hence,
  $$|2^{\wedge}(\widetilde{A}_{1}\cup \{a_{8}^{*}\})|\geqslant 21. $$
  Since
  $$2^{\wedge}(\widetilde{A}_{1}\cup \{a_{8}^{*}\})\cup2^{\wedge}\widetilde{A}_{2}\subseteq 2^{\wedge}A^{*}, $$
  $$2^{\wedge}(\widetilde{A}_{1}\cup \{a_{8}^{*}\})\cap 2^{\wedge}\widetilde{A}_{2}=\{a_{i}^{*}+a_{j}^{*}: 5\leqslant i<j\leqslant 8\}. $$
  By (\ref{e2-2-3}), (\ref{e2-2}) we have
  $$|2^{\wedge}A|=|2^{\wedge}A^{*}|\geqslant |2^{\wedge}(\widetilde{A}_{1}\cup \{a_{8}^{*}\})|+|2^{\wedge}\widetilde{A}_{2}|-6\geqslant 3k-7. $$
  \item Similarly, we can show that the result is true for
  $$s=7, \;\widetilde{A}_{1}=\{0, 1, 5, 6, 7, 8, 12, 13, 14\}, $$
  $$s=8, \;\widetilde{A}_{1}=\{0, 1, 6, 7, 8, 9, 13, 14, 15, 16\}.$$
  \item $\widetilde{A}_{1}=\{0, 1\}\cup [s+1, 2s]$. Then $s\geqslant 3$. If $s=3$, then $\widetilde{A}_{1}=\{0, 1, 4, 5, 6\}$. Thus,
  $$a_{k-2}^{*}-a_{2}^{*}\geqslant 2k-8, \ \ a_{j}^{*}-a_{2}^{*}<2(j-2), \ \ j=3, \ldots, k-3. $$
  By $0, 1, 2\in \widetilde{A}^{-}_{2}\backslash\{a_{k-1}^{*}-a_{2}^{*}\}$ and Theorem F we have
  $$|2^{\wedge}(\widetilde{A}_{2}\backslash\{a_{k-1}^{*}\})|=|2^{\wedge}(\widetilde{A}^{-}_{2}\backslash\{a_{k-1}^{*}-a_{2}^{*}\})|\geqslant 3k-15, $$
  thus,
  $$|2^{\wedge}\widetilde{A}_{2}|\geqslant |2^{\wedge}(\widetilde{A}_{2}\backslash\{a_{k-1}^{*}\})|+|\{a_{k-3}^{*}+a_{k-1}^{*}, a_{k-2}^{*}+a_{k-1}^{*}\}|\geqslant 3k-13. $$
  Noting that
  $$\{1, 4, 5, 6, 7\}\subseteq 2^{\wedge}A^{*}\backslash 2^{\wedge}\widetilde{A}_{2}, $$
  it follows that
  $$|2^{\wedge}A^{*}\backslash 2^{\wedge}\widetilde{A}_{2}|\geqslant 5. $$
  If $|2^{\wedge}A^{*}\backslash 2^{\wedge}\widetilde{A}_{2}|\geqslant 6$, then
  $$|2^{\wedge}A|=|2^{\wedge}A^{*}|\geqslant 3k-7. $$
  Assume that $|2^{\wedge}A^{*}\backslash 2^{\wedge}\widetilde{A}_{2}|=5$.
  If $a_{5}^{*}=7$ or $8$, then $8\in 2^{\wedge}A^{*}\backslash 2^{\wedge}\widetilde{A}_{2}$, a contradiction.
  Since $a_{5}^{*}<10$, we have $a_{5}^{*}=9$. Similarly, we have $a_{6}^{*}=10$, $a_{7}^{*}=13. $
  Thus,
  $$1, 2, 5, 6, 9\in \widetilde{A}^{-}_{2}\backslash\{a_{k-1}^{*}-a_{2}^{*}\}. $$
  By (\ref{factc}) we have
  $$(a_{k-1}^{*}-a_{2}^{*})+(a_{j}^{*}-a_{2}^{*})\in 2^{\wedge}(\widetilde{A}^{-}_{2}\backslash\{a_{k-1}^{*}-a_{2}^{*}\}) $$
  for all $j=2, \ldots, k-4$.
  By Proposition \ref{P16.3} we have
  $$|2^{\wedge}(\widetilde{A}^{-}_{2}\backslash\{a_{k-1}^{*}-a_{2}^{*}\})|\geqslant 3(k-3)-5=3k-14, $$
  thus,
  \begin{eqnarray*}
  |2^{\wedge}A|&=&|2^{\wedge}A^{*}|\geqslant 5+|2^{\wedge}(\widetilde{A}_{2}\backslash\{a_{k-1}^{*}\})|+|\{a_{k-3}^{*}+a_{k-1}^{*}, a_{k-2}^{*}+a_{k-1}^{*}\}|\\
  &=& |2^{\wedge}(\widetilde{A}^{-}_{2}\backslash\{a_{k-1}^{*}-a_{2}^{*}\})|+7\geqslant 3k-7.
  \end{eqnarray*}
  Now we assume that $s\geqslant 4$. By Theorem F we have
  $$|2^{\wedge}(\widetilde{A}^{-}_{2}\backslash\{a_{k-1}^{*}-a_{s-1}^{*}\})|\geqslant 3(k-s)-6. $$
  If $s=k-4$, then
  $$A^{*}=\{0, 1\}\cup [k-3, 2k-8]\cup \{a_{k-2}^{*}, a_{k-1}^{*}\}, $$
  thus,
  $$|2^{\wedge}A|=|2^{\wedge}A^{*}|\geqslant 3k-7. $$
  Assume that $s<k-4$. Then $a_{s+2}^{*}<2s+4$. If $a_{s+2}^{*}=2s+1$, then
  \begin{eqnarray*}|2^{\wedge}A|&=&|2^{\wedge}A^{*}|\geqslant |2^{\wedge}\widetilde{A}_{1}|+|2^{\wedge}(\widetilde{A}_{2}\backslash\{a_{k-1}^{*}\})|\\
  &&+|\{a_{k-3}^{*}+a_{k-1}^{*}, a_{k-2}^{*}+a_{k-1}^{*}\}|+|\{2s+2\}|-3\\
  &\geqslant& 3k-7. \end{eqnarray*}
  By (\ref{factd}) we have $a_{s+2}^{*}\neq 2s+2$. If $a_{s+2}^{*}=2s+3$, then
  \begin{eqnarray*}|2^{\wedge}A|&=&|2^{\wedge}A^{*}|\geqslant |2^{\wedge}(\widetilde{A}_{1}\cup \{a_{s+2}^{*}\})|+|2^{\wedge}(\widetilde{A}_{2}\backslash\{a_{k-1}^{*}\})|\\
  &&+|\{a_{k-3}^{*}+a_{k-1}^{*}, a_{k-2}^{*}+a_{k-1}^{*}\}|-6\\
  &\geqslant& 3k-7. \end{eqnarray*}
\end{itemize}

By Claim 1, it is sufficient to consider set $A$ satisfies (\ref{eqa5.1}), (\ref{eqa5.2}), (\ref{eqa5.4}), (\ref{eqa5.23}) and
\begin{equation}\label{eqa5.17}a_{k-1}-a_{2}\geqslant 2k-6. \end{equation}
By (\ref{eqa5.4}) we have
$$a_{3}=a_{1}+a_{2}, \ \ a_{4}=a_{1}+a_{3} \text{ or } a_{2}+a_{3}. $$

We have the following claim.
\begin{ClaimB} Assume that set $A$ satisfies (\ref{eqa5.1}), (\ref{eqa5.2}), (\ref{eqa5.4}), (\ref{eqa5.23}), (\ref{eqa5.17}). If one of the following five cases holds:

(i) $\gcd(C)\geqslant 2$;

(ii) $\gcd(C)=1$ and $a_{4}=a_{2}+a_{3}$;

(iii) $\gcd(C)=1$, $a_{4}=a_{1}+a_{3}$ and $b_{j}\notin 2^{\wedge}(B\backslash \{b_{0}\})$ for some $4\leqslant j\leqslant k-2$;

(iv) $\gcd(C)=1$, $a_{4}=a_{1}+a_{3}$, $b_{j}\in 2^{\wedge}(B\backslash \{b_{0}\})$ for all $j=4, \ldots, k-2$ and $|2^{\wedge}C|\geqslant 3k-12$;

(v) $\gcd(C)=1$, $a_{4}=a_{1}+a_{3}$, $b_{j}\in 2^{\wedge}(B\backslash \{b_{0}\})$ for all $j=4, \ldots, k-2$ and $a_{k-2}-a_{2}<2k-8$,

\noindent then we have $|2^{\wedge}A|\geqslant 3k-7$.
\end{ClaimB}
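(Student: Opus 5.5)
The plan is to reduce everything to a lower bound for the translate $B=A\backslash\{a_{0}\}-a_{1}$ and then for $C$. Since $a_{1},a_{2}<a_{1}+a_{2}\leqslant\min 2^{\wedge}(A\backslash\{a_{0}\})$, we have
$$|2^{\wedge}A|\geqslant |2^{\wedge}(A\backslash\{a_{0}\})|+|\{a_{1},a_{2}\}|=|2^{\wedge}B|+2 .$$
So in every case it suffices to show $|2^{\wedge}B|\geqslant 3k-9$. The set $C$ is a translate of $B\backslash\{b_{0}\}$, so $|2^{\wedge}(B\backslash\{b_{0}\})|=|2^{\wedge}C|$. The elements $b_{1},b_{2}$ lie in $2^{\wedge}B\backslash 2^{\wedge}(B\backslash\{b_{0}\})$, because they are smaller than $b_{1}+b_{2}$. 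By Fact A we may assume $a_{1}\geqslant 2$, or $a_{1}=1$ and $a_{2}\geqslant 4$. In particular, by (\ref{eqa4.1}) we have $a_{2}\neq 2a_{1}$ and $a_{2}\neq 3a_{1}$.

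The lower bound for $|2^{\wedge}C|$ comes from the basic fact. We have $|C|=k-2\geqslant 8$, so $|C|\neq 7$. By (\ref{eqa5.17}),
$$\max C=a_{k-1}-a_{2}\geqslant 2k-6=2|C|-2 .$$
Hence whenever $\gcd(C)=1$ the basic fact gives $|2^{\wedge}C|\geqslant 3k-13$. In cases (ii) and (iii) it is therefore enough to find four elements of $2^{\wedge}B\backslash 2^{\wedge}(B\backslash\{b_{0}\})$, and in case (iv) three such elements suffice.

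\textbf{Case (i).} Here $\gcd(C)\geqslant 2$. Note that $B\backslash\{b_{0}\}-b_{1}=C$ and $\gcd(B)=1$ by (\ref{eqa5.1}). So Proposition \ref{P12} (ii), applied to $B$, gives $|2^{\wedge}B|\geqslant 3(k-1)-6=3k-9$ directly.

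\textbf{Case (ii).} Here $a_{4}=a_{2}+a_{3}$. Proposition \ref{lem2.13} applies, since $k\geqslant 6$, (\ref{eqa5.4}) holds, and $a_{1}\geqslant 2$ or ($a_{1}=1$, $a_{2}\geqslant 3$). It gives either $|2^{\wedge}A|\geqslant 3k-7$ outright, or four new elements of $2^{\wedge}B$ outside $2^{\wedge}(B\backslash\{b_{0}\})$. In the second alternative, $|2^{\wedge}B|\geqslant 3k-13+4$, which is what we need.

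\textbf{Case (iii).} Here $a_{4}=a_{1}+a_{3}$, so $b_{3}=a_{4}-a_{1}=a_{1}+a_{2}$. The only possible representation of $b_{3}$ in $2^{\wedge}(B\backslash\{b_{0}\})$ is $b_{1}+b_{2}=2a_{2}-a_{1}$. That would force $a_{2}=2a_{1}$, which is excluded. So $b_{1},b_{2},b_{3}$ and the given $b_{j}$ (with $j\geqslant 4$) are four distinct new elements, and the same count as in case (ii) finishes the argument.

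\textbf{Case (iv).} The elements $b_{1},b_{2},b_{3}$ alone give
$$|2^{\wedge}B|\geqslant |2^{\wedge}C|+3\geqslant 3k-12+3=3k-9 .$$

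\textbf{Case (v).} This is the substantive case, and the plan is to invoke Proposition \ref{P10}, so the work is checking its hypotheses. Hypothesis (i) of Proposition \ref{P10} is (\ref{eqa5.1}) together with (\ref{eqa5.23}), (\ref{eqa5.17}) and the assumption $a_{k-2}-a_{2}<2k-8$. Hypotheses (ii) and (iii) are (\ref{eqa5.4}) and (\ref{eqa5.2}). Hypothesis (iv) asks that $a_{4}=a_{1}+a_{3}$ and $a_{4}\neq a_{3}+a_{2}-a_{1}$. The second part is exactly $b_{3}\neq b_{1}+b_{2}$, which was verified in case (iii). Hypothesis (v) is a translation of $b_{s-1}\in 2^{\wedge}(B\backslash\{b_{0}\})$ for all $s\geqslant 5$. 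Indeed, $b_{s-1}=b_{p}+b_{q}$ with $0<p<q<s-1$ means $a_{s}=a_{p+1}+a_{q+1}-a_{1}$ with $1<p+1<q+1<s$. The main obstacle is making this index translation airtight and confirming that $k\geqslant 10$ is available. Once that is checked, Proposition \ref{P10} yields $|2^{\wedge}A|\geqslant 3k-7$.
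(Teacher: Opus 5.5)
Your proposal is correct and follows the paper's proof step for step. You use the reduction $|2^{\wedge}A|\geqslant|2^{\wedge}B|+2$ and the bound $|2^{\wedge}C|\geqslant 3k-13$ from the basic fact when $\gcd(C)=1$. Then Proposition \ref{P12}(ii) settles (i), Proposition \ref{lem2.13} settles (ii) after the Fact A reduction, the new elements $b_1,b_2,b_3$ (together with $b_j$ in case (iii)) settle (iii) and (iv), and Proposition \ref{P10} settles (v). The index translation you flag in (v) is valid exactly as you wrote it, and $k\geqslant 10$ is available because the inductive step in the paper only treats $|A|=k\geqslant 10$.
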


To prove (i).  Since $\gcd(C)\geqslant 2$, by Proposition \ref{P12} we have
$$|2^{\wedge}B|\geqslant 3|B|-6=3k-9, $$
thus,
$$|2^{\wedge}A|\geqslant |2^{\wedge}(A\backslash \{a_{0}\})|+|\{a_{1}, a_{2}\}|=|2^{\wedge}B|+|\{a_{1}, a_{2}\}|\geqslant 3k-7. $$

Assume that $\gcd(C)=1$. By (\ref{eqa5.17}) we have
$$\max C=a_{k-1}-a_{2}\geqslant 2k-6=2|C|-2, $$
applying basic fact to $C$, we have
\begin{equation}\label{e1-31}|2^{\wedge}(B\backslash \{b_{0}\})|=|2^{\wedge}C|\geqslant 3|C|-7=3k-13. \end{equation}

To prove (ii). If $a_{1}=1$ and $a_{2}<4$, then Fact A gives $|2^{\wedge}A|\geqslant 3k-7$.
Throughout the rest of the proof we may assume that $a_{1}\geqslant 2$ or $a_{1}=1$, $a_{2}\geqslant 4$.
Noting that $a_{4}=a_{2}+a_{3}$, by Proposition \ref{lem2.13}, it is sufficient to consider
$$|2^{\wedge}(A\backslash\{a_{0}\}-a_{1})\backslash 2^{\wedge}(A\backslash\{a_{0}, a_{1}\}-a_{1})|\geqslant 4, $$
that is
$$|2^{\wedge}B\backslash 2^{\wedge}(B\backslash\{b_{0}\})|\geqslant 4. $$
By (\ref{e1-31}) we have
$$|2^{\wedge}(A\backslash\{a_{0}\})|\geqslant |2^{\wedge}(B\backslash\{b_{0}\})|+4=3k-9, $$
thus,
$$|2^{\wedge}A|\geqslant |2^{\wedge}(A\backslash\{a_{0}\})|+|\{a_{1}, a_{2}\}|\geqslant 3k-7. $$

To prove (iii). Noting that
\begin{equation}\label{eqa5.18}a_{4}=a_{1}+a_{3},\end{equation}
we have
$$b_{1}=a_{2}-a_{1}, \ \ b_{2}=a_{2}, \ \ b_{3}=a_{1}+a_{2}. $$

If $b_{3}=b_{1}+b_{2}$, then $a_{1}+a_{2}=2a_{2}-a_{1}$, thus, $a_{2}=2a_{1}$.
By (\ref{eqa4.1}) we have  $a_{1}=1$, $a_{2}=2$. By Fact A we have $|2^{\wedge}A|\geqslant 3k-7$.

If $b_{3}\neq b_{1}+b_{2}$, then
$$b_{3}\in 2^{\wedge}B\backslash 2^{\wedge}(B\backslash \{b_{0}\})$$
and
\begin{equation}\label{eqa5.9}a_{4}\neq a_{3}+a_{2}-a_{1}. \end{equation}
If there exists an integer $4\leqslant j\leqslant k-2$ such that $b_{j}\in 2^{\wedge}B\backslash 2^{\wedge}(B\backslash \{b_{0}\})$,
then
$$|2^{\wedge}(A\backslash \{a_{0}\})|=|2^{\wedge}B|\geqslant |2^{\wedge}(B\backslash \{b_{0}\})|+|\{b_{1}, b_{2}, b_{3}, b_{j}\}|\geqslant 3k-9, $$
thus,
$$|2^{\wedge}A|\geqslant |2^{\wedge}(A\backslash \{a_{0}\})|+|\{a_{1}, a_{2}\}|\geqslant 3k-7. $$

Noting that
\begin{equation}\label{eqa5.14}b_{j}\in 2^{\wedge}(B\backslash \{b_{0}\}), \ \ j=4, \ldots, k-2. \end{equation}
Then for every $s\geqslant 5$, we have
\begin{equation}\label{eqa5.7}a_{s}=a_{u}+a_{v}-a_{1} \text{ for some } 1<u<v<s. \end{equation}

To prove (iv). Since $|2^{\wedge}(B\backslash \{b_{0}\})|=|2^{\wedge}C|\geqslant 3k-12$, we have
$$|2^{\wedge}(A\backslash \{a_{0}\})|=|2^{\wedge}B|\geqslant |2^{\wedge}(B\backslash \{b_{0}\})|+|\{b_{1}, b_{2}, b_{3}\}|\geqslant 3k-9, $$
thus,
$$|2^{\wedge}A|\geqslant |2^{\wedge}(A\backslash \{a_{0}\})|+|\{a_{1}, a_{2}\}|\geqslant 3k-7. $$

To prove (v). Since $a_{k-2}-a_{2}<2k-8$, by Proposition \ref{P10}, we have $|2^{\wedge}A|\geqslant 3k-7$.

By Claim 2, it is sufficient to consider \begin{equation}\label{eqa5.8}|2^{\wedge}C|=3k-13, \end{equation}\vskip-5mm
\begin{equation}\label{eqa5.19}a_{k-2}-a_{2}\geqslant 2k-8,\end{equation}

Now, we shall prove the following claim:

\begin{ClaimC}Assume that set $A$ satisfies
(\ref{eqa5.1}), (\ref{eqa5.2}), (\ref{eqa5.4}), (\ref{eqa5.23}), (\ref{eqa5.17}), (\ref{eqa5.18}), (\ref{eqa5.9}), (\ref{eqa5.7}), (\ref{eqa5.8}), (\ref{eqa5.19}).
If $a_{k-1}-a_{3}\geqslant 2k-8$, then $|2^{\wedge}A|\geqslant 3k-7$.
\end{ClaimC}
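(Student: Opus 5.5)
The plan is to push the translative-recursive scheme of Claim 2 one level further, to $D=C\backslash\{c_{0}\}-c_{1}=A\backslash\{a_{0},a_{1},a_{2}\}-a_{3}$. The aim is to contradict (\ref{eqa5.8}) by proving $|2^{\wedge}C|\geqslant 3k-12$. If that holds, then, exactly as in the proof of Claim 2 (iv),
$$|2^{\wedge}A|\geqslant |2^{\wedge}C|+|\{b_{1},b_{2},b_{3}\}|+|\{a_{1},a_{2}\}|\geqslant 3k-7 .$$

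\textbf{Step 1: structure of $C$.} By (\ref{eqa5.18}) we have $a_{4}=2a_{1}+a_{2}$, so $c_{1}=a_{3}-a_{2}=a_{1}$ and $c_{2}=a_{4}-a_{2}=2a_{1}=2c_{1}$. Consequently $c_{1},c_{2}\in 2^{\wedge}C\backslash 2^{\wedge}(C\backslash\{c_{0}\})$. Moreover $|2^{\wedge}(C\backslash\{c_{0}\})|=|2^{\wedge}D|$, and $|D|=k-3$, $\max D=a_{k-1}-a_{3}\geqslant 2k-8=2|D|-2$ by hypothesis.

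\textbf{Step 2: the gcd of $D$.} If $\gcd(D)\geqslant 2$, then Proposition \ref{P12} (ii) applied to $C$ (which has $\gcd(C)=1$) gives $|2^{\wedge}C|\geqslant 3(k-2)-6=3k-12$, and we are done. So assume $\gcd(D)=1$. The Basic fact then gives $|2^{\wedge}D|\geqslant 3k-16$. For $k=10$ we have $|D|=7$; there I would rule out the exceptional set of the Base case via the constraint $\max D\geqslant 12$ and the shape $d_{1}=a_{2}-a_{1}$, just as was done for $\widetilde{A}^{-}_{2}$ in Claim 1.

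\textbf{Step 3: count the elements of $C$ that are not sums.} Let $N=|2^{\wedge}C\backslash 2^{\wedge}(C\backslash\{c_{0}\})|\geqslant 2$.
\begin{itemize}
\item If $N\geqslant 4$, then $|2^{\wedge}C|\geqslant 3k-12$ and we are done.
\item If $N=3$, there is a $c_{j}$ with $j\geqslant 3$ that is not a sum of two distinct nonzero elements of $C$. It then suffices to have $|2^{\wedge}D|\geqslant 3k-15$. So the real sub-case is $|2^{\wedge}D|=3k-16$, which would have to be excluded by a structural argument (see the obstacle below).
\item If $N=2$, then every $c_{j}$ with $j\geqslant 3$ lies in $2^{\wedge}(C\backslash\{c_{0}\})$. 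Together with (\ref{eqa5.4}) and (\ref{eqa5.14}), this means that for each of $A$, $B$ and $C$, every element from the fifth onward is a sum of two distinct nonzero elements. We may assume $a_{1}\geqslant 2$ or $a_{1}=1$, $a_{2}\geqslant 4$ (Fact A), and we have $a_{3}=a_{1}+a_{2}$ and $a_{4}=a_{1}+a_{3}$. So Proposition \ref{lem2.14} applies. Its case (ii) finishes directly. Its case (i) forces $A\cap[a_{1},a_{6}]=\{2,3,5,7,8,9\}$; in that concrete situation I would check by hand that (\ref{eqa5.7}) or (\ref{eqa5.2}) fails, or that enough further sums in $2^{\wedge}A$ appear.
\end{itemize}

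\textbf{Main obstacle.} The hard part is the equality case $|2^{\wedge}D|=3k-16$ with $N=3$, since no general inverse theorem is available for sets with $\max D\geqslant 2|D|-2$. I would first try to pass the hypotheses (\ref{eqa5.2}) and (\ref{eqa5.7}) down to $D$. Then I would split $D$ at the last index where it stops being locally dense, as in Propositions \ref{P17} and \ref{P10}. This reduces the problem to Theorem E, Propositions \ref{P7}--\ref{P7-2}, and Lemma \ref{P4}, whose explicit lists of extremal sets can each be checked against (\ref{eqa5.7}).
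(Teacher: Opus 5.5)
Your Steps 1--2 match the paper ($c_1=a_1$, $c_2=2a_1$, $\gcd(D)=1$ via Proposition \ref{P12} (ii) against (\ref{eqa5.8}), and $|2^{\wedge}D|\geqslant 3k-16$). However, the proposal has a genuine gap: it stops exactly where the content of the claim lies. Since $|2^{\wedge}C|=|2^{\wedge}D|+N$, (\ref{eqa5.8}) forces $N\leqslant 3$. So your ``main obstacle'' ($N=3$, $|2^{\wedge}D|=3k-16$) is not a leftover sub-case; it is essentially the whole claim. For it you offer only a plan: inverse results for $D$ and transporting (\ref{eqa5.2}), (\ref{eqa5.7}) down to $D$. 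You do not carry this out, and no available result supports it.

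The missing idea is that the third non-sum must be $c_3$. By (\ref{eqa5.4}) and (\ref{eqa5.18}), $c_3=a_5-a_2\in\{3a_1,\,a_1+a_2,\,2a_1+a_2,\,3a_1+a_2\}$. If $c_3\in 2^{\wedge}(C\backslash\{c_0\})$, then $c_3=c_1+c_2=3a_1$, i.e.\ $b_4=2a_1+a_2$. By (\ref{eqa5.14}) this gives $a_2\in\{2a_1,3a_1\}$, hence $a_1=1$, $a_2\leqslant 3$ (as $\gcd(a_1,a_2)=1$), and Fact A applies. So your $N=2$ case simply reduces to Fact A. Otherwise $c_1,c_2,c_3$ are the three non-sums, and (\ref{eqa5.8}) together with $|2^{\wedge}D|\geqslant 3k-16$ forces $c_j\in 2^{\wedge}(C\backslash\{c_0\})$ for every $j\geqslant 4$. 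That is precisely the hypothesis on $C$ in Proposition \ref{lem2.14}, with (\ref{eqa5.4}) for $A$ and (\ref{eqa5.14}) for $B$. You placed that proposition in the $N=2$ case, where it is not needed, instead of in the $N=3$ case, where it does the work.

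Two further steps would also fail as written.

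First, outcome (i) of Proposition \ref{lem2.14}, $A\cap[a_1,a_6]=\{2,3,5,7,8,9\}$, is locally consistent with (\ref{eqa5.4}), (\ref{eqa5.9}), (\ref{eqa5.7}) and (\ref{eqa5.14}); for instance $a_5=a_2+a_4-a_1$ and $a_6=a_2+a_5-a_1$. So no hand check settles it, and $k$ is arbitrary. The paper argues as follows:
\begin{itemize}
\item It uses $B\cap[b_1,b_5]=\{1,3,5,6,7\}$ and takes $t\geqslant 6$ minimal with $b_t\geqslant 2t$; such $t$ exists by (\ref{eqa5.23}).
\item Proposition \ref{P16} applies to $B_1=\{b_0,\ldots,b_t\}$, since $(b_4,b_5)=(6,7)\neq(6,8)$, giving $|2^{\wedge}B_1|\geqslant 3t-2$.
\item The Basic fact applies to $B_2=\{b_{t-2},\ldots,b_{k-2}\}$, whose translate has $\gcd$ $1$ by (\ref{eqa5.2}), giving $|2^{\wedge}B_2|\geqslant 3k-3t-4$.
\item Subtracting the three common sums gives $|2^{\wedge}B|\geqslant 3k-9$; when $t=k-6$ an $8$-element $B_2$ is used instead.
\end{itemize}

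Second, for $k=10$ you have $d_1=a_4-a_3=a_1$, not $a_2-a_1$, and $D$ need not contain $1$. So the Claim 1 device (largest element at most $8$) does not exclude the Base-case exception $\{0,a,b,a+b,2a+b,a+2b,2a+2b\}$: with $a=a_1$ its maximum $2a_1+2b$ is unbounded. The paper excludes it structurally. By (\ref{eqa5.18}), $a=a_1$. Then (\ref{eqa5.4}) and (\ref{eqa5.14}) leave $b\in\{a_2,a_1+a_2\}$. In each case $b_6$, respectively $b_5$, is not in $2^{\wedge}(B\backslash\{b_0\})$, contradicting (\ref{eqa5.14}).
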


Write $c_{i}=a_{i+2}-a_{2}$, $i=0, \ldots, k-3$. Then
$$C=B\backslash \{b_{0}\}-b_{1}:=\{c_{0}, c_{1}, \ldots, c_{k-4}, c_{k-3}\}. $$
Clearly,
$$c_{1}, c_{2}\in 2^{\wedge}C\backslash 2^{\wedge}(C\backslash \{c_{0}\}). $$
By $a_{4}=a_{1}+a_{3}$ and (\ref{eqa5.4}), we have
$$a_{5}=3a_{1}+a_{2}, \ \ a_{1}+2a_{2}, \ \ 2a_{1}+2a_{2} \text{ or } 3a_{1}+2a_{2}, $$
thus,
$$c_{3}=3a_{1}, \ \ a_{1}+a_{2}, \ \ 2a_{1}+a_{2} \text{ or } 3a_{1}+a_{2}. $$
If $c_{3}\in 2^{\wedge}(C\backslash \{c_{0}\})$, combining with $c_{1}=a_{1}$ and $c_{2}=2a_{1}$, then $c_{3}=3a_{1}$, thus, $b_{4}=2a_{1}+a_{2}$.
By (\ref{eqa5.14}) we have $b_{4}\in 2^{\wedge}(B\backslash \{b_{0}\})$, thus, $2a_{1}+a_{2}=2a_{2}-a_{1}$ or $2a_{2}$,
it follows that $a_{2}=3a_{1}$ or $2a_{1}$. By (\ref{eqa4.1}) we have $a_{1}=1$ and $a_{2}=2$ or $3$. By Fact A we have $|2^{\wedge}A|\geqslant 3k-7$.

Assume that $c_{3}\not\in 2^{\wedge}(C\backslash \{c_{0}\})$. Put
$$D=C\backslash \{c_{0}\}-c_{1}=A\backslash \{a_{0}, a_{1}, a_{2}\}-a_{3}. $$
If $\gcd(D)>1$, then by Proposition \ref{P12} (ii) and $\gcd(C)=1$ we have $|2^{\wedge}C|\geqslant 3k-12$, 
which contradicts with (\ref{eqa5.8}). Hence,
$$\gcd(D)=1. $$

Since $a_{k-1}-a_{3}\geqslant 2k-8$, we have
$$\max D=a_{k-1}-a_{3}\geqslant 2k-8=2|D|-2, $$
applying base case and basic fact to $D$, we have
$$|2^{\wedge}D|\geqslant 3k-16, $$
except for
$$D=\{0, a, b, a+b, 2a+b, a+2b, 2a+2b\}, $$
where $a$ and $b$ are two sufficiently large and coprime positive integers.

For the exceptional case, combining with (\ref{eqa5.4}) we have
$$A=\left\{\begin{array}{l}a_{0}, a_{1}, a_{2}, a_{1}+a_{2}, a+a_{1}+a_{2}, b+a_{1}+a_{2}, a+b+a_{1}+a_{2}, \\ 2a+b+a_{1}+a_{2}, a+2b+a_{1}+a_{2}, 2a+2b+a_{1}+a_{2}\end{array}\right\}. $$
By (\ref{eqa5.18}) we have $a_{4}=2a_{1}+a_{2}$, thus, $a=a_{1}$. Hence,
$$A=\left\{\begin{array}{l}a_{0}, a_{1}, a_{2}, a_{1}+a_{2}, 2a_{1}+a_{2}, b+a_{1}+a_{2}, b+2a_{1}+a_{2}, \\ b+3a_{1}+a_{2}, 2b+2a_{1}+a_{2}, 2b+3a_{1}+a_{2}\end{array}\right\}. $$
Again by (\ref{eqa5.4}) we have $a_{5}\in 2^{\wedge}(A\backslash\{a_{0}\})$, thus,
$$b\in\{2a_{1}, a_{2}, a_{1}+a_{2}, 2a_{1}+a_{2}\}. $$
By $a$ and $b$ are two sufficiently large and coprime positive integers, we have $b\neq 2a_{1}$.
Since $B=A\backslash\{a_{0}\}-a_{1}$, we have $b_{1}=a_{2}-a_{1}$, $b_{2}=a_{2}$, $b_{3}=a_{1}+a_{2}$, $b_{4}=b+a_{2}$.
Combining with (\ref{eqa5.14}) we have $b_{4}\in 2^{\wedge}(B\backslash \{b_{0}\})$, thus,
$$b\in \{a_{2}, a_{1}+a_{2}\}. $$

\begin{itemize}
  \item $b=a_{2}$. Then
  $$A=\left\{a_{0}, a_{1}, a_{2}, a_{1}+a_{2}, 2a_{1}+a_{2}, a_{1}+2a_{2}, 2a_{1}+2a_{2}, 3a_{1}+2a_{2}, 2a_{1}+3a_{2}, 3a_{1}+3a_{2}\right\}, $$
  thus,
  $$B=\left\{0, a_{2}-a_{1}, a_{2}, a_{1}+a_{2}, 2a_{2}, a_{1}+2a_{2}, 2a_{1}+2a_{2}, a_{1}+3a_{2}, 2a_{1}+3a_{2}\right\}. $$
  Combining with (\ref{eqa4.1}) we have $b_{6}\notin 2^{\wedge}(B\backslash \{b_{0}\})$, which contradicts with (\ref{eqa5.14}).
  \item $b=a_{1}+a_{2}$. Then
  $$A=\left\{a_{0}, a_{1}, a_{2}, a_{1}+a_{2}, 2a_{1}+a_{2}, 2a_{1}+2a_{2}, 3a_{1}+2a_{2}, 4a_{1}+2a_{2}, 4a_{1}+3a_{2}, 5a_{1}+3a_{2}\right\}, $$
  thus,
  $$B=\left\{0, a_{2}-a_{1}, a_{2}, a_{1}+a_{2}, a_{1}+2a_{2}, 2a_{1}+2a_{2}, 3a_{1}+2a_{2}, 3a_{1}+3a_{2}, 4a_{1}+3a_{2}\right\}. $$
  Similarly, we have $b_{5}\notin 2^{\wedge}(B\backslash \{b_{0}\})$, which contradicts with (\ref{eqa5.14}).
\end{itemize}

Hence, $|2^{\wedge}D|\geqslant 3k-16$. By $|2^{\wedge}C|=3k-13$, we have
$$c_{j}\in 2^{\wedge}(C\backslash \{c_{0}\}), \ \ j=4, \ldots, k-3. $$
If $a_{1}=1$ and $a_{2}<4$, then Fact A gives $|2^{\wedge}A|\geqslant 3k-7$.
Throughout the rest of the proof we may assume that $a_{1}\geqslant 2$ or $a_{1}=1$, $a_{2}\geqslant 4$.
By Proposition \ref{lem2.14}, it is sufficient to consider
$$A\cap [a_{1}, a_{6}]=\{2, 3, 5, 7, 8, 9\}, $$
thus,
$$B\cap [b_{1}, b_{5}]=\{1, 3, 5, 6, 7\}. $$
Since $b_{k-3}=a_{k-2}-a_{1}\geqslant 2k-6$, choose $t\in [6, k-3]$ such that $b_{t}\geqslant 2t$ and
$$b_{j}<2j, \ \ j=1, 2, \ldots, t-1. $$
Put $B_{1}=\{b_{0}, b_{1}, \ldots, b_{t}\}$. Then $|B_{1}|=t+1\geqslant 7$.
By Proposition \ref{P16} we have
\begin{equation}\label{P5-1}|2^{\wedge}B_{1}|\geqslant 3(t+1)-5=3t-2. \end{equation}

If $t\neq k-6$, then put $B_{2}=\{b_{t-2}, b_{t-1}, \ldots, b_{k-2}\}$.
Then
$$|B_{2}|=k-t+1\geqslant 4, \ \ |B_{2}|\neq 7. $$
Let $B^{-}_{2}=B_{2}-b_{t-2}$. Assume that $\gcd(B^{-}_{2})=d>1$. Then
$$b_{i}\equiv b_{t-2}\pmod{d}, \ \ i=t-2, \ldots, k-2.$$
Since $1, 3, 5, 6, 7\in B$, we have there exists
$$b_{j}\not\equiv b_{t-2}\pmod{d} \text{ for some } 1\leqslant j\leqslant t-3. $$
Let $b_{m}$ be the largest integer of $B$ satisfies $b_{m}\not \equiv b_{t-2}\pmod{d}$, then
$$b_{k-2}+b_{m}\notin 2^{\wedge}(B\backslash\{b_{k-2}\}), $$
it implies that
$$a_{k-1}+a_{m+1}\notin 2^{\wedge}(A\backslash\{a_{k-1}\}), $$
which contradicts with (\ref{eqa5.2}). Hence, $\gcd(B^{-}_{2})=1$.
By (\ref{eqa5.1}) and $b_{t-2}<2(t-2)$ we have
$$\max B^{-}_{2}=b_{k-2}-b_{t-2}\geqslant 2k-2t=2|B^{-}_{2}|-2, $$
applying basic fact to $B^{-}_{2}$, we have
\begin{equation}\label{P5-2}|2^{\wedge}B_{2}|=|2^{\wedge}B^{-}_{2}|\geqslant 3|B^{-}_{2}|-7=3k-3t-4. \end{equation}
In addition,
$$2^{\wedge}B_{1}\cap 2^{\wedge}B_{2}=\{b_{t-2}+b_{t-1}, b_{t-2}+b_{t}, b_{t-1}+b_{t}\}. $$
By (\ref{P5-1}) and (\ref{P5-2}) we have
$$|2^{\wedge}B|\geqslant |2^{\wedge}B_{1}|+|2^{\wedge}B_{2}|-3=3k-9. $$
Hence,
$$|2^{\wedge}A|\geqslant |2^{\wedge}(A\backslash\{a_{0}\})|+|\{a_{1}, a_{2}\}|=|2^{\wedge}B|+2=3k-7. $$

If $t=k-6$, then put $B_{2}=\{b_{t-3}, b_{t-2}, \ldots, b_{k-2}\}$. Then $|B_{2}|=8$.
Let $B^{-}_{2}=B_{2}-b_{t-3}$. Similarly, we have $\gcd(B^{-}_{2})=1$ and
$$|2^{\wedge}B_{2}|=|2^{\wedge}B^{-}_{2}|\geqslant 3|B^{-}_{2}|-7=17. $$
In addition,
$$2^{\wedge}B_{1}\cap 2^{\wedge}B_{2}=\{b_{i}+b_{j}: t-3\leqslant i<j\leqslant t\}. $$
Thus,
$$|2^{\wedge}B|\geqslant |2^{\wedge}B_{1}|+|2^{\wedge}B_{2}|-6=3k-9. $$
Hence,
$$|2^{\wedge}A|\geqslant |2^{\wedge}(A\backslash\{a_{0}\})|+|\{a_{1}, a_{2}\}|=|2^{\wedge}B|+2=3k-7. $$

\begin{ClaimD}Assume that $A$ satisfies
(\ref{eqa5.1}), (\ref{eqa5.2}), (\ref{eqa5.4}), (\ref{eqa5.23}), (\ref{eqa5.17}), (\ref{eqa5.18}), (\ref{eqa5.9}), (\ref{eqa5.7}), (\ref{eqa5.19}).
If $a_{k-1}-a_{3}<2k-8$, then $|2^{\wedge}A|\geqslant 3k-7$.
\end{ClaimD}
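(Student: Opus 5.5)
Claim 4 is the analogue of Claim 1 one level deeper: there $a_{k-1}-a_2<2k-6$ was handled through the reflection $A^{*}$, and here $a_{k-1}-a_3<2k-8$ will be handled the same way. In reflected terms the hypothesis reads $a^{*}_{k-4}<2k-8$, while (\ref{eqa5.19}) and (\ref{eqa5.17}) give $a^{*}_{k-3}\geqslant 2k-8$ and $a^{*}_{k-2}\geqslant 2k-6$. By (\ref{factd}) we may apply Fact B to $A^{*}$, so we may assume $a^{*}_1\geqslant 2$ or $a^{*}_1=1$, $a^{*}_2\geqslant 4$.

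First suppose $a^{*}_j<2j$ for all $j\leqslant k-4$. Then $a^{*}_1=1$ and $a^{*}_2<4$, and Fact B finishes. Otherwise, let $s\in[2,k-4]$ be maximal with $a^{*}_{s-1}\geqslant 2s-2$. Exactly as in (\ref{eqa4.8}) this forces $a^{*}_{s-1}=2s-2$ and $a^{*}_s=2s-1$.

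\textbf{Case $s=k-4$.} We have $a^{*}_{k-5}=2k-10$ and $a^{*}_{k-4}=2k-9$. Apply the $a_{k-1}=2k-3$ inverse machinery (Lemma \ref{LL3-4}, Propositions \ref{P2-2} and \ref{P3}) to $\{a^{*}_0,\dots,a^{*}_{k-4}\}$ with the three large elements $a^{*}_{k-3},a^{*}_{k-2},a^{*}_{k-1}$ appended. Proposition \ref{P3} handles two appended elements; with three, I expect the extra sums $a^{*}_{k-1}+a^{*}_{k-2}$ and $a^{*}_{k-1}+a^{*}_{k-3}$ to supply the missing count. If that falls short, I would instead use (\ref{eqa5.8}): it says $|2^{\wedge}C|=3k-13$ is extremal, and the exact structure of $C$ near the top should pin down $A$.

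\textbf{Case $s<k-4$.} Split $A^{*}$ into $\widetilde A_1=\{a^{*}_0,\dots,a^{*}_{s+1}\}$, which has $a^{*}_{s+1}\in\{2s,2s+1\}$, and $\widetilde A_2=\{a^{*}_{s-1},\dots,a^{*}_{k-1}\}$.
\begin{itemize}
\item[(a)] Theorem E gives $|2^{\wedge}\widetilde A_1|\geqslant 3s-1$.
\item[(b)] $\widetilde A_2-a^{*}_{s-1}$ contains $0,1$ and has maximum at least $2|\widetilde A_2|-2$. The basic fact (or the base case when $|\widetilde A_2|=7$, as in (\ref{e2-2-3})) gives $|2^{\wedge}\widetilde A_2|\geqslant 3(k-s+1)-7$.
\item[(c)] The two sumsets overlap in exactly three elements, so $|2^{\wedge}A|\geqslant 3k-7$ unless $|2^{\wedge}\widetilde A_1|=3s-1$.
\end{itemize}
In that extremal case, Proposition \ref{P7-2} lists $\widetilde A_1$ explicitly. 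Treat the lists as in Claim 1:
\begin{itemize}
\item[(i)] For the sporadic sets, adjoin the next element $a^{*}_{s+2}$ to $\widetilde A_1$; its restricted sumset then gains at least three, which compensates for the overlap of six.
\item[(ii)] For $\{0,2,3,5\}$ and $\{0,1\}\cup[s+1,2s]$, remove $a^{*}_{k-1}$ from $\widetilde A_2$ and use (\ref{factc}). The locally dense results (Theorem F, Propositions \ref{P16}, \ref{P16.1}, \ref{P16.3}) then give the stronger bound $3m-5$ for the truncated piece. Adding back $a^{*}_{k-3}+a^{*}_{k-1}$ and $a^{*}_{k-2}+a^{*}_{k-1}$, plus the small elements $a^{*}_1,a^{*}_2$ that are not sums from $\widetilde A_2$, gives $3k-7$.
\end{itemize}

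The main obstacle is the new point compared with Claim 1. Now $a^{*}_{k-3}$ may be as large as $2k-8$ or more, so the truncated $\widetilde A_2$ is no longer locally dense right up to its penultimate element. Theorem F and Propositions \ref{P16}–\ref{P16.3} therefore do not apply directly. To handle this, I would remove both $a^{*}_{k-1}$ and $a^{*}_{k-2}$. For this, (\ref{factc}) with $t\leqslant k-4$ must be used to show that $a^{*}_{k-2}+a^{*}_{k-1}$, $a^{*}_{k-3}+a^{*}_{k-1}$ and $a^{*}_{k-3}+a^{*}_{k-2}$ contribute three new sums. I would also keep track of which of the hypotheses (\ref{eqa5.9}), (\ref{eqa5.7}) reflect to constraints on $A^{*}$ that exclude the extremal configurations.
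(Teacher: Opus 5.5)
Your outline follows the paper's route: reflect, split $A^{*}$ at $s$, apply Theorem E and the basic fact, use Proposition \ref{P7-2} in the tight case, then strip the two largest elements so the locally dense results apply. However, the decisive count does not close as you set it up. For $\widetilde A_1=\{0,2,3,5\}$ the doubly truncated piece $\{a^{*}_1,\ldots,a^{*}_{k-3}\}$ has $k-3$ elements, so Propositions \ref{P16} and \ref{P16.1} give $3k-14$. Your three new sums plus $a^{*}_1,a^{*}_2$ then reach only $3k-9$. You need five new sums:
\begin{itemize}
\item re-adding $a^{*}_{k-2}$ must contribute $a^{*}_{k-2}+a^{*}_{k-3}$, $a^{*}_{k-2}+a^{*}_{k-4}$ and $a^{*}_{k-2}+a^{*}_{k-5}$; the last is new only because of (\ref{eqa5.12-1}), the reflection of (\ref{eqa5.9});
\item re-adding $a^{*}_{k-1}$ contributes $a^{*}_{k-1}+a^{*}_{k-2}$ and $a^{*}_{k-1}+a^{*}_{k-3}$.
\end{itemize}
Also, the hypothesis of Proposition \ref{P16.1} (resp.\ \ref{P16.3}) for the doubly truncated set uses $n=a^{*}_{k-2}-a^{*}_1$ (resp.\ $n=a^{*}_{k-2}-a^{*}_2$). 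It cannot come from (\ref{factc}), which concerns $a^{*}_t+a^{*}_{k-1}$ and allows representations using $a^{*}_{k-2}$. It comes from (\ref{eqa5.13-1}), the reflection of (\ref{eqa5.7}). So (\ref{eqa5.9}) and (\ref{eqa5.7}) are needed at the top of $A^{*}$, not for excluding configurations at its bottom. For $\widetilde A_1=\{0,1,4,5,6\}$, Theorem F does not give $3m-5$. The paper gets only $3k-18$ from it, then uses the five sums $1,4,5,6,7\notin 2^{\wedge}\widetilde A_2$. Only in the tight case does it pin down $a^{*}_5=9$, $a^{*}_6=10$, $a^{*}_7=13$, so that Proposition \ref{P16.3} applies.

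What excludes the rest of Proposition \ref{P7-2}'s list is a symmetric reduction you do not make. $A^{*}$ satisfies (\ref{facta})--(\ref{factd}) and (\ref{eqa5.23}), which is reflection-invariant. So Claims 1--2 and Fact B applied to $A^{*}$ let one assume (\ref{eqa5.11})--(\ref{eqa5.13}). These kill $\{0,2,3,5,8,10,11,13\}$ via (\ref{eqa5.11}). They kill $\{0,1\}\cup[s+1,2s]$ for $s\geqslant 4$, and the other two sporadic sets, via (\ref{eqa5.13}) at $a^{*}_5$. Without this, your Claim-1 treatment of $\{0,1\}\cup[s+1,2s]$, $s\geqslant 4$, fails. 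It applied Theorem F to $\widetilde A_2\backslash\{a^{*}_{k-1}\}$, which is no longer locally dense, because $a^{*}_{k-3}=a_{k-1}-a_2\geqslant 2k-6$ by (\ref{eqa5.17}). Your translation of the hypotheses is also off: (\ref{eqa5.19}) gives $a^{*}_{k-3}-a^{*}_1\geqslant 2k-8$, and $a^{*}_{k-2}\geqslant 2k-4$.

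Finally, the case $s=k-4$ needs no inverse machinery, and your plan for it is not an argument; the fallback (\ref{eqa5.8}) is not even a hypothesis of Claim 4. In fact the case is vacuous:
\begin{itemize}
\item $a^{*}_{k-4}-a^{*}_{k-5}=a_4-a_3=1$ forces $a_1=1$ by (\ref{eqa5.18});
\item hence $a_3=a_2+1$;
\item so $a^{*}_{k-4}=a_{k-1}-a_2-1\geqslant 2k-7$ by (\ref{eqa5.17}), contradicting $a^{*}_{k-4}=2k-9$.
\end{itemize}
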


Since $a_{k-1}-a_{3}<2k-8$, we have $a_{k-4}^{*}<2k-8$.
It is clearly that $a_{k-3}^{*}\geqslant 2k-6$,
$$a_{k-5}^{*}+a_{k-1}^{*}=a_{k-4}^{*}+a_{k-2}^{*}, $$\vskip -5mm
\begin{equation}\label{eqa5.12-1}a_{k-5}^{*}+a_{k-2}^{*}\neq a_{k-4}^{*}+a_{k-3}^{*}\end{equation}
and for every $s\geqslant 5$,
\begin{equation}\label{eqa5.13-1}a_{k-1-s}^{*}+a_{k-2}^{*}=a_{k-1-u}^{*}+a_{k-1-v}^{*}  \text{ for some } 1<u<v<s. \end{equation}
By the above discussion, we need to consider $A^{*}$ satisfying
$$a_{k-1}^{*}-a_{2}^{*}\geqslant 2k-6, $$\vskip -5mm
\begin{equation}\label{eqa5.11}a_{4}^{*}=a_{1}^{*}+a_{3}^{*}, \end{equation} \vskip -5mm
\begin{equation}\label{eqa5.12}a_{4}^{*}\neq a_{3}^{*}+a_{2}^{*}-a_{1}^{*}\end{equation}
and for every $s\geqslant 5$,
\begin{equation}\label{eqa5.13}a_{s}^{*}=a_{u}^{*}+a_{v}^{*}-a_{1}^{*}  \text{ for some } 1<u<v<s. \end{equation}

If $a_{j}^{*}<2j$ for $j=1, \ldots, k-4$, then $a_{1}^{*}=1$ and $a_{2}^{*}<4$. By Fact B we have
$$|2^{\wedge}A|=|2^{\wedge}A^{*}|\geqslant3k-7. $$

If there exists an integer $s\in [2, k-4]$ such that
$$a_{s-1}^{*}\geqslant 2(s-1), \ \ a_{j}^{*}<2j, \ \ j=s, \ldots, k-4,$$
then
$2s-2\leqslant a_{s-1}^{*}<a_{s}^{*}<2s, $
thus,
\begin{equation}\label{eqa4.8-1}a_{s-1}^{*}=2s-2, \ \ a_{s}^{*}=2s-1. \end{equation}

If $s=k-4$, then $a_{k-5}^{*}=2k-10$, $a_{k-4}^{*}=2k-9$, thus, $a_{4}-a_{3}=1$.
By \eqref{eqa5.18} we have $a_{1}=1$, thus, $a_{3}=a_{2}+1$, combining with \eqref{eqa5.17}, we have $a_{k-1}-a_{3}=a_{k-1}-a_{2}-1\geqslant 2k-7$, a contradiction.

If $s\leqslant k-5$, then $a_{s+1}^{*}<2s+2$. Combining with (\ref{eqa4.8-1}), we have
\begin{equation}\label{eqa4.10-1}a_{s+1}^{*}=2s \text{ or } 2s+1. \end{equation}
Write
$$\widetilde{A}_{1}=\{a_{0}^{*}, a_{1}^{*}, \ldots, a_{s-1}^{*}, a_{s}^{*}, a_{s+1}^{*}\}, $$\vskip-5mm
$$\widetilde{A}_{2}=\{a_{s-1}^{*}, a_{s}^{*}, a_{s+1}^{*}, \ldots, a_{k-1}^{*}\}. $$
Define the set $\widetilde{A}^{-}_{2}$ by
$$\widetilde{A}^{-}_{2}=\widetilde{A}_{2}-a_{s-1}^{*}=\{0, 1, a_{s+1}^{*}-a_{s-1}^{*}, \ldots, a_{k-1}^{*}-a_{s-1}^{*}\}. $$
Then
\begin{equation}\label{eqa5.6-1}\max\widetilde{A}^{-}_{2}=a_{k-1}^{*}-a_{s-1}^{*}\geqslant 2(k-s+1)-2=2|\widetilde{A}^{-}_{2}|-2. \end{equation}
Noting that the third smallest integer of $\widetilde{A}^{-}_{2}$ is either 2 or 3.
If $|\widetilde{A}^{-}_{2}|=7$ and $|2^{\wedge}\widetilde{A}^{-}_{2}|<14$, then by Base Case we have the largest integer of $\widetilde{A}^{-}_{2}$ is not greater than 8, which contradicts with (\ref{eqa5.6-1}).
Hence, if $|\widetilde{A}^{-}_{2}|=7$, then
\begin{equation}\label{e2-2-2}|2^{\wedge}\widetilde{A}^{-}_{2}|\geqslant 14. \end{equation}
Assume that $|\widetilde{A}^{-}_{2}|\neq 7$. By (\ref{eqa5.6-1}), applying basic fact to $\widetilde{A}^{-}_{2}$, we have
\begin{equation}\label{e2-2-1}|2^{\wedge}\widetilde{A}_{2}|=|2^{\wedge}\widetilde{A}^{-}_{2}|\geqslant 3k-3s-4. \end{equation}

If $|2^{\wedge}\widetilde{A}_{1}|\geqslant 3|\widetilde{A}_{1}|-6=3s$, then by
$$2^{\wedge}\widetilde{A}_{1}\cup2^{\wedge}\widetilde{A}_{2}\subseteq 2^{\wedge}A^{*}$$
and
$$2^{\wedge}\widetilde{A}_{1}\cap2^{\wedge}\widetilde{A}_{2}=\{a_{s-1}^{*}+a_{s}^{*}, a_{s-1}^{*}+a_{s+1}^{*}, a_{s}^{*}+a_{s+1}^{*}\}, $$
combine with (\ref{e2-2-1}), (\ref{e2-2-2}) we have
$$|2^{\wedge}A|=|2^{\wedge}A^{*}|\geqslant |2^{\wedge}\widetilde{A}_{1}|+|2^{\wedge}\widetilde{A}_{2}|-3\geqslant 3k-7. $$

If $|2^{\wedge}\widetilde{A}_{1}|=3|\widetilde{A}_{1}|-7=3s-1$, then by (\ref{factd}), (\ref{eqa4.8-1}), (\ref{eqa4.10-1}), Proposition \ref{P7-2} and (\ref{eqa5.11})-(\ref{eqa5.13}) we need consider the following cases:

\begin{itemize}
  \item $\widetilde{A}_{1}=\{0, 2, 3, 5\}$. Then $s=2$. Since
  $$a_{j}^{*}<2j, \ \ j=2, \ldots, k-4, $$
  combining with (\ref{eqa5.23}) and the fact that $a_{1}^{*}=2$, we have
  $$a_{j}^{*}-a_{1}^{*}<2(j-1), \ \ j=2, \ldots, k-4; \ \ a_{k-3}^{*}-a_{1}^{*}\geqslant 2k-8. $$
  Moreover, for all $t=1, \ldots, k-6$, by (\ref{eqa5.13-1}) we have
  $$(a_{k-2}^{*}-a_{1}^{*})+(a_{t}^{*}-a_{1}^{*})\in 2^{\wedge}(\widetilde{A}^{-}_{2}\backslash\{a_{k-2}^{*}-a_{1}^{*}, a_{k-1}^{*}-a_{1}^{*}\}). $$
  By (\ref{factd}) we have $a_{4}^{*}\geqslant 7$, thus, $a_{4}^{*}=7$.
  Hence, $1, 3, 5\in \widetilde{A}^{-}_{2}\backslash\{a_{k-2}^{*}-a_{1}^{*}, a_{k-1}^{*}-a_{1}^{*}\}$.
  By Proposition \ref{P16} and Proposition \ref{P16.1} we have
  $$|2^{\wedge}(\widetilde{A}_{2}\backslash\{a_{k-2}^{*}, a_{k-1}^{*}\})|=|2^{\wedge}(\widetilde{A}^{-}_{2}\backslash\{a_{k-2}^{*}-a_{1}^{*}, a_{k-1}^{*}-a_{1}^{*}\})|\geqslant 3(k-3)-5=3k-14. $$
  By (\ref{eqa5.12-1}) we have
  $$|2^{\wedge}(\widetilde{A}_{2}\backslash\{a_{k-1}^{*}\})|\geqslant 3k-11, $$
  thus,
  $$|2^{\wedge}\widetilde{A}_{2}|\geqslant |2^{\wedge}(\widetilde{A}_{2}\backslash\{a_{k-1}^{*}\})|+|\{a_{k-3}^{*}+a_{k-1}^{*}, a_{k-2}^{*}+a_{k-1}^{*}\}|\geqslant 3k-9, $$
  it follows that
  $$|2^{\wedge}A|=|2^{\wedge}A^{*}|\geqslant |2^{\wedge}\widetilde{A}_{2}|+|\{a_{1}^{*}, a_{2}^{*}\}|\geqslant 3k-7. $$
  \item $\widetilde{A}_{1}=\{0, 1, 4, 5, 6\}$. Then $s=3$. Thus,
  $$a_{k-3}^{*}-a_{2}^{*}\geqslant 2k-10, \ \ a_{j}^{*}-a_{2}^{*}<2(j-2), \ \ j=3, \ldots, k-4. $$
  By $0, 1, 2\in \widetilde{A}^{-}_{2}\backslash\{a_{k-2}^{*}-a_{2}^{*}, a_{k-1}^{*}-a_{2}^{*}\}$ and Theorem F we have
  $$|2^{\wedge}(\widetilde{A}_{2}\backslash\{a_{k-2}^{*}, a_{k-1}^{*}\})|=|2^{\wedge}(\widetilde{A}^{-}_{2}\backslash\{a_{k-2}^{*}-a_{2}^{*}, a_{k-1}^{*}-a_{2}^{*}\})|\geqslant 3k-18, $$
  thus,
  $$|2^{\wedge}\widetilde{A}_{2}|\geqslant 3k-13. $$
  Noting that
  $$\{1, 4, 5, 6, 7\}\subseteq 2^{\wedge}A^{*}\backslash 2^{\wedge}\widetilde{A}_{2}, $$
  it follows that
  $$|2^{\wedge}A^{*}\backslash 2^{\wedge}\widetilde{A}_{2}|\geqslant 5. $$
  If $|2^{\wedge}A^{*}\backslash 2^{\wedge}\widetilde{A}_{2}|\geqslant 6$, then
  $$|2^{\wedge}A|=|2^{\wedge}A^{*}|\geqslant 3k-7. $$
  Assume that $|2^{\wedge}A^{*}\backslash 2^{\wedge}\widetilde{A}_{2}|=5$.
  If $a_{5}^{*}=7$ or $8$, then $8\in 2^{\wedge}A^{*}\backslash 2^{\wedge}\widetilde{A}_{2}$, a contradiction.
  Since $a_{5}^{*}<10$, we have $a_{5}^{*}=9$. Similarly, we have $a_{6}^{*}=10$, $a_{7}^{*}=13. $
  Thus,
  $$1, 2, 5, 6, 9\in \widetilde{A}^{-}_{2}\backslash\{a_{k-2}^{*}-a_{2}^{*}, a_{k-1}^{*}-a_{2}^{*}\}. $$
  By (\ref{eqa5.13-1}) we have
  $$(a_{k-2}^{*}-a_{2}^{*})+(a_{j}^{*}-a_{2}^{*})\in 2^{\wedge}(\widetilde{A}^{-}_{2}\backslash\{a_{k-2}^{*}-a_{2}^{*}, a_{k-1}^{*}-a_{2}^{*}\}) $$
  for all $j=2, \ldots, k-6$.
  By Proposition \ref{P16.3} we have
  \begin{eqnarray*}|2^{\wedge}(\widetilde{A}_{2}\backslash\{a_{k-2}^{*}, a_{k-1}^{*}\})|&=&|2^{\wedge}(\widetilde{A}^{-}_{2}\backslash\{a_{k-2}^{*}-a_{2}^{*}, a_{k-1}^{*}-a_{2}^{*}\})|\\
  &\geqslant& 3(k-4)-5=3k-17.
  \end{eqnarray*}
  Combining with (\ref{eqa5.12-1}) we have
  $$|2^{\wedge}(\widetilde{A}_{2}\backslash\{a_{k-1}^{*}\})|\geqslant 3k-14, $$
  thus,
  \begin{eqnarray*}|2^{\wedge}A|=|2^{\wedge}A^{*}|&\geqslant& 5+|2^{\wedge}(\widetilde{A}_{2}\backslash\{a_{k-1}^{*}\})|+|\{a_{k-3}^{*}+a_{k-1}^{*}, a_{k-2}^{*}+a_{k-1}^{*}\}|\\
  &\geqslant& 3k-7. \end{eqnarray*}
\end{itemize}

By Claims 1-4, we complete the proof of Theorem \ref{T}.



\section*{References}

\begin{thebibliography}{30}
\bibitem{Freiman64}G.A. Freiman, {\it Addition of finite sets}, Doklady Akad. Nauk SSSR, 158(1964), 1038-1041.

\bibitem{Freiman1} G.A. Freiman, {\it Foundations of structural theory of set addition}, vol. 37, Translations of Mathematical Monographs, American Mathematical Society, Providence, R. I. 1973.

\bibitem{Freiman} G.A. Freiman, {\it The addition of finite sets}, I, Izv. Vys\v{s}. U\v{c}ebn. Zaved. Matematika 13(1959), 202-213.

\bibitem{Freiman1999} G.A. Freiman, L. Low and J. Pitman, {\it Sumsets with distinct summands and the conjecture of Erd\H{o}s-Heilbronn on sums residues}, Asterisque 258(1999), 163-172.

\bibitem{Jin2007} R.L. Jin, {\it Freiman's inverse problem with small doubling property}, Adv. Math. 216(2007), 711-752.

\bibitem{Lev2} V.F. Lev, {\it Restricted set addition in groups, I. The classical setting}, J. London Math. Soc. 62(2000), 27-40.

\bibitem{Nathanson1} M.B. Nathanson, {\it Inverse theorems for subset sums}, Trans. Amer. Math. Soc. 347(1995), 1409-1418.

\bibitem{Sch} T. Schoen, {\it The cardinality of restricted sumsets}, J. Number Theory 96(2002), 48-54.

\bibitem{Wang2025}Y.J. Wang and M. Tang, {\it Freiman-Lev conjecture (in Chinese)}, Sci Sin Math. 55(2025), 1-32, doi:10.1360/SSM-2024-0249. (also see arXiv: 2402.01471v3)

\bibitem{Wang} Y.J. Wang and M. Tang, {\it On Freiman-Lev conjecture}, Contrib. Discrete Math. 20(2025), 42-59.

\end{thebibliography}
\end{document}